\def\ps@myheadings{%
\def\@oddfoot{}%
\def\@evenfoot{}%
\def\@oddhead{\underline{\protect\rule[-0.5ex]{0cm}{1ex}\protect\makebox[\textwidth][l]{
\hbox{}%
\rightmark \hfill \hfill  \rm\thepage}}}%
\def\@evenhead{\underline{\protect\rule[-0.5ex]{0cm}{1ex}\protect\makebox[\textwidth][l]
{\hbox{}%
{\rm\thepage}  \hfill \hfill \leftmark}}}%
}
\def\thesection {\thechapter.\arabic{section}} 
\def\ps@headings{\let\@mkboth\markboth%
\def\@oddfoot{}\def\@evenfoot{}%    No feet
\def\@evenhead{\underline{\protect\rule[-0.5ex]{0cm}{1ex}\protect\makebox[\textwidth][l]
{\hbox{}%
{\rm\thepage}  \hfill \hfill \leftmark}}}%
\def\@oddhead{\underline{\protect\rule[-0.5ex]{0cm}{1ex}\protect\makebox[\textwidth][l]{
\hbox{}%
\rightmark \hfill \hfill  \rm\thepage}}}%
\def\chaptermark##1{\markboth {\sc{\ifnum \c@secnumdepth >\z@
    \thechapter\hskip 1em\relax \fi ##1}}%
{\sc{\ifnum \c@secnumdepth >\z@
    \thechapter\hskip 1em\relax \fi ##1}}}%
\def\sectionmark##1{\markright{\ifnum \c@secnumdepth >\@ne
          \thesection\hskip 1em\relax \fi ##1}}}
\def\ps@headings{%
\def\@oddfoot{}%
\def\@evenfoot{}%
\def\@evenhead{\underline{\protect\rule[-0.5ex]{0cm}{1ex}\protect\makebox[\textwidth][l]
{\hbox{}%
{\rm\thepage}  \hfill \hfill \leftmark}}}%
\def\@oddhead{\underline{\protect\rule[-0.5ex]{0cm}{1ex}\protect\makebox[\textwidth][l]{
\hbox{}%
\rightmark \hfill \hfill  \rm\thepage}}}%
\def\chaptermark##1{\markboth{\sc{\ifnum \c@secnumdepth >\z@
    \thechapter\hskip 1em\relax \fi ##1}}%
{\sc{\ifnum \c@secnumdepth >\z@
    \thechapter\hskip 1em\relax \fi ##1}}}%
\def\sectionmark##1{\markright{\ifnum \c@secnumdepth >\@ne
          \thesection\hskip 1em\relax \fi ##1}}}
\renewcommand*{\cleardoublepage}{%
  \clearpage%
  \thispagestyle{empty}% <- addition
  \if@twoside%
  \ifodd\c@page\else\hbox{}\newpage%
  \if@twocolumn\hbox{}\newpage%
  \fi\fi\fi%
}
\newcommand{\suchthat}{\,\vert\,}
\newcommand{\te}[1]{\ensuremath{#1^\mathrm{th}}}
\newcommand{\pd}[1]{{\smash{(#1)}}}   % a divided power
\newcommand{\A}{\mathcal{A}}          % an algebra A
\newcommand{\D}{\mathcal{D}}          % basis D of differentials
\newcommand{\E}{\mathcal{E}}          % basis E of differentials
\newcommand{\J}{\mathcal{J}}          % partition J
\newcommand{\K}{k}                    % the ground field
\newcommand{\m}{\mathsf{m}}           % a maximal ideal
\newcommand{\cO}{\mathcal{O}}         % structure sheaf
\newcommand{\p}{\partial}             % partial differentiation
\newcommand{\SG}{\Sigma}              % the symmetric group
\newcommand{\T}{\mathsf{T}\!}         % transpose of a matrix
\newcommand{\W}{\mathfrak{W}}         % space W
\newcommand{\Z}{\mathbb{Z}}           % the ring of integers
\newcommand{\F}{\mathcal{F}}          % resolution F
\newcommand{\G}{\mathcal{G}}          % resolution G
\newcommand{\HH}{\mathcal{H}}         % resolution H
\newcommand{\M}{\mathcal{M}}          % resolution M
\newcommand{\N}{\mathcal{N}}          % resolution N
\newcommand{\MN}{\mathcal{MN}}        % resolution MN
\newcommand{\bA}{\mathbb{A}}          % affine A
\newcommand{\bN}{\mathbb{N}}          % the positive integers
\newcommand{\bP}{\mathbb{P}}          % projective P
\newcommand{\cR}{\mathcal{R}}         % ring R of polynomials
\newcommand{\cS}{\mathcal{S}}         % ring S of polynomials
\newcommand{\cT}{\mathcal{T}}         % ring T or tangent space
\DeclareMathOperator*{\dsum}{\oplus}  % direct sum
\DeclareMathOperator*{\isect}{\cap}   % intersection
\DeclareMathOperator*{\sprod}{\Pi}    % small product
\DeclareMathOperator*{\union}{\cup}   % union
\DeclareMathOperator{\ann}{ann}       % the annihilator ideal
\DeclareMathOperator{\Cat}{Cat}       % a catalecticant
\DeclareMathOperator{\chr}{char}      % characteristic (of a field)
\DeclareMathOperator{\Coid}{Coid}     % a set of coids
\DeclareMathOperator{\codim}{codim}   % codimension
\DeclareMathOperator{\Ext}{Ext}       % Ext group
\DeclareMathOperator{\GL}{GL}         % the general linear group
\DeclareMathOperator{\Grass}{Grass}   % the Grassmanian
\DeclareMathOperator{\Hom}{Hom}       % homomorphisms
\DeclareMathOperator{\id}{id}         % identity map
\DeclareMathOperator{\im}{im}         % the image
\DeclareMathOperator{\indx}{index}    % index of nilpotency
\DeclareMathOperator{\iso}{\cong}     % isomorphism
\DeclareMathOperator{\lc}{lc}         % leading coefficient
\DeclareMathOperator{\Mat}{Mat}       % the matrix algebra
\DeclareMathOperator{\nil}{nil}       % the nil radical
\DeclareMathOperator{\rank}{rank}     % the rank (of a matrix)
\DeclareMathOperator{\Reg}{Reg}       % regular splittings
\DeclareMathOperator{\sat}{sat}       % saturation
\DeclareMathOperator{\Socle}{Socle}   % the socle
\DeclareMathOperator{\bGor}{\mathbf{Gor}}       % parameter scheme
\DeclareMathOperator{\bPGor}{\mathbf{PGor}}     % parameter scheme
\DeclareMathOperator{\Gor}{Gor}       % parameterizing affine set
\DeclareMathOperator{\PGor}{PGor}     % parameterizing projective set
\DeclareMathOperator{\Split}{Split}   % splitting subfamily of Gor
\DeclareMathOperator{\PSplit}{PSplit} % splitting subfamily of PGor
\renewcommand{\implies}{\Rightarrow}
\renewcommand{\iff}{\Leftrightarrow}
\newtheoremstyle{thm_style}{.5\baselineskip}{.5\baselineskip}{\slshape}{}{\bfseries\scshape}{:}{\newline}{}
\newtheoremstyle{lem_style}{.5\baselineskip}{.5\baselineskip}{\slshape\thm@notefont{}}{}{\bfseries\slshape}{:}{.5em}{}
\newtheoremstyle{def_style}{.5\baselineskip}{.5\baselineskip}{}{}{\bfseries}{:}{.5em}{}
\newtheoremstyle{case_style}{3pt}{3pt}{}{}{\slshape}{:}{ }{}
\theoremstyle{thm_style}
\newtheorem{thm}{Theorem}[chapter]
\theoremstyle{lem_style}
\newtheorem{lem}[thm]{Lemma}
\newtheorem{prop}[thm]{Proposition}
\newtheorem{cor}[thm]{Corollary}
\newtheorem{que}[thm]{Question}
\theoremstyle{def_style}
\newtheorem{defn}[thm]{Definition}
\newtheorem{rem}[thm]{Remark}
\newtheorem{exmp}[thm]{Example}
\theoremstyle{case_style}
\begin{document}

\bibliographystyle{amsalpha}
\renewcommand{\proofname}{Proof:}
\renewcommand{\chaptername}{\scshape Chapter\upshape}

%% \input{titlepg}
%% Forside
\pagestyle{empty}
\begin{titlepage}
\begin{center}
  {\huge\bfseries
    Additive Splittings\\
    of Homogeneous Polynomials\\}

  \vspace{1.5cm}
  {\Large\scshape
    Thesis for the Degree of\\
    Doctor Scientiarum\\}

  \vspace{2.5cm}
  {\Large\slshape\bfseries Johannes Kleppe}

  \vspace{2.5cm}
  \begin{figure}[htb]
    \begin{center}
      \epsfig{height=5cm,clip=false,file=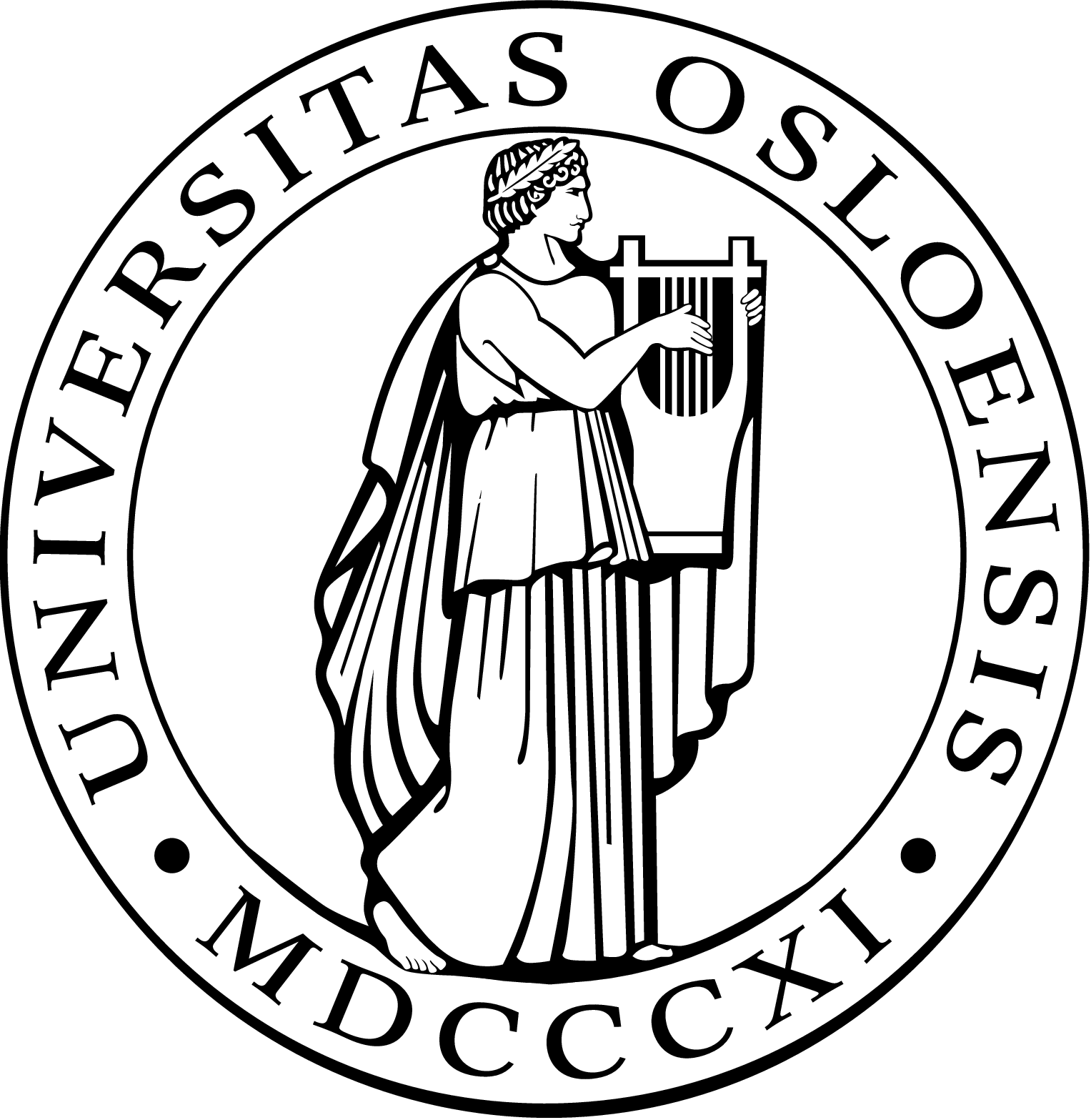}
    \end{center}
  \end{figure}

  \vspace{1cm}
  {\Large\scshape
    Department of Mathematics\\
    University of Oslo\\
    Spring 2005\\}
\end{center}
\end{titlepage}

\cleardoublepage
%% \input{abstract}
%% Abstract
%\addcontentsline{toc}{chapter}{Abstract}
\renewcommand{\abstractname}{\LARGE Abstract}
\begin{abstract}
  \vspace*{\baselineskip}
  The purpose of this paper is to study when a homogeneous polynomial $f$
  ``splits additively''. By this we mean that it is possible to write $f = g +
  h$ where $g$ and $h$ are polynomials in independent sets of variables. To
  help us we will define a matrix algebra $M_f$. The study of $M_f$ will be
  important to us, and we prove that it determines the (regular) splitting
  properties of $f$. Next, assuming the corresponding information is known
  about the additive components of $f$, we will compute the minimal free
  resolution of the graded Artinian Gorenstein quotient $R/\ann f$, the
  dimension of a ``splitting subfamily'' of the parameter space $\PGor (H)$,
  and the dimension of the tangent space to $\PGor (H)$ at the point
  corresponding to $f$.

  We will also study degenerations of polynomials that split and see how they
  relate to $M_f$. This situation is more difficult, but we are able to prove
  several partial results that together cover many interesting cases. In
  particular, we prove that $f$ has a regular or degenerate splitting if and
  only if the ideal $\ann f$ has at least one generator in its socle degree.
  Finally, we look at some generalizations of $M_f$ and corresponding results.
\end{abstract}
%% The following was used as an abstract for a talk at Blindern the autumn of 2003:
%%
%% Title: Polynomials that spilt additively
%%
%% Abstract: First I will define precisely what I mean by an additive splitting
%% of a homogeneous polynomial. As a basic example, a form f of degree d in
%% k[x,y] splits additive if it equals x^d + y^d after a suitable base
%% change. I will explain how to find any (regular) splitting that a given f
%% has, and I will say something about degenerations of such f. If time
%% permits, I will compute the resolution of the associated Artinian
%% Gorenstein quotient R/ann f, the dimension of a "splitting subfamily" of
%% PGor(H) and the dimension of the tangent space at the point corresponding
%% to f, assuming the corresponding information is known for the additive
%% components of f.

\cleardoublepage
\pagenumbering{roman}
\pagestyle{plain}
%% \input{preface}
%% Preface / Acknowledgments / History
\addcontentsline{toc}{chapter}{Preface}
\chapter*{Preface}

This thesis has been written for the degree of Doctor Scientiarum (dr.
scient.) at the Department of Mathematics, University of Oslo. My supervisor
has been professor Kristian Ranestad.

I would like to thank the University of Oslo for financial support. Special
thanks go to my supervisor, professor Kristian Ranestad, for helping me
through this period of time, and to professor Frank-Olaf Schreyer who first
introduced me to the problem of ``additive splittings'' during my stay at the
University of Bayreuth, Germany, the fall of 2000. I would also like to thank
my father, professor Jan Oddvar Kleppe, for interesting discussions.

The problem professor Schreyer originally gave me is the following. Consider a
homogeneous polynomial $f$ of degree $3$ (we were looking at double hyperplane
sections of canonical curves) that is a sum of two polynomials in separate
variables, that is $f = g+h$ with $g \in \K[x_1, \dots x_s]$ and $h \in
\K[x_{s+1}, \dots, x_r]$ up to base change. The minimal resolution of the
ideal
\begin{displaymath}
  ( \{ \p_i \p_j \suchthat i = 1, \dots, s, \: j = s+1, \dots, r \} )
  \subseteq R = \K [\p_1, \dots, \p_r]
\end{displaymath}
will be part of any resolution of $\ann f$. Therefore the graded Betti number
$\beta_{r-1,r}$ of $R/\ann f$ will be nonzero. He asked if I could prove that
this was an equivalence.

After computing some examples, I realized degree three did not matter much,
and I wondered if something stronger might be true. Could $1+\beta_{r-1,r}$ be
the maximal length of an ``additive splitting'' of $f$? It was also clear that
I had to allow degenerations of such splittings. I decided to take the simple
approach of definition \ref{def:limsplit} and restrict my attention to
``deformations'' defined over a polynomial ring. In the end it turned out that
$1+\beta_{r-1,r}$ does not always count the length of a maximal degenerate
splitting.

Chapter 1 contains a brief discussion of background material. In chapter 2 I
define precisely want I mean by regular and degenerate additive splittings. I
also define a matrix algebra $M_f$, which probably is the most important new
object in this thesis, and I give some basic results about $M_f$ and additive
splittings.

In chapter 3 I effectively determine all regular splittings, and I use this to
calculate the minimal free resolution of $R/ \ann f$ and its graded Betti
numbers. I also discuss some consequences for $\bPGor (H)$, the scheme
parameterizing all graded Artinian Gorenstein quotients of $R$. Chapter 4
studies degenerate splittings. The central question is whether we can use all
of $M_f$ to construct generalizations of $f$ that splits $\beta_{r-1,r}$
times. I give some conditions that implies a positive answer, and I construct
several counter examples in general. Finally, chapter 5 generalizes $M_f$ and
some results about it.

\vspace*{2\baselineskip}\noindent June, 2005\hfill Johannes Kleppe

%% Redefining \cleardoublepage (again) wuch that empty pages have "plain" style
\makeatletter
\renewcommand*{\cleardoublepage}{%
  \clearpage%
  \thispagestyle{plain}% <- addition
  \if@twoside%
  \ifodd\c@page\else\hbox{}\newpage%
  \if@twocolumn\hbox{}\newpage%
  \fi\fi\fi%
}
\makeatother

%% Table of contents after Preface:
\tableofcontents

%% Body of paper

\cleardoublepage
\pagenumbering{arabic}
\pagestyle{headings}

%% \input{intro}
%% Chapter 1: Introduction of problem and method
\chapter{Introduction}

It is well known that any homogeneous polynomial $f$ of degree two in $r$
variables over a field of characteristic $\ne 2$ can be written as a linear
combination of $n = \rank H \le r$ squares. Here $H = (\p_i\p_j f)$ is the
Hessian matrix of $f$. The usual way to generalize this to higher degrees is
to ask how to write a form $f$ of degree $d$ as a sum of powers of linear
forms, $f = \sum_{i=1}^n c_i l_i^d$, and how small $n$ can be. This is usually
called Waring's problem, and has been studied by many people and has been
solved for general $f$.

There is, however, a different way to generalize the sum of squares theorem.
If we write $f = \sum_{i=1}^n c_i l_i^2$ with $n$ minimal, then $l_1, \dots,
l_n$ are necessarily linearly independent. For higher degrees, when $f =
\sum_{i=1}^n c_i l_i^d$ and $d \ge 3$, the $l_i$'s can no longer be linearly
independent, except for very special $f$. With this in mind, we see that there
is another question that naturally generalizes of the sum of squares theorem:
When is it possible to write $f$ as a sum of several homogeneous polynomials
in independent sets of variables? We will call this a \emph{regular splitting}
of $f$ (definition \ref{def:regsplit}). Some examples of polynomials that
split regularly are $f = x_1^3 + x_2 x_3 x_4$, $f = x_1 x_2^6 + x_3^2 x_4^5 +
x_4^3 x_5^4$ and $f = (x_1 + x_2)^8 + x_2^3 (x_2 + x_3)^5$. Sometimes there
exist more than one regular splitting of the same polynomial, like $f = x_1^2
+ x_2^2 = \tfrac 12 \bigl( (x_1 + x_2)^2 + (x_1 - x_2)^2 \bigr)$.

To make the theory work in positive characteristics we assume that $f$ is a
homogeneous polynomial in the divided power algebra $\cR = \K [x_1, \dots,
x_r]^{DP}$. The polynomial ring $R = \K [\p_1, \dots, \p_r]$ acts on $\cR$ by
partial differentiation. An important ideal in $R$ will be $\ann_R f$, the set
of $D \in R$ that annihilates $f$, i.e. $D(f) = 0$. It is well known that $R/
\ann_R f$ is a Gorenstein ring of dimension zero, and furthermore that every
graded Artinian Gorenstein quotient arises this way, cf. lemma \ref{lem:mac}.

To study the splitting behavior of a homogeneous polynomial $f$ of degree $d$,
we associate to $f$ the following set of $r \times r$-matrices.

\medskip\noindent
{\bfseries Definition~\ref{def:Mf}:}
Given $f \in \cR_d$, define
\begin{displaymath}
  M_f = \{ A \in \Mat_\K (r,r) \suchthat I_2 (\p\: A\p) \subseteq \ann_R f \}.
\end{displaymath}

Here $\p = [\p_1, \dots, \p_r]^\T$ is a column vector, thus $(\p\: A\p)$ is
the $r \times 2$ matrix consisting of the two columns $\p$ and $A\p$, and $I_2
(\p\: A\p)$ is the ideal generated by its $2 \times 2$ minors. The study of
$M_f$ has a central position in this paper. One goal is figure out what $M_f$
can tell us about $f$. To transfer matrices $A \in M_f$ back into polynomials
$g \in \cR$, we also define a $\K$-linear map
\begin{displaymath}
  \gamma_f : M_f \to \cR_d
\end{displaymath}
sending $A \in M_f$ to the unique $g \in \cR_d$ that satisfies $\p g = A \p f$
(definition \ref{def:gamma_f}). An important property of $M_f$ is the
following.

% The first important property is the existence of the map $\gamma_f$.

\medskip\noindent {\bfseries\slshape Proposition~\ref{prop:Mf}:} {\slshape Let
  $d \ge 3$ and $f \in \cR_d$. $M_f$ is a $\K$-algebra, and all commutators
  belong to $\ker \gamma_f$. In particular, $M_f$ is commutative if $\ann
  (f)_1 = 0$. }

\medskip In chapter \ref{chapter:regular} we analyze the situation of regular
splittings completely. In particular, we prove that the idempotents in $M_f$
determine all regular splittings of $f$ in the following precise way.

\medskip\noindent
{\bfseries\scshape Theorem~\ref{thm:regsplit}:}\newline
{\slshape
  Assume $d \ge 2$, $f \in \cR_d$ and $\ann_R (f)_1 = 0$. Let $\Coid (M_f)$ be
  the set of all complete sets $\{ E_1, \dots, E_n \}$ of orthogonal
  idempotents in $M_f$, and let
  \begin{displaymath}
    \Reg (f) = \{ \{ g_1, \dots, g_n \} \suchthat f = g_1 + \dots + g_n \text{
      is a regular splitting of } f \}.
  \end{displaymath}
  The map $\{ E_i \}_{i=1}^n \mapsto \{ g_i = \gamma_f (E_i) \}_{i=1}^n$
  defines a bijection
  \begin{displaymath}
    \Coid (M_f) \to \Reg (f).
  \end{displaymath}
  In particular, there is a unique maximal regular splitting of $f$ when $d
  \ge 3$.
}

\medskip We also give an extended version of this theorem. In the
generalization (theorem \ref{thm:regsplitE}) we also prove that, loosely
speaking, $M_f = \dsum_{i=1}^n M_{g_i}$, if these algebras are computed inside
the appropriate rings. Note in particular the uniqueness when $d=3$, which is
not there when $d=2$.

In the last two sections of chapter \ref{chapter:regular} we examine a regular
splitting $f = \sum_{i=1}^n g_i$ more carefully. For each $i$, the additive
component $g_i$ is a polynomial in some divided power subring $\cS_i \subseteq
\cR$. The definition of a regular splitting requires that these subrings are
independent in the sense that $(\cS_i)_1 \isect \sum_{j \ne i} (\cS_j)_1 = 0$
for all $i$. We let $S_i$ be a polynomial subring of $R$ dual to $\cS_i$.
Assuming the minimal free resolutions of every $S_i/ \ann_{S_i} (g_i)$ is
known, then we are able to compute the minimal free resolution of $R/ \ann_R
f$. Theorem~\ref{thm:fgh} does this for the case $n=2$. The induction process
to get $n \ge 2$ is carried out for the shifted graded Betti numbers (see
equation \eqref{eq:shiftB} below), culminating in the following theorem.

\medskip\noindent
{\bfseries\scshape Theorem~\ref{thm:betti}:}\newline
{\slshape
  Let $d \ge 2$ and $f, g_1, \dots, g_n \in \cR_d$. Suppose $f = g_1 + \dots +
  g_n$ is a regular splitting of $f$. Let $s_i = \dim_\K R_{d-1} (g_i)$ for
  every $i$. Let $s = \sum_{i=1}^n s_i$, and define
  \begin{displaymath}
    \nu_{nk} = (n-1) \binom{r}{k+1} + \binom{r-s}{k+1} - \sum_{i=1}^n
    \binom{r-s_i}{k+1}.
  \end{displaymath}
  Denote by $\smash{\hat{\beta}^f_{kj}}$ and $\smash{\hat{\beta}^{g_i}_{kj}}$
  the shifted graded Betti numbers of $R/ \ann_R (f)$ and $R/ \ann_R (g_i)$,
  respectively. Then
  \begin{equation*}
    \hat{\beta}^f_{kj} = \sum_{i=1}^n \hat{\beta}^{g_i}_{kj} + \nu_{nk}
    \delta_{1j} + \nu_{n,r-k} \delta_{d-1,j}
  \end{equation*}
  for all $0<j<d$ and all $k \in \Z$. Here the symbol $\delta_{ij}$ is defined
  by $\delta_{ii} = 1$ for all $i$, and $\delta_{ij} = 0$ for all $i \ne j$.
}

\medskip We proceed to study some consequences for $\bPGor (H)$, the
quasi-projective scheme parameterizing all graded Artinian Gorenstein
quotients $R/I$ with Hilbert function $H$. We define a subset $\PSplit (H_1,
\dots, H_n) \subseteq \bPGor (H)$ that parametrizes all quotients $R/ \ann_R
f$ such that $f$ has a regular splitting $f = \sum_{i=1}^n g_i$ such that the
Hilbert function of $R/ \ann_R (g_i)$ is $H_i$ for all $i$, and we are able to
prove under some conditions that its closure $\overline{\PSplit (H_1, \dots,
  H_n)}$ is an irreducible, generically smooth component of $\bPGor (H)$
(theorem \ref{thm:gor}).

\medskip In chapter 4 we turn our attention to degenerate splittings, i.e.
polynomials that are specializations of polynomials that split regularly. A
simple example is $f = x^\pd 2 y = \tfrac 1t \bigl( (x+y)^\pd 3 - x^\pd 3
\bigr)$. The main question that we are trying to shed some light upon, is the
following.

\medskip\noindent
{\bfseries{\slshape Question}~\ref{que:Q}: }
{\slshape
  Given $f \in \cR_d$, $d \ge 3$, is it possible to find $f_t \in \cR_d [t_1,
  \dots, t_n]$ such that $f_0 = f$ and $f_t$ splits regularly $\dim_k M_f - 1$
  times over $\K (t_1, \dots, t_n)$?
}

\medskip By lemma \ref{lem:upbound}, $\dim_\K M_f$ is an upper bound for the
length of a regular splitting of $f_t$. Thus the question asks when this upper
bound is achieved. This would mean that $M_f$ not only determines the regular
splittings of $f$, but that we are able to use all of $M_f$ to construct
degenerate splittings as well.

% The working hypothesis was that $M_f$ should determine all splitting
% properties of $f$. For regular splittings, this is achieved, but the
% situation is more complicated for degenerate splittings.

We first prove that we can construct an $f_t$ with the desired properties
using all powers of a single nilpotent matrix $A$. This is
theorem~\ref{thm:onematrix}. In particular it gives a positive answer to
question \ref{que:Q} in case $M_f$ is generated by $A$ alone as a
$\K$-algebra.

\medskip\noindent
{\bfseries\scshape Theorem~\ref{thm:onematrix}:}\newline
{\slshape
  Let $d \ge 3$ and $f \in \cR_d$. Assume that $M_f$ contains a non-zero
  nilpotent matrix $A \in \Mat_\K (r,r)$, and let $n = \indx (A) - 1 \ge 1$.
  Then $f$ is a specialization of some $f_t \in \cR_d [t_1, \dots, t_n]$ that
  splits regularly $n$ times inside $\cR_d (t_1, \dots, t_n)$.
}

\medskip We later give a generalized version of this theorem. A careful
analysis shows that this covers most cases with $r \le 4$, and we are able to
solve the rest by hand. Hence we get the following result.

\medskip\noindent
{\bfseries\scshape Theorem~\ref{thm:rle4}:}\newline
{\slshape
  Assume that $r \le 4$ and $\bar{\K} = \K$. Let $f \in \cR_d$, $d \ge 3$,
  satisfy $\ann_R (f)_1 = 0$. Then for some $n \ge 1$ there exists $f_t \in
  \cR_d [t_1, \dots, t_n]$ such that $f_0 = f$ and $f_t$ splits regularly
  $\dim_\K M_f - 1$ times over $\K (t_1, \dots, t_n)$.
}

\medskip The rest of chapter 4 is devoted to constructing examples where
question \ref{que:Q} has a negative answer. We are able to do this for all
$(r,d)$ with $r \ge 5$ and $d \ge 3$, except the six pairs $(5,3), (6,3),
(7,3), (8,3), (5,4)$ and $(6,4)$.

Finally, in chapter 5, we consider some generalizations of $M_f$. We do not
yet have a particular use for these generalizations. However, $M_f$ proved
very useful to us, and we show how to define two similar algebras and prove
some basic results about them.

% An alternative way to write these ``out of place'' theorems.
%\newcounter{oldthm}
%\setcounter{oldthm}{\value{thm}}
%\setcounter{thm}{10}
%\begin{thm}
%  Assume $d \ge 3$ and $f \in \cR_d$. $f$ splits as $g_1 + \dots + g_n$ if
%  and only if, for all $i$, there is a matrix $E_i \in \Mat_\K (r,r)$ such
%  that $\p g_i = E_i \p f$, and $\{ E_1, \dots, E_n \}$ is a complete set of
%  orthogonal idempotents for $M_f$. In particular, there is a unique maximal
%  regular splitting of $f$.
%\end{thm}
%\setcounter{thm}{\value{oldthm}}

% \chapter{Preliminaries}

\section{Polynomials and divided powers}

Let $R = \K[\p_1, \dots, \p_r]$ be a polynomial ring in $r$ variables with the
standard grading over a field $\K$. As usual, we denote by $R_d$ the
$\K$-vector space spanned by all monomials of total degree $d$. Then $R =
\dsum_{d \ge 0} R_d$, and elements in $\union_{d \ge 0} R_d$ are called
homogeneous. An ideal $I$ in $R$ is homogeneous if $I = \dsum_d I_d$ where
$I_d = I \isect R_d$. The unique maximal homogeneous ideal in $R$ is $\m_R =
(\p_1, \dots, \p_r)$.

The graded Betti numbers $\beta_{ij}$ of a homogeneous ideal $I$ are the
coefficients that appear in a graded minimal free resolution of $I$. We will
often speak of the ``shifted'' graded Betti numbers, by which we mean
$\hat{\beta}_{ij} = \beta_{i,i+j}$. So if $0 \to F_c \to \dots \to F_1$ is a
graded minimal free resolution of $I$, then the \te{i} term is
\begin{equation} \label{eq:shiftB}
  F_i \iso \dsum_{j \ge i} \beta_{ij} R(-j) = \dsum_{j \ge 0} \hat{\beta}_{ij}
  R(-i-j)
\end{equation}
In particular, $\beta_{1j}$ is the minimal number of generators of $I$ of
degree $j$.

Let $\cR = \dsum_{d \ge 0} \cR_d$ be the graded dual of $R$, i.e. $\cR_d =
\Hom_\K (R_d, \K)$. It is called a ring of divided powers, and we write $\cR =
\K [x_1, \dots, x_r]^{DP}$. Let $\bN_0$ denote the non-negative integers. The
divided power monomials
\begin{displaymath}
  \biggl\{ x^\pd \alpha = \prod_{i=1}^r x_i^\pd {\alpha_i} \,\bigg\vert\:
  \alpha \in \bN_0^r \text{ and } |\alpha| = \sum_{i=1}^r \alpha_i = d
  \biggr\}
\end{displaymath}
form a basis for $\cR_d$ as a $\K$-vector space. This basis is dual to the
standard monomial basis for $R_d$, i.e. $\{ \p^\beta = \sprod_i
\p_i^{\alpha_i} \suchthat \beta \in \bN_0^r \text{ and } |\beta| = d \}$, in
the sense that $x^\pd \alpha \bigl( \p^\alpha \bigr) = 1$ and $x^\pd \alpha
\bigl( \p^\beta \bigr) = 0$ for $\alpha \ne \beta$. The ring structure of
$\cR$ is the natural one generated by
\begin{displaymath}
  x_i^\pd a \cdot x_i^\pd b = \tbinom{a+b}{a} \, x_i^\pd {a+b},
\end{displaymath}
see \cite[Section A2.4]{Eis} or \cite[Appendix A]{IK} for details. We will
refer to elements of $\cR_d$ simply as homogeneous polynomials or forms of
degree $d$. If $\chr \K = 0$, we may identify $\cR$ with the regular
polynomial ring $\K [x_1, \dots, x_r]$ by letting $x_i^\pd d = x_i^d / d!$

Let $R$ act on $\cR$ by
\begin{displaymath}
  \p^\beta \bigl( x^\pd \alpha \bigr) = x^\pd {\alpha - \beta},
\end{displaymath}
i.e. the action generated by $\p_i \bigl( x_i^\pd d \bigr) = x_i^\pd {d-1}$
and $\p_j \bigl( x_i^\pd d \bigr) = 0$ for all $i \ne j$. The reason for our
notation is that $\p_i$ is indeed a derivation, which follows by bilinearity
from
\begin{align*}
  \p_i \bigl( x_i^\pd {a} \bigr) \cdot x_i^\pd {b} + x_i^\pd {a} & \cdot \p_i
  \bigl( x_i^\pd {b} \bigr) \\
  & = x_i^\pd {a-1} \cdot x_i^\pd {b} + x_i^\pd {a} \cdot x_i^\pd {b-1} \\
  & = \tbinom{a+b-1}{a-1} \, x_i^\pd {a+b-1} + \tbinom{a+b-1}{a} \, x_i^\pd
  {a+b-1} \\
  & = \tbinom{a+b}{a} \, x_i^\pd {a+b-1} = \tbinom{a+b}{a} \, \p_i \bigl(
  x_i^\pd {a+b} \bigr) \\
  & = \p_i \bigl( x_i^\pd {a} \cdot x_i^\pd {b} \bigr).
\end{align*}
Under the identification $x_i^\pd d = x_i^d / d!$ when $\chr \K = 0$, the
action of $\p_i$ becomes normal partial differentiation with respect to $x_i$.

% For any vector space $V$, we say that a column vector $v = [ v_1, \dots, v_n
% ]^\T$ is an \emph{ordered basis} for $V$ if $\{ v_1, \dots, v_n \}$ is a
% basis for $V$. Usually, we simply say that $v$ is a basis for $V$.

Arrange the elements of the standard monomial bases for $\cR_d$ and $R_d$ into
column vectors $h$ and $D$ using the same ordering. The fact that they are
dual can then be expressed as $D h^\T = I$, the identity matrix. If $\{ f_1,
\dots, f_N \}$ is any basis for $\cR_d$, $N = \dim_\K \cR_d =
\binom{r-1+d}{d}$, then there is a dual basis for $R_d$. Indeed, there exists
an $N \times N$ invertible matrix $P$ such that $f = [ f_1, \dots, f_N ]^\T =
P^\T h$. Let $E = P^{-1} D$. Then $Ef^\T = P^{-1} Dh^\T P = I$, hence $E$ is
the dual basis of $f$ (as column vectors).

If $S$ is any ring, let $\Mat_S (a,b)$ be the set of $a \times b$ matrices
defined over $S$, and let $\GL_r (S)$ be the invertible $r \times r$ matrices.
When $S = \K$, we usually just write $\GL_r$. We will frequently make use of
the following convention.
\begin{quote}
  If $v \in S^b$ is any vector and $A \in \Mat_S (a,b)$ any matrix, we denote
  by $v_i$ the \te{i} entry of $v$ and by $A_{ij}$ the \te{(i,j)} entry of
  $A$.
\end{quote}
In particular, $(Av)_i = \sum_{j=1}^b A_{ij} v_j$ is the \te{i} entry of the
vector $Av$, and the \te{(i,j)} entry of the rank one matrix $(Av)(Bv)^\T$ is
$(Avv^\T B^\T)_{ij} = (Av)_i(Bv)_j$.

% In terms of standard bases we have $v = \sum_i v_i e_i$ and $A = \sum_{ij}
% A_{ij} E_{ij}$.

For any $P \in \GL_r$, define $\phi_P : \cR \to \cR$ to be the $\K$-algebra
homomorphism induced by $x_i \mapsto \sum_{j=1}^r P_{ji} x_j$ for all $i$. We
usually let $x$ denote the column vector $x = [x_1, \dots x_r]^\T$, thus
$\phi_P$ is induced by $x \mapsto P^\T x$. The ``dual'' map $R \to R$, which
we also denote by $\phi_P$, is induced by $\p \mapsto P^{-1} \p$, where $\p =
[\p_1, \dots, \p_r]^\T$. For any $D \in R$ and $f \in \cR$, it follows that
\begin{displaymath}
  %% CHANGED 2013-07-13: removed comma at display end
  \phi_P (Df) = (\phi_P D) (\phi_P f)
\end{displaymath}
and in particular, $\ann_R (\phi_P f) = \phi_P (\ann_R f)$.

If $D \in \Mat_R (a,b)$ and $h \in \Mat_\cR (b,c)$, then $Dh$ denotes the $a
\times c$ matrix whose \te{(i,j)} entry is $(Dh)_{ij} = \sum_{k=1}^b D_{ik}
(h_{kj}) \in \cR$. Of course, this is nothing but the normal matrix product,
where multiplication is interpreted as the action of $R$ and $\cR$. We already
used this notation when discussing dual bases. Also, for any $f \in \cR$, we
let $D(f)$ (or simply $Df$) denote the $a \times b$ matrix whose \te{(i,j)}
entry is $(Df)_{ij} = D_{ij} (f) \in \cR$. It follows that if $E \in \Mat_R
(a',a)$, then $E(D(f)) = (ED)(f)$.

If $A \in \Mat_R (a,b)$ and $v_i \in R^a$ is the \te{i} column vector in $A$,
then we let $I_k (A) = I_k (v_1 \: \cdots \: v_b)$ be the ideal generated by
all $k \times k$ minors of $A$ ($k \le a,b$). Of course, this only depends on
$\im A = \langle v_1, \dots, v_b \rangle = \{ \sum_{i=1}^b c_i v_i \suchthat
c_1, \dots, c_b \in \K \}$.

% Note that Euler's identity $\sum_{i=1}^r x_i \p_i f = d f$ for all $f \in
% \cR_d$ also holds in the divided power ring. But if $\chr \K \divided d$,
% then the right-hand side is zero, and we cannot divide by $d$. But we can
% still lift a set $\{ g_i \}_{i=1}^r$ satisfying $\p_i g_j = \p_j g_i$ to a
% $g$ such that $\p_i g = g_i$ for all $i$.

\section{Annihilator ideals and Gorenstein quotients}

Given any $\K$-vector subspace $V \subseteq \cR_d$, define its orthogonal
$V^\perp \subseteq R_d$ by
\begin{displaymath}
  V^\perp = \{ D \in R_d \suchthat Df=0 \:\forall\: f \in V \}.
\end{displaymath}
Similarly, if $U \subseteq R_d$, define $U^\perp = \{ f \in \cR_d \suchthat
Df=0 \:\forall\: D \in U \}$.

% The dual of $V$, is $V^* = \Hom_\K (V,\K) = R_d / V^\perp$. Because $R_d =
% \Hom_\K (\cR_d, \K)$, hence $R_d \to \Hom_\K (V,\K)$ is surjective, and its
% kernel is clearly $V^\perp$. Thus $R_d \iso V^* \dsum V^\perp$.

Let $n=\dim_\K V$ and $N = \dim_\K \cR_d = \dim_\K R_d$. Pick a basis $\{f_1,
\dots, f_n\}$ for $V$, and expand it to a basis $\{f_1, \dots, f_N\}$ for
$\cR_d$. Let $\{D_1, \dots, D_N\}$ be the dual basis for $R_d$. Clearly,
$V^\perp = \langle D_{n+1}, \dots, D_N \rangle $, the $k$-vector subspace of
$R_d$ spanned by $D_{n+1}, \dots, D_N$. Therefore,
\begin{displaymath}
  \dim_\K V + \dim_\K V^\perp = \dim_\K R_d.
\end{displaymath}

By symmetry, this equation is true also when applied to $V^\perp$, that is, we
get $\dim_\K V^\perp + \dim_\K V^{\perp\perp} = \dim_\K R_d$. Hence it follows
that $\dim_\K V^{\perp\perp} = \dim_\K V$. Since
\begin{math}
  V^{\perp\perp} = \{ g \in \cR_d \suchthat Dg = 0 \:\forall\: D \in V^\perp
  \}
\end{math}
obviously contains $V$, we have in fact $V^{\perp\perp} = V$. Note in
particular that $\cR_d^\perp = 0$ and $R_d^\perp = 0$. This says precisely
that the pairing ($\K$-bilinear map) $R_d \times \cR_d \to \K$ defined by
$(D,f) \mapsto D(f)$ is non-degenerate.

\begin{defn} \label{def:ann}
  For any $f \in \cR_d$, $d \ge 0$, the \emph{annihilator ideal} in $R$ of $f$
  is defined to be
  \begin{displaymath}
    \ann_R (f) = \{ D \in R \suchthat Df=0 \}.
  \end{displaymath}
\end{defn}

% Could allow inhomogeneous $f$ in this definition. Also, even though the
% definition uses globally defined $\cR$ and $R$, it applies as long as $R$
% acts on $\cR_d$. Alternative def: for any $U \subseteq R$ and $V \subseteq
% \cR$, let $\ann_U V = \{ D \in U \suchthat Df = 0 \:\forall\: f \in V \}$.

Since $f$ is homogeneous, $\ann_R (f)$ is a homogeneous ideal in $R$. We
notice that its degree $d$ part $\ann_R (f)_d$ is equal to $\langle f
\rangle^\perp$ as defined above. The annihilator ideals have several nice
properties.

First, consider the homomorphism $R_e \to \cR_{d-e}$ defined by $D \mapsto
D(f)$. We denote its image by
\begin{displaymath}
  R_e (f) = \{ D(f) \suchthat D \in R_e \},
\end{displaymath}
and its kernel is by definition $\ann_R (f)_e$. We observe that if $R_e (f) =
0$ for some $e < d = \deg f$, then $R_d(f)=0$ because $R_d = R_{d-e} \cdot
R_e$. Since $R_d \times \cR_d \to \K$ is non-degenerate, this implies $f=0$.
Thus the contraction map $R_e \times \cR_d \to \cR_{d-e}$ is also
non-degenerate. The $R$-module $R(f) = \dsum_{e \ge 0} R_e (f)$ is called the
module of contractions.

\begin{lem} \label{lem:ann}
  Let $d,e \ge 0$ and $f \in \cR_d$. The ideal $\ann_R (f) \subseteq R$
  satisfies:
  \begin{enumerate}
    \setlength{\itemsep}{0pt}
    \setlength{\parskip}{2pt}
    \renewcommand{\theenumi}{\alph{enumi}}
    \renewcommand{\labelenumi}{\normalfont(\theenumi)}
  \item If $0 \le k \le e \le d$, then the degree $k$ part $\ann_R (f)_k$ is
    determined by the degree $e$ part $\ann_R (f)_e$ by ``saturation'', that
    is, $D \in \ann_R (f)_k$ if and only if $ED \in \ann_R (f)_e$ for all $E
    \in R_{e-k}$.
  \item $R_e (f) \iso R_e / \ann_R (f)_e$ and $R_e (f)^\perp = \ann_R
    (f)_{d-e}$.
  \item $\dim_\K (R/ \ann_R (f))_e = \dim_\K R_e (f) = \dim_k (R/ \ann_R
    (f))_{d-e}$.
  \item $\isect_{D\in R_e} \ann_R (Df) = \ann_R (f) + R_d + \dots +
    R_{d-e+1}$.

    In particular, $\isect_{D\in R_e} \ann_R (Df)_{d-e} = \ann_R (f)_{d-e}$.
  \end{enumerate}
\end{lem}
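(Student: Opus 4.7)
My plan is to prove the four parts in order, since (a) is a saturation statement that the others rely on: (b) follows from (a) via the first isomorphism theorem and a perp calculation, (c) is then immediate from (b) combined with the orthogonal dimension formula established just before the lemma, and (d) is a degree-by-degree application of (a).

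For (a) the forward direction is immediate by associativity of the action: $D(f) = 0$ yields $(ED)(f) = 0$ for every $E$. The substance is in the converse. I would suppose $D \in R_k$ satisfies $ED \in \ann_R(f)_e$ for every $E \in R_{e-k}$, so that $E(Df) = (ED)(f) = 0$ for all $E \in R_{e-k}$, i.e.\ $R_{e-k}(Df) = 0$. Since $Df \in \cR_{d-k}$ and $e-k \le d-k$, the observation stated just before the lemma (``if $R_{e'}(g) = 0$ for some $e' \le \deg g$, then $g = 0$'', which follows from non-degeneracy of the pairing $R_{d-k} \times \cR_{d-k} \to \K$) forces $Df = 0$, so $D \in \ann_R(f)_k$.

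For (b), the evaluation map $R_e \to \cR_{d-e}$, $D \mapsto Df$, has kernel $\ann_R(f)_e$ by definition and image $R_e(f)$, so the first isomorphism theorem gives $R_e(f) \cong R_e/\ann_R(f)_e$. For the perp identity, $E \in R_{d-e}$ lies in $R_e(f)^\perp$ iff $E(Df) = (ED)(f) = 0$ for all $D \in R_e$, which by (a) is equivalent to $E \in \ann_R(f)_{d-e}$. Part (c) then falls out: the first equality uses (b) directly, and the second combines (b) with the identity $\dim_\K V + \dim_\K V^\perp = \dim_\K R_{d-e}$ proved in the paragraph above the lemma, applied to $V = R_e(f)$.

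For (d) I will analyze $I = \isect_{D \in R_e} \ann_R(Df)$ degree by degree. Given a homogeneous $E \in R_j$, the condition $E \in I_j$ reads $ED \in \ann_R(f)$ for all $D \in R_e$. When $j + e > d$ the products $ED$ sit above degree $d$ and annihilate $f$ automatically, so $I_j = R_j$; when $j + e \le d$, the condition becomes $ED \in \ann_R(f)_{j+e}$ for all $D \in R_e$, which by (a) collapses to $E \in \ann_R(f)_j$. Combining these with $R_k \subseteq \ann_R(f)$ for $k > d$ gives the advertised equality, and the ``in particular'' clause is just the degree $d-e$ slice. The main obstacle, such as it is, lies in (a): one has to verify that the hypothesis $e \le d$ genuinely allows the non-degeneracy argument to conclude $Df = 0$ from $R_{e-k}(Df) = 0$, after which (b)--(d) are largely bookkeeping.
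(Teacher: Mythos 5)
Your proof is correct and follows essentially the same route as the paper: (a) via non-degeneracy of the contraction pairing, (b) via the evaluation map and the perp computation using (a), (c) by taking dimensions, and (d) by reducing each graded piece to (a). The only cosmetic difference is in (d), where you check every degree directly while the paper computes only the degree $d-e$ part and invokes ``saturating downwards''; the content is identical.
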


\begin{proof}
  To prove (a), let $D \in R_k$. Since $R_{d-e} \times \cR_{d-e} \to \K$ is
  non-degenerate, it follows for any $E \in R_{e-k}$ that $ED(f) = 0$ if and
  only if $E'ED(f) = 0$ for all $E' \in R_{d-e}$. Therefore, $ED(f) = 0$ for
  all $E \in R_{e-k}$ if and only if $E''D(f) = 0$ for all $E'' \in R_{d-k}$,
  which is equivalent to $D(f)=0$ since $R_{d-k} \times \cR_{d-k} \to \K$ is
  non-degenerate. Thus
  \begin{displaymath}
    \ann_R (f)_k = \{ D \in R_k \suchthat R_{e-k} \cdot D \subseteq \ann_R
    (f)_e \},
  \end{displaymath}
  i.e. $\ann_R (f)_k$ is determined by $\ann_R (f)_e$ by ``saturation''.

  The first part of (b) follows immediately from the exact sequence
  \begin{displaymath}
    0 \to \ann_R (f)_e \to R_e \to R_e (f) \to 0.
  \end{displaymath}
  Since $R_e (f) \subseteq \cR_{d-e}$, it follows from (a) that
  \begin{align*}
    R_e (f)^\perp & = \{ D \in R_{d-e} \suchthat D(Ef) = 0 \text{ for all } E
    \in R_e \} \\
    & = \{ D \in R_{d-e} \suchthat D(f) = 0 \} = \ann_R (f)_{d-e}.
  \end{align*}
  And (c) follows from (b) by taking dimensions of the two equalities. Note
  that
  \begin{align*}
    \isect_{D \in R_e} \ann_R (Df)_{d-e} & = \{ E \in R_{d-e} \suchthat
    E(Df) = 0 \text{ for all } D \in R_e \} \\
    & = R_e (f)^\perp = \ann_R (f)_{d-e}.
  \end{align*}
  Now (d) follows by ``saturating downwards'' due to (a). (Obviously, it is
  enough to use a basis for $R_e$ in the intersection.)
\end{proof}

% We see that $E \in \isect_{D\in R_e} \ann_R (Df)$ if and only if $DE(f) = 0$
% for all $D \in R_e$, which is equivalent to $E(f) \in \cR_{e-1}$. So either
% $\deg E > d-e$ or $E(f)=0$, which is (d). We note that it is enough to use a
% basis for $R_e$ in the intersection.

Let $f \in \cR_d$. The \emph{Hilbert function} $H_f = H (R/ \ann_R f)$ of $R/
\ann_R (f)$ computes the dimensions of the graded components of $R/ \ann_R
(f)$, i.e.
\begin{displaymath}
  H_f (e) = \dim_\K (R/ \ann_R f)_e \text{ for all } e \ge 0.
\end{displaymath}
Note that (c) implies that the Hilbert function of $R/ \ann_R (f)$ is
symmetric about $d/2$. Since $H_f (e) = 0$ for all $e > d$, we will often
abuse notation and write $H_f = (h_0, \dots, h_d)$ where $h_e = H_f (e)$.
Written this way, $H_f$ is sometimes called the \emph{$h$-vector} of $R/
\ann_R f$.

A finitely generated $\K$-algebra $A$ is \emph{Artinian} if and only if it has
finite dimension as a $\K$-vector space. Let $I \subseteq R$ be a homogeneous
ideal. Then $A = R/I$ is Artinian if and only if $I_e = R_e$ for all $e \gg
0$. Its \emph{socle} is defined by $\Socle (R/I) = (0:\m_R)$, i.e. $\Socle
(R/I) = \dsum_{e \ge 0} \Socle_e (R/I)$ where $\Socle_e (R/I) = \{ D \in R_e
\suchthat \p_i D \in I_{e+1} \text{ for all } i = 1, \dots, r \} / I_e$.
Furthermore, $\Hom_\K (-,\K)$ is a \emph{dualizing functor} for $A$, hence its
\emph{canonical module} is
\begin{displaymath}
  \omega_A = \Hom_\K (A, \K) = \dsum_{e \ge 0} \Hom_\K (A_e, \K).
\end{displaymath}
$A$ is called \emph{Gorenstein} if $\omega_A \iso A$ (up to a twist). By
\cite[proposition 21.5]{Eis}, $A = R/I$ is Gorenstein if and only if its socle
is simple, i.e. $\dim_\K \Socle (R/I) = 1$. By \cite[proposition 21.16]{Eis}
this is equivalent to the minimal free resolution of $A$ being self-dual.

% The socle of a local Artinian ring $A$ is defined to be $\Socle A = (0:\m) =
% \{ a \in A \suchthat a\m = 0 \}$.

% Gorenstein: $\omega_A \iso A(d)$, i.e. the twist is the socle degree.

\begin{lem}[Macaulay] \label{lem:mac}
  There is a one-to-one correspondence between graded Artinian Gorenstein
  quotients $R/I$ having socle degree $d$, and non-zero poly\-nomials $f \in
  \cR_d$ up to a scalar multiplum. The correspondence is given by $I = \ann_R
  f$ and $\langle f \rangle = (I_d)^\perp$.
\end{lem}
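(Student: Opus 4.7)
The plan is to prove both directions of the correspondence and check that the two assignments $f \mapsto \ann_R f$ and $R/I \mapsto f \in (I_d)^\perp$ are mutually inverse. I would build on Lemma~\ref{lem:ann}, which already furnishes the two tools I need: the saturation property in part (a), and the symmetry $\dim_\K (R/\ann_R f)_e = \dim_\K (R/\ann_R f)_{d-e}$ in part (c).

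First, given nonzero $f \in \cR_d$, I would verify that $R/\ann_R f$ is a graded Artinian Gorenstein quotient with socle in degree $d$. Artinianness is immediate: since $f$ has degree $d$, the contraction $R_e \to \cR_{d-e}$ is the zero map for $e > d$, hence $(\ann_R f)_e = R_e$ for $e > d$. To pin down the socle, note that $\bar D \in (R/\ann_R f)_e$ lies in the socle iff $\p_i(Df) = 0$ for all $i$, iff $Df \in \cR_{d-e}$ is killed by $\m_R$. Since $\m_R$ acts on $\cR$ by lowering degree, a nonzero homogeneous form is killed by every $\p_i$ only when it is a constant, forcing $d-e = 0$. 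So $\Socle(R/\ann_R f) = (R/\ann_R f)_d$, which by Lemma~\ref{lem:ann}(c) has dimension $\dim_\K (R/\ann_R f)_0 = 1$; this gives the Gorenstein property together with socle degree $d$.

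For the reverse direction, given $R/I$ graded Artinian Gorenstein with socle degree $d$, I would use the simple socle to get $\dim_\K (R/I)_d = 1$, so that $I_d^\perp \subseteq \cR_d$ is one-dimensional by the formula $\dim_\K I_d + \dim_\K I_d^\perp = \dim_\K R_d$. Pick any nonzero $f$ spanning $I_d^\perp$; then $I_d = \langle f \rangle^\perp = (\ann_R f)_d$, and $f$ is determined up to scalar. The main obstacle is then matching $I$ and $\ann_R f$ in every degree. For $e > d$ both equal $R_e$ by Artinianness. For $e < d$, Lemma~\ref{lem:ann}(a) gives $(\ann_R f)_e = \{D \in R_e : R_{d-e} D \subseteq I_d\}$, and the inclusion $I_e \subseteq \{D : R_{d-e} D \subseteq I_d\}$ is trivial, so the crux is showing that any $\bar D \in (R/I)_e$ with $R_{d-e}\bar D = 0$ must vanish when $e < d$. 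Here I would consider the finite-dimensional graded submodule $R\bar D \subseteq R/I$: if nonzero, its top nonzero graded component is annihilated by $\m_R$ and therefore lies in $\Socle(R/I) = (R/I)_d$, forcing that top degree to equal $d$ and hence $R_{d-e}\bar D \neq 0$, a contradiction. This is the step where the Gorenstein hypothesis is essential.

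Finally, to see that the two constructions are mutually inverse, starting from $f$ Lemma~\ref{lem:ann}(b) gives $(\ann_R f)_d^\perp = \langle f \rangle^{\perp\perp} = \langle f \rangle$, so $f$ is recovered up to scalar; starting from $I$, the degree-by-degree equality above recovers $I$ itself. Uniqueness of $f$ up to scalar is built in, since $I_d^\perp$ is one-dimensional, completing the bijection.
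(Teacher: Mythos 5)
Your proof is correct. Note that the paper does not actually prove this lemma at all: it simply cites \cite[Theorem 21.6 and Exercise 21.7]{Eis}, \cite[Lemma 2.14]{IK}, and Macaulay's original treatment via inverse systems. So what you have supplied is a genuinely self-contained argument where the paper defers to the literature, and it hangs together. The forward direction correctly reduces the Gorenstein property to $\dim_\K \Socle(R/\ann_R f) = \dim_\K (R/\ann_R f)_d = \dim_\K (R/\ann_R f)_0 = 1$ via Lemma~\ref{lem:ann}(c) and the non-degeneracy of the contraction pairing (which forces any nonzero $Df$ of positive degree to have a nonvanishing first partial). The backward direction correctly identifies the crux --- that $I$ is ``saturated'' in degrees below $d$, i.e.\ $R_{d-e}\bar D = 0$ in $(R/I)_d$ forces $\bar D = 0$ --- and your argument that the top nonzero graded piece of the cyclic submodule $R\bar D$ is killed by $\m_R$, hence lies in the simple socle concentrated in degree $d$, is exactly the right use of the Gorenstein hypothesis; combined with Lemma~\ref{lem:ann}(a) applied to $\ann_R f$ this gives $I_e = (\ann_R f)_e$ for all $e$. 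The mutual-inverse check via $V^{\perp\perp} = V$ is also fine. The one presentational gain of your route over the citation is that it makes visible precisely which ingredients are needed (duality of finite-dimensional graded pieces plus simplicity and placement of the socle), at the cost of about a page that the paper chose not to spend.
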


\begin{proof}
  See \cite[Theorem 21.6 and Exercise 21.7]{Eis} or \cite[Lemma 2.14]{IK}.
  Macaulay's original proof in \cite[chapter IV]{Mac} uses \emph{inverse
    systems}.
\end{proof}

Note that it is customary to call $\ann_R (f)$ a \emph{Gorenstein ideal} since
the quotient $R/\ann_R f$ is Gorenstein. We conclude these preliminaries with
the following fundamental lemma. It expresses the effect of dualizing ($V
\mapsto V^\perp$) an inclusion $U \subseteq V$ in terms of annihilator ideals.

\begin{lem}[Apolarity] \label{lem:apolar}
  Let $f_1, \dots, f_n, g_1, \dots, g_m \in \cR_d$ be forms of the same degree
  $d$. Then the following statements are equivalent:
  \begin{enumerate}
    \setlength{\itemsep}{2pt}
    \setlength{\parskip}{0pt}
    \renewcommand{\theenumi}{\alph{enumi}}
    \renewcommand{\labelenumi}{\normalfont(\theenumi)}
% Originally: \item $f \in \langle g_1, \dots, g_s \rangle$
  \item $\langle f_1, \dots, f_n \rangle \subseteq \langle g_1, \dots, g_m
    \rangle$
  \item $\isect_{i=1}^n \ann_R (f_i) \supseteq \isect_{i=1}^m \ann_R (g_i)$
  \item $\isect_{i=1}^n \ann_R (f_i)_d \supseteq \isect_{i=1}^m \ann_R
    (g_i)_d$
  \end{enumerate}
\end{lem}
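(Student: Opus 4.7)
The plan is to run the implications in the cycle (a) $\implies$ (b) $\implies$ (c) $\implies$ (a), leaning on the orthogonality material already developed in the excerpt. The first two implications are essentially formal; the content lies in the last one, which is where $V^{\perp\perp}=V$ gets used.

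For (a) $\implies$ (b), I would observe that annihilation by an operator $D \in R$ is $\K$-linear in its argument: if $f_i = \sum_j c_{ij} g_j$ and $D(g_j)=0$ for every $j$, then $D(f_i)=0$ for every $i$. Hence every $D \in \isect_j \ann_R(g_j)$ lies in every $\ann_R(f_i)$, and so in their intersection. The implication (b) $\implies$ (c) is immediate by restricting to degree $d$.

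The crux is (c) $\implies$ (a). Set $V = \langle f_1, \dots, f_n \rangle$ and $W = \langle g_1, \dots, g_m \rangle$, both viewed as subspaces of $\cR_d$. The key identification is that
\begin{displaymath}
  V^\perp \;=\; \isect_{i=1}^n \ann_R(f_i)_d, \qquad W^\perp \;=\; \isect_{j=1}^m \ann_R(g_j)_d,
\end{displaymath}
since $\ann_R(f_i)_d = \langle f_i \rangle^\perp$ by the remark following definition \ref{def:ann}, and orthogonality of a subspace equals the intersection of the orthogonals of its spanning vectors. Thus hypothesis (c) reads $V^\perp \supseteq W^\perp$. Applying $(-)^\perp$ once more and invoking the equality $V^{\perp\perp} = V$ established earlier in the preliminaries (which itself follows from non-degeneracy of the pairing $R_d \times \cR_d \to \K$), we conclude $V = V^{\perp\perp} \subseteq W^{\perp\perp} = W$, which is exactly (a).

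I do not expect any real obstacle: the only subtlety is to remember that $V^{\perp\perp} = V$ requires the non-degeneracy of the pairing in degree $d$, which has already been verified in the excerpt, and to notice that the intersection of the $\ann_R(f_i)_d$ is exactly the orthogonal of the span $V$. Once these two identifications are in hand, the equivalence is a one-line application of double-orthogonality.
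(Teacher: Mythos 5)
Your proposal is correct and follows the same cycle (a) $\implies$ (b) $\implies$ (c) $\implies$ (a) as the paper, using linearity of the action for the first step and the identification $\langle f_1,\dots,f_n\rangle^\perp = \isect_i \ann_R(f_i)_d$ together with $V^{\perp\perp}=V$ for the last. No issues.
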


\begin{proof}
  (a) just says that all $f_i$ can be written as $f_i = \sum_{j=1}^m c_{ij}
  g_j$ for suitable $c_{ij} \in \K$. So if $D \in R$ annihilates all $g_j$, it
  necessarily kills all $f_i$, which proves (a) $\implies$ (b). (b) $\implies$
  (c) is trivial, and (c) $\implies$ (a) follows from $V^{\perp\perp} = V$ and
  \begin{displaymath}
    \langle f_1, \dots, f_n \rangle ^\perp = \{ D \in R_d \suchthat D(f_i) = 0
    \:\forall\: i \} = \isect_{i=1}^n \ann_R (f_i)_d.\qedhere
  \end{displaymath}
\end{proof}

\begin{rem}
  What is more often called the \emph{apolarity lemma}, for example
  \cite[Lemma 1.15]{IK}, follows from lemma \ref{lem:apolar} by letting $n=1$
  and $g_i = l_{p_i}^\pd d$, $l_{p_i} = \sum_j p_{ij} x_j$, with the
  additional observation that $D (l_{p_i}^\pd d) = D(p_i) l_{p_i}^\pd {d-e}$
  for all $D \in R_e$.
\end{rem}

%% \input{defmf}
%% Chapter 2: Finding additive splittings (def of M_f)
\chapter{Additive splitting}

\section{What is an additive splitting?}

We would like to say that a polynomial like $f = x_1^\pd 2 x_2^\pd 2 + x_3^\pd
4$ \emph{splits} since it is a sum of two polynomials, $x_1^\pd 2 x_2^\pd 2$
and $x_3^\pd 4$, that do not share any variable. Of course, we want to allow a
change of variables. Therefore, we need to make the idea of ``polynomials in
separate variables'' more precise.

\begin{defn} \label{def:separate}
  Let $g_1, \dots, g_n \in \cR$ be homogeneous polynomials, and for all $i$
  let $d_i = \deg g_i$. We say that $g_1, \dots, g_n$ are polynomials in
  (linearly) \emph{independent sets of variables} if
  \begin{displaymath}
    R_{d_i-1} (g_i) \isect \biggl( \sum_{j \ne i} R_{d_j-1} (g_j) \biggr) = 0
  \end{displaymath}
  as subspaces of $\cR_1$ for all $i = 1, \dots, n$.
\end{defn}

\begin{rem} \label{rem:native}
  Let $f \in \cR_d$. It is natural to say that $R_{d-1} (f)$ contains the
  ``native'' variables of $f$ for the following reason. If $V \subseteq \cR_1$
  is a $\K$-vector subspace, denote by $\K [V]^{DP}$ the $\K$-subalgebra of
  $\cR$ generated by $V$. If $v_1, \dots, v_n$ is any basis for $V$, then $\K
  [V]^{DP} = \K [v_1, \dots, v_n]^{DP}$. In particular, $\K [V]^{DP}_0 = \K$
  and $\K [V]^{DP}_1 = V$. For all $\delta \in R_{d-1} (f)^\perp \subseteq
  R_1$ and all $D \in R_{d-1}$, it follows that $D \delta f \in \delta
  (R_{d-1} (f)) = 0$. Hence $\delta f = 0$ for all $\delta \in R_{d-1}
  (f)^\perp$, and therefore
  \begin{displaymath}
    f \in \K [R_{d-1} (f)]^{DP}.
  \end{displaymath}
  Thus definition \ref{def:separate} simply requires that the sets of native
  variables of $g_1, \dots, g_n$ are linearly independent, that is, if
  $\sum_{i=1}^n c_i v_i = 0$ for some $v_i \in R_{d_i-1} (g_i)$ and $c_i \in
  \K$, then $c_i = 0$ for all $i$.
\end{rem}

\begin{rem} \label{rem:separate}
  We note that definition \ref{def:separate} implies that
  \begin{displaymath}
    R_{d_i-e} (g_i) \isect \biggl( \sum_{j \ne i} R_{d_j-e} (g_j) \biggr) = 0
  \end{displaymath}
  for all $i = 1, \dots, n$ and all $e>0$. Indeed, if $h \in R_{d_i-e} (g_i)
  \isect \bigl( \sum_{j \ne i} R_{d_j-e} (g_j) \bigr)$, then $D(h) \in
  R_{d_i-1} (g_i) \isect \bigl( \sum_{j \ne i} R_{d_j-1} (g_j) \bigr) = 0$ for
  all $D \in R_{e-1}$, hence $h=0$.
\end{rem}

\begin{defn} \label{def:regsplit}
  Let $f \in \cR_d$. We say that $f$ \emph{splits regularly} $n-1$ times if
  $f$ is a sum of $n$ non-zero forms of degree $d$ in independent sets of
  variables. That is, if there exist non-zero $g_1, \dots, g_n \in \cR_d$ such
  that
  \begin{displaymath}
    f = g_1 + \dots + g_n,
  \end{displaymath}
  and for all $i$, $R_{d-1} (g_i) \isect \bigl( \sum_{j \ne i} R_{d-1} (g_j)
  \bigr) = 0$ as subspaces of $\cR_1$. In this situation, we call the $g_i$'s
  \emph{additive components} of $f$, and we say that the expression $f = g_1 +
  \dots + g_n$ is a \emph{regular splitting of length $n$}.
\end{defn}

% We may say that $f$ has length $n$ if it has a splitting of length $n$,
% i.e. $n$ additive components.

Clearly, this concept is uninteresting for $d=1$. For $d=2$ and $\chr \K \ne
2$ it is well known that any $f \in \cR_2$ can be written as a sum of $n =
\rank (\p\p^\T f)$ squares. (When $\chr \K = 2$ it is in general only a limit
of a sum of $n$ squares). Consequently, we will concentrate on $d \ge 3$.

\begin{exmp}
  Let $\chr \K \ne 2$ and $f = x^\pd 3 + x y^\pd 2 \in \K [x,y]^{DP}$. Then
  \begin{displaymath}
    f = \tfrac{1}{2} \bigl( (x+y)^\pd 3 + (x-y)^\pd 3 \bigr)
  \end{displaymath}
  is a regular splitting of $f$ of length 2. Indeed, $R_2 \bigl( (x+y)^\pd 3
  \bigr) = \langle x+y \rangle$ and $R_2 \bigl( (x-y)^\pd 3 \bigr) = \langle
  x-y \rangle$, and their intersection is zero.
\end{exmp}

\begin{rem} \label{rem:regsplit}
  When $f$ splits regularly, it is possible to separate the variables of its
  components by a suitable ``rectifying'' automorphism. More precisely, $f \in
  \cR_d$ splits regularly $n-1$ times if and only if there exists $\J_1,
  \dots, \J_n \subseteq \{1, \dots, r \}$ such that $\J_i \isect \J_j =
  \varnothing$ for all $i \ne j$, a graded automorphism $\phi : \cR \to \cR$
  and nonzero polynomials $h_i \in \cS^i_d$ where $\cS^i = \K [ \{ x_j
  \suchthat j \in \J_i \} ] \subseteq \cR$, such that $\phi (f) = h_1 + \dots
  + h_n$.
  
  To prove this, assume that $f = g_1 + \dots + g_n$ is a regular splitting of
  $f$. By definition, $R_{d-1} (g_i) \isect \bigl( \sum_{j \ne i} R_{d-1}
  (g_j) \bigr) = 0$ for all $i$. This simply means that $R_{d-1} (g_1), \dots,
  R_{d-1} (g_n)$ are linearly independent subspaces of $\cR_1$, that is, if
  $\sum_{i=1}^n c_i v_i = 0$ for some $v_i \in R_{d-1} (g_i)$ and $c_i \in
  \K$, then $c_i = 0$ for all $i$. Let $s_i = \dim_\K R_{d-1} (g_i)$. Then in
  particular, $\sum_{i=1}^n s_i = \dim_\K \bigl( \sum_{i=1}^n R_{d-1} (g_i)
  \bigr) \le r$. Hence we may choose $\J_1, \dots \J_n \subseteq \{ 1, \dots,
  r \}$ such that $| \J_i | = s_i$ and $\J_i \isect \J_j = \varnothing$ for
  all $i \ne j$. Now, choose a graded automorphism $\phi : \cR \to \cR$ such
  that $\{ \phi^{-1} (x_j) \suchthat j \in \J_i \}$ is a basis for $R_{d-1}
  (g_i)$ for all $i$, and let $h_i = \phi (g_i) \in \cR_d$ and $\cS^i = \K [
  \{ x_j \suchthat j \in \J_i \} ]^{DP}$. Obviously, $h_i \ne 0$ and $\phi (f)
  = \sum_i \phi (g_i) = \sum_i h_i$. Thus we only have to prove that $h_i \in
  \cS^i$ for all $i$. We note that
  \begin{displaymath}
    \cS^i_1 = \phi ( R_{d-1} (g_i) ) = (\phi R_{d-1}) (\phi g_i) = R_{d-1}
    (h_i).
  \end{displaymath}
  Therefore, for all $j \notin \J_i$ and $D \in R_{d-1}$, we have $\p_j D
  (h_i) \in \p_j (\cS^i_1) = 0$. This implies that $\p_j h_i = 0$ for all $i$
  and $j \notin \J_i$, and we are done.

% cf. remark \ref{rem:native}

  For the converse, we immediately get $f = \sum_{i=1}^n g_i$ with $g_i =
  \phi^{-1} (h_i)$. Note that $R_{d-1} (g_i) = \phi^{-1} ( R_{d-1} (h_i) )$.
  Since $R_{d-1} (h_i) \subseteq \cS^i_1$, and $\cS^1_1, \dots, \cS^n_1$
  obviously are linearly independent subspaces of $\cR_1$, so are $R_{d-1}
  (g_1), \dots, R_{d-1} (g_n)$. Thus $f = \sum_{i=1}^n g_i$ is a regular
  splitting.
\end{rem}

We will also investigate how the regular splitting property specializes. For
this purpose we give the following definition.

\begin{defn}
  \label{def:limsplit}
  Let $f \in \cR_d$. We say that $f$ has a \emph{degenerate splitting of
    length $m$} if there for some $n \ge 1$ exists an $f_t \in \cR_d [t_1,
  \dots, t_n]$ such that $f_0 = f$ and $f_t$ splits regularly $m-1$ times
%% CHANGED 2013-07-13: endret [\p_1, \dots, \p_r]_d til [x_1, \dots, x_r]^{DP}_d
  inside $\cR_d (t_1, \dots, t_n) = \K (t_1, \dots, t_n) [x_1, \dots,
  x_r]^{DP}_d$.
\end{defn}

% Is it better to write: $f_t$ splits over $\K (t_1, \dots, t_n)$ ? It would
% be better to allow splittings over $\overline{\K (t_1, \dots, t_n)}$, or
% even more general $T$ like $\kappa (\K [t_1, \dots, t_n]/P)$.

% $n=0$ is the case of regular splittings.

\begin{exmp}
  Let $f = x^\pd {d-1} y \in \K [x,y]^{DP}$, $d \ge 3$. Clearly $\ann_R f =
  (\p_y^2, \p_x^d)$. If $f$ splits regularly, then it must be in the $\GL_2$
  orbit of $g = x^\pd d + y^\pd d$, and this implies that $\ann_R g$ is in the
  $\GL_2$ orbit of $\ann_R f$. But $\ann_R (f)_2 = \langle \p_y^2 \rangle$ and
  $\ann_R (g)_2 = \langle \p_x \p_y \rangle$, hence this is impossible.

  Still, even though $f$ does not split regularly, $f$ has a degenerate
  splitting. For example, $f$ is a specialization of
  \begin{displaymath}
    f_t = \tfrac{1}{t} \bigl[ (x + t y)^\pd d - x^\pd d \bigr] = x^\pd {d-1} y
    + t x^\pd {d-2} y^\pd 2 + \dots,
  \end{displaymath}
  and $f_t$ splits inside $\K (t) [x,y]^{DP}$.
\end{exmp}

\section{The associated algebra $M_f$} \label{section:Mf}

The starting point of this section is the definition of a regular splitting.
We will see how this naturally leads to the definition of a $\K$-vector space
$M_f$ associated to $f \in \cR_d$. $M_f$ consists of $r \times r$-matrices,
and we prove that $M_f$ is closed under multiplication when $d = \deg f \ge
3$. We start with a fundamental observation.

\begin{lem} \label{lem:basic}
  Let $f = g_1 + \dots + g_n$ be a regular splitting of some $f \in \cR_d$.
  Then
  \begin{displaymath}
    \ann_R (f)_e = \isect_{i=1}^n \ann_R (g_i)_e \text{ for all } e<d.
  \end{displaymath}
\end{lem}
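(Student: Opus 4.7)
The inclusion $\supseteq$ is the trivial one: if $D$ annihilates each $g_i$ then $D(f)=\sum_i D(g_i)=0$. So the content of the lemma is the reverse inclusion, and my plan is to exploit that the subspaces $R_e(g_i)\subseteq\cR_{d-e}$ remain linearly independent in this lower degree, which is where the hypothesis $e<d$ enters.

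First I would take $D\in\ann_R(f)_e$ and rewrite $D(f)=0$ as
\begin{displaymath}
  D(g_1) + \dots + D(g_n) = 0,
\end{displaymath}
noting that each term $D(g_i)$ lies in the subspace $R_e(g_i)\subseteq\cR_{d-e}$. So if I can show that the subspaces $R_e(g_1),\dots,R_e(g_n)$ of $\cR_{d-e}$ are linearly independent (i.e.\ their sum is direct), then each summand $D(g_i)$ must vanish, giving $D\in\isect_i\ann_R(g_i)_e$ as desired.

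The linear independence is exactly what Remark~\ref{rem:separate} supplies, once one feeds it the right index. The definition of a regular splitting imposes the condition $R_{d-1}(g_i)\isect\sum_{j\ne i}R_{d-1}(g_j)=0$, and Remark~\ref{rem:separate} strengthens this to
\begin{displaymath}
  R_{d-e'}(g_i)\isect\Bigl(\sum_{j\ne i}R_{d-e'}(g_j)\Bigr)=0
  \qquad\text{for all } e'>0.
\end{displaymath}
Applying this with $e'=d-e$, which is positive precisely because $e<d$, gives $R_e(g_i)\isect\sum_{j\ne i}R_e(g_j)=0$ for all $i$; this is the required direct-sum decomposition inside $\cR_{d-e}$.

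I do not expect any serious obstacle here: the entire proof is essentially one bookkeeping move (the index substitution $e'=d-e$) followed by the general fact that pairwise trivial intersections with the sum of the others give directness of the sum. The only place where the hypothesis $e<d$ is used is to ensure $d-e>0$, so that Remark~\ref{rem:separate} applies; in the borderline case $e=d$ one would be comparing subspaces of $\cR_0=\K$, where all nonzero subspaces coincide and the conclusion genuinely fails.
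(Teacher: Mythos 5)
Your proof is correct and follows the same route as the paper: the easy inclusion from apolarity, then for $D\in\ann_R(f)_e$ the identity $\sum_i D(g_i)=0$ combined with Remark~\ref{rem:separate} (applied exactly with the substitution you describe, which is where $e<d$ enters) forces each $D(g_i)=0$. Nothing to add.
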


% It would be enough to prove that $\ann(f)_{d-1} \subseteq \ann(g_i)_{d-1}$.
% Lemma \ref{lem:ann} gives the rest by ``saturation''.

\begin{proof}
  Let $e<d$. Lemma \ref{lem:apolar} gives the inclusion $\ann_R (f)_e
  \supseteq \isect_{i=1}^n \ann_R (g_i)_e$, so we only need to prove that
  $\ann_R (f)_e \subseteq \ann_R (g_i)_e$ for all $i$. To do this, let $D \in
  \ann_R (f)_e$. Applying $D$ to $f = \sum_{i=1}^n g_i$ gives $D(g_1) + \dots
  + D(g_n) = 0$. Since $D(g_1), \dots, D(g_n)$ are homogeneous polynomials of
  positive degree in separate rings, this implies $D(g_i) = 0$ for all $i$.
  Indeed, $D(g_i) = - \sum_{j \ne i} D(g_j)$ is an element of both $R_e (g_i)$
  and $\sum_{j \ne i} R_e (g_j)$, and since their intersection is zero by
  remark \ref{rem:separate}, it follows that $D(g_i) = 0$. This proves that
  $\ann_R (f)_e \subseteq \ann_R (g_i)_e$ for all $i$ and all $e<d$, and we
  are done.
\end{proof}

At first sight, one might think that there exist additional regular splittings
of a polynomial $f \in \cR_d$ if we allow ``dummy'' variables, i.e. if $\ann_R
(f)_1 \ne 0$. But it is not so when $d \ge 2$, as we prove next. For this
reason, we may freely assume $\ann_R (f)_1 = 0$ when studying regular
splittings.

\begin{cor} \label{cor:basic}
  Let $d \ge 2$ and $f \in \cR_d$. Every regular splitting of $f$ takes place
  inside the subring $\K [ R_{d-1} (f) ]^{DP} \subseteq \cR$.
\end{cor}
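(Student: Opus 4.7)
The plan is to reduce the claim, via Remark \ref{rem:native}, to showing that $R_{d-1}(g_i) \subseteq R_{d-1}(f)$ for each additive component $g_i$ of a regular splitting $f = g_1 + \dots + g_n$. Since Remark \ref{rem:native} gives $g_i \in \K[R_{d-1}(g_i)]^{DP}$, once we know $R_{d-1}(g_i) \subseteq R_{d-1}(f)$ we immediately conclude $g_i \in \K[R_{d-1}(f)]^{DP}$, and summing yields the corollary.

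To establish the inclusion, I would actually prove the stronger equality $R_{d-1}(f) = \sum_{i=1}^n R_{d-1}(g_i)$ (which, by definition \ref{def:regsplit}, is a direct sum). The one inclusion $R_{d-1}(f) \subseteq \sum_i R_{d-1}(g_i)$ is immediate from $D(f) = \sum_i D(g_i)$ for $D \in R_{d-1}$. For the reverse inclusion, pass to orthogonals in $R_1$. By lemma \ref{lem:ann}(b), $R_{d-1}(f)^\perp = \ann_R(f)_1$ and $R_{d-1}(g_i)^\perp = \ann_R(g_i)_1$. Since $d \ge 2$, we have $e = 1 < d$, so lemma \ref{lem:basic} gives
\begin{displaymath}
  \ann_R(f)_1 = \isect_{i=1}^n \ann_R(g_i)_1.
\end{displaymath}
Taking orthogonals and using $V^{\perp\perp} = V$ together with the standard fact that $\perp$ turns intersections of subspaces into sums of their orthogonals, we obtain
\begin{displaymath}
  R_{d-1}(f) = \ann_R(f)_1^\perp = \biggl( \isect_{i=1}^n \ann_R(g_i)_1 \biggr)^{\!\perp} = \sum_{i=1}^n \ann_R(g_i)_1^\perp = \sum_{i=1}^n R_{d-1}(g_i).
\end{displaymath}
In particular $R_{d-1}(g_i) \subseteq R_{d-1}(f)$ for every $i$, which is what we needed.

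There is no real obstacle here beyond correctly invoking lemma \ref{lem:basic} at $e=1$; the only mildly delicate point is the duality step converting an intersection of annihilators into a sum of contraction spaces, but this is entirely formal given the non-degenerate pairing $R_1 \times \cR_{d-1} \to \cR_{d-2}$ (or equivalently the identity $V^{\perp\perp}=V$ established earlier in the chapter). No hypothesis on $\ann_R(f)_1$ is needed, which is why the corollary also justifies the remark preceding it that one may assume $\ann_R(f)_1 = 0$ without loss of generality: the splitting simply lives in the smaller divided power algebra $\K[R_{d-1}(f)]^{DP}$ on $\dim_\K R_{d-1}(f) = r - \dim_\K \ann_R(f)_1$ variables.
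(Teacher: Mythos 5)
Your proof is correct and follows essentially the same route as the paper's: reduce via remark \ref{rem:native} to showing $R_{d-1}(g_i) \subseteq R_{d-1}(f)$, obtain this from lemma \ref{lem:basic} in degree $1$, and dualize. The paper only dualizes the single inclusion $\ann_R(f)_1 \subseteq \ann_R(g_i)_1$ (via lemma \ref{lem:apolar}) rather than the full equality, so you prove marginally more than needed (the equality $R_{d-1}(f) = \sum_i R_{d-1}(g_i)$ is recorded separately in remark \ref{rem:basic}); note also that the relevant non-degenerate pairing for your orthogonality step is $R_1 \times \cR_1 \to \K$, not $R_1 \times \cR_{d-1} \to \cR_{d-2}$, though your appeal to $V^{\perp\perp}=V$ is the correct justification anyway.
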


\begin{proof}
  Let $f = g_1 + \dots + g_n$ be a regular splitting of $f$. By remark
  \ref{rem:native}, $g_i \in \K [ R_{d-1} (g_i) ]^{DP}$. Lemma \ref{lem:basic}
  tells us that $\ann_R (f)_1 \subseteq \ann_R (g_i)_1$, and by duality (lemma
  \ref{lem:apolar}) we get $R_{d-1} (g_i) \subseteq R_{d-1} (f)$. Thus each
  additive component is an element of $\K [ R_{d-1} (f) ]^{DP}$.
\end{proof}

% Put differently, if $f = g_1 + \dots + g_n$ is any splitting of $f$ with $d
% \ge 2$, then in particular $\ann (f)_1 \subseteq \ann (g_i)_1$ for all $i$.
% If $\dim_\K \ann_R (f)_1 = s$, then after a base change we may assume that
% $f \in \cS := \K [ x_1, \dots, x_{r-s} ]$, and any splitting of $f$ inside
% $\cR$ actually happens inside $\cS$.

\begin{rem} \label{rem:basic}
  Let $f = g_1 + \dots + g_n$ be a regular splitting of $f \in \cR_d$. Lemma
  \ref{lem:basic} tells us that $\ann_R (f)_e = \isect_{i=1}^n \ann_R (g_i)_e$
  for all $e<d$. Using duality, this is equivalent to $R_{d-e} (f) =
  \sum_{i=1}^n R_{d-e} (g_i)$ for all $e<d$. In particular, we have $R_{d-1}
  (f) = R_{d-1} (g_1) + \dots + R_{d-1} (g_n)$ when $d \ge 2$.

  Let $\cS = \K [ R_{d-1} (f) ]^{DP}$ and $\cS^i = \K [ R_{d-1} (g_i) ]^{DP}$
  for $i = 1, \dots, n$. Since $R_{d-1} (g_i) \isect \bigl( \sum_{j \ne i}
  R_{d-1} (g_j) \bigr) = 0$ and $\sum_i R_{d-1} (g_i) = R_{d-1} (f)$, we get
  \begin{displaymath}
    \cS^1 \otimes_k \dots \otimes_k \cS^n = \cS \subseteq \cR.
  \end{displaymath}
  Obviously, $f \in \cS^1_d \dsum \dots \dsum \cS^n_d$. Hence we have another
  characterization of a regular splitting: An $f \in \cR_d$ splits regularly
  $n-1$ times if and only if there exist non-zero $\K$-vector subspaces $V_1,
  \dots, V_n \subseteq \cR_1$ such that $V_i \isect \bigl( \sum_{j \ne i} V_j
  \bigr) = 0$ for all $i$ and $\sum_{i=1}^n V_i = R_{d-1} (f)$, and $f \in
  \cS^1_d \dsum \dots \dsum \cS^n_d$ where $\cS^i = \K [ V_i ]^{DP}$.
\end{rem}

% Just different focus: on $\{ \cS^i \}$ instead of $\{ g_i \}$.
% The requirement above that $\sum_{i=1}^n V_i = R_{d-1} (f)$ implies that
% every component $f|_{\cS^i}$ is non-zero.

By lemma \ref{lem:basic}, if we want to split an $f \in \cR_d$, we have to
look for $g \in \cR_d$ such that $\ann (f)_e \subseteq \ann (g)_e$ for all
$e<d$. The next lemma investigates this relationship. Recall that $\p$ denotes
the column vector $\p = [\p_1, \dots, \p_r]^\T$, thus $\p f = [\p_1 f, \dots,
\p_r f]^\T$.

\begin{lem} \label{lem:main1}
  Given $f,g \in \cR_d$, the following are equivalent:
  \begin{enumerate}
    \setlength{\itemsep}{2pt}
    \setlength{\parskip}{0pt}
    \renewcommand{\theenumi}{\alph{enumi}}
    \renewcommand{\labelenumi}{\normalfont(\theenumi)}
  \item $\ann (f)_e \subseteq \ann (g)_e$ for all $e<d$,
  \item $\ann (f)_{d-1} \subseteq \ann (g)_{d-1}$,
  \item there exists a matrix $A \in \Mat_\K (r,r)$ such that $\p g = A \p f$,
  \item $R_1 \cdot \ann (f)_{d-1} \subseteq \ann (g)_d$,
  \item $\m \cdot \ann (f) \subseteq \ann (g)$.
  \end{enumerate}
\end{lem}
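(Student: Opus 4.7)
The plan is to establish the five equivalences through short pairwise arguments, each a direct application of Lemma~\ref{lem:ann} (saturation and the duality $R_e(h)^\perp = \ann(h)_{d-e}$) together with the non-degeneracy of the pairings $R_k \times \cR_k \to \K$. I would organize them around (b): prove (a) $\iff$ (b), (b) $\iff$ (c), (b) $\iff$ (d), and separately (a) $\iff$ (e).

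For (a) $\iff$ (b), Lemma~\ref{lem:ann}(a) says $\ann(h)_k$ is determined by $\ann(h)_{d-1}$ via saturation whenever $k \le d-1$, so the degree $d-1$ inclusion propagates down through every degree $e < d$, and conversely is implied by them. For (b) $\iff$ (c), I would reformulate the matrix equation $\p g = A \p f$ as the inclusion $R_1(g) \subseteq R_1(f)$ of subspaces of $\cR_{d-1}$; then taking orthogonals inside $R_{d-1}$ and invoking $R_1(h)^\perp = \ann(h)_{d-1}$ from Lemma~\ref{lem:ann}(b) converts it into (b). For (b) $\iff$ (d), the point is simply that for $D \in \ann(f)_{d-1}$, $Dg \in \cR_1$, and the condition that $(\p_i D)g = \p_i(Dg) = 0$ for every $i$ is equivalent (by non-degeneracy of $R_1 \times \cR_1 \to \K$) to $Dg = 0$.

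For (a) $\iff$ (e), the forward direction splits by degree: for $D \in \ann(f)_e$ with $e < d$, (a) gives $Dg = 0$ and hence $\p_i D \in \ann(g)$, while for $e \ge d$ every element of $R_{e+1}$ automatically annihilates $g \in \cR_d$. Conversely, given (e), iterating $\m \cdot \ann(g) \subseteq \ann(g)$ (which holds because $\ann(g)$ is an ideal) yields $R_j \cdot \ann(f) \subseteq \ann(g)$ for every $j \ge 1$; applied with $j = d-k$ to $D \in \ann(f)_k$, $k < d$, this produces $R_{d-k} \cdot D \subseteq \ann(g)_d$, and saturation (Lemma~\ref{lem:ann}(a) applied to $g$) concludes $D \in \ann(g)_k$. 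I do not foresee any serious obstacle; the lemma is essentially a translation exercise in the apparatus developed in Section~1.2, with only minor care needed in handling degrees $e \ge d$ inside the (a) $\iff$ (e) step.
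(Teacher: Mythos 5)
Your proof is correct and follows essentially the same route as the paper: the same chain of pairwise equivalences, each reduced to Lemma~\ref{lem:ann} (saturation and the duality $R_e(h)^\perp=\ann(h)_{d-e}$) and the non-degeneracy of the pairings. The only cosmetic differences are that you link (e) to (a) rather than to (d), and for (b)$\iff$(c) you apply the orthogonality of Lemma~\ref{lem:ann}(b) directly where the paper routes through Lemma~\ref{lem:ann}(d) and Lemma~\ref{lem:apolar}; both are the same underlying duality.
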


\begin{proof}
  (a) $\Leftrightarrow$ (b) is immediate by lemma \ref{lem:ann}. The same
  lemma also tells us that $\isect_{i=1}^r \ann (\p_i f) = \ann (f) + R_d$,
  which means that (b) just says that
  \begin{displaymath}
    \isect_{i=1}^r \ann (\p_i f)_{d-1} \subseteq \isect_{i=1}^r \ann (\p_i
    g)_{d-1}.
  \end{displaymath}
  By lemma \ref{lem:apolar}, this is equivalent to $\langle \p_1 g, \dots \p_r
  g \rangle \subseteq \langle \p_1 f, \dots \p_r f \rangle$, and (c) just
  expresses this in vector form. (b) $\Leftrightarrow$ (d) since $R_1^{-1}
  \ann_R (g)_d = \ann_R (g)_{d-1}$, again by lemma \ref{lem:ann}a. Finally,
  lemma \ref{lem:ann}a also shows that (d) $\Leftrightarrow$ (e), since $(\m
  \cdot \ann (f))_e = \sum_k \m_k \cdot \ann (f)_{e-k} = R_1 \cdot \ann
  (f)_{e-1}$.
\end{proof}

% Note that (c) is equivalent to the corresponding relationship between the
% catalecticant matrices of $f$ and $g$, $\Cat_{d-1,1} (g) = A \Cat_{d-1,1}
% (f)$.

Let $f \in \cR_d$. Both the previous lemma and the next lemma study the
equation $\p g = A \p f$. In the previous we gave equivalent conditions on $g
\in \cR_d$ for $A \in \Mat_\K (r,r)$ to exist. The next lemma tells us when
%% CHANGED 2013-07-13: is -> if
$g$ exists given $A$. Recall that if $B$ is any matrix, then $I_k (B)$ denotes
the ideal generated by all $k \times k$-minors of $B$.

\begin{lem} \label{lem:main2}
  Let $f \in \cR_d$ and $A \in \Mat_\K (r,r)$. The following are equivalent:
  \begin{enumerate}
    \setlength{\itemsep}{2pt}
    \setlength{\parskip}{0pt}
    \renewcommand{\theenumi}{\alph{enumi}}
    \renewcommand{\labelenumi}{\normalfont(\theenumi)}
  \item There exists $g \in \cR_d$ such that $\p g = A \p f$,
  \item $A \p\p^\T (f)$ is a symmetric matrix,
  \item $I_2 (\p\:A\p) \subseteq \ann f$.
  \end{enumerate}
  Furthermore, if $d>0$, then a $g \in \cR_d$ satisfying $\p g = A \p f$ is
  necessarily unique.
\end{lem}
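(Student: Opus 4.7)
The plan is to show (a) $\implies$ (b), then (b) $\iff$ (c), and finally (b) $\implies$ (a), with uniqueness handled at the end. The substance is in the third implication.

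For (a) $\implies$ (b), I would just differentiate the identity $\p g = A\p f$ once more: componentwise, $\p_j \p_i g = \sum_k A_{ik} \p_j \p_k f = (A \p \p^\T f)_{ij}$, so $A \p \p^\T f = \p \p^\T g$, and the latter is symmetric because partial derivatives commute. For (b) $\iff$ (c), the $2 \times 2$ minor of $(\p \: A\p)$ on rows $i,j$ is the differential operator $\p_i (A\p)_j - \p_j (A\p)_i$; applying it to $f$ gives $\sum_k A_{jk} \p_i \p_k f - \sum_k A_{ik} \p_j \p_k f = (A \p \p^\T f)_{ji} - (A \p \p^\T f)_{ij}$. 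Since $\ann f$ is an ideal, $I_2 (\p \: A\p) \subseteq \ann f$ iff these generators annihilate $f$, which is exactly the symmetry of $A \p \p^\T f$.

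The main step is (b) $\implies$ (a). Set $h_i := (A\p f)_i \in \cR_{d-1}$; I want $g \in \cR_d$ with $\p_i g = h_i$ for every $i$. Translating (b) into a statement about the $h_i$, the symmetry of $A \p \p^\T f$ becomes the integrability condition $\p_j h_i = \p_i h_j$ for all $i,j$. I would then construct $g$ coefficient by coefficient in the divided power basis. Expand $h_i = \sum_{|\beta| = d-1} b^i_\beta \, x^\pd \beta$ and seek $g = \sum_{|\alpha| = d} c_\alpha \, x^\pd \alpha$; the condition $\p_i g = h_i$ forces $c_\alpha = b^i_{\alpha - \epsilon_i}$ whenever $\alpha_i \ge 1$ (with $\epsilon_i$ the $i$th standard basis vector of $\bN_0^r$). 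Reading the integrability condition $\p_j h_i = \p_i h_j$ at the coefficient level yields $b^i_{\gamma + \epsilon_j} = b^j_{\gamma + \epsilon_i}$ for every $\gamma$ with $|\gamma| = d-2$; applied to $\gamma = \alpha - \epsilon_i - \epsilon_j$ this reads $b^i_{\alpha - \epsilon_i} = b^j_{\alpha - \epsilon_j}$ whenever $\alpha_i, \alpha_j \ge 1$, so $c_\alpha$ is independent of the chosen $i$ and the prescription is well-defined. Setting $g := \sum_\alpha c_\alpha \, x^\pd \alpha$ then satisfies $\p g = A\p f$ by construction. This is the main obstacle; note that the shortcut $g = \tfrac{1}{d} \sum_k x_k h_k$ via Euler's identity would require $d$ to be invertible in $\K$, so the divided-power coefficient argument is what secures the statement in arbitrary characteristic.

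Finally, uniqueness for $d > 0$: if $g, g' \in \cR_d$ both satisfy $\p g = A\p f$, their difference $w := g - g'$ has $\p_i w = 0$ for every $i$. Writing $w = \sum_{|\alpha| = d} c_\alpha \, x^\pd \alpha$, every $\alpha$ with $|\alpha| = d \ge 1$ has some $\alpha_i \ge 1$, and $c_\alpha$ appears as the coefficient of $x^\pd{\alpha - \epsilon_i}$ in $\p_i w$; hence $c_\alpha = 0$ for all $\alpha$, and $w = 0$.
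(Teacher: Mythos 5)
Your proof is correct and follows the same route as the paper: both reduce (a) $\iff$ (b) to the integrability criterion that a tuple $\{h_i\}$ lifts to a $g$ with $\p_i g = h_i$ iff $\p_j h_i = \p_i h_j$, and both identify the $2 \times 2$ minors of $(\p\:A\p)$ applied to $f$ with the entries of $A\p\p^\T f - (A\p\p^\T f)^\T$. The only difference is that the paper cites the lifting criterion as well known, whereas you prove it directly on divided-power coefficients --- a worthwhile addition, since it makes explicit that the argument works in arbitrary characteristic without appealing to Euler's identity.
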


\begin{proof}
  It is well known that a set $\{g_i\}_{i=1}^r$ can be lifted to a $g$ such
  that $\p_i g = g_i$ if and only if $\p_j g_i = \p_i g_j$ for all $i,j$. This
  condition simply says that $\p [g_1, \dots, g_r]$ is a symmetric matrix.
  Let $g_i = (A \p f)_i$, that is, $g_i$ is the \te{i} coordinate of the
  column vector $A \p f$. Then the existence of $g$ is equivalent to $A
  \p\p^\T f$ being a symmetric matrix. Thus (a) $\Leftrightarrow$ (b).
  
  Since $(A \p\p^\T)^\T = \p\p^\T A^\T$, it follows that $A \p\p^\T (f)$ is
  symmetric if and only if $(A \p\p^\T - \p\p^\T A^\T) (f)=0$. Thus (b) $\iff$
  (c), since the \te{(i,j)} entry of the matrix $(A\p)\p^\T - \p(A\p)^\T$ is
  $(A\p)_i \p_j - \p_i (A\p)_j$, the $2 \times 2$ minor of the $2 \times r$
  matrix $(\p\:A\p)$ corresponding to the \te{i} and \te{j} row (up to sign).
  The last statement is trivial.
\end{proof}

% In other words: $I_1 (A \p\p^\T - \p\p^\T A^\T) = I_2 (\p\:A\p)$.

Note that the $2 \times 2$ minors of $(\p\:A\p)$ are elements of $R_2$, so (c)
is really a condition on $\ann (f)_2$. Combining lemma \ref{lem:basic} with
lemmas \ref{lem:main1} and \ref{lem:main2}, we see that a regular splitting $f
= g_1 + \dots, g_n$ implies the existence of matrices $A$ satisfying $I_2
(\p\: A\p) \subseteq \ann_R f$. These matrices will in fact enable us to find
both regular and degenerate splittings. Thus we are naturally lead to the
following definition.

\begin{defn} \label{def:Mf}
  Given $f \in \cR_d$, define
  \begin{displaymath}
    M_f = \{ A \in \Mat_\K (r,r) \suchthat I_2 (\p\: A\p) \subseteq \ann_R f
    \}.
  \end{displaymath}
\end{defn}

\begin{exmp} \label{ex:r2d3a} % r=2, d=3, part a
  The notation $I_2 (\p\:A\p)$ might be confusing, so we will consider an
  example with $r=2$. Let $\cR = \K [x,y]^{DP}$ and $f = x^\pd 3 + xy^\pd 2
  \in \cR_3$. A quick calculation of the partials of $f$ proves that $\ann_R f
  = ( \p_x^2 - \p_y^2, \p_y^3 )$. We will show that the $2 \times 2$ matrix
  \begin{math}
    A = \bigl(
    \begin{smallmatrix}
      0 & 1 \\ 1 & 0
    \end{smallmatrix}
    \bigr)
  \end{math}
  belongs to $M_f$. Obviously,
  \begin{displaymath}
    A \p =
    \begin{pmatrix}
      0 & 1 \\ 1 & 0
    \end{pmatrix}
    \cdot
    \begin{pmatrix}
      \p_x \\ \p_y
    \end{pmatrix}
    =
    \begin{pmatrix}
      \p_y \\ \p_x
    \end{pmatrix}\!.
  \end{displaymath}
  The matrix $(\p\:A\p)$ has $\p$ as its first column and $A\p$ as its second
  column, so
  \begin{displaymath}
    (\p\:A\p) =
      \begin{pmatrix}
        \p_x & \p_y \\
        \p_y & \p_x
      \end{pmatrix}\!.
  \end{displaymath}
  Its only $2 \times 2$ minor is its determinant, $D = \p_x^2 - \p_y^2$, and
  since $D \in \ann_R f$, it follows by definition that $A \in M_f$.

  Let us determine $M_f$. We need to find all matrices
  \begin{math}
    B = \bigl(
    \begin{smallmatrix}
      a & b \\ c & d
    \end{smallmatrix}
    \bigr)
  \end{math}
  such that $I_2 (\p\:B\p) \subseteq \ann_R f$. Since
  \begin{displaymath}
    (\p\:B\p) =
      \begin{pmatrix}
        \p_x & a\p_x + b\p_y \\
        \p_y & c\p_x + d\p_y
      \end{pmatrix}\!,
  \end{displaymath}
  we get $I_2 (\p\:B\p) = \bigl( c \p_x^2 + (d-a) \p_x \p_y - b \p_y^2
  \bigr)$. Hence $\p_x^2 - \p_y^2$ must divide $c \p_x^2 + (d-a) \p_x \p_y - b
  \p_y^2$, which is equivalent to $a=d$ and $b=c$. Therefore, $M_f$ consists
  of all matrices $B$ with $a=d$ and $b=c$, that is, $M_f = \langle I,A
  \rangle$.
\end{exmp}

Almost everything that we are going to study in this paper is connected to
$M_f$. One goal is to find out what $M_f$ can tell us about $f$. Before we can
do this, we need investigate what properties $M_f$ itself possesses. We will
in particular show that $M_f$ is closed under matrix multiplication when $\deg
f \ge 3$. Obviously, $d \le 1$ implies $\ann_R (f)_2 = R_2$, and therefore
$M_f = \Mat_\K (r,r)$. The case $d=2$ is different, and not all of our results
will apply to this case. We start with another definition.

\begin{defn} \label{def:gamma_f}
  Suppose $d>0$ and $f \in \cR_d$. Define a map
  \begin{displaymath}
    \gamma_f : M_f \to \cR_d
  \end{displaymath}
  by sending $A \in M_f$ to the unique $g \in \cR_d$ satisfying $\p g = A \p
  f$, cf. lemma \ref{lem:main2}.
\end{defn}

Note that $\p \gamma_f(A) = A \p f$ by definition. If $\chr \K \nmid d$, then
the Euler identity ($x^\T \p f = d \mspace{1mu} f$) implies that $\gamma_f (A)
= \tfrac{1}{d} x^\T A \p f$. By lemmas \ref{lem:basic} and \ref{lem:main1},
the image of $\gamma_f$ contains in particular all additive components of $f$.
We will in chapter \ref{chapter:regular} see how to extract the regular
splitting properties of $f$ from $M_f$ explicitly.

\begin{lem} \label{lem:Mf}
  Let $d>0$ and $f \in \cR_d$, $f \ne 0$. Let $\beta_{1e}$ be the minimal
  number of generators of $\ann_R (f)$ of degree $e$.
  \begin{enumerate}
    \setlength{\itemsep}{2pt}
    \setlength{\parskip}{0pt}
    \renewcommand{\theenumi}{\alph{enumi}}
    \renewcommand{\labelenumi}{\normalfont(\theenumi)}
  \item $M_f$ is a $\K$-vector space containing the identity matrix $I$.
  \item $\gamma_f : M_f \to \cR_d$ is $\K$-linear.
% \item
%   \begin{math}
%     \ker \gamma_f = \{ A \in M_f \suchthat A \p f = 0 \} = \{ A \in \Mat_\K
%     (r,r) \suchthat (A \p)_i \in \ann_R (f)_1 \:\forall\: i \}.
%   \end{math}
  \item $\dim_\K \ker \gamma_f = r \cdot \beta_{11}$ and $\dim_\K \im \gamma_f
    = 1 + \beta_{1d}$.
  \item $\dim_\K M_f = 1 + \beta_{1d} + r \cdot \beta_{11}$.
  \end{enumerate}
\end{lem}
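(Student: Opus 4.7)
The plan is to deduce parts (a) and (b) directly from the reformulations of $M_f$ in lemma \ref{lem:main2}, to compute $\dim\ker\gamma_f$ and $\dim\im\gamma_f$ separately using lemma \ref{lem:main1}, and finally to obtain (d) by rank--nullity.

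For (a), note that $I$ lies in $M_f$ because $(\p\:I\p) = (\p\:\p)$ has identical columns, so every $2\times 2$ minor vanishes identically. For closure under $\K$-linear combinations, the characterization $A \in M_f \iff (A\p\p^\T - \p\p^\T A^\T)(f) = 0$ from lemma \ref{lem:main2}(b) exhibits $M_f$ as the kernel of a $\K$-linear map from $\Mat_\K(r,r)$ to $\Mat_\cR(r,r)$. Part (b) is then immediate: if $\gamma_f(A) = g$ and $\gamma_f(B) = h$, then $(A + cB)\p f = A\p f + c B\p f = \p(g + ch)$, and the uniqueness statement in lemma \ref{lem:main2} forces $\gamma_f(A+cB) = g + ch$.

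For the kernel in (c), $A \in \ker\gamma_f$ iff $A\p f = 0$, which means every row of $A$ (viewed as a linear form in $R_1$) lies in $\ann_R(f)_1$; such a matrix automatically lies in $M_f$ because each minor $\p_i(A\p)_j - \p_j(A\p)_i$ kills $f$ (both $(A\p)_i(f)$ and $(A\p)_j(f)$ vanish). Thus $\ker\gamma_f \iso \ann_R(f)_1^{\oplus r}$. Since $f \ne 0$ gives $\ann_R(f)_0 = 0$, we have $\beta_{11} = \dim_\K \ann_R(f)_1$, so $\dim_\K\ker\gamma_f = r \cdot \beta_{11}$.

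For the image, lemma \ref{lem:main1}(d) identifies $\im\gamma_f$ with the orthogonal $(R_1 \cdot \ann_R(f)_{d-1})^\perp$ inside $\cR_d$, giving $\dim_\K \im\gamma_f = \dim_\K R_d - \dim_\K(R_1 \cdot \ann_R(f)_{d-1})$. Since $(\m \cdot \ann_R(f))_d = R_1 \cdot \ann_R(f)_{d-1}$, the quotient $\ann_R(f)_d / R_1 \cdot \ann_R(f)_{d-1}$ is precisely the space of minimal degree-$d$ generators of $\ann_R(f)$, of dimension $\beta_{1d}$; and $\dim_\K R_d - \dim_\K \ann_R(f)_d = H_f(d) = H_f(0) = 1$ by lemma \ref{lem:ann}(c) (together with $f \ne 0$). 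Combining yields $\dim_\K \im\gamma_f = 1 + \beta_{1d}$, after which (d) follows from rank--nullity applied to $\gamma_f$. The only real subtlety is this last dimension count, which interleaves the description of $\beta_{1d}$ as $\dim_\K \bigl(\ann_R(f)_d / (\m \cdot \ann_R(f))_d\bigr)$ with the Gorenstein top-degree equality $H_f(d) = 1$.
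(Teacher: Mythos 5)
Your proof is correct and follows essentially the same route as the paper's: linearity of $M_f$ and $\gamma_f$ from the relation $\p\gamma_f(A)=A\p f$, identification of $\ker\gamma_f$ with $\ann_R(f)_1^{\oplus r}$, and identification of $\im\gamma_f$ with $(R_1\cdot\ann_R(f)_{d-1})^\perp$ whose codimension computation splits into the Gorenstein equality $\dim_\K(R/\ann_R f)_d=1$ plus $\beta_{1d}$. The only difference is cosmetic (you verify explicitly that $A\p f=0$ forces $A\in M_f$ via the minors, which the paper asserts without detail).
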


\begin{proof}
  Obviously, $I \in M_f$, so $M_f$ is nonempty. And since the determinant is
  linear in each column, it follows that $M_f$ is a $\K$-vector space.
  Alternatively, let $A,B \in M_f$. Since $\p \gamma_f (A) = A \p f$, it
  follows for any $a,b \in \K$ that
  \begin{displaymath}
    \p \bigl( a \gamma_f (A) + b \gamma_f (B) \bigr) = a \p \gamma_f (A) + b
    \p \gamma_f (B) = (aA+bB) \p f.
  \end{displaymath}
  This implies that $aA+bB \in M_f$ for all $a,b \in \K$, which proves (a),
  and furthermore that $\gamma_f (aA+bB) = a \gamma_f (A) + b \gamma_f (B)$,
  thus $\gamma_f$ is $\K$-linear.
  
  Of course, $\gamma_f (A) = 0$ if and only if $A \p f = 0$. For any $A \in
  \Mat_\K (r,r)$, the equation $A \p f = 0$ implies that $A \in M_f$, hence
  the kernel of $\gamma_f$ consists of all such $A$. Recall that $(A\p)_i$
  denotes the \te{i} coordinate of the column vector $A\p$, that is, $(A\p)_i
  = a_i^\T \p$ where $a_i^\T$ is the \te{i} row of $A$. Thus
  \begin{displaymath}
    \ker \gamma_f = \{ A \in M_f \suchthat A \p f = 0 \} = \{ A \in \Mat_\K
    (r,r) \suchthat (A \p)_i \in \ann_R (f)_1 \:\forall\: i \},
  \end{displaymath}
  and therefore $\dim_\K \ker \gamma_f = r \cdot \dim_\K \ann (f)_1 = r
  \beta_{11}$.

  Furthermore, by lemma \ref{lem:main1}, the image of $\gamma_f$ are precisely
  those $g \in \cR_d$ that satisfy $R_1 \cdot \ann (f)_{d-1} \subseteq \ann
  (g)_d$, which is equivalent to $\langle g \rangle \subseteq (R_1 \cdot \ann
  (f)_{d-1})^\perp$ by lemma \ref{lem:apolar}. Since $\dim_\K (R/ \ann (f))_d
  = 1$, and $R_1 \cdot \ann (f)_{d-1}$ is a subspace of $\ann (f)_d$ of
  codimension $\dim_\K (\ann (f)_d / R_1 \cdot \ann (f)_{d-1}) = \beta_{1d}$,
  it follows that $\dim_\K \im \gamma_f = \codim_\K (\m \cdot \ann (f))_d = 1
  + \beta_{1d}$. This finishes part (c). (d) follows immediately.
\end{proof}

% $\im \gamma_f = \{ \gamma_f (A) \suchthat A \in M_f \} = \{ g \in \cR_d
% \suchthat \p g = A \p f \text{ for some } A \}$.

% Must at least have $\K$ infinite in the next remark. And it should be true
% also for $d=3$, but I have yet no proof of it.

\begin{rem}
  We would like to point out that $M_f$ is ``large'' only for special $f$. In
  fact, when $\K = \bar{\K}$ and $d \ge 4$, a general $f \in \cR_d$ will
  satisfy $\beta_{11} = \beta_{1d} = 0$ (see for example \cite[Proposition
  3.12]{IK}), which implies $M_f = \langle I \rangle$. In particular, $M_f =
  M_g$ does not say very much by itself.
\end{rem}

\begin{exmp} \label{ex:r2d3b}
  Let us reconsider example \ref{ex:r2d3a}. Since $\ann_R f = ( \p_x^2 -
  \p_y^2, \p_y^3 )$, we see that $\beta_{11} = 0$ and $\beta_{13} = 1$. Lemma
  \ref{lem:Mf} implies that $\dim_\K M_f = 1 + 1 = 2$. As before,
  \begin{math}
    A = \bigl(
    \begin{smallmatrix}
      0 & 1 \\ 1 & 0
    \end{smallmatrix}
    \bigr) \in M_f
  \end{math}
  since $I_2 (\p\: A\p) = ( \p_x^2 - \p_y^2 ) \subseteq \ann_R f$. It follows
  that $M_f = \langle I,A \rangle$.

  Let us also determine $\im \gamma_f$. Letting $g = x^\pd 2 y + y^\pd 3 \in
  \cR_3$, we see that
  \begin{displaymath}
    \p g =
    \begin{pmatrix}
      xy \\ x^\pd 2 + y^\pd 2
    \end{pmatrix}
    = A \p f.
  \end{displaymath}
  Thus $A \in M_f$ and $\gamma_f (A) = g$. Obviously, $\gamma_f (I) = f$,
  hence $\im \gamma_f = \langle f,g \rangle$. This image consists of all $h
  \in \cR_3$ such that $\ann_R (f)_2 \subseteq \ann_R (h)_2$. Thus another way
  to compute $\im \gamma_f$ is $(R_1 \cdot \ann_R (f)_2)^\perp = \langle
  \p_x^3 - \p_x \p_y^2, \p_x^2 \p_y - \p_y^3 \rangle^\perp = \langle f,g
  \rangle$.
\end{exmp}

\begin{rem} \label{rem:compute}
  Before we move on, we would like to point out that there are several ways to
  compute $M_f$. One is to use the definition directly and find all $A \in
  \Mat_\K (r,r)$ such that every $2 \times 2$ minor of $(\p\: A\p)$ is
  contained in $\ann_R (f)_2$. This can be effective when $\ann_R (f)_2$ is
  simple enough, as in example \ref{ex:r2d3a}. In particular, if $\dim_\K
  \ann_R (f)_2 < r-1$, then $M_f = \langle I \rangle$. Another direct approach
  is to solve the system of linear equations that is contained in the
  statement ``$A \p\p^\T f$ is symmetric''. We will do this when we prove
  proposition \ref{prop:counterex}.
  
  Alternatively, we can find $\dim_\K M_f$ by computing $\ann_R f$ and
  counting the number of generators of degree $d$, and then explicitly find
  the correct number of linearly independent matrices $A$ satisfying $I_2
  (\p\: A\p) \subseteq \ann_R (f)$. In fact, most examples in this paper are
  constructed by first choosing $M \subseteq \Mat_\K (r,r)$ and then finding
  $f \in \cR$ such that $M \subseteq M_f$. Having done so, if we thereafter
  are able to show that $\ann_R f$ has no generators of degree 1 and $\dim_\K
  M - 1$ generators of degree $d$, then it follows that $M_f = M$.
\end{rem}

Note in particular that the $M_f$ in example \ref{ex:r2d3b} is closed under
matrix multiplication. This is in fact always true when $\deg f \ge 3$. We
will now prove this important and a bit surprising fact about $M_f$.

\begin{prop} \label{prop:Mf}
  Let $d \ge 3$ and $f \in \cR_d$. $M_f$ is a $\K$-algebra, and all
  commutators belong to $\ker \gamma_f$. In particular, $M_f$ is commutative
  if $\ann (f)_1 = 0$.
\end{prop}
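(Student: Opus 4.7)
The plan is to recast membership in terms of the Hessian: by Lemma~\ref{lem:main2}(b), writing $H = \p\p^\T f$, we have $A \in M_f$ iff $AH = HA^\T$. Both assertions will be extracted from this reformulation together with the saturation content of Lemma~\ref{lem:main1}.

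For closure under multiplication, take $A, B \in M_f$ and set $h = \gamma_f (B)$, so $\p h = B \p f$. Lemma~\ref{lem:main1} then yields $\ann_R (f)_e \subseteq \ann_R (h)_e$ for every $e < d$; since $d \ge 3$, this inclusion holds in degree $2$ in particular. The defining condition $I_2 (\p\: A\p) \subseteq \ann_R (f)_2$ therefore upgrades to $A \in M_h$, so Lemma~\ref{lem:main2} produces $k \in \cR_d$ with $\p k = A \p h = AB \p f$. Applying Lemma~\ref{lem:main2} once more, now for the matrix $AB$ acting on $f$, gives $AB \in M_f$ with $\gamma_f (AB) = k$.

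To show $[A,B] \in \ker \gamma_f$, I would chain the symmetry identities $AH = HA^\T$ and $BH = HB^\T$:
\begin{displaymath}
  (AB) H = A(BH) = A(HB^\T) = (AH) B^\T = (HA^\T) B^\T = H (BA)^\T.
\end{displaymath}
Since $AB \in M_f$ by the previous paragraph, $(AB)H$ is itself symmetric and so also equals $H(AB)^\T$. Comparing the two expressions for $(AB)H$ gives $H[(AB)^\T - (BA)^\T] = 0$, which transposes (using $H = H^\T$) to $CH = 0$ for $C = [A,B]$. The identity $(CH)_{ij} = \p_j (C \p f)_i$ now says every entry of the vector $C \p f \in \cR_{d-1}^r$ has vanishing gradient; since $d - 1 \ge 2$, any such polynomial of positive degree must be zero. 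Hence $C \p f = 0$, i.e., $\gamma_f ([A,B]) = 0$. The final assertion is immediate: when $\ann_R (f)_1 = 0$, Lemma~\ref{lem:Mf}(c) gives $\ker \gamma_f = 0$, forcing $[A,B] = 0$.

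The crux is the middle step. Closure $AB \in M_f$ does not a priori produce any commutativity information; what makes the argument work is that the symmetry of $(AB)H$ was established by pushing $A$ and $B$ past $H$ in the "wrong" order, yielding $H(BA)^\T$ rather than $H(AB)^\T$. The resulting discrepancy with the transpose is exactly the commutator vanishing. Once this is in hand, the polynomial-is-determined-by-its-gradient argument in positive degree finishes the job.
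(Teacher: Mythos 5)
Your proof is correct and follows essentially the same route as the paper: the same two-step use of Lemmas \ref{lem:main1} and \ref{lem:main2} to get $AB \in M_f$, and the same chaining of the symmetry identities for $AH$, $BH$, and $(AB)H$ to conclude $(AB-BA)\p\p^\T f = 0$, hence $(AB-BA)\p f = 0$. (Your remark that $d-1\ge 2$ is needed for the gradient argument is slightly stronger than necessary — a homogeneous form of any positive degree with vanishing gradient is zero — but this is harmless since $d\ge 3$.)
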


\begin{proof}
  We use lemmas \ref{lem:main1} and \ref{lem:main2} several times. Let $A,B
  \in M_f$. Since $B \in M_f$, there exists $g \in \cR_d$ such that $\p g = B
  \p f$. Now $I_2 (\p\: A\p) \subseteq R \ann (f)_2$, and $\ann (f)_2
  \subseteq \ann (g)_2$ since $d \ge 3$. Hence $A \in M_g$, and there exists
  $h \in \cR_d$ such that $\p h = A \p g$. Then $\p h = AB \p f$, thus $AB \in
  M_f$. Furthermore, since $A \p\p^\T (f)$, $B \p\p^\T (f)$ and $AB \p\p^\T
  (f)$ are all symmetric, we get
  \begin{displaymath}
    AB \p\p^\T (f) = \p\p^\T (f) (AB)^\T = \p\p^\T (f) B^\T A^\T = B \p\p^\T
    (f) A^\T = BA \p\p^\T (f).
  \end{displaymath}
  Hence $(AB-BA) \p\p^\T f = 0$. Note that $C \p\p^\T f = 0 \iff (C\p)_i \p_j
  f = 0$ for all $i,j \iff C \p f = 0$. Thus $(AB-BA) \p f = 0$, and therefore
  $\gamma_f (AB-BA) = 0$. If $\ann (f)_1 = 0$, then it follows that $AB=BA$.
\end{proof}

\begin{rem}
  When $d \ge 3$ it also follows for all $A,B \in M_f$ that
  \begin{displaymath}
    A\p\p^\T(f)B^\T = AB\p\p^\T(f) = \p\p^\T(f)B^\T A^\T = B\p\p^\T(f)A^\T.
  \end{displaymath}
  Thus $(A\p)(B\p)^\T (f)$ is symmetric, which implies that $I_2 (A\p \: B\p)
  \subseteq \ann f$, cf. lemma \ref{lem:main2}.
\end{rem}

\begin{exmp} \label{ex:A3=0}
  Let $r=3$, $d \ge 3$ and $f = x_1^\pd {d-1} x_3 + x_1^\pd {d-2} x_2^\pd 2$.
  First, let us determine $\ann_R f$. Clearly, $\ann_R (f)_1 = 0$, and a
  straightforward computation shows that $\ann_R (f)_2 = \langle \p_3^2, \p_2
  \p_3, \p_1 \p_3 - \p_2^2 \rangle$. We note that these polynomials are the
  maximal minors of
  \begin{displaymath}
    \begin{pmatrix}
      \p_1 & \p_2 & \p_3 \\
      \p_2 & \p_3 & 0
    \end{pmatrix}\!.
  \end{displaymath}
  
  By Hilbert-Burch the ideal $J = R \ann_R (f)_2$ defines a scheme of length 3
  in $\bP^2$. Indeed, $\p_2^3 = \p_1 (\p_2 \p_3) - \p_2 (\p_1 \p_3 - \p_2^2)
  \in J$, and this implies for every $e \ge 2$ that $(R/J)_e$ is spanned by
  (the images of) $\p_1^e$, $\p_1^{e-1} \p_2$ and $\p_1^{e-2} \p_2^2$. Since
  $\p_1^e (f)$, $\p_1^{e-1} \p_2 (f)$ and $\p_1^{e-2} \p_2^2 (f)$ are linearly
  independent for all $2 \le e < d$, it follows that $\dim_\K (R/J)_e = 3$ for
  all $e>1$, and that $\ann_R (f)_e = J_e$ for all $1<e<d$. Thus $\ann_R f$
  needs exactly two generators of degree $d$, and we get
  \begin{displaymath}
    \ann_R f = (\p_3^2, \p_2 \p_3, \p_1 \p_3 - \p_2^2, \p_1^{d-1} \p_2,
    \p_1^d).
  \end{displaymath}

  Let
  \begin{displaymath}
    A = \left(
      \begin{smallmatrix}
        0 & 1 & 0 \\
        0 & 0 & 1 \\
        0 & 0 & 0
      \end{smallmatrix}
    \right).
  \end{displaymath}
  We have just seen that $\ann_R (f)_2$ is generated by the $2 \times 2$
  minors of $(\p\: A\p)$, hence $A \in M_f$. Because $M_f$ is closed under
  multiplication, we also have $A^2 \in M_f$. By looking at $\ann_R f$, we see
  that $\beta_{11} = 0$ and $\beta_{1d} = 2$. Thus $\dim_\K M_f = 3$, and it
  follows that $M_f = \langle I, A, A^2 \rangle$.
\end{exmp}

\begin{rem}
  The ``formula'' for the annihilator ideal $\ann_R f$ in example
  \ref{ex:A3=0} is true even for $d=2$. In this case $\ann_R f$ has five
  generators of degree 2, thus $M_f$ will be 6-dimensional. In fact, since in
  this case
  \begin{displaymath}
    \p\p^\T f = \left(
      \begin{smallmatrix}
        0 & 0 & 1 \\
        0 & 1 & 0 \\
        1 & 0 & 0
      \end{smallmatrix}
    \right)\!,
  \end{displaymath}
  it follows that $M_f$ consists of all matrices that are symmetric about the
  anti-diagonal. Thus $M_f$ is no longer closed under multiplication.
\end{rem}

\begin{rem}
  With $A$ as in example \ref{ex:A3=0}, it is easy to determine all $g \in
  \cR_d$ such that $A \in M_g$. Indeed, if $I_2 (\p\: A\p) \subseteq \ann_R g$
  for some $g \in \cR_d$, then $\ann_R (g)_e \supseteq \ann_R (f)_e$ for all
  $e<d$ since the $2 \times 2$ minors of $(\p\: A\p)$ are the only generators
  of $\ann_R f$ of degree less than $d$. It follows that
  \begin{displaymath}
    \{ g \in \cR_d \suchthat A \in M_g \} = \im \gamma_f = \{ a f + b x_1^\pd
    {d-1} x_2 + c x_1^\pd d \suchthat a,b,c \in \K \}.
  \end{displaymath}
  If in addition $\ann_R (g)_1 = 0$, then $a \ne 0$, implying that $g$ is in
  the $\GL_3$ orbit of $f$ ($\chr \K \nmid d$).
\end{rem}

%\begin{que}
%  Which subalgebras of $\Mat_\K (r,r)$ arise as $M_f$ for different $f \in
%  \cR_d$?
%\end{que}

One natural question to ask is the following:
\begin{quote}
  Which subalgebras of $\Mat_\K (r,r)$ arise as $M_f$ for different $f \in
  \cR_d$?
\end{quote}
We have not been able to determine this in general, but we will in the
remainder of this chapter point out some restrictions on $M_f$. We start with
the following result, which holds even for $d<3$.

% We have not found a simple characterization of which algebras that arise as
% $M_f$ for different $f \in \cR_d$, $d \ge 3$. Nevertheless, there are some
% things we can say.

\begin{prop} \label{prop:Mexpo}
  Suppose $d \ge 0$ and $f \in \cR_d$. Let $A,B \in \Mat_\K (r,r)$ and $C \in
  M_f$. Assume that $AC, BC \in M_f$ and $BAC = ABC$. Then $A^i B^j C \in M_f$
  for all $i,j \ge 0$. In particular, $M_f$ is always closed under
  exponentiation.
\end{prop}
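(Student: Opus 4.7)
My approach is to convert the membership condition into a matrix-symmetry condition using Lemma~\ref{lem:main2}(b): $X \in M_f$ if and only if $X\p\p^\T(f)$ is symmetric. Write $H = \p\p^\T(f)$; note $H^\T = H$. Since $C \in M_f$, the matrix $V = CH$ is symmetric, so $V = HC^\T$. The assumption $AC \in M_f$ then translates to $AV = ACH = H(AC)^\T = VA^\T$; similarly $BV = VB^\T$. Iterating these ``intertwining'' relations gives $A^iV = V(A^\T)^i$ and $B^jV = V(B^\T)^j$ for all $i,j \ge 0$. Finally, $BAC = ABC$ says $(AB-BA)C = 0$, and multiplying on the right by $H$ yields $(AB-BA)V = 0$.

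Next I reduce the goal $A^iB^jC \in M_f$ to a commutator identity. We need $A^iB^jV$ to be symmetric. Using the intertwining relations, $A^iB^jV = V(A^\T)^i(B^\T)^j$, whereas $(A^iB^jV)^\T = B^jA^iV$ (using $V^\T = V$ and moving the transpose through). Hence symmetry of $A^iB^jV$ is equivalent to the commutator vanishing
\[
  [A^i, B^j]\,V = 0.
\]

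The main step is to prove $[A^i, B^j]V = 0$ for all $i,j \ge 0$ by induction on $i+j$. The base case $[A,B]V = 0$ was established above. The inductive step relies on the Leibniz-type identities
\[
  [A^i,B^j] = A[A^{i-1},B^j] + [A,B^j]A^{i-1}, \qquad [A,B^j] = B[A,B^{j-1}] + [A,B]B^{j-1},
\]
applied to $V$. The key trick is that the intertwining relations let one pull a trailing $A^{i-1}$ or $B^{j-1}$ past $V$, converting it into $(A^\T)^{i-1}$ or $(B^\T)^{j-1}$ acting from the right; the commutator factor then stands immediately against $V$ and vanishes by the inductive hypothesis, leaving $0$ on both summands. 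I expect the main obstacle to be simply bookkeeping --- verifying that these two Leibniz identities, together with the two intertwinings, really do reduce every $[A^i,B^j]V$ to strictly smaller cases already handled (one must split into cases $i \ge 2$ and $i = 1$ to keep the indices dropping in $i+j$).

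The ``in particular'' claim that $M_f$ is closed under exponentiation follows by specializing the proposition to $B = I$ and $C = I$. For any $A \in M_f$, the hypotheses $AC \in M_f$, $BC \in M_f$, and $BAC = ABC$ reduce respectively to $A \in M_f$, $I \in M_f$, and $A = A$, which all hold; the conclusion then reads $A^i = A^iB^jC \in M_f$ for every $i \ge 0$.
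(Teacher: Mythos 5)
Your proof is correct and follows the same approach as the paper: both reduce membership in $M_f$ to symmetry of $X\p\p^\T(f)$ via Lemma~\ref{lem:main2} and exploit the intertwining relations $AV = VA^\T$, $BV = VB^\T$ together with $[A,B]V=0$. The only difference is that you carry out explicitly the commutator induction (via the Leibniz identities for $[A^i,B^j]$) that the paper compresses into the phrase ``we are done by induction''; that step is genuinely needed, since the naive iteration of the single product step requires $[A^i,B^j]CH=0$, which is exactly what your induction establishes.
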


\begin{proof}
  Lemma \ref{lem:main2} says that $A \in M_f$ if and only if $A \p\p^\T f$ is
  symmetric. Thus all three matrices $C \p\p^\T f$, $AC \p\p^\T f$ and $BC
  \p\p^\T f$ are symmetric. It follows that
  \begin{displaymath}
    ABC \p\p^\T f = A \p\p^\T f C^\T B^\T = AC \p\p^\T f B^\T = \p\p^\T f C^\T
    A^\T B^\T = \p\p^\T f (ABC)^\T,
  \end{displaymath}
  hence $ABC \in M_f$, and we are done by induction. The last statement
  follows by letting $B=C=I$. Note that we have not assumed $d \ge 3$ here.
\end{proof}

When $d \ge 3$ one might wonder if the assumptions $C, AC \in M_f$ actually
implies that $A \in M_f$. If so, the conclusion of the previous proposition
would immediately follow from the fact that $M_f$ is closed under
multiplication when $d \ge 3$. But $M_f$ does not support division, in the
sense that $C, AC \in M_f$ does not generally imply $A \in M_f$, as seen in
the following example.

\begin{exmp}
  Let $r=4$ and $f = x_1^\pd {d-1} x_4 + x_1^\pd {d-2} x_2 x_3 + x_2^\pd d$.
  Then
  \begin{displaymath}
    \ann_R f = (\p_1 \p_4 - \p_2 \p_3, \p_2 \p_4, \p_3^2, \p_3 \p_4, \p_4^2,
    \p_1 \p_2^2, \p_1^{d-2} \p_3 - \p_2^{d-1}, \p_1^d, \p_1^{d-1} \p_2).
  \end{displaymath}
  This implies that $\dim_\K M_f = 3$ when $d \ge 4$. Let
  \begin{displaymath}
    A = \left(
      \begin{smallmatrix}
        0 & 1 & 0 & 0 \\
        0 & 0 & 1 & 0 \\
        0 & 0 & 0 & 1 \\
        0 & 0 & 0 & 0
      \end{smallmatrix}
    \right).
  \end{displaymath}
  It is easy to verify that $A^2, A^3 \in M_f$, thus $M_f = \langle I, A^2,
  A^3 \rangle$ when $d \ge 4$. In particular, $A \notin M_f$, even though
  $A^2, A^3 \in M_f$.
\end{exmp}

% Above: $\ann_R f$ has three generators of degree $d$ when $d=3$. In this
% case, $M_f = \langle I, A, A^2, A^3 \rangle = \K [A]$.

We will finish this section with a result computing some special elements of
$\ann_R f$. We start with a lemma.

\begin{lem} \label{lem:UV}
  Let $d \ge 0$ and $f \in \cR_d$. Pick $A_1, \dots, A_m, B_1, \dots, B_n \in
  M_f$, and let $u \in \sum_{k=1}^m \im A_k^\T + \sum_{k=1}^n \ker B_k^\T$ and
  $v \in \bigl( \isect_{k=1}^m \ker A_k^\T \bigr) \isect \bigl( \isect_{k=1}^n
  \im B_k^\T \bigr)$. Then
  \begin{displaymath}
    (u^\T \p) \cdot (v^\T \p) \in \ann_R f.
  \end{displaymath}
\end{lem}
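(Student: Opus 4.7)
The plan is to reduce the statement to showing $u^\T \p\p^\T(f) v = 0$, which I will verify using the symmetry of $A_k \p\p^\T(f)$ and $B_k \p\p^\T(f)$ guaranteed by lemma \ref{lem:main2}.

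First I observe that $(u^\T \p) \cdot (v^\T \p) = \sum_{i,j} u_i v_j \p_i \p_j$ as a differential operator, so after applying to $f$ we get
\begin{displaymath}
  (u^\T \p)(v^\T \p)(f) = u^\T \bigl( \p\p^\T f \bigr) v.
\end{displaymath}
Setting $H = \p\p^\T(f) \in \Mat_\cR(r,r)$, it therefore suffices to prove $u^\T H v = 0$. The crucial input from the theory of $M_f$ is that for every $A \in M_f$, lemma \ref{lem:main2}(b) gives that $AH$ is symmetric, i.e.\ $AH = HA^\T$.

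Next, by bilinearity of $(u,v) \mapsto u^\T H v$, it is enough to consider $u$ ranging over a set of generators of $\sum_k \im A_k^\T + \sum_k \ker B_k^\T$. I would handle the two kinds of generators separately, keeping $v$ in the prescribed intersection throughout. If $u = A_k^\T u'$ for some $u'$, then since $v \in \ker A_k^\T$,
\begin{displaymath}
  u^\T H v = (u')^\T A_k H v = (u')^\T H A_k^\T v = 0.
\end{displaymath}
If instead $u \in \ker B_k^\T$, then using $v = B_k^\T v'$ (which is available since $v \in \im B_k^\T$),
\begin{displaymath}
  u^\T H v = u^\T H B_k^\T v' = u^\T B_k H v' = (B_k^\T u)^\T H v' = 0.
\end{displaymath}

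Summing the two kinds of contributions then yields $u^\T H v = 0$ in general, which is what we wanted. The only real work is the symmetry identity $AH = HA^\T$, which is already supplied by lemma \ref{lem:main2}; everything else is linear bookkeeping. I do not anticipate a substantive obstacle, beyond being careful that the symmetry is applied on the correct side in each of the two cases.
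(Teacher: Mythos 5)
Your proof is correct and is essentially the paper's argument in a different notation: the paper's key identity $(b^\T\p)(c^\T A\p)-(b^\T A\p)(c^\T\p)\in\ann_R f$, evaluated at $f$, is exactly the symmetry $AH=HA^\T$ of $H=\p\p^\T(f)$ that you invoke from lemma \ref{lem:main2}, and your two cases (generators $A_k^\T u'$ and $u\in\ker B_k^\T$, paired with $v\in\ker A_k^\T$ and $v=B_k^\T v'$) match the paper's two substitutions exactly. The reduction of $(u^\T\p)(v^\T\p)\in\ann_R f$ to $u^\T Hv=0$ is valid since the operator is of degree $2$ and annihilating $f$ is precisely vanishing of its evaluation at $f$.
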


\begin{proof}
  The proof rests on the following equation. If $A \in M_f$ and $b = [b_1,
  \dots, b_r]^\T$ and $c = [c_1, \dots, c_r]^\T$ are two vectors, then
  \begin{displaymath}
    \begin{vmatrix}
      b^\T \p & b^\T A \p \\
      c^\T \p & c^\T A \p
    \end{vmatrix}
    =
    \begin{vmatrix}
      \sum_i b_i \p_i & \sum_i b_i (A \p)_i \\
      \sum_j c_j \p_j & \sum_j c_j (A \p)_j
    \end{vmatrix}
    = \sum_{i,j=1}^r b_i c_j
    \begin{vmatrix}
      \p_i & (A \p)_i \\
      \p_j & (A \p)_j
    \end{vmatrix},
  \end{displaymath}
  and therefore
  \begin{equation} \label{eq:UV}
    (b^\T \p) \cdot (c^\T A \p) - (b^\T A \p) \cdot (c^\T \p) \in \ann_R f.
  \end{equation}
  
  By definition of $u$ there exist $a_1, \dots a_m, b_1, \dots b_n \in \K^r$
  such that $B_k^\T b_k = 0$ and $u = \sum_{k=1}^m A_k^\T a_k + \sum_{k=1}^n
  b_k$. Furthermore, $A_k^\T v = 0$ and $v = B_1^\T c_1 = \dots = B_n^\T c_n$
  for some $c_1, \dots, c_n \in \K^r$. Putting $(A,b,c) = (A_k,a_k,v)$ in
  \eqref{eq:UV}, and using $A_k^\T v = 0$, implies $(a_k^\T A_k \p) (v^\T \p)
  \in \ann_R f$. Letting $(A,b,c) = (B_k,b_k,c_k)$ gives $(b_k^\T \p) (v^\T
  \p) \in \ann_R f$ since $B_k^\T b_k = 0$ and $B_k^\T c_k = v$. Adding these
  equations together proves that $(u^\T \p) \cdot (v^\T \p) \in \ann_R f$.
\end{proof}

The next proposition gives us a restriction on $M_f$ when $\ann_R (f)_1 = 0$.
We will use this in chapter \ref{chapter:limits}.

\begin{prop} \label{prop:UV}
  Let $d \ge 2$ and $f \in \cR_d$. Pick $A_1, \dots, A_m, B_1, \dots, B_n \in
  M_f$, and define
  \begin{displaymath}
    U = \sum_{k=1}^m \im A_k^\T + \sum_{k=1}^n \ker B_k^\T \quad \text{ and }
    \quad V = \biggl( \bigcap_{k=1}^m \ker A_k^\T \biggr) \isect \biggl(
    \bigcap_{k=1}^n \im B_k^\T \biggr).
  \end{displaymath}
  Assume that (a) $U+V = \K^r$ and $U \isect V \ne 0$, or (b) $\dim_\K U =
  r-1$ and $\dim_\K V \ge 2$. Then $\ann_R (f)_1 \ne 0$.
\end{prop}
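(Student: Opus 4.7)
The starting point would be Lemma~\ref{lem:UV}, which already tells us that the hypotheses force $(u^\T\p)(v^\T\p) \in \ann_R(f)$ for all $u \in U$, $v \in V$, and this product sits in $\ann_R(f)_2$ by homogeneity. Via the isomorphism $\K^r \to R_1$, $w \mapsto w^\T\p$, I would pass to subspaces $L, W \subseteq R_1$ of the same dimensions as $U, V$, so that the conclusion of the lemma translates into $L \cdot W \subseteq \ann_R(f)_2$. The goal is then to extract a nonzero element of $\ann_R(f)_1$, and the main tool available is the saturation criterion of Lemma~\ref{lem:ann}(a) with $k = 1$ and $e = 2$: an element $\ell \in R_1$ lies in $\ann_R(f)_1$ iff $\ell \cdot R_1 \subseteq \ann_R(f)_2$.

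In case~(a), where $U + V = \K^r$ and $U \cap V \ne 0$, I would pick any nonzero $\ell \in L \cap W$. Using $\ell \in L$ gives $\ell \cdot W \subseteq \ann_R(f)_2$, while $\ell \in W$ gives $\ell \cdot L \subseteq \ann_R(f)_2$; summing and using $L + W = R_1$ yields $\ell \cdot R_1 \subseteq \ann_R(f)_2$, so the saturation criterion produces a nonzero $\ell \in \ann_R(f)_1$.

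For case~(b), I would first dispose of the subcase $W \not\subseteq L$: since $\dim L = r-1$, the only strict superspace of $L$ inside $R_1$ is all of $R_1$, so automatically $L + W = R_1$; a dimension count then gives $\dim(L \cap W) \ge (r-1) + 2 - r = 1$, reducing the situation to case~(a). The remaining subcase $W \subseteq L$ is where the genuine work lies, because now multiplying by $w \in W \subseteq L$ only produces elements of $L \cdot R_1$ and never touches a direction transverse to $L$, so the case~(a) trick alone fails. To handle it, I would make a linear change of variables so that $L = \langle \p_1, \dots, \p_{r-1} \rangle$; this is harmless via the identity $\ann_R(\phi_P f) = \phi_P(\ann_R f)$ from the preliminaries. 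In these coordinates, $w \cdot L \subseteq \ann_R(f)_2$ reads $\p_i(wf) = 0$ for $i = 1, \dots, r-1$, which forces $wf \in \cR_{d-1}$ to lie in the one-dimensional subspace $\K \cdot x_r^\pd {d-1}$. Writing $wf = c(w)\, x_r^\pd {d-1}$ defines a $\K$-linear map $c: W \to \K$, whose kernel is nontrivial because $\dim W \ge 2$; any nonzero $w \in \ker c$ then satisfies $wf = 0$, i.e.\ lies in $\ann_R(f)_1$.

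The main obstacle is precisely this last subcase $W \subseteq L$: the pure saturation argument of case~(a) is insufficient, and one must use the extra dimension of $W$ to cancel the one-dimensional ``overflow'' of $wf$ lying along the complement of $L$ in $R_1$. This explains both why the numerical hypothesis $\dim V \ge 2$ appears in (b) and why one cannot merge (a) and (b) into a single clean statement.
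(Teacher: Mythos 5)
Your proposal is correct and follows essentially the same route as the paper's proof: case (a) is the identical saturation argument applied to a nonzero element of $U \isect V$, and in case (b) you reduce $V \nsubseteq U$ to case (a) exactly as the paper does, while your treatment of $V \subseteq U$ (forcing $wf \in \K \cdot x_r^{\pd{d-1}}$ and killing the one-dimensional overflow using $\dim V \ge 2$) is just a coordinate-free repackaging of the paper's choice of $u_1, u_2 \in V$ with $u_1^\T \p f = c_1 l_r^{\pd{d-1}}$, $u_2^\T \p f = c_2 l_r^{\pd{d-1}}$ and the combination $(c_2 u_1 - c_1 u_2)^\T \p f = 0$.
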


\begin{proof}
  (a) Let $u \in U \isect V$. Since $u \in U$, lemma \ref{lem:UV} implies for
  all $v \in V$ that $(u^\T \p) \cdot (v^\T \p) \in \ann_R f$. Because $u \in
  V$, we get $(u^\T \p) \cdot (v^\T \p) \in \ann_R f$ for all $v \in U$ by the
  same lemma. Now $U+V=\K^r$ implies that $(u^\T \p) \cdot R_1 \in \ann_R f$,
  hence $(u^\T \p) \in \ann_R f$.

  (b) If $V \nsubseteq U$, then $U+V = \K^r$, and we are done by part (a).
  Thus we assume that $V \subseteq U$. Choose $u_1, u_2 \in V$, $u_1
  \nparallel u_2$. Expand this to a basis $\{ u_1, \dots, u_{r-1} \}$ for $U$,
  and choose $u_r \notin U$. Then $\{ u_1^\T \p, \dots, u_r^\T \p \}$ is a
  basis for $R_1$. Let $\{ l_1, \dots, l_r \}$ be the dual basis for $\cR_1$.
  Since $(u^\T \p) (u_1^\T \p) \in \ann_R f$ for all $u \in U$, it follows
  that $u_1^\T \p f = c_1 l_r^\pd {d-1}$ for some $c_1 \in \K$. Similarly,
  $u_2^\T \p f = c_2 l_r^\pd {d-1}$. Thus $(c_2 u_1 - c_1 u_2)\!^\T \p f = 0$,
  and $\ann_R (f)_1 \ne 0$.
\end{proof}

% $f = (c_1 l_1 + c_2 l_2) l_r^{d-1} + g (l_3, \dots, l_r)$

\begin{exmp} \label{ex:UV}
  We will give an example of each of the two cases of proposition
  \ref{prop:UV}. In both cases, let $r=3$, $d \ge 2$ and $f \in \cR_d$.
  \begin{enumerate}
    \setlength{\itemsep}{2pt}
    \setlength{\parskip}{0pt}
  \item Let
    \begin{math}
      B_1 = \left(
        \begin{smallmatrix}
          0 & 0 & 1 \\
          0 & 0 & 0 \\
          0 & 0 & 0
        \end{smallmatrix}
      \right)
    \end{math}
    and
    \begin{math}
      B_2 = \left(
        \begin{smallmatrix}
          0 & 0 & 0 \\
          0 & 0 & 1 \\
          0 & 0 & 0
        \end{smallmatrix}
      \right)\!,
    \end{math}
    and assume that $B_1,B_2 \in M_f$. Then
    \begin{displaymath}
      \ann_R f \supseteq I_2 (\p\: B_1\p) + I_2 (\p\: B_2\p)
      = ( \p_1 \p_3, \p_2 \p_3, \p_3^2 ) = \p_3 \cdot \m_R.
    \end{displaymath}
    Hence $\p_3 \in \ann_R (f)_1$, and $\ann_R (f)_1 \ne 0$. This belongs to
    case (a) of proposition \ref{prop:UV} (with $A_i = 0$ for all $i$).

  \item Let
    \begin{math}
      A_1 = \left(
        \begin{smallmatrix}
          0 & 1 & 0 \\
          0 & 0 & 0 \\
          0 & 0 & 0
        \end{smallmatrix}
      \right)
    \end{math}
    and
    \begin{math}
      A_2 = \left(
        \begin{smallmatrix}
          0 & 0 & 1 \\
          0 & 0 & 0 \\
          0 & 0 & 0
        \end{smallmatrix}
      \right)\!,
    \end{math}
    and assume that $A_1,A_2 \in M_f$. Then
    \begin{displaymath}
      \ann_R f \supseteq I_2 (\p\: A_1\p) + I_2 (\p\: A_2\p) = ( \p_2^2, \p_2
      \p_3, \p_3^2 ) = (\p_2, \p_3)^2.
    \end{displaymath}
    Thus $f = c_1 x_1^\pd d + c_2 x_1^\pd {d-1} x_2 + c_3 x_1^\pd {d-1} x_3$,
    and therefore, $\ann_R (f)_1 \ne 0$. This is case (b) of proposition
    \ref{prop:UV} (with $B_i = I$ for all $i$).
  \end{enumerate}
\end{exmp}

$M_f$ has other properties that further restrict the subalgebras that arise as
$M_f$, and we will say a little more about this in the next section.

\section{Determinantal ideals}

We mentioned in remark \ref{rem:compute} that most examples in this paper are
constructed by first choosing a subset (usually a subalgebra) $M \subseteq
\Mat_\K (r,r)$. Having chosen $M$, we can compute $X_d = \{ f \in \cR_d
\suchthat M_f \supseteq M \}$, and finally choose one of these $f$ to present
as the example.

We now take a closer look at this method. Given a subset $M \subseteq \Mat_\K
(r,r)$, we will define an ideal $I(M)$ and an $R$-module $X(M)$. Studying
$I(M)$ and $X(M)$ can be thought of as an alternative to studying all $M_f$
that contain $M$, and we will make this connection precise. However, the first
half of this section will only deal with $I(M)$ and a related ideal $\check
I(M)$.

\begin{defn} \label{def:IM}
  Let $M$ be any subset of $\Mat_\K (r,r)$. Let $I(M)$ and $\check I(M)$ be
  the ideals in $R$ defined by
  \begin{displaymath}
    I(M) = \sum_{A \in M} I_2 (\p\: A\p) \quad \text{and} \quad
    \check I(M) = \sum_{A,B \in M} I_2 (A\p\: B\p).
  \end{displaymath}
\end{defn}

Note that the ideal $I(M_f)$ is the part of $\ann_R f$ that determines $M_f$.
Obviously, if $M$ is a $\K$-vector space, and $A_1, \dots, A_n$ is a basis for
$M$, then
\begin{displaymath}
  \check I(M) = \sum_{i<j} I_2 (A_i\p\: A_j\p) = I_2 (A_1\p\: A_2\p\: \dots\:
  A_n\p).
\end{displaymath}
Thus $\check I(M)$ is the ideal generated by the $2 \times 2$ minors of a
matrix of linear forms. Conversely, if $\varphi$ is any matrix of linear forms,
then $I_2 (\varphi) = \check I(M)$ for suitable $M$. We realize that $\check
I(M)$ is a very general object. In this section we will usually require that
the identity matrix $I$ is in $M$. (Actually, it would be enough to assume
that $M$ contains an invertible matrix, but this is not important to us.) We
start with a result relating $\check I(M)$ and the simpler object $I(M)$.

\begin{lem} \label{lem:IM}
  Assume $I \in M \subseteq \Mat_\K (r,r)$. Then $I(M) \subseteq \check I(M) =
  I(M^2)$ and $I(M)_e = \check I(M)_e$ for all $e \ge 3$. In particular, if
  $M$ is closed under matrix multiplication, then $I(M) = \check I(M)$.
\end{lem}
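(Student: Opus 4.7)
The inclusion $I(M) \subseteq \check I(M)$ is immediate from the hypothesis $I \in M$: for every $A \in M$, the pair $(I,A)$ contributes $I_2(I\partial\:A\partial) = I_2(\partial\:A\partial)$ to $\check I(M)$. In particular, $M \subseteq M^2$ (via $A = I \cdot A$), so $I(M) \subseteq I(M^2)$ as well.

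For the equality $\check I(M) = I(M^2)$, I would establish the Plücker-type identity
\begin{displaymath}
  (A\partial)_i (B\partial)_j - (A\partial)_j (B\partial)_i \;\equiv\;
  \partial_i (AB\partial)_j - \partial_j (AB\partial)_i \pmod{I(M)}
\end{displaymath}
for all $A, B \in M$ and all $i,j$. This is a direct calculation: writing $(A\partial)_i = \sum_k A_{ik}\partial_k$ and $(AB\partial)_j = \sum_k A_{jk}(B\partial)_k$, expanding and using commutativity of $R$, the difference of the two sides unfolds into a $\K$-linear combination of the $2 \times 2$ minors $\partial_k(B\partial)_l - \partial_l(B\partial)_k$ of the matrix $(\partial\:B\partial)$, with coefficients $A_{ik}$ and $A_{jk}$. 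Since these minors lie in $I(M)$, the identity follows. Combined with $I(M) \subseteq I(M^2)$, it shows $\check I(M) \subseteq I(M^2)$; conversely, rearranging the same identity shows $I(M^2) \subseteq \check I(M) + I(M) = \check I(M)$.

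For the graded equality $I(M)_e = \check I(M)_e$ when $e \geq 3$, I would establish a second identity expressing
\begin{displaymath}
  \partial_k \cdot \bigl[(A\partial)_i (B\partial)_j - (A\partial)_j (B\partial)_i\bigr]
\end{displaymath}
as
\begin{displaymath}
  (B\partial)_j[\partial_k(A\partial)_i - \partial_i(A\partial)_k] - (B\partial)_i[\partial_k(A\partial)_j - \partial_j(A\partial)_k] + (A\partial)_k[\partial_i(B\partial)_j - \partial_j(B\partial)_i],
\end{displaymath}
again verified by commutative manipulation in $R$. Each bracketed factor is (up to sign) a $2\times 2$ minor of $(\partial\:A\partial)$ or $(\partial\:B\partial)$, hence lies in $I(M)_2$, so the whole expression lies in $I(M)_3$. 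This gives $R_1 \cdot \check I(M)_2 \subseteq I(M)_3$. Since $\check I(M)$ is generated in degree $2$, we conclude $\check I(M)_e = R_{e-3}\cdot R_1 \cdot \check I(M)_2 \subseteq R_{e-3}\cdot I(M)_3 \subseteq I(M)_e$ for $e \geq 3$, and the reverse inclusion was already established.

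The final claim is then formal: if $M$ is closed under multiplication, then $M^2 \subseteq M$, so $I(M^2) \subseteq I(M)$, and the chain $I(M) \subseteq \check I(M) = I(M^2) \subseteq I(M)$ collapses. The only real work is verifying the two Plücker-type identities; these are elementary but index-heavy, and represent the main (and only) obstacle.
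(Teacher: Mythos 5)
Your proof is correct and follows essentially the same route as the paper's: your first Pl\"ucker-type congruence is the paper's identity \eqref{eq:AB}, and your second identity is exactly what the paper obtains by expanding the $3 \times 3$ determinant with columns $\p$, $A\p$, $B\p$ first along its third column and then along a row. One small sign slip: modulo $I(M)$ one has $\p_i(AB\p)_j \equiv (A\p)_j(B\p)_i$, so $\p_i(AB\p)_j - \p_j(AB\p)_i \equiv -\bigl[(A\p)_i(B\p)_j - (A\p)_j(B\p)_i\bigr]$, i.e.\ it is the \emph{sum} of the two minors that lies in $I(M)$ (as in \eqref{eq:AB}), not their difference; since you use the congruence only for membership in an ideal, this does not affect any of your conclusions.
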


\begin{proof}
  $I(M) \subseteq \check I(M)$ is immediate when $I \in M$. Let $A,B \in M$,
  and consider the determinant
  \begin{displaymath}
    D = \left|
      \begin{smallmatrix}
        \p_i & (A\p)_i & (B\p)_i \\
        \p_j & (A\p)_j & (B\p)_j \\
        \p_k & (A\p)_k & (B\p)_k
      \end{smallmatrix}
    \right|.
  \end{displaymath}
  By expanding along the third column, we get $D \in I(M)$. Thus expansion
  along the first row shows that
  \begin{displaymath}
    \p_i \cdot
    \begin{vmatrix}
      (A\p)_j & (B\p)_j \\
      (A\p)_k & (B\p)_k
    \end{vmatrix}
    \in I(M) \text{ for all } i,j \text{ and } k.
  \end{displaymath}
  Therefore, $\m_R \, \check I(M) \subseteq I(M)$. Since $\check I(M)$ is
  generated in degree 2, it follows that $\check I(M)_e = I(M)_e$ for all $e
  \ge 3$. Furthermore, since $(A\p)_j = \sum_{k=1}^r A_{jk} \p_k$, we get
  \begin{displaymath}
    \sum_{k=1}^r A_{jk}
    \begin{vmatrix}
      \,\p_i & (B\p)_i \\
      \,\p_k & (B\p)_k
    \end{vmatrix}
    =
    \begin{vmatrix}
      \p_i & (B\p)_i \\
      (A\p)_j & (AB\p)_j
    \end{vmatrix}
    = \p_i \cdot (AB\p)_j - (A\p)_j \cdot (B\p)_i
  \end{displaymath}
  and therefore,
  \begin{equation} \label{eq:AB}
    \sum_{k=1}^r A_{jk} \left|
      \begin{array}{@{}c@{\;}c@{}}
        \,\p_i & (B\p)_i \\
        \,\p_k & (B\p)_k
      \end{array}
    \right| - \sum_{k=1}^r A_{ik} \left|
      \begin{array}{@{}c@{\;}c@{}}
        \,\p_j & (B\p)_j \\
        \,\p_k & (B\p)_k
      \end{array}
    \right| = \left|
      \begin{array}{@{}c@{\;}c@{}}
        \,\p_i & (AB\p)_i \\
        \,\p_j & (AB\p)_j
      \end{array}
    \right| + \left|
      \begin{array}{@{}c@{\;}c@{}}
        (A\p)_i & (B\p)_i \\
        (A\p)_j & (B\p)_j
      \end{array}
    \right|\!.
  \end{equation}
  Hence, if $B \in M$, then $I_2 (A\p\: B\p) \subseteq I(M)$ if and only if
  $I_2 (\p\: AB\p) \subseteq I(M)$. In particular, $\check I(M) = I(M^2)$,
  since $I \in M$ implies $M \subseteq M^2$. If $M$ is closed under
  multiplication, then also $M^2 \subseteq M$, implying $I(M) = \check I(M)$.
\end{proof}

We note that $\check I(M) = I(M)$ when $M$ is closed under multiplication. If
$M$ is not closed, it is natural to ask if we can close $M$ and not change the
ideal $\check I(M)$. This is true, as the following proposition shows.

\begin{prop} \label{prop:K[M]}
  Assume $I \in M \subseteq \Mat_\K (r,r)$. Let $M'$ be the $\K$-subalgebra of
  $\Mat_\K (r,r)$ generated by $M$. Then $I(M') = \check I(M)$.
\end{prop}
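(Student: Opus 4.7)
The plan is to reduce the proposition to a strong inductive claim about products of matrices in $M$, using Lemma~\ref{lem:IM} applied to $M'$ itself as the entry point. Since $M'$ is a $\K$-subalgebra containing $I$, it is closed under multiplication, so Lemma~\ref{lem:IM} yields $I(M')=\check I(M')$. Combined with the automatic inclusion $\check I(M)\subseteq I(M')$ (from $M\subseteq M'$), the entire content of the proposition is the reverse inclusion
\[
\check I(M')\subseteq\check I(M),
\]
that is, $I_2(A\p\: B\p)\subseteq\check I(M)$ for all $A,B\in M'$. Since the $2\times 2$ minors of $(A\p\: B\p)$ are $\K$-bilinear in the two columns and $M'$ is the $\K$-linear span of $\bigcup_{n\ge 0} M^n$, this reduces to proving
\[
I_2(P\p\: Q\p)\subseteq\check I(M)
\]
for every pair of products $P,Q$ of elements of $M$, and I would establish this by strong induction on the total word length $|P|+|Q|$. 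The base $|P|+|Q|\le 2$ is immediate from Lemma~\ref{lem:IM}, which provides both $I(M)\subseteq\check I(M)$ (since $I\in M$) and the equality $\check I(M)=I(M^2)$.

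For the inductive step the key point is that the identity underlying~\eqref{eq:AB} admits a \emph{scalar-coefficient refinement}: because the entries $A_{ij}$ are in $\K$, the sums $\sum_k A_{jk}\cdot(\text{minor of }(\p\: B\p))$ are $\K$-linear combinations of the generators of $I_2(\p\: B\p)_2$ and so lie in $I_2(\p\: B\p)_2$ itself, not merely in $R\cdot I_2(\p\: B\p)$. This gives
\[
I_2(\p\: AB\p)\subseteq I_2(\p\: B\p)+I_2(A\p\: B\p).
\]
An entirely analogous expansion of $(A\p)_i(CD\p)_l=\sum_k C_{lk}(A\p)_i(D\p)_k$, collecting the diagonal and off-diagonal terms in the same way, produces the companion identity
\[
I_2(A\p\: CD\p)\subseteq I_2(A\p\: D\p)+I_2(CA\p\: D\p),
\]
which permits peeling the leftmost factor off the second column and prepending it to the first.

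Applied iteratively to $I_2(P\p\: Q\p)$, the second identity peels $Q$ one factor at a time onto $P$; each "drop" summand that appears along the way has total length $|P|+|Q|-1$ and lies in $\check I(M)$ by the inductive hypothesis, while after $|Q|$ steps the main chain terminates in the single-column minor $I_2(\p\: Q^{\mathrm{rev}}P\p)$ of the same total length. Conversely, for a one-column minor $I_2(\p\: R\p)$ with $R=R_1R_2$ and both $|R_i|\ge 1$, the sharpened~\eqref{eq:AB} writes it as $I_2(\p\: R_2\p)+I_2(R_1\p\: R_2\p)$, sending the first summand to $\check I(M)$ by induction and the second to a two-argument minor at the same length. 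These two reductions interlock and, via the base case, close the induction.

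The main obstacle I anticipate is that the one-argument reduction and the two-argument reduction at length $n+1$ feed into each other, so some care is required to set up the induction without circularity; the cleanest way is a single joint strong induction that establishes the one-column and two-column statements for each total length simultaneously. The fact that both refined identities work termwise in degree $2$ — with purely scalar coefficients, and never requiring passage through $R_1\cdot(\text{something})$ — is what keeps the containments strictly inside $\check I(M)$ and makes the joint induction go through.
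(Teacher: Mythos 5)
Your setup is right (reducing to $I_2(P\p\:Q\p)\subseteq\check I(M)$ for words $P,Q$ in $M$), and both of your identities are correct: equation \eqref{eq:AB} really does give $I_2(\p\:AB\p)_2\subseteq I_2(\p\:B\p)_2+I_2(A\p\:B\p)_2$ with scalar coefficients, and the companion $I_2(A\p\:CD\p)_2\subseteq I_2(A\p\:D\p)_2+I_2(CA\p\:D\p)_2$ checks out. The problem is the induction itself: it is genuinely circular, and the ``joint strong induction'' you propose does not repair it, because the one-column statement at total length $n+1$ is reduced to the two-column statement at total length $n+1$ and vice versa — you would be assuming each statement at length $n+1$ in order to prove the other at length $n+1$. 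Concretely, your two moves compose to the word-rewriting rule $W=R_1R_2\mapsto R_2^{\mathrm{rev}}R_1$ on the surviving main-chain term, whose total length never drops below $n+1$; this rule cycles. For instance $ABC=A\cdot BC\mapsto CBA$ and $CBA=C\cdot BA\mapsto ABC$, and the other natural splitting choices fare no better (splitting off the last letter gives a cyclic shift, so you return to $W$ after $|W|$ steps). At the end of such a loop you have only shown $I_2(\p\:W\p)\subseteq\check I(M)+I_2(\p\:W\p)$, which is vacuous. No well-founded ordering is exhibited, so the argument does not close.

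The fix — and what the paper actually does — is to stop tracking ideals $I_2(P\p\:Q\p)$ and instead track the individual products $(P,Q)_{ij}=(P\p)_i(Q\p)_j$ modulo $\check I(M)$, using the symmetric swap congruence $(AC,BD)\equiv(AD,BC)$, valid whenever $I_2(C\p\:D\p)\subseteq\check I(M)$, i.e.\ at the cost of a two-column fact about only the pair being swapped. Writing a word of length $k+1$ as $ABC$ with $A,C\in M$ and $B\in M^{k-1}$, the paper runs the eight-step chain $(ABC,I)\equiv(AB,C)\equiv(A,CB)\equiv(B,CA)\equiv(BA,C)\equiv(BC,A)\equiv(B,AC)\equiv(C,AB)\equiv(I,ABC)$; every swap in this chain invokes $I_2(X\p\:Y\p)\subseteq\check I(M)$ only for $X,Y$ of total length $\le k$, which is available from the induction hypothesis via \eqref{eq:AB}. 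Since the desired minor of $(\p\:ABC\p)$ is exactly $(I,ABC)_{ij}-(ABC,I)_{ij}$, the telescoping of these congruences is what makes the length induction terminate. Your one-sided identities are the special cases of the swap identity with one slot equal to $I$; by never letting the factors migrate \emph{between} the two columns symmetrically, you lose the cancellation that breaks the cycle.
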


\begin{proof}
  We have not assumed that $M$ is a $\K$-vector space. It is just any subset
  of $\Mat_\K (r,r)$ containing the identity matrix $I$. Therefore, its powers
  are defined as $M^k = \{ \sprod_{i=1}^k A_i \suchthat A_i \in M \text{ for
    all } i \}$, and not the linear span. Note that $M^k \subseteq M^{k+1}$
  since $I \in M$. Because $\Mat_\K (r,r)$ is a finite-dimensional vector
  space, it follows that $M' = \langle M^k \rangle$, the linear span of $M^k$,
  for large $k$. Since a minor is linear in each column, we get $I(\langle M^k
  \rangle) = I(M^k)$. Thus to prove that $I(M') = \check I(M)$, it is enough
  to show that $I_2 (\p\: A\p) \subseteq \check I(M)$ for all $A \in M^k$ for
  all $k \gg 0$.

  For every $A,B \in \Mat_\K (r,r)$ and all $1 \le i < j \le r$, define
  $(A,B)_{ij} \in R_2$ by $(A,B)_{ij} = (A\p)_i \cdot (B\p)_j$. We will
  usually suppress the subscripts. Note that
  \begin{displaymath}
    \begin{vmatrix}
      (AC\p)_i & (AD\p)_i \\
      (BC\p)_j & (BD\p)_j
    \end{vmatrix}
    = \sum_{k,l=1}^r A_{ik} B_{jl}
    \begin{vmatrix}
      (C\p)_k & (D\p)_k \\
      (C\p)_l & (D\p)_l
    \end{vmatrix}.
  \end{displaymath}
  Thus $(AC,BD) - (AD,BC) \in I_2 (C\p\: D\p)$, and if $I_2 (C\p\: D\p)
  \subseteq \check I(M)$, then
  \begin{equation} \label{eq:ABCD}
    (AC,BD) = (AD,BC) \mod \check I(M).
  \end{equation}
  
  Assume that $I_2 (X\p\: Y\p) \subseteq \check I(M)$ for all $X,Y \in
  \{I,A,B,C\}$. We want to show that $I_2 (\p\: ABC\p) \subseteq \check I(M)$.
  This is equivalent to $(ABC,I)_{ij} = (I,ABC)_{ij}$ mod $\check I(M)$ for
  all $i$ and $j$. To prove this, we will use equation \eqref{eq:ABCD} eight
  times, and each time one of the matrices will be $I$. Indeed, modulo $\check
  I(M)$ we have
  \begin{align*}
    (ABC,I) & = (AB,C) = (A,CB) = (B,CA) \\ & = (BA,C)
    = (BC,A) = (B,AC) = (C,AB) = (I,ABC).
  \end{align*}
  
  The rest is a simple induction. We know that $I_2 (\p\: A\p) \subseteq
  \check I(M)$ for all $A \in M^2$. Assume for some $k \ge 2$ that $I_2 (\p\:
  A\p) \subseteq \check I(M)$ for all $A \in M^k$. Then by equation
  \eqref{eq:AB} also $I_2 (A\p\: B\p) \subseteq \check I(M)$ for all $A \in
  M^i$ and $B \in M^j$ as long as $i+j \le k$. Pick $A' = \sprod_{i=1}^{k+1}
  A_i \in M^{k+1}$. Let $A=A_1$, $B = \sprod_{i=2}^k A_i$ and $C = A_{k+1}$ so
  that $ABC = A'$. The induction hypothesis and the previous paragraph imply
  that $I_2 (\p\: A'\p) \subseteq \check I(M)$. Hence we are done by induction
  on $k$.
\end{proof}

One consequence of lemma \ref{lem:IM} and proposition \ref{prop:K[M]} is that
$\{ I(M) \}$ does not change much if we restrict our attention to subsets $M
\subseteq \Mat_\K (r,r)$ that are $\K$-algebras. Indeed, if $M \subseteq
\Mat_\K (r,r)$ is any subset containing the identity matrix $I$, and $M'$ is
the $\K$-algebra generated by $M$, then $I(M)_e = I(M')_e$ for all $e \ge 3$.
Thus these ideals can only be different in degree two.

Another consequence is the following corollary.

\begin{cor} \label{cor:K[M]}
  Let $A_1, \dots, A_n \in \Mat_\K (r,r)$ and $M = \K [A_1, \dots, A_n]$. Then
  \begin{displaymath}
    I(M) = I_2 (\p\: A_1\p\: \cdots\: A_n\p).
  \end{displaymath}
\end{cor}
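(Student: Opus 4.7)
The plan is to reduce the corollary to Proposition~\ref{prop:K[M]} by choosing a small generating set for $M$ and then unpacking the definition of $\check I$. Set $M_0 = \{I, A_1, \dots, A_n\} \subseteq \Mat_\K (r,r)$. Then $I \in M_0$, and the $\K$-subalgebra of $\Mat_\K (r,r)$ generated by $M_0$ is precisely $M = \K [A_1, \dots, A_n]$. Thus Proposition~\ref{prop:K[M]} gives
\begin{displaymath}
  I(M) = \check I(M_0).
\end{displaymath}

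Next I would rewrite $\check I(M_0)$. By definition,
\begin{displaymath}
  \check I(M_0) = \sum_{A,B \in M_0} I_2 (A\p\: B\p).
\end{displaymath}
Terms with $A=B$ vanish, and $I_2 (A\p\: B\p) = I_2 (B\p\: A\p)$, so writing $A_0 = I$ we get
\begin{displaymath}
  \check I(M_0) = \sum_{0 \le i < j \le n} I_2 (A_i\p\: A_j\p).
\end{displaymath}

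The final step is a direct unravelling of $I_2$ of a matrix: a $2 \times 2$ minor of the $r \times (n+1)$ matrix $(\p\: A_1\p\: \cdots\: A_n\p)$ is exactly a $2 \times 2$ minor of one of the two-column submatrices $(A_i\p\: A_j\p)$, $0 \le i < j \le n$. Hence
\begin{displaymath}
  I_2 (\p\: A_1\p\: \cdots\: A_n\p) = \sum_{0 \le i < j \le n} I_2 (A_i\p\: A_j\p) = \check I(M_0) = I(M),
\end{displaymath}
which is the desired equality.

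There is no real obstacle here; all the substantive work has already been done in Proposition~\ref{prop:K[M]}, which does the difficult job of replacing the small set $M_0$ by the (potentially much larger) algebra it generates without enlarging the ideal. The only thing one has to notice is the combinatorial identification of the ideal of $2 \times 2$ minors of a matrix with the sum of the pairwise two-column minor ideals.
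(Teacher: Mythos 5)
Your proof is correct and follows the paper's own route exactly: the paper likewise observes that $M$ is the algebra generated by $\{I, A_1, \dots, A_n\}$ and invokes Proposition~\ref{prop:K[M]}, with the identification of $\check I(\{I,A_1,\dots,A_n\})$ with $I_2(\p\: A_1\p\: \cdots\: A_n\p)$ left implicit (it was already noted just after Definition~\ref{def:IM}). You have merely spelled out those implicit steps, which is fine.
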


\begin{proof}
  $M$ is the $\K$-algebra generated by $\{ I, A_1, \dots, A_n \} \subseteq
  \Mat_\K (r,r)$, and the result follows from proposition \ref{prop:K[M]}.
\end{proof}

We now associate to any subset $M \subseteq \Mat_\K (r,r)$ a graded $R$-module
$X(M)$. When we defined $M_f = \{ A \in \Mat_\K (r,r) \suchthat I_2 (\p\: A\p)
\subseteq \ann_R f \}$ in definition \ref{def:Mf}, we required $f$ to a
homogeneous polynomial. To simplify the following definition and results, we
will allow any $f \in \cR$. Of course, if $f = \sum_{k \ge 0} f_k$ and $f_k
\in \cR_k$, then $M_f = \isect_{k \ge 0} M_{f_k}$, since $I_2 (\p\: A\p)$ is a
homogeneous ideal.

\begin{defn} \label{def:XM}
  Let $M \subseteq \Mat_\K (r,r)$. Define the graded $R$-module $X(M)$ by
  \begin{displaymath}
    X(M) = \{ f \in \cR \suchthat M \subseteq M_f \}.
  \end{displaymath}
\end{defn}

The discussion before the definition explains why $X(M)$ is a graded
$\K$-vector subspace of $\cR$. Note that $\ann_R (f) \subseteq \ann_R (Df)$
for any $D \in R$. This implies that $M_f \subseteq M_{Df}$, thus $X(M)$ is
indeed an $R$-module. $X(M)$ is closely connected to $I(M)$, as seen in the
following lemma.

\begin{lem} \label{lem:XM}
  Let $M \subseteq \Mat_\K (r,r)$ be any subset. Then
  \begin{enumerate}
    \setlength{\itemsep}{2pt}
    \setlength{\parskip}{0pt}
    \renewcommand{\theenumi}{\alph{enumi}}
    \renewcommand{\labelenumi}{\normalfont(\theenumi)}
  \item $M \subseteq M_f$ if and only if $I(M) \subseteq \ann_R f$,
% if and only if $I(M) \subseteq I(M_f)$
  \item $X_d (M) = \{ f \in \cR_d \suchthat R_{d-2} (f) \subseteq X_2 (M) \}$
    for all $d \ge 3$,
  \item $I(M)_d^\perp = X_d (M)$ for all $d \ge 0$,
  \item $I(M) = \isect_{f \in X(M)} I(M_f) = \isect_{f \in X(M)} \ann_R f$.
  \end{enumerate}
  In particular, $I_2 (\p\: A\p) \subseteq I(M)$ if and only if $A \in M_f$
  for all $f \in X(M)$.
\end{lem}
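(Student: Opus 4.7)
The plan is that all four parts reduce to a combination of: (i) unwinding definitions, (ii) the fact that $I(M)$ is generated in degree $2$, (iii) the saturation property of lemma \ref{lem:ann}(a), and (iv) the duality $V^{\perp\perp}=V$. Part (a) is immediate: $M\subseteq M_f$ means $I_2(\p\:A\p)\subseteq\ann_R f$ for every $A\in M$, and since $I(M)=\sum_{A\in M}I_2(\p\:A\p)$, this is equivalent to $I(M)\subseteq\ann_R f$.

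For (c) I use that $I(M)_d=R_{d-2}\cdot I(M)_2$ whenever $d\ge 2$. By non-degeneracy of the contraction pairing $R_{d-2}\times\cR_{d-2}\to\K$, for $D\in I(M)_2$ and $f\in\cR_d$ one has $D(f)=0$ iff $E(Df)=(ED)(f)=0$ for every $E\in R_{d-2}$. Hence $f\in I(M)_d^\perp$ iff $D(f)=0$ for every $D\in I(M)_2$, iff $I(M)_2\subseteq\ann_R (f)_2$, iff $I(M)\subseteq\ann_R f$ (using that $I(M)$ is generated in degree $2$ and $\ann_R f$ is an ideal), iff $f\in X_d(M)$ by (a). The cases $d=0,1$ are trivial because $I(M)_d=0$ and $X_d(M)=\cR_d$. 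Part (b) follows by the same mechanism: $I(M)_2\subseteq\ann_R (f)_2$ iff for every $D\in I(M)_2$, $D(f)=0$, iff for every $D\in I(M)_2$ and every $E\in R_{d-2}$, $D$ annihilates $E(f)\in\cR_2$, iff every $g\in R_{d-2}(f)$ satisfies $I(M)_2\cdot g=0$, iff $R_{d-2}(f)\subseteq X_2(M)$ by (a) applied in degree $2$.

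Finally, (d) combines (a) with double perp. The chain $I(M)\subseteq I(M_f)\subseteq\ann_R f$ for each $f\in X(M)$ (from (a) and the definition of $I(M_f)$) yields $I(M)\subseteq\bigcap_{f\in X(M)} I(M_f)\subseteq\bigcap_{f\in X(M)}\ann_R f$. For the reverse inclusion I argue one graded piece at a time: if $D\in R_d$ lies in $\bigcap_{f\in X(M)}\ann_R f$, then in particular $D(f)=0$ for every $f\in X_d(M)$, so $D\in X_d(M)^\perp=I(M)_d^{\perp\perp}=I(M)_d$ by (c) and double perp. The concluding ``In particular'' statement is then immediate: $I_2(\p\:A\p)\subseteq I(M)$ iff $I_2(\p\:A\p)\subseteq\ann_R f$ for every $f\in X(M)$ by (d), iff $A\in M_f$ for every such $f$ by the definition of $M_f$. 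I do not foresee any real obstacle; the only mild subtlety is that in (d) the intersection ranges over $f$ of arbitrary degree while $I(M)$ is a fixed homogeneous ideal, but this is resolved by the observation that restricting the intersection to $X_d(M)$ already recovers $I(M)_d$ via (c).
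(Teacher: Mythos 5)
Your proof is correct and follows essentially the same route as the paper's: (a) by unwinding definitions, and (c), (d) and the final claim via the degree-two generation of $I(M)$ combined with the saturation property and the duality $V^{\perp\perp}=V$. The only cosmetic difference is in (b), where you replace the paper's induction through $M_f=\bigcap_{i=1}^r M_{\p_i f}$ by a single application of the non-degenerate pairing $R_{d-2}\times\cR_{d-2}\to\K$; both arguments rest on the same facts.
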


\begin{proof}
  Clearly, $I(M) \subseteq \ann_R f$ if and only if $I_2 (\p\: A\p) \subseteq
  \ann_R f$ for all $A \in M$, which is equivalent to $M \subseteq M_f$. This
  is (a).

  Let $X = X(M)$. Pick $f \in \cR_d$, $d \ge 3$. Since $I(M)$ is generated in
  degree two and $\ann_R (f)_{d-1} = \isect_{i=1}^r \ann_R (\p_i f)_{d-1}$, it
  follows that $M_f = \isect_{i=1}^r M_{\p_i f}$. Hence $f \in X_d$ if and
  only if $\p_i f \in X_{d-1}$ for all $i$, and by induction this is
  equivalent to $Df \in X_2$ for all $D \in R_{d-2}$. This proves (b).

  For all $d \ge 0$ we have $I(M)_d^\perp = \{ f \in \cR_d \suchthat Df = 0
  \:\forall\: D \in I(M) \}$, which equals $X_d$ by (a). For any $f \in X$ we
  note that $I(M) \subseteq I(M_f) \subseteq \ann_R f$, hence $I(M) \subseteq
  \isect_{f \in X} I (M_f) \subseteq \isect_{f \in X} \ann_R f$. Furthermore,
  by (c),
  \begin{displaymath}
    I(M)_d = X_d^\perp = \{ D \in R_d \suchthat Df = 0 \:\forall\: f \in X_d
    \} = \isect_{f \in X_d} \ann_R (f)_d.
  \end{displaymath}
  Thus $I(M)_d \supseteq ( \isect_{f \in X} \ann_R f )_d$, which implies (d).
  In particular, it follows that $I_2 (\p\: A\p) \subseteq I(M)$ if and only
  if $I_2 (\p\: A\p) \subseteq \ann_R f$ for all $f \in X$, and this is
  equivalent to $A \in M_f$ for all $f \in X$.
\end{proof}

\begin{rem} \label{rem:XM}
  A consequence of lemma \ref{lem:XM} is that results about $M_f$ often
  correspond to results about $I(M)$. For example, we know that $M_f$ is a
  $\K$-algebra for all $f \in \cR_d$, $d \ge 3$ (proposition \ref{prop:Mf}).
  This corresponds to the fact that $I(M^2)_d \subseteq I(M)_d$ for all $d \ge
  3$ when $I \in M$ (lemma \ref{lem:IM}).

% This is true even without $I \in M$.
  
  To prove this, let $d \ge 3$ and $f \in \cR_d$, and pick $A,B \in M_f$.
  Consider $M = \{ I,A,B \} \subseteq M_f$. We have $I(M^2)_d \subseteq I(M)_d
  \subseteq \ann_R (f)_d$. Since $\ann_R (f)_2$ is determined by $\ann_R
  (f)_d$ by lemma \ref{lem:ann}a, and $I_2 (\p\: AB\p) \subseteq I(M^2)$, we
  get $I_2 (\p\: AB\p) \subseteq \ann_R f$. Hence $AB \in M_f$.
  
  Conversely, let $A,B \in M$. Then $A,B \in M_f$ for all $f \in X = X(M)$,
  implying $AB \in M_f$ for all $f \in X_d$, $d \ge 3$. Hence $I_2 (\p\:
  AB\p)_d \subseteq \isect_{f \in X_d} \ann_R (f)_d = I(M)_d$ for all $d \ge
  3$, that is, $I(M^2)_d \subseteq I(M)_d$. Thus even though the proofs of
  these two results look very different, they actually imply each other.
\end{rem}

As promised, we give another result that restricts which algebras that arise
as $M_f$. The conclusion of this proposition does not in general follow from
the other results we have proven about $M_f$.

\begin{prop} \label{prop:M+}
  Suppose $A_0, \dots, A_n \in M_f$. Let $a_{ij}$ be the \te{j} column of
  $A_i^\T$. (So $A_i = [ a_{i1}, \dots, a_{ir} ]^\T$, i.e. $(A_i)_{jk} =
  (a_{ij})_k$ for all $i,j,k$.) Let $s<r$. Assume that $a_{ij} = 0$ for all $i
  \ge 1$ and $j \le s$, and that $a_{0j} \in \langle a_{1j}, \dots, a_{nj}
  \rangle$ for all $j>s$. Then $B = [ a_{01}, \dots, a_{0s}, 0 \dots, 0 ]^\T
  \in M_f$.
\end{prop}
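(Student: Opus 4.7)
The approach is to verify $B \in M_f$ directly from the definition by checking that every $2 \times 2$ minor of $(\p\: B\p)$, namely $(B\p)_i \p_j - \p_i (B\p)_j$, lies in $\ann_R f$. Since $B$'s $j$-th row equals $a_{0j}^\T$ for $j \le s$ and vanishes for $j > s$, we have $(B\p)_j = (A_0\p)_j$ when $j \le s$ and $(B\p)_j = 0$ when $j > s$. The check splits into three cases according to whether $i, j$ are $\le s$ or $> s$.

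When both $i, j \le s$, the expression $(B\p)_i \p_j - \p_i (B\p)_j$ equals $(A_0\p)_i \p_j - \p_i (A_0\p)_j$, which is a $2 \times 2$ minor of $(\p\: A_0\p)$ and thus lies in $\ann_R f$ since $A_0 \in M_f$. When both $i, j > s$ the expression is trivially zero. The remaining case, $i \le s < j$ (the opposite is symmetric), is where the hypotheses on the columns really get used: the expression becomes $(A_0\p)_i \p_j$, and I must show this lies in $\ann_R f$.

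The key step is the following manipulation. Since $A_0 \in M_f$, I know $(A_0\p)_i \p_j \equiv \p_i (A_0\p)_j \pmod{\ann_R f}$. By hypothesis, $a_{0j} = \sum_{k=1}^n c_k a_{kj}$ for some $c_k \in \K$, so $(A_0\p)_j = \sum_{k=1}^n c_k (A_k\p)_j$, giving
\begin{displaymath}
  (A_0\p)_i \p_j \equiv \sum_{k=1}^n c_k \p_i (A_k\p)_j \pmod{\ann_R f}.
\end{displaymath}
For each $k \ge 1$, applying $A_k \in M_f$ yields $\p_i (A_k\p)_j \equiv (A_k\p)_i \p_j \pmod{\ann_R f}$; but $(A_k\p)_i = a_{ki}^\T \p = 0$ because $a_{ki} = 0$ for $i \le s$ and $k \ge 1$. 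Hence $\p_i (A_k\p)_j \in \ann_R f$ for every $k \ge 1$, and summing gives $(A_0\p)_i \p_j \in \ann_R f$, as needed.

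The only slight obstacle is remembering that the $a_{ij}$ are the rows of $A_i$ (not the columns), so that $(A_i\p)_j = a_{ij}^\T \p$; once that bookkeeping is set up correctly, the proof is a short calculation combining two applications of $A \in M_f \iff I_2(\p\: A\p) \subseteq \ann_R f$ with the assumed linear relation among the rows.
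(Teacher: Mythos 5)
Your proof is correct and follows essentially the same route as the paper's: a case split on whether $i,j$ are $\le s$ or $>s$, with the mixed case handled by combining the $2\times 2$ minor of $(\p\: A_0\p)$ with the linear relation $a_{0j} = \sum_k c_k a_{kj}$ and the minors of the $(\p\: A_k\p)$, whose $i$-th entries vanish. The only cosmetic difference is that you phrase the computation as congruences modulo $\ann_R f$ while the paper writes out the corresponding determinant identity inside $I(M)$; the underlying manipulation is identical.
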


% This result originally said: suppose $A_0, \dots, A_n \in M$, then $I_2
% (\p\: B\p) \subseteq I(M)$. An equivalent formulation, since: Assume $A_i
% \in M_f$ for all $i$. Then $I_2 (\p\: B\p) \subseteq I( \{A_i\} ) \subseteq
% \ann_R f$, hence $B \in M_f$. Conversely, assume that $A_i \in M_f$ for all
% $i$ implies that $B \in M_f$. Let $M = \{ A_i \}$ and $X = X(M)$. For any $f
% \in X$ we have $I_2 (\p\: A_i\p) \subseteq I(M) \subseteq \ann_R f$, hence
% $A_i \in M_f$ for all $i$. Thus also $B \in M_f$. Therefore, $I_2 (\p\: B\p)
% \subseteq \isect_{f \in X} I(M_f) = I(M)$.

\begin{proof}
  Let $M = \{ A_0, \dots, A_n \}$. We want to prove that $I(M)$ contains every
  $2 \times 2$ minor of $(\p\: B\p)$. If $i,j \le s$, then
  \begin{displaymath}
    \begin{vmatrix}
      \p_i & (B\p)_i \\
      \p_j & (B\p)_j
    \end{vmatrix}
    =
    \begin{vmatrix}
      \p_i & (A_0\p)_i \\
      \p_j & (A_0\p)_j
    \end{vmatrix}
    \in I(M).
  \end{displaymath}
  If $i,j > s$, then this minor is obviously zero. So we are left with the
  case $i \le s$ and $j>s$. By assumption $a_{0j} \in \langle a_{1j}, \dots,
  a_{nj} \rangle$, thus $a_{0j} = \sum_{k=1}^n c_{kj} a_{kj}$ for suitable
  $c_{kj} \in \K$. It follows that
  \begin{align*}
    \begin{vmatrix}
      \p_i & (B\p)_i \\
      \p_j & (B\p)_j
    \end{vmatrix}
    =
    \begin{vmatrix}
      \p_i & a_{0i}^\T \p \\
      \p_j & 0
    \end{vmatrix}
    & =
    \begin{vmatrix}
      \p_i & a_{0i}^\T \p \\
      \p_j & a_{0j}^\T \p
    \end{vmatrix}
    - \sum_{k=1}^n c_{kj}
    \begin{vmatrix}
      \p_i & 0 \\
      \p_j & a_{kj}^\T \p
    \end{vmatrix}
    \\ & =
    \begin{vmatrix}
      \p_i & (A_0\p)_i \\
      \p_j & (A_0\p)_j
    \end{vmatrix}
    - \sum_{k=1}^n c_{kj}
    \begin{vmatrix}
      \p_i & (A_k\p)_i \\
      \p_j & (A_k\p)_j
    \end{vmatrix}
    \in I(M).
  \end{align*}
  Therefore, $I_2 (\p\: B\p) \subseteq I(M)$. Since $I(M) \subseteq \ann_R f$,
  this implies $B \in M_f$.
\end{proof}

\begin{exmp}
  The assumptions in proposition \ref{prop:M+} might seem a bit strange. One
  situation where it can be used, is the following. Let $c_1 + c_2 + c_3 = r$.
  For $i=1,2,3$, pick $C_i \in \Mat_\K (c_i,r)$, and define $B_i \in \Mat_\K
  (r,r)$ by
  \begin{displaymath}
    B_1 = \left(
      \begin{smallmatrix}
        C_1 \\ 0 \\ 0
      \end{smallmatrix}
    \right) \qquad B_2 = \left(
      \begin{smallmatrix}
        C_1 \\ C_2 \\ 0
      \end{smallmatrix}
    \right) \qquad B_3 = \left(
      \begin{smallmatrix}
        C_1 \\ 0 \\ C_3
      \end{smallmatrix}
    \right).
  \end{displaymath}
  Assume that $B_2, B_3 \in M_f$. If we apply proposition \ref{prop:M+} with
  $A_0 = B_2$ and $A_1 = B_2 - B_3$, we get $B_1 \in M_f$. A special case when
  $r=6$ is
  \begin{displaymath}
    B_1 = \left(
      \begin{smallmatrix}
        C & 0 & 0 \\
        0 & 0 & 0 \\
        0 & 0 & 0
      \end{smallmatrix}
    \right) \qquad B_2 = \left(
      \begin{smallmatrix}
        C & 0 & 0 \\
        0 & C & 0 \\
        0 & 0 & 0
      \end{smallmatrix}
    \right) \qquad B_3 = \left(
      \begin{smallmatrix}
        C & 0 & 0 \\
        0 & 0 & 0 \\
        0 & 0 & C
      \end{smallmatrix}
    \right),
  \end{displaymath}
  where
  \begin{math}
    C = \bigl(
    \begin{smallmatrix}
      0 & 1 \\ 0 & 0
    \end{smallmatrix}
    \bigr).
  \end{math}
  As above, $B_2, B_3 \in M_f$ implies $B_1 \in M_f$.
\end{exmp}

We will end this section with a result identifying $Z(I(M)) \subseteq
\bP^{r-1}$, the set of closed points of the projective scheme determined by
$I(M)$.

\begin{prop} \label{prop:Z(IM)}
  Suppose $M \subseteq \Mat_\K (r,r)$. Then
  \begin{displaymath}
    Z(I(M)) = \{ v \in \K^r \setminus \{0\} \suchthat v \text{ is an
    eigenvector for every } A \in M \}/\K^*.
  \end{displaymath}
\end{prop}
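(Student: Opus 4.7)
The proof is essentially a direct unraveling of the definitions, with the only real observation being that the vanishing of all $2\times 2$ minors of a matrix $(v\:Av)$ of rank data is equivalent to linear dependence of the columns. Here is my proposed plan.

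First I would recall that a closed point $[v] \in \bP^{r-1}$ lies in $Z(I(M))$ if and only if every generator of $I(M)$ vanishes at $v$. By definition, $I(M) = \sum_{A \in M} I_2(\p\:A\p)$ is generated by the $2\times 2$ minors of the matrices $(\p\:A\p)$ as $A$ ranges over $M$. The $(i,j)$-minor of $(\p\:A\p)$ is the linear form
\begin{displaymath}
  \p_i \cdot (A\p)_j - \p_j \cdot (A\p)_i \in R_2,
\end{displaymath}
and evaluating at $\p \mapsto v$ produces $v_i (Av)_j - v_j (Av)_i$, which is exactly the $(i,j)$-minor of the $r \times 2$ matrix $(v\:Av) \in \Mat_\K(r,2)$.

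Next I would observe that $v \in Z(I(M))$ if and only if, for every $A \in M$, all $2\times 2$ minors of $(v\:Av)$ vanish, i.e.\ $\rank (v\:Av) \le 1$. Since we are working with $v \ne 0$, the first column already spans a one-dimensional subspace, so the rank condition is equivalent to $Av \in \langle v \rangle$, i.e.\ $Av = \lambda_A v$ for some scalar $\lambda_A \in \K$ (possibly zero). This is precisely the statement that $v$ is an eigenvector of $A$.

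Finally, I would conclude by quantifying over $M$: the condition holds for every generator of $I(M)$ simultaneously if and only if $v$ is a common eigenvector of every $A \in M$. Passing to the projectivization $\K^r \setminus \{0\} / \K^*$ yields the claimed description of $Z(I(M))$. There is no serious obstacle here; the only subtle point worth spelling out is that zero is admitted as an eigenvalue (so we must say "eigenvector" in the broad sense $Av \in \langle v \rangle$, not require $\lambda_A \ne 0$), which is automatic from the rank-at-most-one condition.
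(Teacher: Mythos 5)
Your proof is correct and follows essentially the same route as the paper: evaluate the $2 \times 2$ minors of $(\p\:A\p)$ at $v$, observe that their simultaneous vanishing for a nonzero $v$ is exactly the condition $Av \in \langle v \rangle$ for every $A \in M$, and projectivize. Your explicit remark that eigenvalue zero is permitted is a worthwhile clarification but does not change the argument.
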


\begin{proof}
  By definition, $I(M) = \sum_{A \in M} I_2 (\p\: A\p)$. Thus a $v \in \K^r$
  satisfies $D(v)=0$ for all $D \in I(M)$ if and only if
  \begin{displaymath}
    \begin{vmatrix}
      v_i & (Av)_i \\
      v_j & (Av)_j
    \end{vmatrix}
    = 0 \text{ for all $i<j$ and all $A \in M$.}
  \end{displaymath}
  This is equivalent to $v$ being an eigenvector for every $A \in M$. Thus
  $Z(I(M))$ is simply the projectivization of the union of the eigenspaces.
\end{proof}

% If $J \subseteq R$ is an ideal, we may define a $\K$-vector space $M_J = \{
% A \in \Mat_\K (r,r) \suchthat I_2 (\p\: A\p) \subseteq J \}$. Then $I (M_J)
% \subseteq J$ for all $J$, and $M_{I(M)} \supseteq M$ for all $M$. This
% implies that $M_{I(M_J)} = M_J$ and $I(M_{I(M)}) = I(M)$. This gives a
% one-to-one correspondence between ideals in $R$ of the form $I(M)$ and
% subspaces of $\Mat_\K (r,r)$ of the form $M_J$.

%% \input{regular}
%% Chapter 3: Regular splittings
\chapter{Regular splittings}\label{chapter:regular}

This chapter covers our work on regular splittings. The first half deals with
how to find such splittings. Then in section \ref{section:minimal} we study
how a regular splitting affects the Artinian Gorenstein quotient $R/ \ann_R
f$. In fact, if $f = \sum_{i=1}^n g_i$ is a regular splitting of $f$, then we
express the Hilbert function and, more generally, the (shifted) graded Betti
numbers of $R/ \ann_R f$ in terms of those for $R/ \ann_R (g_i)$, $i = 1,
\dots, n$. To get there, we calculate the minimal free resolution of $R/
\ann_R f$.

Section \ref{section:pgor} concerns $\bPGor (H)$, the space parameterizing all
graded Artinian Gorenstein quotients $R/I$ with Hilbert function $H$. We
define a subset parameterizing those $R/ \ann_R f$ where $f$ splits regularly,
and we compute its dimension and the dimension of the tangent space to $\bPGor
(H)$ at the point $R/ \ann_R f$.

One goal of this paper is to study what $M_f$ can tell us about $f \in \cR_d$,
and in section \ref{section:idempot} we show how to extract from $M_f$ the
regular splitting properties of $f$. By corollary \ref{cor:basic}, any regular
splitting of $f$ happens inside the subring $\K [ R_{d-1} (f) ]^{DP} \subseteq
\cR$. Thus we may assume that $\ann_R (f)_1 = 0$ by performing a suitable base
change and reducing the number of variables, if necessary. If in addition $d
\ge 3$, proposition \ref{prop:Mf} tells us that $M_f$ is a commutative
$\K$-algebra. This will allow us to find all regular splittings. It turns out
that the idempotents in $M_f$ determine the regular splittings, so we start by
studying these.

\section{Idempotents and matrix algebras}

This section discusses idempotents in general, and in particular how they
relate to matrix algebras. We will see how eigenvalues and eigenspaces are
connected to idempotents. We start with some elementary definitions.

Let $A$ be a ring with unity. A nonzero element $e$ in $A$ is called an
\emph{idempotent} if $e^2 = e$. A subset $\{ e_1, \dots, e_n \} \subseteq A$
is a set of \emph{orthogonal idempotents} in $A$ if $e_i^2 = e_i \ne 0$ for
all $i$ and $e_i e_j = 0$ for all $i \ne j$. The set is \emph{complete} if in
addition $\sum_{i=1}^n e_i = 1$. If $\{ e_1, \dots, e_n \}$ is not complete,
let $e_0 = 1 - \sum_{i=1}^n e_i \ne 0$. Then
\begin{displaymath}
  e_i e_0 = e_i - e_i \sum_{j=1}^n e_j = e_i - e_i^2 = 0 = e_0 e_i
\end{displaymath}
for all $i>0$, and $e_0^2 = (1 - \sum_{i=1}^n e_i) e_0 = e_0$. Thus $e_0$ is
an idempotent, and $\{ e_0, \dots, e_n \}$ is a complete set of orthogonal
idempotents.

% Since $\sum_i e_i = 1$, it is also a partition of unity.
% Originally I allowed a coid to contain (several) elements equal to zero.

We define a \emph{coid} to be a set $\E = \{ e_1, \dots, e_n \}$ of nonzero
elements of $A$ such that $e_i e_j = 0$ for all $i \ne j$ and $\sum_{i=1}^n
e_i = 1$. This implies $e_i = e_i \sum_{j=1}^n e_j = e_i^2$, thus $\E$ is a
complete set of orthogonal idempotents (hence the name coid). We define its
\emph{length} to be $l(\E) = n$, the size of $\E$ as a set.

% We only need the idempotents to commute.

Assume in addition that $A$ is a commutative ring. Let $\E = \{ e_1, \dots,
e_n \}$ and $\E' = \{ e'_1, \dots, e'_m \}$ be two coids. For all $1 \le i \le
n$ and $1 \le j \le m$, let $e_{ij} = e_i e'_j$. Then $\sum_{i,j} e_{ij} = (
\sum_{i=1}^n e_i ) ( \sum_{j=1}^m e'_j ) = 1$, and for all $(i,j) \ne (k,l)$,
we have $e_{ij} e_{kl} = e_i e'_j e_k e'_l = (e_i e_k) (e'_j e'_l) = 0$. Thus,
if $e_{ij}$ and $e_{kl}$ are nonzero, then they are orthogonal idempotents. In
particular, they are not equal. This shows that
\begin{displaymath}
  \E \otimes \E' = \{ e_{ij} \suchthat e_{ij} \ne 0 \}
\end{displaymath}
is another coid, which we call the \emph{product} coid. This product has the
following properties.

\begin{lem} \label{lem:productcoid}
  Suppose $A$ is a commutative ring with unity. Let $\E = \{ e_1, \dots, e_n
  \}$ and $\E' = \{ e'_1, \dots, e'_m \}$ be two coids. Then $l (\E \otimes
  \E') \ge l (\E)$, and $l (\E \otimes \E') = l (\E)$ if and only if $\E
  \otimes \E' = \E$. Furthermore, if $\E \otimes \E' = \E$, then $\E$
  \emph{refines} $\E'$ in the sense that there exists a partition $\{ \J_1,
  \dots, \J_m \}$ of $\{ 1, \dots, n \}$ such that $e'_j = \sum_{i \in \J_j}
  e_i$.
\end{lem}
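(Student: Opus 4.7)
The plan is to exploit the simple identity $e_i = e_i \cdot 1 = e_i \sum_{j} e'_j = \sum_{j} e_{ij}$, which expresses every $e_i \in \E$ as a sum of the product-coid elements $e_{ij}$, and the symmetric identity $e'_j = \sum_i e_{ij}$. From these two identities the entire lemma follows by bookkeeping.

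For the inequality $l(\E \otimes \E') \ge l(\E)$, I would argue as follows. Fix $i$. Since $e_i \ne 0$ and $e_i = \sum_{j=1}^m e_{ij}$, at least one of the products $e_{ij}$ must be non-zero; pick one such $j$ and call it $j(i)$. This defines a map $i \mapsto (i, j(i))$ from $\{1,\dots,n\}$ into $\E \otimes \E'$; it is injective simply because the first coordinates are distinct. Hence $|\E \otimes \E'| \ge n = l(\E)$.

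Next, I would analyse when equality holds. If $l(\E \otimes \E') = l(\E)$, then the map above must be a bijection onto $\E \otimes \E'$, which means that for each $i$ there is a \emph{unique} $j = j(i)$ with $e_{ij} \ne 0$. The identity $e_i = \sum_j e_{ij}$ then collapses to $e_i = e_{i,j(i)}$, so $\E \subseteq \E \otimes \E'$; having the same cardinality forces $\E = \E \otimes \E'$. The converse direction is immediate. At this point the refinement statement is one short step: with the same function $j(\cdot)$, put $\J_j = \{ i \mid j(i) = j \}$. These sets partition $\{1,\dots,n\}$, and
\begin{displaymath}
  e'_j \;=\; e'_j \cdot 1 \;=\; \sum_{i=1}^{n} e'_j e_i \;=\; \sum_{i=1}^{n} e_{ij} \;=\; \sum_{i \in \J_j} e_i,
\end{displaymath}
since $e_{ij} = 0$ when $j(i) \ne j$ and $e_{ij} = e_i$ when $j(i) = j$.

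The only subtlety I anticipate is checking that none of the $\J_j$ are empty, because the statement gives a partition indexed by \emph{all} of $\{1,\dots,m\}$. But that is automatic: if $\J_j = \varnothing$ then the displayed formula gives $e'_j = 0$, contradicting the assumption that $\E'$ is a coid. So in fact the function $j(\cdot)$ is necessarily surjective, $m \le n$, and the partition is genuine. I expect this little consistency check to be the only non-mechanical point; the rest of the proof is just manipulating the two identities $e_i = \sum_j e_{ij}$ and $e'_j = \sum_i e_{ij}$.
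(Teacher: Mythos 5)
Your proof is correct and follows essentially the same route as the paper's: extract from $e_i=\sum_j e_ie'_j\ne 0$ a choice $j(i)$ with $e_{i,j(i)}\ne 0$, use it to get the inequality and the equality criterion, and define $\J_j=\{i \suchthat j(i)=j\}$ to obtain the refinement. Your extra observation that no $\J_j$ is empty (since $e'_j\ne 0$) is a small point the paper leaves implicit; just note that injectivity of $i\mapsto e_{i,j(i)}$ rests on the fact, established when the product coid is defined, that distinct nonzero $e_{ij}$'s are orthogonal and hence unequal, not merely on the index pairs being distinct.
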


% Let $\du$ denote disjoint union. When we write $J_1 \du \dots \du J_m = N$,
% it means that $\union_{i=1}^m J_i = N$ and $J_i \isect J_j = \varnothing$
% for all $i \ne j$. Thus the $J_i$'s are subsets of $N$, and $\{ J_i
% \}_{i=1}^m$ is a partition of $N$.

\begin{proof}
  For each $i = 1, \dots, n$, at least one of $e_i e'_1, \dots, e_i e'_m$ must
  be nonzero, since $\sum_{j=1}^m e_i e'_j = e_i \ne 0$. This proves that $l
  (\E \otimes \E') \ge l (\E)$. It also shows that, if $l (\E \otimes \E') = l
  (\E)$, then for every $i$ there exists a unique $j_i$ such that $e_i
  e'_{j_i} \ne 0$. Then $e_i = \sum_{j=1}^m e_i e'_j = e_i e'_{j_i}$, hence
  $\E \otimes \E'$ and $\E$ are equal. For every $j = 1, \dots, m$, let $\J_j
  = \{ i \suchthat j_i = j \}$. Then $\J_j \isect \J_k = \varnothing$ for all
  $j \ne k$, and $\J_1 \union \dots \union \J_m = \{ 1, \dots, n \}$. Thus $\{
  \J_j \}$ is a partition of $\{ 1, \dots, n \}$, and $e'_j = \sum_{i=1}^n e_i
  e'_j = \sum_{i \in \J_j} e_i$.
\end{proof}

% If $i \in \J_j \isect \J_k$, then $j_i = j = k$. And $i \in \J_{j_i}$.

The next proposition contains what we will need to know about idempotents.
First, note the following. Let $V$ be any $\K$-vector space, and $V_1, \dots,
V_n \subseteq V$ be subspaces. When we write $V = \dsum_{i=1}^n V_i$, we mean
that the natural map $\dsum_{i=1}^n V_i \to V$ defined by $(v_i) \mapsto
\sum_{i=1}^n v_i$ is an isomorphism. This is equivalent to $\sum_{i=1}^n V_i =
V$ and $V_i \isect \bigl( \sum_{j \ne i} V_j \bigr) = 0$ for all $i$.

We say that $A$ contains a unique maximal coid if it contains a coid $\E$ of
maximal length and every coid refines into $\E$, cf. lemma
\ref{lem:productcoid}.

\begin{prop} \label{prop:idempot}
  Let $A$ be a commutative ring with unity.
  \begin{enumerate}
    \setlength{\itemsep}{2pt}
    \setlength{\parskip}{0pt}
    \renewcommand{\theenumi}{\alph{enumi}}
    \renewcommand{\labelenumi}{\normalfont(\theenumi)}
  \item For every coid $\{ e_1, \dots, e_n \}$, the natural map $A \to e_1 A
    \dsum \dots \dsum e_n A$ is an isomorphism of rings. Furthermore, every
    ring-isomorphism $A \to \dsum_{i=1}^n A_i$ arise this way up to
    isomorphisms of the summands $A_i$.
  \item Assume in addition that $A$ is Noetherian. Then $A$ contains a unique
    maximal coid $\E = \{ e_1, \dots, e_n \}$. In particular, the idempotents
    in $A$ are precisely the elements $e = \sum_{i \in I} e_i$ with
    $\varnothing \ne I \subseteq \{ 1, \dots, n \}$.
% Hence there are exactly $2^n-1$ idempotents in $A$.
  \item Let $A$ also be Artinian, and let $\{ e_1, \dots, e_n \}$ be the
    unique maximal coid. For every $i$, the ring $A_i = e_i A$ is local
    Artinian, and its maximal ideal is $A_i^{\nil} = \{ a \in A_i \suchthat
    a^k = 0 \text{ for some } k \}$, the set of nilpotent elements in $A_i$.
    In particular, $A$ contains exactly $n$ prime ideals.
  \end{enumerate}
\end{prop}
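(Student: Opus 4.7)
For part (a), the plan is to define the candidate isomorphism $\phi : A \to \dsum_{i=1}^n e_i A$ by $\phi(a) = (e_1 a, \dots, e_n a)$, where each $e_i A$ is a ring with unit $e_i$ (inherited from $A$). The orthogonality relation $e_i e_j = \delta_{ij} e_i$ immediately shows $\phi$ is a ring homomorphism, since cross terms vanish. Injectivity follows from the completeness $\sum_i e_i = 1$: if $e_i a = 0$ for all $i$, then $a = 1 \cdot a = \sum_i e_i a = 0$. Surjectivity is equally direct: given $(e_i a_i)_i$, the element $a = \sum_i e_i a_i$ satisfies $e_j a = e_j a_j$. For the converse, if $\psi : A \to \dsum_{i=1}^n A_i$ is any ring isomorphism, then the elements $e_i = \psi^{-1}(0,\dots,1_{A_i},\dots,0)$ form a coid in $A$ whose associated decomposition recovers $\psi$ up to the obvious isomorphism $e_i A \iso A_i$.

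For part (b), the key input is that Noetherianness bounds the length of any coid. From a coid $\{e_1, \dots, e_n\}$ one obtains the ascending chain of ideals $e_1 A \subsetneq e_1 A + e_2 A \subsetneq \dots \subsetneq A$; strictness holds because $e_{i+1} \in \sum_{j \le i} e_j A$ would, upon multiplication by $e_{i+1}$, force $e_{i+1} = 0$. So a coid $\E$ of maximal length $n$ exists. Given any other coid $\E'$, Lemma \ref{lem:productcoid} gives $l(\E \otimes \E') \ge l(\E) = n$; maximality forces equality, and the lemma then yields $\E \otimes \E' = \E$, so $\E$ refines $\E'$. This gives uniqueness. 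For an arbitrary idempotent $e \in A$, either $e=1$ or $\{e, 1-e\}$ is a coid refined by $\E$, so $e = \sum_{i \in I} e_i$ for some nonempty $I \subseteq \{1,\dots,n\}$.

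For part (c), observe first that $A_i = e_i A \iso A/(1-e_i) A$ is again Artinian, and by part (b) (applied to $A_i$) the uniqueness of the maximal coid of $A$ forces $A_i$ to have no nontrivial idempotents. The strategy is to show any Artinian commutative ring without nontrivial idempotents is local with maximal ideal equal to its nilradical. Since $A_i$ is Artinian, its Jacobson radical is nilpotent and equals $A_i^{\nil}$, so $B = A_i / A_i^{\nil}$ is a reduced Artinian ring, hence a finite product of fields. A nontrivial such product contains a nontrivial idempotent, which lifts through the nilpotent ideal $A_i^{\nil}$ to a nontrivial idempotent of $A_i$ (by the standard idempotent-lifting argument over a nil ideal); since none exists, $B$ must be a single field. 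Therefore $A_i$ is local with maximal ideal $A_i^{\nil}$, and its unique prime is this maximal ideal. Using the product decomposition $A \iso \dsum_i A_i$ from part (a), the primes of $A$ correspond bijectively to primes of the factors, giving exactly $n$ prime ideals. The main obstacles I anticipate are establishing strict inclusion in the chain argument of (b) and invoking idempotent lifting in (c); both are standard, but need to be addressed cleanly.
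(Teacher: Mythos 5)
Your parts (a) and (c) are fine. Part (a) is the same argument as the paper's. Part (c) takes a genuinely different route: you quotient by the nilradical, use the structure of reduced Artinian rings as finite products of fields, and lift idempotents through the nil ideal to conclude that $A_i$ is local; the paper instead argues elementwise, using the stabilization $(a^n) = (a^{n+1})$ to show that every element of an Artinian ring is either nilpotent or has an associated nonzero idempotent $(ab)^n$, so that in the idempotent-free ring $A_i$ every non-nilpotent is a unit. Your version leans on two standard external facts (structure of reduced Artinian rings, idempotent lifting modulo a nil ideal), while the paper's is self-contained; both are correct.

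Part (b) has a genuine gap at the very first step. You write that ``Noetherianness bounds the length of any coid'' and conclude from the strict chain $e_1 A \subsetneq e_1 A + e_2 A \subsetneq \dots \subsetneq A$ that a coid of maximal length exists. But the Noetherian condition only says that every ascending chain \emph{stabilizes}; it does not give a uniform bound on the lengths of finite strictly ascending chains (in $\K[x]$ one has $(x^n) \subsetneq \dots \subsetneq (x)$ of arbitrary length). So exhibiting, for each coid of length $n$, a strict chain of length $n$ does not rule out coids of unbounded length, and the existence of a coid of maximal length does not follow. To close the gap you need an actual infinite chain. The paper does this by showing that if $A$ had infinitely many idempotents one could construct an infinite sequence $e_1, e_2, \dots$ of nonzero pairwise orthogonal idempotents (choosing at each stage an idempotent $e_{k+1}$ in $(1-e_k)\cdots(1-e_1)\Upsilon$ so that infinitely many idempotents remain), and then $(e_1) \subsetneq (e_1, e_2) \subsetneq \dots$ is an infinite strictly ascending chain, contradicting ACC; finiteness of the set of idempotents then yields a coid of maximal length. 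Alternatively, you could take an ideal maximal among those generated by finite sets of nonzero pairwise orthogonal idempotents and check that the corresponding coid admits no proper refinement, then invoke Lemma \ref{lem:productcoid}. The remainder of your (b) --- uniqueness via the product coid and the description of all idempotents as sums $\sum_{i \in I} e_i$ --- is correct and matches the paper once the existence of a maximal coid is secured.
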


\begin{proof}
  We note that if $e \in A$ is an idempotent, then the ideal
  \begin{displaymath}
    eA = \{ ea \suchthat a \in A \} \subseteq A
  \end{displaymath}
  is itself a commutative ring, with identity $e$. The map $a \mapsto (e_1 a,
  \dots, e_n a)$ is obviously a homomorphism of rings. Since $(a_1, \dots,
  a_n) \mapsto \sum_{i=1}^n a_i$ is an inverse, it is an isomorphism.

  Assume that $A \to \dsum_{i=1}^n A_i$ is an isomorphism of rings. For every
  $i = 1, \dots, n$, let $e_i \in A$ be the element mapped to $1 \in A_i$ and
  $0 \in A_j$ for all $j \ne i$. Then for all $i \ne j$, we have $e_i e_j
  \mapsto 0$ in every component, thus $e_i e_j = 0$. And $\sum_{i=1}^n e_i =
  1$ since $1 \mapsto (1, \dots, 1)$. Hence $\{ e_1, \dots, e_n \}$ is a coid,
  and $A \to \dsum_{i=1}^n A_i$ restricts to isomorphisms $e_i A \to A_i$.
  Thus the map $A \to \dsum_{i=1}^n A_i$ factors through the natural map $A
  \to \dsum_{i=1}^n e_i A \iso \dsum_{i=1}^n A_i$. This proves (a).

  We will now prove (b) in several steps. First, suppose that $A$ contains an
  idempotent $e \ne 1$. Then $1-e$ is also idempotent. Let
  \begin{displaymath}
    \Upsilon = \Upsilon (A) = \{ a \in A \suchthat a^2 = a \},
  \end{displaymath}
  and note that $\Upsilon (eA) = \{ ea \suchthat ea^2 = ea \} = e \Upsilon$.
  It follows that the isomorphism $A \to eA \dsum (1-e)A$ restricts to a
  bijection $\Upsilon \to e \Upsilon \times (1-e) \Upsilon$.

  Assume that $A$ contains infinitely many idempotents. Thus $\Upsilon$ is
  infinite, and for every idempotent $e$, at least one of $e \Upsilon$ and
  $(1-e) \Upsilon$ must be infinite. Pick $e_1 \in \Upsilon \setminus \{ 0,1
  \}$ such that $(1-e_1) \Upsilon$ is infinite. Since $(1-e_1)A$ has
  infinitely many idempotents, we may choose $e_2 \in (1-e_1) \Upsilon
  \setminus \{ 0,1-e_1 \}$ such that $(1-e_2)(1-e_1) \Upsilon$ is infinite.
  Since $e_2 \in (1-e_1) \Upsilon$, we get $e_1 e_2 = 0$. We may repeat this
  process as many times as we like, producing elements $e_1, e_2, \ldots \in
  A$ such that $e_i^2 = e_i \ne 0$ for all $i$ and $e_i e_j = 0$ for all $i
  \ne j$. If $e_k = \sum_{i<k} a_i e_i$ for some $a_i \in A$, then $e_k^2 =
  \sum_{i<k} a_i e_i e_k = 0$, which is a contradiction. Hence we have
  produced a non-terminating, ascending sequence of ideals
  \begin{displaymath}
    (e_1) \subsetneq (e_1, e_2) \subsetneq (e_1, e_2, e_3) \subsetneq \dots,
  \end{displaymath}
  contradicting the Noetherian hypothesis.

% proving that $A$ cannot be Noetherian.

  Since $A$ has only finitely many idempotents, there is a coid $\E$ of
  maximal length. If $\E'$ is any coid, we know that $l(\E \otimes \E') \ge
  l(\E)$. By the maximality of $\E$, it must be an equality, implying $\E
  \otimes \E' = \E$. Furthermore, $l(\E') \le l(\E \otimes \E') = l(\E)$, with
  equality if and only if $\E' = \E$. Hence $\E$ is the unique coid of maximal
  length. Moreover, $\E$ is a refinement of $\E'$, so any coid is obtained
  from $\E$ by ``grouping'' some of its elements as in lemma
  \ref{lem:productcoid}. In particular, if $e \ne 1$ is any idempotent, then
  $\{ e, 1-e \}$ can be refined to $\E = \{ e_1, \dots, e_n \}$, implying that
  there is a non-empty subset $I \subseteq \{ 1, \dots, n \}$ such that $e =
  \sum_{i \in I} e_i$.

  To prove (c), assume that $A$ is Artinian, and let $a \in A$. Since
  \begin{displaymath}
    (1) \supseteq (a) \supseteq (a^2) \supseteq (a^3) \supseteq \dots
  \end{displaymath}
  becomes stationary, there is an $n \ge 0$ such that $(a^n) = (a^{n+1})$.
  Hence there exists $b \in A$ such that $a^n = b a^{n+1}$. It follows that
  $a^{n+k} b^k = a^n$ for all $k \ge 1$, and therefore, $(ab)^{2n} = (ab)^n$.
  If $(ab)^n = 0$, then $a^n = a^{2n} b^n = 0$. Thus either $a$ is nilpotent,
  or $(ab)^n \ne 0$ is idempotent.

% Or $a^{n^2} = (a^n)^n = a^{n(n+1)} b^n = 0$.

  The ring $A_i = e_i A$ contains no non-trivial idempotents because $\{ e_1,
  \dots, e_n \}$ is maximal. Let $P \subseteq A_i$ be a prime ideal.
  Obviously, $P$ contains all nilpotents. But if $a \in A_i$ is not nilpotent,
  then we have just proven that $a$ must be invertible. Thus
  \begin{displaymath}
    P = A_i^{\nil} = \{ a \in A_i \suchthat \text{$a$ is nilpotent} \}.
  \end{displaymath}

  Clearly, an ideal $P \subseteq A = \dsum_{i=1}^n A_i$ is prime if and only
  if $P = P_1 \dsum \dots \dsum P_n$ and there exists $j$ such that $P_j$ is a
  prime ideal in $A_j$ and $P_i = A_i$ for all $i \ne j$. Since $A_j$ has a
  unique prime ideal, it follows that $P_j = A_j^{\nil}$. Thus $A$ has exactly
  $n$ prime ideals.
\end{proof}

\begin{rem}
  Continuing with the notation of the proof of part (c), we see that $A_P \iso
  (A_j)_{P_j} \iso A_j$. Hence the decomposition $A = \dsum_{i=1}^n A_i$ is
  the one that is obtained in \cite[section 2.4]{Eis} using filtrations and
  localizations.
  
  Note that the ideal $A_i^{\nil}$ is nilpotent. Since $A_i$ is Noetherian,
  $A_i^{\nil}$ is finitely generated, say by $a_1, \dots, a_q$. Since every
  $a_k$ is nilpotent, there exists $m_k$ such that $a_k^{m_k} = 0$. The ideal
  $(A_i^{\nil})^m$ is generated by products $\sprod_{j=1}^m \bigl(
  \sum_{k=1}^q c_{jk} a_k \bigr)$. When $m > \sum_{k=1}^q (m_k-1)$, every
  monomial in the expansion is necessarily zero. Thus the product is zero,
  proving that $A_i^{\nil}$ is a nilpotent ideal.
\end{rem}

\begin{rem}
  Note that the commutativity of $A$ in (b) is necessary. Indeed, $\Mat_\K
  (r,r)$ contains infinitely many idempotents when $r \ge 2$ and $\K$ is
  infinite. For instance,
  \begin{math}
    A = \bigl(
    \begin{smallmatrix}
      1 & a \\ 0 & 0
    \end{smallmatrix}
    \bigr)
  \end{math}
  is idempotent for all $a \in \K$.
\end{rem}

In this paper, when we apply proposition \ref{prop:idempot}, the ring $A$ will
usually be a matrix algebra $M$. In this case, the idempotents in $M$ are
closely related to the eigenspaces of $M$. Before we prove this, we give some
definitions.

Let $M$ be a commutative subalgebra $M \subseteq \Mat_\K (r,r)$, and assume
that $M$ contains the identity matrix $I$. We say that $v \in V = \K^r$ is an
\emph{eigenvector for $M$} if it is an eigenvector for all $A \in M$, that is,
if for every $A \in M$ there exists $\lambda_A \in \K$ such that $Av =
\lambda_A v$. Obviously, $v=0$ is an eigenvector.

Fix an eigenvector $v \ne 0$. Then $Av = \lambda_A v$ determines $\lambda_A$
uniquely. Consider the map $\lambda : M \to \K$ defined by $\lambda (A) =
\lambda_A$. Let $A,B \in M$. Since $M$ is a $\K$-algebra, we get $aA+bB \in M$
for all $a,b \in \K$, and $AB \in M$. It follows that
\begin{displaymath}
  \lambda (aA+bB) v = (aA+bB) v = aAv + bBv = \bigl( a \lambda (A) + b \lambda
  (B) \bigr) v.
\end{displaymath}
% = a \lambda (A) v + b \lambda (B) v
Since $v \ne 0$, this implies that $\lambda (aA+bB) = a \lambda (A) + b
\lambda (B)$, and similarly, $\lambda (AB) = \lambda (A) \lambda (B)$.
Moreover, $\lambda (I) = 1$. Thus $\lambda$ is a homomorphism of
$\K$-algebras. We call $\lambda$ an \emph{eigenvalue function} for $M$.

For every homomorphism $\lambda : M \to \K$ of $\K$-algebras, we define
\begin{displaymath}
  U_\lambda = \{ v \in V \suchthat Av = \lambda (A) v \text{ for all } A \in M
  \}.
\end{displaymath}
Clearly, $\lambda$ is an eigenvalue function for $M$ if and only if $U_\lambda
\ne 0$. $U_\lambda$ is the \emph{eigenspace} associated to $\lambda$. Let
$U_\lambda^0 = 0$. Define $U_\lambda^k$ for $k \ge 1$ inductively by
\begin{displaymath}
  U_\lambda^k = \bigl\{ v \in V \,\big\vert\: \bigl( A - \lambda (A) I \bigr)
  \mspace{1mu} v \in U_\lambda^{k-1} \text{ for all } A \in M \bigr\}.
\end{displaymath}
In particular, $U_\lambda^1 = U_\lambda$, the eigenspace associated to
$\lambda$. Obviously, the sequence $0 \subseteq U_\lambda^1 \subseteq
U_\lambda^2 \subseteq \dots$ must stabilize since $V$ is of finite dimension.
Define $V_\lambda = \sum_{k \ge 0} U_\lambda^k$, that is, $V_\lambda =
U_\lambda^k$ for all $k \gg 0$. We call $V_\lambda$ the \emph{generalized
  eigenspace} associated to $\lambda$.

The following proposition is a spectral theorem for $M$.

\begin{prop} \label{prop:eigen}
  Let $M \subseteq \Mat_\K (r,r)$ be a commutative subalgebra containing the
  identity matrix $I$.
  \begin{enumerate}
    \setlength{\itemsep}{2pt}
    \setlength{\parskip}{0pt}
    \renewcommand{\theenumi}{\alph{enumi}}
    \renewcommand{\labelenumi}{\normalfont(\theenumi)}
  \item $M$ has a unique maximal complete set of orthogonal idempotents $\{
    E_i \}_{i=1}^n$.
  \item $M_i = E_i M$ is local Artinian, and its unique prime ideal is
    \begin{displaymath}
      M_i^{\nil} = \{ A \in M_i \suchthat A \text{ is nilpotent} \}.
    \end{displaymath}
  \item $M = M_1 \dsum \dots \dsum M_n$.
  \item $\K^r = \im E_1 \dsum \dots \dsum \im E_n$.
  \item Let $I = \{ i \suchthat M_i = \langle E_i \rangle \dsum M_i^{\nil}
    \}$. There are exactly $|I|$ homomorphism $M \to \K$ of $\K$-algebras.
    Indeed, for each $i \in I$, the $\K$-linear map $\lambda_i : M \to \K$
    defined by $\lambda_i (E_i) = 1$ and $\lambda_i (A) = 0$ for all $A \in
    M_i^{\nil} \dsum \bigl( \dsum_{j \ne i} M_j \bigr)$ is a homomorphism of
    $\K$-algebras, and there are no others. Each $\lambda_i$ is an eigenvalue
    function, and $V_{\lambda_i} = \im E_i$.
  \item $M_i = \langle E_i \rangle \dsum M_i^{\nil}$ for all $i$ if and only
    if $\K$ contains every eigenvalue of each $A \in M$.
  \end{enumerate}
\end{prop}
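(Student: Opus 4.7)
The plan is to dispatch parts (a), (b), (c) as direct consequences of Proposition \ref{prop:idempot}: since $M$ is a finite-dimensional commutative $\K$-algebra with unity, it is both Noetherian and Artinian, so part (b) of that proposition supplies the unique maximal coid $\{E_i\}_{i=1}^n$, part (c) gives that each $M_i = E_i M$ is local Artinian with maximal ideal $M_i^{\nil}$, and part (a) gives the ring decomposition $M = \bigoplus_i M_i$. Part (d) then follows by applying the $E_i$ to the identity $v = Iv = \sum_i E_i v$: using $E_i E_j = \delta_{ij} E_i$, I observe that $\im E_i$ equals the fixed points $\{w : E_i w = w\}$, so applying $E_j$ to any relation $\sum v_i = 0$ with $v_i \in \im E_i$ forces $v_j = 0$.

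For part (e), any $\K$-algebra map $\lambda : M \to \K$ sends each $E_i$ to an idempotent in $\K$, hence to $0$ or $1$, and orthogonality together with $\sum_i \lambda(E_i) = 1$ forces exactly one index $i$ to satisfy $\lambda(E_i) = 1$. Then $\lambda$ vanishes on $\bigoplus_{j \neq i} M_j$, and since nilpotents must map to $0$ in a field, $\lambda$ also vanishes on $M_i^{\nil}$. Hence $\lambda$ factors through the local field $M_i / M_i^{\nil}$, and a $\K$-algebra map from this field to $\K$ exists iff $M_i / M_i^{\nil} = \K$, i.e., iff $M_i = \langle E_i \rangle \oplus M_i^{\nil}$; this pins down the homomorphisms as exactly the $\lambda_i$ with $i \in I$. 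To prove $V_{\lambda_i} = \im E_i$, I would show that for $v \in \im E_i$ and $A \in M$, one has $(A - \lambda_i(A) I) v = n_A v$ where $n_A \in M_i^{\nil}$ is the nilpotent part of $A$ inside $M_i$; iterated application lands in $(M_i^{\nil})^k \im E_i = 0$ for large $k$, so $\im E_i \subseteq V_{\lambda_i}$. Conversely, I argue by induction on $k$ that $U_{\lambda_i}^k \subseteq \im E_i$: the base case uses $(E_j - 0) v = E_j v = 0$ for $j \ne i$ to conclude $v = E_i v$; the inductive step uses that $E_j v$ lies in both $\im E_j$ and $U_{\lambda_i}^{k-1} \subseteq \im E_i$, forcing $E_j v = 0$.

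For (f), the forward direction is a direct calculation: writing any $A \in M$ as $\sum_i (c_i E_i + n_i)$ with $c_i \in \K$ and $n_i \in M_i^{\nil}$, the block decomposition from (d) shows the characteristic polynomial of $A$ factors as $\prod_i (X - c_i)^{\dim \im E_i}$, placing all eigenvalues in $\K$. For the converse I would proceed by contradiction: if some $A \in M_i$ lies outside $\langle E_i \rangle + M_i^{\nil}$, then its minimal polynomial in the local ring $M_i$ (with unit $E_i$) has degree $\ge 2$; if every eigenvalue of $A$ lies in $\K$, this polynomial splits as $\prod_k (X - \mu_k)^{e_k}$ over $\K$, and if there were only one distinct root $\mu$ then $A - \mu E_i$ would be nilpotent, pushing $A$ back into $\langle E_i \rangle + M_i^{\nil}$. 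So $s \geq 2$ distinct roots $\mu_k \in \K$ occur, and the Chinese Remainder Theorem applied inside the commutative ring $M_i$ produces $s$ orthogonal nonzero idempotents $p_k(A) \in M_i \subseteq M$ summing to $E_i$, a genuine refinement of the maximal coid contradicting (a).

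The most delicate step will be the identification $V_{\lambda_i} = \im E_i$ in (e), which requires carefully unwinding the inductive definition of generalized eigenspaces and exploiting the nilpotency of $M_i^{\nil}$ as an ideal; the CRT argument in the converse of (f) is the other place requiring care, to confirm that the produced idempotents $p_k(A)$ are nonzero (ensured by the distinctness of the $\mu_k$) and do refine $\{E_i\}$ rather than merely rearranging it.
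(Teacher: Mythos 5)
Your proposal is correct, and for parts (a)--(e) and the forward direction of (f) it follows essentially the same route as the paper: (a)--(c) are quoted from Proposition \ref{prop:idempot}, (d) is the observation that $v \mapsto (E_i v)$ and $(v_i) \mapsto \sum v_i$ are inverse, the classification of homomorphisms in (e) comes down to $\lambda(E_j) \in \{0,1\}$ plus vanishing on nilpotents (the paper phrases this via $\ker\lambda$ being a maximal ideal, but the content is identical), and $V_{\lambda_i} = \im E_i$ is proved in both cases by the two inclusions, using that $M_i^{\nil}$ is a nilpotent ideal acting on $\im E_i$.

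The one genuinely different step is the converse of (f). The paper exploits locality directly: given $A \in M_i$ with an eigenvalue $\lambda \in \K$ on $\im E_i$, the element $A - \lambda E_i$ is a non-unit of the local ring $M_i$, hence lies in the maximal ideal $M_i^{\nil}$, and $A \in \langle E_i \rangle \dsum M_i^{\nil}$ immediately. You instead argue by contradiction through the Chinese Remainder Theorem: if $A \notin \langle E_i \rangle \dsum M_i^{\nil}$, its minimal polynomial over $\K$ (computed in $M_i$, equivalently for $A$ restricted to $\im E_i$) splits with at least two distinct roots, and CRT in $\K[A]$ produces $s \ge 2$ nonzero orthogonal idempotents summing to $E_i$, yielding a coid of length $n+s-1 > n$ and contradicting the maximality in (a). Both arguments are valid; the paper's is shorter, while yours makes explicit the mechanism (new idempotents) by which the failure of $M_i = \langle E_i \rangle \dsum M_i^{\nil}$ would be detected, at the cost of having to justify that the minimal polynomial of $A$ in $M_i$ coincides with that of $A|_{\im E_i}$ and that its roots are among the matrix eigenvalues of $A$, hence lie in $\K$ by hypothesis.
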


% Every homomorphism $\lambda : M \to \K$ of $\K$-algebras is an eigenvalue
% function.

\begin{proof}
  Since $M$ has finite dimension as a $\K$-vector space, it is Artinian.
  Hence (a), (b) and (c) follow immediately from proposition
  \ref{prop:idempot}.

  To prove (d), that is, $\K^r = \im E_1 \dsum \dots \dsum \im E_n$, it is
  enough to note that $v \mapsto (E_1 v, \dots, E_n v)$ and $(v_1, \dots, v_n)
  \mapsto \sum_{i=1}^n v_i$ are $\K$-linear maps and inverses of each other.

% For all $i$, let $V_i = \im E_i = \{ E_i (v) \suchthat v \in V \}$. Since
% $\sum_i E_i = I$, it follows that $V = \K^r = \sum_i V_i$. Pick $v \in V_i
% \isect ( \sum_{j \ne i} V_j )$. Since $v \in V_i$, there exists $u_i \in V$
% such that $v = E_i (u_i)$. Since $v \in \sum_{j \ne i} V_j$, there are $u_j
% \in V$, $j \ne i$, with $v = \sum_{j \ne i} E_j (u_j)$. But then
% \begin{displaymath}
%   v = E_i (u_i) = E_i^2 (u_i) = E_i (v) = E_i \biggl( \sum_{j \ne i} E_j
%   (u_j) \biggr) = \sum_{j \ne i} E_i E_j (u_j) = 0.
% \end{displaymath}
% Therefore, $V_i \isect ( \sum_{j \ne i} V_j ) = 0$ for all $i$, and $V =
% \dsum_i V_i$. This proves (d).

  Clearly, each $\lambda_i$ is a homomorphism of $\K$-algebras. If $\lambda :
  M \to K$ is any $\K$-algebra homomorphism onto some subfield $K$ of
  $\bar{\K}$, then $\ker \lambda$ must be a maximal ideal in $M$. Thus $\ker
  \lambda = M_i^{\nil} \dsum \bigl( \dsum_{j \ne i} M_j \bigr)$ for some $i$.
  If $K = \K$, then this kernel must have codimension one as a $\K$-vector
  subspace of $M$, which implies that $M_i = \langle E_i \rangle \dsum
  M_i^{\nil}$. Obviously, $\lambda (E_i) = \lambda \bigl( \sum_j E_j \bigr) =
  1$, hence $\lambda = \lambda_i$.

  To prove that $\lambda_i$ is an eigenvalue function, we must find a nonzero
  $v \in \K^r$ such that $Av = \lambda_i (A) v$ for all $A \in M$. We shall in
  fact prove that $V_{\lambda_i} \ne 0$, since this implies $U_{\lambda_i} \ne
  0$. Since $E_i \ne 0$, it is enough to prove that $V_{\lambda_i} = \im E_i$.

  Let $v \in U_{\lambda_i}^k$. For every $j \ne i$ we have $\lambda_i (E_j) =
  0$, and thus $E_j v \in U_{\lambda_i}^{k-1}$. Then $E_j^k v \in
  U_{\lambda_i}^0 = 0$ by induction. But $E_j^k = E_j$, hence $v \in \ker
  E_j$. From $v \in \isect_{j \ne i} \ker E_j$, it follows that $v = \sum_j
  E_j v = E_i v \in \im E_i$. We also note for all $j \ne i$ and $A \in M_j$
  that $A v = A E_j v = 0$. Thus
  \begin{align*}
    U_{\lambda_i}^k & = \biggl\{ v \in \K^r \,\bigg\vert\: \prod_{j=1}^k
    \bigl( A_j - \lambda_i (A_j) I \bigr) v = 0 \text{ for all } A_1, \dots,
    A_k \in M \biggr\} \\
    & = \biggl\{ v \in \im E_i \,\bigg\vert\: \biggl( \prod_{j=1}^k A_j
    \biggr) (v) = 0 \text{ for all } A_1, \dots, A_k \in M_i^{\nil} \biggr\}
    \\
    & = \biggl\{ v \in \im E_i \,\bigg\vert\: A v = 0 \text{ for all } A \in
    \bigl( M_i^{\nil} \smash{\bigr)}^k \biggr\}.
  \end{align*}
  Since $M_i^{\nil}$ is nilpotent, this implies $V_{\lambda_i} = \im E_i$, and
  finishes the proof of (e).

  To prove (f), assume that $M_i = \langle E_i \rangle \dsum M_i^{\nil}$ for
  all $i$. Pick $A \in M$. For all $i$, since $E_i A \in M_i$, there exists
  $\lambda_i \in \K$ such that $E_i A - \lambda_i E_i \in M_i^{\nil}$. Hence
  there exists $m_i \ge 1$ such that $(E_i A - \lambda_i E_i)^{m_i} = 0$. It
  follows that $E_j \sprod_{i=1}^n (A - \lambda_i I)^{m_i} = 0$ for all $j$.
  Therefore, $\sprod_{i=1}^n (A - \lambda_i I)^{m_i} = 0$. Thus the minimal
  polynomial of $A$ divides $\sprod_{i=1}^n (\lambda - \lambda_i)^{m_i}$.
  Hence $\lambda_1, \dots, \lambda_n$ are all of $A$'s eigenvalues, and they
  are all in $\K$.

% In fact, if each $m_i$ is chosen minimally, then the minimal polynomial of
% $A$ is $\sprod_{i=1}^n (\lambda - \lambda_i)^{m_i}$.

  Conversely, let $A \in M_i$. $A$ has at least one eigenvalue $\lambda \in
  \bar{\K}$, and by assumption, $\lambda \in \K$. Thus $A - \lambda E_i \in
  M_i$ is not invertible. Since $M_i$ is local, $A - \lambda E_i$ must be
  nilpotent, i.e. $A \in \langle E_i \rangle \dsum M_i^{\nil}$. Since this is
  true for every $A \in M_i$, it follows that $M_i = \langle E_i \rangle \dsum
  M_i^{\nil}$.
  % And it follows that $\lambda = \lambda_i (A)$.
\end{proof}

% Since $M_i^{\nil}$ is maximal, $M_i/M_i^{\nil}$ is a finite field extension
% of $\K$. If $\K = \bar{\K}$, it must be $\K$, hence $M_i = \langle E_i
% \rangle \dsum M_i^{\nil}$ as a $\K$-vector space.

\begin{rem} \label{rem:diagEi}
  If $\{ E_1, \dots, E_n \}$ is a coid in $\Mat_\K (r,r)$, then $E_1, \dots,
  E_n$ can easily be diagonalized simultaneously. Indeed, let $s_i = \rank
  E_i$ for all $i$, and
  \begin{displaymath}
    \J_i = \biggl\{ j \in \Z \,\bigg\vert\: \sum_{k<i} s_k < j \le \sum_{k \le
      i} s_k \biggr\}.
  \end{displaymath}
  Choose a basis $\{ v_j \suchthat j \in \J_i \}$ for $\im E_i$. Since $\K^r =
  \dsum_{i=1}^n \im E_i$, it follows that $\sum_{i=1}^n s_i = r$, and that $\{
  v_1, \dots, v_r \}$ is a basis for $\K^r$. Hence $\{ \J_1, \dots, \J_n \}$
  is a partition of $\{ 1, \dots, r \}$, and $P = [ v_1, \dots, v_r]$ is
  invertible.

  Note that $E_i^2 = E_i$ is equivalent to $E_i v = v$ for all $v \in \im
  E_i$. Hence $E_i v_j = v_j$ for all $j \in \J_i$. Similarly, since $E_i E_j
  = 0$ for all $i \ne j$, we get $E_i v_j = 0$ for all $j \notin \J_i$. It
  follows that
  \begin{displaymath}
%% CHANGED 2013-07-13: v_1 -> v_r
    P^{-1} E_i P = \bigl[ P^{-1} E_i v_1, \dots, P^{-1} E_i v_r \bigr] =
    \begin{pmatrix}
      0 & 0 & 0 \\
      0 & I & 0 \\
      0 & 0 & 0
    \end{pmatrix}\!,
  \end{displaymath}
  where $I$ is the $s_i \times s_i$ identity matrix. Thus every $E'_i = P^{-1}
  E_i P$ is a diagonal matrix, with diagonal entries $(E'_i)_{jj} = 1$ if $j
  \in \J_i$ and $(E'_i)_{jj} = 0$ otherwise.

  Also note that a matrix $A \in \Mat_\K (r,r)$ commutes with every $E'_i$, $i
  = 1, \dots, n$, if and only if $A$ can be written in block diagonal form
  \begin{displaymath}
    A = \left(
      \begin{array}{@{}c@{\mspace{12mu}}c@{\mspace{12mu}}c@{}}
        A_1 & 0 & 0 \\
        0 & \smash{\ddots} & 0 \\
        0 & 0 & A_n
      \end{array}
    \right)\!,
  \end{displaymath}
  where $A_i$ is an $s_i \times s_i$ matrix. Furthermore,
  \begin{displaymath}
    E'_i \Mat_\K (r,r) E'_i = \left\{ \left.
        \begin{pmatrix}
          0 & 0 & 0 \\
          0 & A_i & 0 \\
          0 & 0 & 0
        \end{pmatrix}
        \:\right\vert\: A_i \in \Mat_\K (s_i,s_i) \right\},
  \end{displaymath}
  hence
  \begin{math}
    \bigl\{ A \in \Mat_\K (r,r) \,\big\vert\: A E'_i = E'_i A \text{ for all }
    i \bigr\} = \dsum_{i=1}^n E'_i \Mat_\K (r,r) E'_i.
  \end{math}
\end{rem}

\section{Idempotents and regular splittings} \label{section:idempot}

We are now ready to prove that the idempotents in $M_f$ determine the regular
splittings of $f$, and how they do it. The bridge between $M_f$ and the
additive components of $f$ is the map $\gamma_f$. Recall that $\gamma_f = M_f
\to \cR_d$ sends $A \in M_f$ to the unique $g \in \cR_d$ that satisfies $\p g
= A \p f$ ($d>0$). This map connects the idempotents in $M_f$ to the additive
components of $f$, and establishes a bijection between the complete sets of
orthogonal idempotents in $M_f$ and the regular splittings of $f$.

\begin{thm} \label{thm:regsplit}
  Assume $d \ge 2$, $f \in \cR_d$ and $\ann_R (f)_1 = 0$. Let $\Coid (M_f)$ be
  the set of all complete sets $\{ E_1, \dots, E_n \}$ of orthogonal
  idempotents in $M_f$, and let
  \begin{displaymath}
    \Reg (f) = \bigl\{ \{ g_1, \dots, g_n \} \,\big\vert\: f = g_1 + \dots +
      g_n \text{ is a regular splitting of } f \bigr\}.
  \end{displaymath}
  The map $\{ E_i \}_{i=1}^n \mapsto \{ g_i = \gamma_f (E_i) \}_{i=1}^n$
  defines a bijection
  \begin{displaymath}
    \Coid (M_f) \to \Reg (f).
  \end{displaymath}
  In particular, there is a unique maximal regular splitting of $f$ when $d
  \ge 3$.
\end{thm}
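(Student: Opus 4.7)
The plan is to establish the bijection by constructing both directions explicitly, using Proposition \ref{prop:eigen}(d) as the key tool, and to derive the ``unique maximal'' statement from Proposition \ref{prop:idempot}(b).

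Two basic facts will be used repeatedly. Since $\ann_R(f)_1 = 0$ implies $\beta_{11} = 0$, Lemma \ref{lem:Mf}(c) gives $\ker \gamma_f = 0$, so $\gamma_f$ is injective. Also $\gamma_f(I) = f$ by the uniqueness clause of Lemma \ref{lem:main2}. Now let $\{E_1,\ldots,E_n\}$ be a coid in $M_f$ and set $g_i := \gamma_f(E_i)$. The $g_i$ are nonzero (by injectivity) and $\sum_i g_i = \gamma_f(\sum_i E_i) = \gamma_f(I) = f$ by $\K$-linearity of $\gamma_f$. The critical step is to show that the subspaces $R_{d-1}(g_i) \subseteq \cR_1$ are in direct sum. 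For this I would identify $R_{d-1}(g_i)$ explicitly: by Lemma \ref{lem:ann}(b) it equals $\ann_R(g_i)_1^\perp$, and using $\p g_i = E_i \p f$ one computes $v^\T \p g_i = (E_i^\T v)^\T \p f$ for any $v \in \K^r$. Hence $v^\T \p \in \ann_R(g_i)_1$ iff $E_i^\T v \in \ann_R(f)_1 = 0$, so under $R_1 \iso \K^r$ the subspace $\ann_R(g_i)_1$ corresponds to $\ker E_i^\T$, and dually $R_{d-1}(g_i)$ corresponds under $\cR_1 \iso \K^r$ to $(\ker E_i^\T)^\perp = \im E_i$. Proposition \ref{prop:eigen}(d) then yields $\K^r = \bigoplus_i \im E_i$, which is exactly the required direct sum condition.

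For surjectivity I start with a regular splitting $f = g_1 + \cdots + g_n$. By Remark \ref{rem:regsplit} choose $P \in \GL_r$ and disjoint subsets $\J_i \subseteq \{1,\ldots,r\}$ such that $h_i := \phi_P(g_i) \in \K[\{x_j : j \in \J_i\}]^{DP}$; the hypothesis $\ann_R(f)_1 = 0$ combined with Remark \ref{rem:basic} forces $\sum_i |\J_i| = r$, so $\{\J_i\}$ partitions $\{1,\ldots,r\}$. Let $E_i'$ be the diagonal matrix with $1$'s exactly at positions in $\J_i$. Since $x_j$ with $j \in \J_i$ occurs only in $h_i$, a direct computation gives $\p h_i = E_i' \p \phi_P(f)$, making $\{E_i'\}$ a coid in $M_{\phi_P(f)}$ with $\gamma_{\phi_P(f)}(E_i') = h_i$. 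Transporting back via the compatibilities $A \in M_f \iff PAP^{-1} \in M_{\phi_P(f)}$ and $\gamma_{\phi_P(f)}(PAP^{-1}) = \phi_P(\gamma_f(A))$, the matrices $E_i := P^{-1} E_i' P$ form a coid in $M_f$ mapping to $\{g_i\}$.

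Injectivity of $\{E_i\} \mapsto \{g_i\}$ is immediate from injectivity of $\gamma_f$. Finally, when $d \ge 3$, Proposition \ref{prop:Mf} makes $M_f$ a commutative finite-dimensional (hence Artinian) $\K$-algebra, so Proposition \ref{prop:idempot}(b) furnishes a unique maximal coid, corresponding under the bijection to a unique maximal regular splitting. The main obstacle is the identification $R_{d-1}(g_i) \leftrightarrow \im E_i$, which is where the hypothesis $\ann_R(f)_1 = 0$ is genuinely used: it is precisely what lets us pass from $E_i^\T v \in \ann_R(f)_1$ to $E_i^\T v = 0$.
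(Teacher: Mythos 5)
Your proof is correct, and its overall architecture (the correspondence $\p g_i = E_i \p f$, the identification $R_{d-1}(g_i) \leftrightarrow \im E_i$, injectivity of $\gamma_f$ from $\beta_{11}=0$, and Proposition \ref{prop:idempot}b for the maximal splitting) is the same as the paper's. The forward direction is essentially identical: the paper computes $R_{d-1}(g_i) = \{\p D g_i\} = \im E_i$ directly from $R_{d-1}(f)=\cR_1$, while you reach the same identification dually via $\ann_R(g_i)_1 \leftrightarrow \ker E_i^\T$ and $(\ker E_i^\T)^\perp = \im E_i$; both then invoke $\K^r = \dsum_i \im E_i$. Where you genuinely diverge is surjectivity. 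The paper stays intrinsic: it obtains $E_i \in M_f$ with $\p g_i = E_i\p f$ from lemmas \ref{lem:basic} and \ref{lem:main1}, gets $\sum_i E_i = I$ from $\p f = \sum_i E_i\p f$, and then derives orthogonality $E_iE_j=0$ by rearranging $E_jv = \sum_k E_kE_jv$ so that $E_iE_jv$ lands in $\im E_i \isect \sum_{k\ne i}\im E_k = 0$ --- notably without ever choosing coordinates. You instead rectify the splitting via Remark \ref{rem:regsplit}, read off the obvious diagonal idempotents $E_i'$ for $\phi_P(f)$, and conjugate back using $\gamma_{\phi_P(f)}(PAP^{-1}) = \phi_P(\gamma_f(A))$. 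Your route is slightly longer (it needs the partition property $\sum_i|\J_i|=r$ and the $\phi_P$-equivariance of $M_f$ and $\gamma_f$, both of which you correctly justify from Remark \ref{rem:basic} and the transformation rule $\p(\phi_P g) = P\phi_P(\p g)$), but it makes the idempotents completely explicit; the paper's route buys a coordinate-free argument and, as a byproduct, the uniqueness of each $E_i$ given $g_i$.
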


% \begin{align*}
%   \Coid (M_f) & = \{ \, \text{all complete sets } \{ E_i \}_{i=1}^n
%   \text{ of orthogonal idempotents in } M_f \}, \\
%   \Reg (f) & = \{ \{ g_1, \dots, g_n \} \suchthat f = g_1 + \dots + g_n
%   \text{ is a regular splitting of } f \}.
% \end{align*}
% The map $\{ E_i \}_{i=1}^n \mapsto \{ g_i = \gamma_f (E_i) \}_{i=1}^n$
% defines a bijection $\Coid (M_f) \to \Reg (f)$.

% Assume $d \ge 2$ and $f \in \cR_d$. Let $\cS = \K [ R_{d-1} (f) ]^{DP}$, and
% choose $S \subseteq R$ such that $S \iso \cS^*$. Let $\Coid (M_f^S)$ be the
% set of all complete sets of orthogonal idempotents in $M_f^S$, and let
% $\Reg (f) = \{ \{ g_1, \dots, g_n \} \suchthat f = g_1 + \dots + g_n \text{
% is a regular splitting of } f \}$. The map $\{ E_i \}_{i=1}^n \mapsto \{
% \gamma_f (E_i) \}_{i=1}^n$ defines a bijection $\Coid (M_f^S) \to \Reg (f)$.
% In particular, there is a unique maximal regular splitting of $f$ when $d
% \ge 3$.

% $g$ is an additive component of $f$ iff $f = g + (f-g)$ is an additive
% splitting, iff there is an idempotent $E \in M_f$ such that $\p g = E \p f$.

Similar to our usage in the last section, when we here say that there is a
unique maximal regular splitting of $f$, we mean that there is a unique
regular splitting of maximal length and that every other regular splitting is
obtained from the maximal one by ``grouping'' some of its summands, cf.
proposition \ref{prop:idempot}b.

\begin{proof}
  First, note that $\ann_R (f)_1 = 0$ is equivalent to $R_{d-1} (f) = \cR_1$,
  that is, $\{ \p Df \suchthat D \in R_{d-1} \} = \K^r$. Hence, if $\p g_i =
  E_i \p f$, then
  \begin{displaymath}
    \{ \p Dg_i \suchthat D \in R_{d-1} \} = \{ E_i \p Df \suchthat D \in
    R_{d-1} \} = \im E_i.
  \end{displaymath}
  Since $\p (v^\T x) = v$, this implies that
  \begin{equation} \label{eq:vTx}
    R_{d-1} (g_i) = \{ v^\T x \suchthat v \in \im E_i \} \subseteq \cR_1.
  \end{equation}
  (Recall that $x$ denotes the column vector $x = [x_1, \dots, x_r]^\T$.)

  Assume that $\{ E_1, \dots, E_n \} \subseteq M_f$ is a complete set of
  orthogonal idempotents. For each $i$, let $g_i = \gamma_f (E_i) \in \cR_d$,
  that is, $\p g_i = E_i \p f$. Note that $g_i \ne 0$ because $E_i \ne 0$ and
  $\ann_R (f)_1 = 0$. Since $\sum_{i=1}^n E_i = I$, we get $\sum_{i=1}^n g_i =
  f$. Furthermore, for all $i$, equation \eqref{eq:vTx} implies that
  \begin{equation} \label{eq:indep}
    R_{d-1} (g_i) \isect \biggl( \sum_{j \ne i} R_{d-1} (g_j) \biggr) =
    \biggl\{ v^\T x  \,\bigg\vert\: v \in \im E_i \isect \biggl( \sum_{j \ne
      i} \im E_j \biggr) \biggr\}
  \end{equation}
  But the $E_i$'s are orthogonal idempotents, thus $\im E_i \isect \bigl(
  \sum_{j \ne i} \im E_j \bigr) = 0$ by proposition \ref{prop:eigen}d. Hence
  $f = g_1 + \dots + g_n$ is a regular splitting of $f$.

  Conversely, assume that $f$ splits regularly as $f = g_1 + \dots + g_n$. By
  lemmas \ref{lem:basic} and \ref{lem:main1} there exists for every $i$ a
  matrix $E_i \in M_f$ such that $\p g_i = E_i \p f$. $E_i$ is unique since
  $\ann_R (f)_1 = 0$, and $\gamma_f (E_i) = g_i$ by definition of $\gamma_f$.
  Furthermore, $\p f = \sum_{i=1}^n \p g_i = \sum_{i=1}^n E_i \p f$ implies
  $\sum_{i=1}^n E_i = I$.

  Because $f = \sum_i g_i$ is a regular splitting, we know for all $i$ that
  \begin{displaymath}
    R_{d-1} (g_i) \isect \biggl( \sum_{j \ne i} R_{d-1} (g_j) \biggr) = 0.
  \end{displaymath}
  Combined with equation \eqref{eq:indep}, this implies $ \im E_i \isect
  \bigl( \sum_{j \ne i} \im E_j \bigr) = 0$. For all $v \in \K^r$ and all $j$
  we know that $E_j v = \sum_{k=1}^n E_k E_j v$. For any $i \ne j$, we
  rearrange this equation and get $E_i E_j v = E_j (v - E_j v) - \sum_{k \ne
    i,j} E_k E_j v$. This is an element of $\im E_i \isect \bigl( \sum_{j \ne
    i} \im E_j \bigr)$, and must therefore be zero. Hence $E_i E_j v = 0$ for
  all $v \in \K^r$, implying $E_i E_j = 0$ for all $i \ne j$. This proves that
  $\{ E_1, \dots, E_n \}$ is a complete set of orthogonal idempotents in
  $M_f$.
  
  When $d \ge 3$, $M_f$ is a commutative $\K$-algebra, and has therefore a
  unique maximal complete set of orthogonal idempotents, by proposition
  \ref{prop:idempot}. It follows that $f$ has a unique regular splitting of
  maximal length, and that every other regular splitting of $f$ is obtained
  from the maximal one by ``grouping'' some of the summands.
\end{proof}

% If $\ann_R (f)_1 \ne 0$, then $\gamma_f (E_i)$ might be zero. Apart from
% this, the map is still surjective, but not injective.

% The proof when $d \ge 3$ is somewhat simpler: $\ann_R (f)_1 = 0$ implies
% that $M_f$ is a commutative $\K$-algebra. Therefore, $E_i E_j v = E_j E_i v
% \in \im E_i \isect \im E_j$ for all $v$. Since this intersection is zero, we
% get $E_i E_j = 0$ for all $i \ne j$.

\begin{rem}
  To sum up, theorem \ref{thm:regsplit} tells us that there is a
  correspondence between regular splittings $f = g_1 + \dots + g_n$ and
  complete sets of orthogonal idempotents $\{ E_1, \dots, E_n \} \subseteq
  M_f$ given by the equation $\p g_i = E_i \p f$. The correspondence is
  one-to-one because $\p g_i = E_i \p f$ determines $g_i$ uniquely given $E_i$
  since $d>0$, and it determines $E_i$ uniquely given $g_i$ because $\ann_R
  (f)_1 = 0$.
\end{rem}

\begin{rem}
  We want to point out that $d \ge 3$ is very different from $d=2$ when we
  work with regular splittings. If $f \in \cR_d$ and $d \ge 3$, then $M_f$
  contains a unique maximal complete set of orthogonal idempotents, and $f$
  has therefore a unique maximal splitting. This is in stark contrast to
  $d=2$, when the representation of $f$ as a sum of squares is far from
  unique. The explanation for this difference is that $M_f$ does not have a
  unique maximal complete set of orthogonal idempotents when $d=2$, and the
  reason for this is that $M_f$ is not closed under multiplication.
\end{rem}

Theorem \ref{thm:regsplit} is not as complete as we would like it to be. It
tells us how to find a regular splitting $f = \sum_{i=1}^n g_i$, but it does
not say how $M_{g_i}$ is related to $M_f$. This is something we would like to
know, since $M_f$ can contain matrices that are not idempotent. If these
matrices are not found in one of the $M_{g_i}$'s, it would mean that we loose
some information about $f$ (contained in $M_f$) when we pass to the additive
components $\{ g_1, \dots, g_n \}$.

Fortunately, this is not the case, as theorem \ref{thm:regsplitE} will tell
us. It would be nice if the relationship between $M_f$ and the $M_{g_i}$'s was
as simple as $M_f = \dsum_{i=1}^n M_{g_i}$. But it is not, because there is an
important difference between $f$ and the $g_i$'s. In theorem
\ref{thm:regsplit} we assumed $\ann_R (f)_1 = 0$, an assumption which was
justified by corollary \ref{cor:basic}. But if $f = g_1 + \dots + g_n$ is a
non-trivial regular splitting (i.e. $n \ge 2$), then necessarily $\ann_R
(g_i)_1 \ne 0$ for all $i$. This affects $M_{g_i}$, and we have to adjust for
this effect. Thus in order to state and prove theorem \ref{thm:regsplitE}, we
need to understand what happens to $M_f$ if $\ann_R (f)_1 \ne 0$. After the
adjustment, the simple relationship between $M_f$ and the $M_{g_i}$'s is in
fact restored.

\begin{rem} \label{rem:scs}
  In the following we will often choose a subspace $W \subseteq \cR_1$ and
  consider the divided power subalgebra $\cS = \K [W]^{DP} \subseteq \cR$.
  (The most important example is $W = R_{d-1} (f)$. If $\ann_R (f)_1 \ne 0$,
  then $W \subsetneq \cR_1$ and $\cS \subsetneq \cR$.) We note that $D(g) \in
  \cS$ for all $g \in \cS$ and $D \in R$. Thus for any subset $S \subseteq R$,
  the action of $R$ on $\cR$ restricts to an action of $S$ on $\cS$. We
  usually want a polynomial ring $S = \K [V]$ with $V \subseteq R_1$ acting as
  the dual of $\cS$ (i.e. $S \iso \cS^*$).

% Actually, if $V \subseteq R_1$, then by $\K [V]$ we mean the image of a
% polynomial ring $S$ in $\dim_\K V$ variables by an injection $S \to R$
% mapping $S_1$ onto $V_1$. (The map is not unique, but the image is.)

% Let $\cS \subseteq \cR$ be any subset. If $\cS$ itself is a divided power
% $\K$-algebra then $D(f) \in \cS$ for all $f \in \cS$ and $D \in R$. If $\cS$
% is a maximal subset of $\cR$ satisfying $D(f) \in \cS$ for all $f \in \cS$
% and $D \in R$, then $\cS$ is a divided power $\K$-algebra.

  To ensure that the choice of $V \subseteq R_1$ implies $S \iso \cS^*$, we
  need $V \iso W^*$. Note that $R_1 \iso W^\perp \dsum W^*$. Thus choosing $S
  = \K [V] \subseteq R$ such that $S \iso \cS^*$ with the action induced by
  $R$, is equivalent to choosing $V \subseteq R_1$ such that $R_1 = W^\perp
  \dsum V$. Note that $\cS \subseteq \cR$ determines the ideal $\ann_R \cS =
  \{ D \in R \suchthat Dg = 0 \text{ for all } g \in \cS \}$, which equals
  $(W^\perp)$, the ideal in $R$ generated by $W^\perp$. Since $R = (W^\perp)
  \dsum S$ as graded $\K$-vector spaces, $\cS$ determines $S$ only as a direct
  summand.
\end{rem}

% Note that $\cR \ne \cS \dsum (V^\perp)$ when $\chr \K > 0$ due to the ring
% structure in $\cR$, but still $\cR_e = \cS_e \dsum (S_e)^\perp$.

% There are many polynomial subalgebras of $R = \K [\p_1, \dots, \p_r]$ that
% are dual to $\cS = \K [x_1, \dots, x_s]^{DP}$. They are all related to $S =
% \K [\p_1, \dots, \p_s]$ by a base change of the form
% \begin{math}
%   \p \mapsto \left(
%     \begin{smallmatrix}
%       I & P \\
%       0 & I
%     \end{smallmatrix}
%   \right) \p.
% \end{math}
% This corresponds to the fact that $\cS$ is invariant under the dual base
% change of $\cR$,
% \begin{math}
%   x \mapsto \left(
%     \begin{smallmatrix}
%       I & 0 \\ -P & I
%     \end{smallmatrix}
%   \right) x.
% \end{math}

\begin{rem} \label{rem:Escs}
  Note that $E \in \Mat_\K (r,r)$ is idempotent if and only if $E$ acts as the
  identity on its image and $\K^r = \im E \dsum \ker E$. Hence specifying $E$
  is equivalent to choosing subspaces $\im E, \ker E \subseteq \K^r$ such that
  $\K^r = \im E \dsum \ker E$.

  A pair $(W \subseteq \cR_1, V \subseteq R_1)$ satisfying $W \dsum V^\perp =
  \cR_1$ determines an idem\-potent $E \in \Mat_\K (r,r)$ by the equations
  \begin{displaymath}
    \im E = \{ v \in \K^r \suchthat v^\T x \in W \} \quad \text{ and } \quad
    \ker E = \{ v \in \K^r \suchthat v^\T x \in V^\perp \}.
  \end{displaymath}
% I.e. $\im E = \p (W)$ and $\ker E = \p (V^\perp)$.
  Note that by remark \ref{rem:scs}, a pair $(W,V)$ satisfying $W \dsum
  V^\perp = \cR_1$ is equivalent to a pair $(\cS = \K [W]^{DP} \subseteq \cR,
  S = \K [V] \subseteq R)$ satisfying $S = \cS^*$.

  Conversely, an idempotent $E \in \Mat_\K (r,r)$ determines $\cS = \K
  [W]^{DP} \subseteq \cR$ and $S = \K [V] \subseteq R$ by the equations
  \begin{alignat*}{2}
    W & = \{ v^\T x \suchthat v \in \im E \} && = \{ x^\T Eu \suchthat u \in
    \K^r \} \subseteq \cR_1, \\
    V & = \{ v^\T \p \suchthat v \in \im E^\T \} && = \{ u^\T E \p \suchthat u
    \in \K^r \} \subseteq R_1.
  \end{alignat*}
% Note that $\im E = \Cols E$ and $\im E^\T = \Rows E$.
  We note that
  \begin{displaymath}
    V^\perp = \{ v^\T x \suchthat (u^\T E \p) (v^\T x) = u^\T E v = 0
    \:\forall\: u \in \K^r \} = \{ v^\T x \suchthat v \in \ker E \}.
  \end{displaymath}
  Since $E$ is idempotent, we know that $\K^r = \im E \dsum \ker E$. This
  implies that $W \dsum V^\perp = \cR_1$ and $W^\perp \dsum V = R_1$, and
  therefore $S \iso \cS^*$.
  
  Let $s = \rank E = \dim_\K V = \dim_\K W$. Choose a basis $\{ v_1, \dots,
  v_s \}$ for $\im E$, and a basis $\{ v_{s+1}, \dots, v_r \}$ for $\ker E$.
  Since $\im E \dsum \ker E = \K^r$, it follows that the matrix $P = [v_1,
  \dots, v_r]$ is invertible. Furthermore,
  \begin{displaymath}
    P^{-1} E P = P^{-1} [v_1, \dots, v_s, 0, \dots, 0] =
      \begin{pmatrix}
        I & 0 \\ 0 & 0
      \end{pmatrix}\!,
  \end{displaymath}
  where $I$ is the $s \times s$ identity matrix, cf. remark \ref{rem:diagEi}.
  The similarity transformation $E \mapsto P^{-1}EP$ corresponds to a change
  of variables in $\cR$ and $R$, transforming $\cS$ into $\cS = \K [x_1,
  \dots, x_s]^{DP}$ and $S$ into $S = \K [\p_1, \dots, \p_s]$, cf. proposition
  \ref{prop:MfE}.
\end{rem}

It is usually more convenient to specify an idempotent $E$ instead of
specifying a pair $(\cS = \K [W]^{DP}, S = \K [V])$ such that $R_1 = W^\perp
\dsum V$. Therefore, we will formulate most of the results in this section
using idempotents, and we will define and use $\cS$ and $S$ only when we have
to. If $f \in \cR$ and $\cS = \K [W]^{DP} \subseteq \cR$, then we will often
need to know when $f \in \cS$. Since $f \in \K [R_{d-1} (f)]^{DP}$, this is
equivalent to $R_{d-1} (f) \subseteq W$. The next lemma allows us to express
this in terms of the idempotent $E$.

\begin{lem} \label{lem:Epf}
  Assume $d>0$ and $f \in \cR_d$. Let $E \in \Mat_\K (r,r)$ be idempotent, and
  define $W = \{ v^\T x \suchthat v \in \im E \}$. Then
  \begin{enumerate}
    \setlength{\itemsep}{2pt}
    \setlength{\parskip}{0pt}
    \renewcommand{\theenumi}{\alph{enumi}}
    \renewcommand{\labelenumi}{\normalfont(\theenumi)}
  \item $R_{d-1} (f) \subseteq W$ if and only if $E \p f = \p f$,
  \item $R_{d-1} (f) = W$ if and only if $E \p f = \p f$ and $\rank E =
    \dim_\K R_{d-1} (f)$.
  \end{enumerate}
\end{lem}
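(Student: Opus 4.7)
The plan is to pass through the isomorphism $\K^r \iso \cR_1$ given by $v \mapsto v^\T x$, under which $\im E$ corresponds to $W$. The key preliminary is the identity
\[ Df \;=\; (D\p f)^\T x \qquad \text{for every } D \in R_{d-1}. \]
This follows by expanding $Df \in \cR_1$ in the dual basis $x_1,\ldots,x_r$, noting that the coefficient of $x_i$ is $\p_i(Df) = D(\p_i f) = (D\p f)_i$. Consequently $R_{d-1}(f)$ corresponds under the isomorphism to the subspace $U = \{D\p f \suchthat D \in R_{d-1}\} \subseteq \K^r$, so the condition $R_{d-1}(f) \subseteq W$ translates into $U \subseteq \im E$.

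For part (a), the forward direction is immediate: if $E\p f = \p f$, then since the constant matrix $E$ commutes entry-wise with the action of $D$, we have $D\p f = D(E\p f) = E(D\p f) \in \im E$ for every $D \in R_{d-1}$. For the converse, suppose every $D\p f$ lies in $\im E$. Set $v = E\p f - \p f \in \cR_{d-1}^r$. Because $E$ fixes $\im E$ pointwise, each entry $v_i \in \cR_{d-1}$ satisfies $D(v_i) = 0$ for all $D \in R_{d-1}$. Non-degeneracy of the pairing $R_{d-1} \times \cR_{d-1} \to \K$ then forces $v_i = 0$ for every $i$, hence $E\p f = \p f$.

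Part (b) reduces to a dimension count. By (a) the hypothesis $E\p f = \p f$ is equivalent to the inclusion $U \subseteq \im E$. Since the isomorphism identifies $U$ with $R_{d-1}(f)$, we have $\dim_\K U = \dim_\K R_{d-1}(f)$, so adding the condition $\rank E = \dim_\K R_{d-1}(f)$ upgrades $U \subseteq \im E$ to the equality $U = \im E$, i.e.\ $R_{d-1}(f) = W$.

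The only substantive step is the non-degeneracy argument in the converse of (a); everything else is bookkeeping once the identification $\K^r \iso \cR_1$ and the identity $Df = (D\p f)^\T x$ are in hand.
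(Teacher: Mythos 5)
Your proof is correct and follows essentially the same route as the paper's: both pass through the identification $\K^r \iso \cR_1$ (the paper via $h \mapsto \p h$, you via $v \mapsto v^\T x$), use idempotency of $E$ to convert containment in $\im E$ into being fixed by $E$, and invoke non-degeneracy of the pairing $R_{d-1} \times \cR_{d-1} \to \K$ to pass from $D(E\p f - \p f) = 0$ for all $D$ to $E\p f = \p f$. The paper leaves the non-degeneracy step implicit; you spell it out, which is the only difference.
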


\begin{proof}
  Clearly, $R_{d-1} (f) = \{ Df \suchthat D \in R_{d-1} \} \subseteq W$ if and
  only if $\{ \p Df \suchthat D \in R_{d-1} \} \subseteq \{ \p h \suchthat h
  \in W \} = \im E$. Since $E$ is idempotent, this is equivalent to $E \p Df =
  \p Df$ for all $D \in R_{d-1}$, i.e. $E \p f = \p f$. This proves (a). (b)
  follows immediately, since $\rank E = \dim_\K W$. Note that $E \p f = \p f$
  implies that $\rank E \ge \dim_\K R_{d-1} (f)$ by (a), thus (b) is the case
  of minimal rank.
\end{proof}

When $f \in \cS \subsetneq \cR$, the definition of $M_f$ is ambiguous in the
following way.

\begin{rem} \label{rem:M'f}
  Let $\cS = \K [ x_1, \dots, x_s ]^{DP}$ and $S = \K [\p_1, \dots, \p_s]$.
  Assume $s<r$, so that $\cS \subsetneq \cR$ and $S \subsetneq R$. Let $\p' =
  [ \p_1, \dots, \p_s ]^\T$. There are two ways to interpret definition
  \ref{def:Mf} when $f \in \cS$. We may consider $f$ to be an element of
  $\cR$, giving $M_f = \{ A \in \Mat_\K (r,r) \suchthat I_2 (\p\: A\p)_2
  \subseteq \ann_R f \}$. Or we may think of $f$ as an element of $\cS$, in
  which case $M'_f = \{ A \in \Mat_\K (s,s) \suchthat I_2 (\p'\, A\p')_2
  \subseteq \ann_S f \}$.

  Notice that we choose to write $I_2 (\p\, A\p)_2$. This is the degree two
  part of the ideal $I_2 (\p\, A\p)$ and generates the ideal. The reason for
  doing this is that $I_2 (\p'\, A\p')$ is ambiguous; is it an ideal in $R$ or
  an ideal in $S$? But its degree two piece is the same in both cases; $I_2
  (\p'\, A\p')_2$ is simply the $\K$-vector space spanned by the $2 \times 2$
  minors of $(\p'\, A\p')$. The ideals in $R$ and $S$ generated by these
  minors are therefore equal to $I_2 (\p'\, A\p')_2 R$ and $I_2 (\p'\, A\p')_2
  S$, respectively.
\end{rem}

Since $\cR$ is our default ring, $M_f$ will always mean what definition
\ref{def:Mf} says, i.e. $M_f = \{ A \in \Mat_\K (r,r) \suchthat I_2 (\p\:
A\p)_2 \subseteq \ann_R f \}$. It is not immediately clear what the analogue
of $M'_f$ should be for a more general subring $\cS \subseteq \cR$. We will in
pro\-position \ref{prop:MfE} prove that the following definition gives us what
we want.

\begin{defn} \label{def:MfE}
  Assume $f \in \cR_d$. Let $E \in M_f$ be idempotent. Define
  % Let $E$ be any element of $\Mat_\K (r,r)$.
  \begin{displaymath}
    M_f^E = M_f \isect E \Mat_\K (r,r) E.
  \end{displaymath}
\end{defn}

Of course, $M_f^I = M_f$. Note that $E \Mat_\K (r,r) E$ is closed under
multiplication. Hence $M_f^E$ is a $\K$-algebra if $M_f$ is closed under
matrix multiplication. In any case, we note that $E \in M_f^E$, and that $E$
acts as the identity on $M_f^E$.

We want to show that if $E \p f = \p f$ then $M_f^E$ reduces to $M'_f$ (cf.
remark \ref{rem:M'f}) when we perform a suitable base change and forget about
extra variables. In remark \ref{rem:M'f} we used both $\ann_R f$ and $\ann_S
f$. In general, if $f \in \cS \subseteq \cR$ and $S \iso \cS^*$, then by
definition $\ann_S f = \{ D \in S \suchthat Df = 0 \}$. Hence
\begin{displaymath}
  \ann_S f = S \isect \ann_R f
\end{displaymath}
is always true. Recall that, if $P \in \GL_r$, then $\phi_P : \cR \to \cR$ is
the $\K$-algebra homomorphism induced by $x \mapsto P^\T x$, and $\phi_P : R
\to R$ is induced by $\p \mapsto P^{-1} \p$.

\begin{prop} \label{prop:MfE}
  Let $f \in \cR_d$, $d>0$. Suppose $E \in M_f$ is idempotent and satisfies $E
  \p f = \p f$. Let $s = \rank E$, $W = \{ v^\T x \suchthat v \in \im E \}$
  and $V = \{ v^\T \p \suchthat v \in \im E^\T \}$. Define $\cS = \K [W]^{DP}
  \subseteq \cR$ and $S = \K [V] \subseteq R$. Choose $P \in \GL_r$ such that
  \begin{displaymath}
    E' = P E P^{-1} =
    \begin{pmatrix}
      I & 0 \\
      0 & 0
    \end{pmatrix}\!.
  \end{displaymath}
  Let $\cS' = \phi_P (\cS)$, $S' = \phi_P (S)$ and $f' = \phi_P (f)$. Then
  $\cS' = \K [x_1, \dots, x_s]^{DP}$, $S' = \K [\p_1, \dots, \p_s]$ and $f'
  \in \cS'$. Let $\p' = [ \p_1, \dots, \p_s ]^\T$. Then
  \begin{displaymath}
    M_f^E \iso M'_{f'} = \bigl\{ A \in \Mat_\K (s,s)  \,\big\vert\: I_2 (\p'\:
    A\p')_2 \subseteq \ann_{S'} (f') \bigr\}.
  \end{displaymath}
\end{prop}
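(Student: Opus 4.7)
The plan is to use the change of variables $\phi_P$ to reduce to the normal-form case $E' = \bigl(\begin{smallmatrix} I & 0 \\ 0 & 0 \end{smallmatrix}\bigr)$, and then verify by a direct block-matrix computation that $M_f^E$ coincides with the image of $M'_{f'}$ inside $E' \Mat_\K (r,r) E'$. I would begin with the three preliminary claims about $\cS'$, $S'$ and $f'$. Since $\phi_P$ sends $x \mapsto P^\T x$, we have $\phi_P (v^\T x) = (Pv)^\T x$, hence $\phi_P (W) = \{ u^\T x \suchthat u \in P (\im E) \} = \{ u^\T x \suchthat u \in \im E' \}$. Because $E' = \bigl(\begin{smallmatrix} I & 0 \\ 0 & 0 \end{smallmatrix}\bigr)$ has image $\langle e_1, \dots, e_s \rangle$, this gives $\cS' = \K[x_1, \dots, x_s]^{DP}$. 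A parallel calculation using $\phi_P (\p) = P^{-1} \p$ together with $(EP^{-1})^\T = E' P^{-\T}$ (valid because $E'$ is symmetric) yields $S' = \K [\p_1, \dots, \p_s]$. Finally, lemma \ref{lem:Epf}(a) applied to the hypothesis $E\p f = \p f$ gives $R_{d-1} (f) \subseteq W$, so remark \ref{rem:native} places $f$ in $\K [R_{d-1} (f)]^{DP} \subseteq \cS$, and hence $f' = \phi_P (f) \in \cS'$.

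Next I would show that conjugation $A \mapsto PAP^{-1}$ induces an isomorphism $M_f \to M_{f'}$ carrying $M_f^E$ onto $M_{f'}^{E'}$. Since $\phi_P$ is an automorphism of $R$ and $\ann_R (f') = \phi_P (\ann_R f)$, it suffices to check that $\phi_P$ sends $I_2 (\p\: A\p)_2$ onto $I_2 (\p\: PAP^{-1} \p)_2$ as $\K$-subspaces of $R_2$. Applying $\phi_P$ entrywise to the matrix $(\p\: A\p)$ produces $(P^{-1}\p \: AP^{-1}\p) = P^{-1} (\p \: PAP^{-1} \p)$, and left-multiplication of an $r \times 2$ matrix by the invertible matrix $P^{-1}$ acts invertibly on its $2 \times 2$ minors (via $\Lambda^2 P^{-1}$), so the $\K$-linear spans agree. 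Under this isomorphism, $EAE$ maps to $E' (PAP^{-1}) E'$, reducing the problem to the case where $E = E'$, $\cS = \K [x_1, \dots, x_s]^{DP}$, and $S = \K [\p_1, \dots, \p_s]$.

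In this normal-form situation, any $A \in E \Mat_\K (r,r) E$ has block form $A = \bigl(\begin{smallmatrix} B & 0 \\ 0 & 0 \end{smallmatrix}\bigr)$ with $B \in \Mat_\K (s,s)$. Splitting $\p$ into $\p' = [\p_1, \dots, \p_s]^\T$ and $\p'' = [\p_{s+1}, \dots, \p_r]^\T$, the $2 \times 2$ minors of $(\p\: A\p)$ fall into three groups by their row indices: both $\le s$ recovers precisely the minors of $(\p'\: B\p')$; both $>s$ gives zero; and mixed indices produce $\pm \p_j \cdot (B\p')_i$ with $j > s$. Since $f \in \cS$ satisfies $\p_j f = 0$ for every $j > s$, the mixed minors automatically lie in $\ann_R (f)$. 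Hence $A \in M_f$ if and only if $I_2 (\p'\: B\p')_2 \subseteq \ann_R (f)_2 \isect S = \ann_S (f)_2$, i.e.\ $B \in M'_f$, and $B \mapsto \bigl(\begin{smallmatrix} B & 0 \\ 0 & 0 \end{smallmatrix}\bigr)$ is the required isomorphism $M'_{f'} \to M_f^E$. The main technical obstacle is the change-of-variables step, where one must confirm that $\phi_P$ carries the degree-two piece of $I_2 (\p\: A\p)$ onto that of $I_2 (\p\: PAP^{-1} \p)$; once this is granted, the remaining block-matrix analysis is routine.
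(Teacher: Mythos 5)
Your proof is correct. It follows the same overall skeleton as the paper's (establish $\cS'$, $S'$, $f' \in \cS'$; transport the problem by $P$ to the normal form $E' = \bigl(\begin{smallmatrix} I & 0 \\ 0 & 0 \end{smallmatrix}\bigr)$; identify the block algebra), but the two middle steps are carried out by genuinely different means. The paper never touches the minors directly after the first paragraph: it proves the intermediate characterization $M_f^E = \{ A \in E \Mat_\K (r,r) E \suchthat I_2 (E\p\: A\p)_2 \subseteq \ann_S f \}$ by repeatedly invoking the symmetry criterion of lemma \ref{lem:main2} (that $A \in M_f$ iff $A\p\p^\T f$ is symmetric), and handles the conjugation step via the identity $\phi_{P^{-1}} (\p\p^\T (f')) = P \p\p^\T (f) P^\T$, again reducing everything to symmetry of matrices of polynomials. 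You instead prove conjugation-equivariance of $M_f$ itself at the level of the defining ideals, using that left multiplication of $(\p\: A\p)$ by $P^{-1}$ acts invertibly on the span of $2 \times 2$ minors via $\Lambda^2 P^{-1}$ together with $\ann_R (\phi_P f) = \phi_P (\ann_R f)$, and then classify the minors of $(\p\: A\p)$ block by block in the normal form, observing that the mixed minors $\pm \p_j (B\p')_i$ with $j > s$ die on $f$ because $f \in \cS$. Your route is more elementary and makes the statement $M_{f'} = P M_f P^{-1}$ explicit as a fact about $M_f$ alone (not just $M_f^E$), which is mildly stronger; the paper's route buys a reusable formula \eqref{eq:MfE} that it quotes again elsewhere. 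Both arguments are complete, and the one point that deserves care in yours --- that $\phi_P$ carries $I_2 (\p\: A\p)_2$ onto $I_2 (\p\: PAP^{-1}\p)_2$ --- is exactly the point you flag and justify correctly.
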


\begin{proof}
  We start by proving that $\phi_P (\cS) = \K [x_1, \dots, x_s]^{DP}$. We know
  that $W = \{ x^\T Eu \suchthat u \in \K^r \}$. Since $\phi_P (x^\T Eu) =
  (P^\T x)^\T Eu = x^\T PE u = x^\T E'Pu$, it follows that $\phi_P (W) = \{
  x^\T E' v \suchthat v \in \K^r \} = \langle x_1, \dots, x_s \rangle$. Thus
  $\phi_P (\cS) = \K [x_1, \dots, x_s]^{DP}$. In a similar fashion we get
  $\phi_P (V) = \{ v^\T E' \p \suchthat v \in \K^r \} = \langle \p_1, \dots,
  \p_s \rangle$, implying $\phi_P (S) = \K [\p_1, \dots, \p_s]$. Furthermore,
  $E \p f = \p f$ implies $R_{d-1} (f) \subseteq W$ by lemma \ref{lem:Epf}.
  Thus $f \in \K [R_{d-1} (f)]^{DP} \subseteq \cS$, and therefore, $f' =
  \phi_P (f) \in \cS'$.

  In order to show that $M_f^E \iso M'_{f'}$, we first prove that
  \begin{equation} \label{eq:MfE}
    M_f^E = \bigl\{ A \in E \Mat_\K (r,r) E \,\big\vert\: I_2 (E\p\: A\p)_2
    \subseteq \ann_S f \bigr\}.
  \end{equation}
  Assume that $A \in E \Mat_\K (r,r) E$. Since $A = AE$ and$(E\p)_i \in S$ for
  all $i$, it follows that $I_2 (E\p\: A\p)_2 \subseteq S$ automatically.
  Hence $I_2 (E\p\: A\p)_2 \subseteq \ann_S f$ if and only if $I_2 (E\p\:
  A\p)_2 \subseteq \ann_R f$. By lemma \ref{lem:main2} this latter statement
  holds if and only if $(A\p)(E\p)^\T (f)$ is symmetric, which is equivalent
  to $A \p\p^\T f$ being symmetric, since $E \p f = \p f$. And $A \p\p^\T f$
  is symmetric if and only if $A \in M_f$. Hence, if $A \in E \Mat_\K (r,r)
  E$, then $I_2 (E\p\: A\p)_2 \subseteq \ann_S f \iff A \in M_f$, which proves
  equation \eqref{eq:MfE}.

  % Note that $I_2^S (E\p\: A\p) = I_2 (E\p\: A\p)_2 S$ is the ideal in $S$
  % generated by the $2 \times 2$ minors of $(E\p\: A\p)$.

  Now let $M = E \Mat_\K (r,r) E$ and
  \begin{displaymath}
    M' = P M P^{-1} = E' \Mat_\K (r,r) E' = \left\{ \left.
        \begin{pmatrix}
          A & 0 \\ 0 & 0
        \end{pmatrix}
        \,\right\vert\: A \in \Mat_\K (s,s) \right\}.
  \end{displaymath}
  Applying equation \eqref{eq:MfE} to $f'$ and $E'$, we see that
  \begin{displaymath}
    M_{f'}^{E'} = \bigl\{ A \in M' \,\big\vert\: I_2 (E'\p\: A\p)_2 \subseteq
    \ann_{S'} (f') \bigr\}.
  \end{displaymath}
  Clearly,
  \begin{math}
    A \mapsto \left(
      \begin{smallmatrix}
        A & 0 \\ 0 & 0
      \end{smallmatrix}
    \right)
  \end{math}
  defines an isomorphism $M'_{f'} \to M_{f'}^{E'}$. Thus to finish the proof,
  it is enough to show that $M_{f'}^{E'} = P M_f^E P^{-1}$.

  Let $A \in M'$. Then $A \in M_{f'}^{E'}$ if and only if $A \p\p^\T (f')$ is
  symmetric. Note that $\phi_{P^{-1}} (\p\p^\T (f')) = (P\p) (P\p)^\T (f) = P
  \p\p^\T (f) P^\T$. Hence $A \p\p^\T (f')$ is symmetric if and only if
  $\phi_{P^{-1}} (P^{-1} A \p\p^\T (f') (P^{-1})^\T) = P^{-1} AP \p\p^\T f$ is
  symmetric, which is equivalent to $P^{-1} AP \in M_f^E$. Thus $M_{f'}^{E'} =
  P M_f^E P^{-1} \iso M_f^E$, and we are done.
\end{proof}

% There is an embedding $\Mat_\K (s,s) \hookrightarrow \Mat_\K (r,r)$ mapping
% $I$ to $E$, (unique up to an automorphism of $\Mat_\K (s,s)$,) and $M_f^E$
% is the image of $M'_{f'}$. Thus a natural way $M_f^E$ is just an embedded
% version of $M'_{f'}$.

Before we go on to theorem \ref{thm:regsplitE}, we need two more lemmas.

\begin{lem} \label{lem:MfE}
  Suppose $d \ge 2$ and $f \in \cR_d$. Let $E \in M_f$ be idempotent. Then
  $M_f^E = E M_f E$. If $E \p f = \p f$ and $\rank E = \dim_\K R_{d-1} (f)$,
  then $M_f^E = M_f E$ and $M_f = M_f^E \dsum \ker \gamma_f$.
\end{lem}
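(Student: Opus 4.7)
The plan is to establish the three claims in sequence. The first uses only the symmetry criterion of lemma~\ref{lem:main2} ($A \in M_f \iff A\p\p^\T f$ is symmetric); the second and third additionally exploit the normalization afforded by proposition~\ref{prop:MfE}.

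For $M_f^E = E M_f E$, the content is the inclusion $E M_f E \subseteq M_f^E$, since any element of $E \Mat_\K(r,r) E$ trivially satisfies the second factor in the definition of $M_f^E$. To verify $EAE \in M_f$ for $A \in M_f$, I would simply compute
\begin{displaymath}
  EAE \cdot \p\p^\T f = E \cdot A \cdot E\p\p^\T f = E \cdot A\p\p^\T f \cdot E^\T = \p\p^\T f \cdot (EAE)^\T,
\end{displaymath}
using the symmetries of $E\p\p^\T f$ and $A\p\p^\T f$ coming from $E, A \in M_f$. The reverse inclusion is free: any $A \in M_f^E$ equals $EBE$ for some $B$ and hence equals $EAE$ by idempotency of $E$.

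For the second claim under the additional hypotheses $E \p f = \p f$ and $\rank E = \dim_\K R_{d-1}(f)$, the inclusion $M_f^E = E M_f E \subseteq M_f E$ is immediate. For the other direction, given $A \in M_f$, lemma~\ref{lem:main2} immediately shows $AE \in M_f$ via $\p\gamma_f(A) = A\p f = AE \p f$. The essential point is to prove $AE = EAE$, i.e.\ $(I-E)AE = 0$. Here I would invoke proposition~\ref{prop:MfE} to put $E$ in the form $\left(\begin{smallmatrix}I_s & 0 \\ 0 & 0\end{smallmatrix}\right)$ with $f \in \K[x_1, \ldots, x_s]^{DP}$; lemma~\ref{lem:ann}(c) combined with the rank assumption makes $\p_1 f, \ldots, \p_s f$ linearly independent in $\cR_{d-1}$, so $\ann_S(f)_1 = 0$. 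In these coordinates, $\p\p^\T f$ is supported on the top-left $s \times s$ block $H$, and symmetry of $A\p\p^\T f$ reduces to $A_{21} H = 0$ on the lower-left block $A_{21}$. Any row $v$ of $A_{21}$ then satisfies $\p_j(v^\T \p' f) = 0$ for all $j \le s$, which for $d \ge 2$ forces $v^\T \p' f = 0$ by degree, so $v \in \ann_S(f)_1 = 0$, and therefore $A_{21} = 0$.

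Finally, for $M_f = M_f^E \dsum \ker \gamma_f$ I would decompose $A = AE + A(I-E)$: the first summand lies in $M_f^E = M_f E$ by the previous step, while $A(I-E) \p f = A\p f - AE \p f = 0$ (using $E\p f = \p f$), so the second lies in $\ker \gamma_f$. For the trivial intersection, any $A \in M_f^E \isect \ker \gamma_f$ satisfies $A = EAE$, placing each row of $A$ (read as a vector in $\K^r$) in $\im E^\T$, and $A \p f = 0$, placing each row in $\ann_R(f)_1 = R_{d-1}(f)^\perp = (\im E)^\perp = \ker E^\T$; since $E^\T$ is itself idempotent, $\im E^\T \isect \ker E^\T = 0$, forcing $A = 0$. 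The main obstacle is the middle step $(I-E)AE = 0$, where all three hypotheses must combine essentially via the coordinate change of proposition~\ref{prop:MfE}; everything else is a fairly direct manipulation of the symmetry criterion and of the definitions.
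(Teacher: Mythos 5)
Your proof is correct. The first claim is handled exactly as in the paper: conjugating the symmetric matrix $A\p\p^\T f$ by $E$ and using the symmetry of $E\p\p^\T f$. Where you genuinely diverge is the key middle step $AE=EAE$. The paper stays coordinate-free: from the symmetries and $E\p f=\p f$ it derives $EA\p\p^\T f=A\p\p^\T f$, hence $EA\p f=A\p f$ (this is where $d\ge 2$ enters), and then converts this identity into $EAE=AE$ by evaluating against $\{\p Df \suchthat D\in R_{d-1}\}=\im E$, which is where the rank hypothesis enters via lemma \ref{lem:Epf}. You instead normalize $E$ to
\begin{math}
  \bigl(
  \begin{smallmatrix}
    I & 0 \\ 0 & 0
  \end{smallmatrix}
  \bigr)
\end{math}
via proposition \ref{prop:MfE} and read off $(I-E)AE=0$ from the vanishing of the lower-left block $A_{21}H$; the rank hypothesis enters as $\ann_{S'}(f')_1=0$, and $d\ge 2$ enters at the analogous spot (a form of degree $d-1\ge 1$ annihilated by all of $R_1$ is zero). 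The two arguments carry the same content; the paper's version avoids the base change (and the implicit check that conjugation by $P$ carries $M_f$ onto $M_{f'}$), while yours makes the role of the rank hypothesis, namely that it forces the Hessian block $H$ to have no row relations, somewhat more transparent. For the direct sum, the paper observes that $A\mapsto AE$ is a projection of $M_f$ onto $M_f^E$ whose kernel $\{A \suchthat AE=0\}$ equals $\ker\gamma_f$, whereas you decompose $A=AE+A(I-E)$ and verify the intersection is trivial via $\im E^\T\isect\ker E^\T=0$; these are the same splitting, and your intersection argument in effect re-proves the paper's equivalence $AE=0\iff A\p f=0$.
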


% Note that $\gamma_f (M_f) = \gamma_f (M_f^E) \iso M_f^E$.

\begin{proof}
  $E \p\p^\T f$ is symmetric since $E \in M_f$. If $A \in M_f$, then $A
  \p\p^\T f$ is symmetric, hence $E A \p \p^\T (f) E^\T = EAE \p\p^\T f$ is
  also symmetric. This proves $EAE \in M_f$, and therefore $EAE \in M_f^E$.
  Hence $A \mapsto EAE$ defines a $\K$-linear map $M_f \to M_f^E$. It is
  clearly surjective. Indeed, if $EAE \in M_f^E \subseteq M_f$, then $EAE
  \mapsto E^2AE^2 = EAE$. Thus $M_f^E = E M_f E$.

  % If $d \ge 3$, then $EA \p f = AE \p f = A \p f$ is a simpler proof of the
  % next statement.

  If $E \p f = \p f$, then $EA \p\p^\T f = E \p\p^\T A^\T (f) = \p\p^\T A^\T
  (f) = A \p\p^\T f$, hence $EA \p f = A \p f$ because $d \ge 2$. Since
  $R_{d-1} (f) = \{ v^\T x \suchthat v \in \im E \}$ by lemma \ref{lem:Epf},
  we have $\{ \p D f \suchthat D \in R_{d-1} \} = \im E$. It follows that
  \begin{align*}
    EA \p f = A \p f & \iff EA \p Df = A \p Df \:\forall\: D \in R_{d-1} \\
    & \iff EAEv = AEv \:\forall\: v \in \K^r \iff EAE = AE.
  \end{align*}
  Similarly, $A \p f = 0$ if and only if $AE=0$. Hence the map $M_f \to M_f^E$
  above is also given by $A \mapsto AE$. This proves that $M_f^E = M_f E$.
  Furthermore, the kernel of this map is obviously $\{ A \in M_f \suchthat
  AE=0 \} = \{ A \in M_f \suchthat A\p f=0 \} = \ker \gamma_f$. Finally, the
  composition $M_f^E \subseteq M_f \to M_f^E$ is the identity, implying $M_f =
  M_f^E \dsum \ker \gamma_f$.
\end{proof}

\begin{lem} \label{lem:MgE}
  Suppose $d \ge 2$ and $f \in \cR_d$. Let $E \in M_f$ be idempotent and $g =
  \gamma_f (E)$. Then $M_g^E = M_f^E$. If $d \ge 3$, then even $M_g E = M_f
  E$.
\end{lem}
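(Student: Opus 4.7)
The plan starts by extracting the key identities that follow from $\p g = E \p f$ combined with $E^2 = E$ and $E \in M_f$. Idempotency gives $E \p g = E^2 \p f = \p g$ immediately. Differentiating $\p g = E \p f$ once more yields $\p\p^\T g = \p\p^\T f \cdot E^\T$; since $E \in M_f$ makes $E \p\p^\T f$ symmetric (Lemma \ref{lem:main2}) and the Hessian $\p\p^\T g$ is itself symmetric, both sides equal $E \p\p^\T f$. In particular $E \in M_g$, so the definition of $M_g^E$ makes sense and Lemma \ref{lem:MfE} applies to $g$ as well.

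For $M_f^E = M_g^E$ (where only $d \ge 2$ is needed) I would argue directly on elements $A = EAE$ of $E\Mat_\K(r,r)E$. Idempotency of $E$ gives $AE = EAE = A$ (and $EA = A$), so
\begin{displaymath}
  A \p\p^\T g = A \cdot E \p\p^\T f = (AE) \p\p^\T f = A \p\p^\T f.
\end{displaymath}
Hence $A \p\p^\T f$ is symmetric if and only if $A \p\p^\T g$ is, which by Lemma \ref{lem:main2} means $A \in M_f \iff A \in M_g$. Intersecting with $E\Mat_\K(r,r)E$ on both sides yields $M_f^E = M_g^E$.

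For $M_g E = M_f E$ under $d \ge 3$, the inclusion $M_g E \subseteq M_f E$ holds unconditionally: given $A \in M_g$, the calculation $(AE) \p\p^\T f = A (E \p\p^\T f) = A \p\p^\T g$ is symmetric by $A \in M_g$, so $AE \in M_f$ and $AE = (AE) E \in M_f E$. The reverse inclusion is exactly where $d \ge 3$ enters, via Proposition \ref{prop:Mf}: for $A \in M_f$, the algebra property gives $AE \in M_f$, and then
\begin{displaymath}
  (AE) \p\p^\T g = AE \cdot E \p\p^\T f = AE \p\p^\T f
\end{displaymath}
is symmetric, so $AE \in M_g$, placing $AE = (AE) E$ in $M_g E$.

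The only subtle point is recognizing where the hypothesis $d \ge 3$ is genuinely used: without the algebra structure of $M_f$, there is no mechanism to produce $AE \in M_f$ from $A \in M_f$, and the inclusion $M_f E \subseteq M_g E$ would fail. Everything else reduces to bookkeeping with the single identity $\p\p^\T g = E \p\p^\T f = \p\p^\T f \cdot E^\T$ and the symmetry criterion for membership in $M_\bullet$.
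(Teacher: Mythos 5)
Your proof is correct and follows essentially the same route as the paper: both rest on the identity $A\,\p\p^\T g = AE\,\p\p^\T f$ together with the symmetry criterion of Lemma \ref{lem:main2}, with $d \ge 3$ entering only through Proposition \ref{prop:Mf} to get $AE \in M_f$ from $A \in M_f$. Your element-wise verification that the symmetry conditions for $f$ and $g$ coincide on $E\Mat_\K(r,r)E$ is just a slight repackaging of the paper's intermediate characterization $M_g = \{A \suchthat AE \in M_f\}$.
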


\begin{proof}
  Since $\p g = E \p f$, we get $A \p\p^\T g = AE \p\p^\T f$. It follows that
  \begin{equation} \label{eq:Mg}
    M_g = \{ A \in \Mat_\K (r,r) \suchthat AE \in M_f \}.
  \end{equation}
  Indeed, $A \in M_g$ if and only if $A \p\p^\T g$ is symmetric. But $A
  \p\p^\T g = AE \p\p^\T f$, and $AE \p\p^\T f$ is symmetric if and only if
  $AE \in M_f$. This proves equation \eqref{eq:Mg}.

  Let $A \in M_g$. Then $AE \in M_f$, and therefore $AE = (AE)E \in M_f E$.
  Thus $M_g E \subseteq M_f E$. This implies that $M_g^E = E M_g E \subseteq E
  M_f E = M_f^E$. Conversely, let $A \in E M_f E \subseteq M_f$. Since $AE =
  A$, we have $AE \in M_f$, and therefore $A \in M_g$. Hence $A = EAE \in E
  M_g E$. This proves that $M_g^E = M_f^E$.

  Assume $d \ge 3$, and let $A \in M_f$. Since $E \in M_f$ and $M_f$ is closed
  under multiplication, it follows that $AE \in M_f$, which implies $A \in
  M_g$. This shows that $M_f \subseteq M_g$. Thus $M_f E \subseteq M_g E
  \subseteq M_f E$, and we are done.
\end{proof}

We are now in a position to prove a generalization of theorem
\ref{thm:regsplit}. This time we do not assume $\ann_R (f)_1 = 0$. More
importantly, however, is that we are able to show how $M_f$ and the
$M_{g_i}$'s are related. Recall that $E$ acts as the identity on $M_f^E$.
Therefore $\{ E_1, \dots, E_n \}$ is a complete set of idempotents in $M_f^E$
if and only if $\sum_{i=1}^n E_i = E$ and $E_i E_j = 0$ for all $i \ne j$.

\begin{thm} \label{thm:regsplitE}
  Let $d \ge 2$ and $f \in \cR_d$. Choose a matrix $E \in M_f$ such that $E \p
  f = \p f$ and $\rank E = \dim_\K R_{d-1} (f)$. Let
  \begin{align*}
    \Coid \left( M_f^E \right) & = \biggl\{ \{ E_i \}_{i=1}^n \,\bigg\vert\, 0
    \ne E_i \in M_f^E, \sum_{i=1}^n E_i = E \text{ and } E_i E_j = 0
    \:\forall\: i \ne j \, \biggr\}, \\
    \Reg (f) & = \biggl\{ \{ g_1, \dots, g_n \} \,\bigg\vert\: f = g_1 + \dots
    + g_n \text{ is a regular splitting of } f \biggr\}.
  \end{align*}
  The map $\{ E_i \}_{i=1}^n \mapsto \{ g_i = \gamma_f (E_i) \}_{i=1}^n$
  defines a bijection
  \begin{displaymath}
    \Coid \left( M_f^E \right) \to \Reg (f).
  \end{displaymath}

  Assume $d \ge 3$. Then $M_f^E$ is a commutative $\K$-algebra, and there
  exists a unique maximal regular splitting of $f$. Let $\{ E_1, \dots, E_n
  \}$ be a complete set of orthogonal idempotents in $M_f^E$, and let $g_i =
  \gamma_f (E_i)$. Then
  \begin{displaymath}
    M_{g_i}^{E_i} = M_f E_i = M_f^E E_i \quad \text{ for all $i$,} \quad
    \text{ and } \quad M_f^E = \dsum_{i=1}^n M_{g_i}^{E_i}.
  \end{displaymath}
\end{thm}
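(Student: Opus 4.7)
The strategy is to bootstrap from Theorem~\ref{thm:regsplit} after reducing to the case $\ann_R(f)_1 = 0$ via Proposition~\ref{prop:MfE}. Choose $P \in \GL_r$ diagonalising $E$ as in that proposition and set $f' = \phi_P(f) \in \cS'_d$ where $\cS' = \K[x_1, \dots, x_s]^{DP}$ with $s = \rank E$. The hypotheses $E\p f = \p f$ and $\rank E = \dim_\K R_{d-1}(f)$ together with Lemma~\ref{lem:Epf} give $R_{d-1}(f') = \phi_P(R_{d-1}(f)) = \langle x_1, \dots, x_s\rangle$, so $\ann_{S'}(f')_1 = 0$. Proposition~\ref{prop:MfE} identifies $M_f^E$ with $M'_{f'}$ via conjugation by $P$, and because $\phi_P$ is a graded $\K$-algebra automorphism, it carries $\Reg(f)$ bijectively onto $\Reg(f')$. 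Theorem~\ref{thm:regsplit} applied to $f' \in \cS'_d$ then yields the bijection $\Coid(M'_{f'}) \to \Reg(f')$; pulling back through $\phi_P$ produces the desired bijection $\Coid(M_f^E) \to \Reg(f)$. That this pullback equals $\{E_i\} \mapsto \{\gamma_f(E_i)\}$ is a short check with Lemma~\ref{lem:main2}: the defining equation $\p\gamma_f(E_i) = E_i \p f$ transforms under $\phi_P$ into the analogous equation for $\gamma_{f'}(P E_i P^{-1})$.

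For the $d \ge 3$ statements, commutativity of $M_f^E$ follows from $M_f^E \iso M'_{f'}$ combined with Proposition~\ref{prop:Mf} applied to $f'$ (using $\ann_{S'}(f')_1 = 0$). Being a commutative, finite-dimensional $\K$-algebra, $M_f^E$ admits a unique maximal coid $\{E_1, \dots, E_n\}$ by Proposition~\ref{prop:idempot}(b), and the established bijection then produces the unique maximal regular splitting.

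The identities $M_{g_i}^{E_i} = M_f E_i = M_f^E E_i$ are obtained by applying Lemmas~\ref{lem:MfE} and~\ref{lem:MgE} with $(f,E)$ replaced by $(g_i,E_i)$. The hypotheses of Lemma~\ref{lem:MfE} must first be verified for $(g_i,E_i)$: first, $E_i \p g_i = E_i^2 \p f = E_i \p f = \p g_i$; second, using $E_i \in M_f^E = E M_f E$ so that $E_i = E_i E$, together with Lemma~\ref{lem:Epf} for $(f,E)$, one computes
\begin{displaymath}
\{\p(D g_i) \suchthat D \in R_{d-1}\} = E_i \{\p(Df) \suchthat D \in R_{d-1}\} = E_i \cdot \im E = \im E_i,
\end{displaymath}
so $R_{d-1}(g_i) = \{v^\T x \suchthat v \in \im E_i\}$ and hence $\dim_\K R_{d-1}(g_i) = \rank E_i$. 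Lemma~\ref{lem:MfE} now gives $M_{g_i}^{E_i} = M_{g_i} E_i$, Lemma~\ref{lem:MgE} gives $M_{g_i} E_i = M_f E_i$, and $M_f E_i = (M_f E) E_i = M_f^E E_i$ follows from Lemma~\ref{lem:MfE} applied to $(f,E)$ together with $E E_i = E_i$. The decomposition $M_f^E = \dsum_{i=1}^n M_{g_i}^{E_i}$ then drops out of $M_f^E = M_f^E \cdot E = \sum_i M_f^E E_i$ for surjectivity of the sum, and directness comes from the orthogonality relation $B_i E_j = \delta_{ij} B_i$ for $B_i \in M_{g_i}^{E_i} = E_i M_f^E E_i$.

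The main obstacle will be the rank identification $\dim_\K R_{d-1}(g_i) = \rank E_i$, which bridges the idempotent data in $M_f^E$ and the hypotheses of Lemma~\ref{lem:MfE}; once it is in place, everything else is a mechanical assembly of the preceding three lemmas with Proposition~\ref{prop:MfE}.
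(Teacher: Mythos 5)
Your proposal is correct and follows essentially the same route as the paper: reduce to the $\ann(f')_1=0$ case via Proposition~\ref{prop:MfE}, invoke Theorem~\ref{thm:regsplit}, and assemble the last identities from Lemmas~\ref{lem:Epf}, \ref{lem:MfE} and~\ref{lem:MgE}. Your verification that $\rank E_i = \dim_\K R_{d-1}(g_i)$ via $\{\p Dg_i\} = \im(E_iE) = \im E_i$ is exactly the paper's computation, and the only (harmless) implicit step is the appeal to Corollary~\ref{cor:basic} to identify regular splittings of $f'$ inside $\cR$ with those inside $\cS'$.
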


% To be complete, if should maybe include the following facts: $E_i \in
% M_{g_i}^{E_i} \subseteq M_{g_i}$ and satisfies $E_i \p g_i = \p g_i$ and
% $\rank E_i = \dim_\K R_{d-1} (g_i)$.

% There exists a unique regular splitting of $f$ of maximal length, and every
% other regular splitting is obtained by grouping some of the summands.

% If $d=2$, then $M_{g_i}^{E_i} = E_i M_f^E E_i$ for all $i$, but $M_f^E \ne
% \dsum_i E_i M_f^E E_i$ since $M_f^E$ is not commutative.

% Assume $\ann_R (f)_1 = 0$. Let $\{ E_1, \dots, E_n \}$ be a complete set of
% orthogonal idempotents in $M_f$, and let $g_i = \gamma_f (E_i)$. Since $\p
% g_i = E_i \p f$, it follows that $\{ \p D g_i \suchthat D \in R_{d-1} \} =
% \im E_i$. Now $M_{g_i}^{E_i} = M_{g_i} E_i$ by lemma \ref{lem:MfE}, and
% $M_{g_i} E_i = M_f E_i$ by lemma \ref{lem:MgE}. Hence $M_{g_i}^{E_i} = M_f
% E_i$ for all $i$, and $M_f = \dsum_{i=1}^n M_{g_i}^{E_i}$.

\begin{proof}
  We know that every regular splitting happens inside $\cS = \K [R_{d-1}
  (f)]^{DP}$ by corollary \ref{cor:basic}. Using the isomorphism of
  proposition \ref{prop:MfE}, the first statements of the theorem are
  equivalent to the corresponding statements about $M'_{f'}$ and $\Reg (f')$,
  and follows from theorem \ref{thm:regsplit}.

% Alternatively, the proof of these statements is essentially the same as the
% proof of theorem \ref{thm:regsplit}, with the following amendments. First
% note that $\{ \p D f \suchthat D \in R_{d-1} \} = \im E$. But since $E_i E =
% E_i$, it still follows that $R_{d-1} (g_i) = \{ v^\T x \suchthat v \in \im
% E_i \}$. Since this is non-zero, it follows that $g_i \ne 0$. Next, when we
% have $g_i$ and find $E_i \in M_f$ such that $\p g_i = E_i \p f$, we see that
% we may choose $E_i \in M_f^E$ because $M_f = M_f^E \dsum \ker \gamma_f$.
% Finally, $\p f = \sum_i E_i \p f$ implies $E = \sum_i E_i E = \sum_{i=1}^n
% E_i$.

  Let $d \ge 3$. It follows from proposition \ref{prop:Mf} and lemma
  \ref{lem:MfE} that $M_f^E$ is a commutative $\K$-algebra. (Or by the
  isomorphism with $M'_{f'}$.) The existence of the unique maximal regular
  splitting of $f$ then follows by proposition \ref{prop:idempot}b.

  It remains only to prove the last two statements. Let $\{ E_1, \dots, E_n
  \}$ be a complete set of orthogonal idempotents in $M_f^E$, and let $g_i =
  \gamma_f (E_i)$. Note that $\{ \p D f \suchthat D \in R_{d-1} \} = \im E$ by
  lemma \ref{lem:Epf}, and recall that $E$ is the identity in $M_f^E$. Since
  $\p g_i = E_i \p f$, it follows that $\{ \p D g_i \suchthat D \in R_{d-1} \}
  = \im (E_i E) = \im E_i$ and $M_{g_i}^{E_i} = M_{g_i} E_i$, cf. the proof of
  lemma \ref{lem:MfE}. Moreover, $M_{g_i} E_i = M_f E_i$ by lemma
  \ref{lem:MgE}, and $M_f E_i = M_f E E_i = M_f^E E_i$ by lemma \ref{lem:MfE}.
  It follows that $M_{g_i}^{E_i} = M_f E_i = M_f^E E_i$ for all $i$, and
  $M_f^E = \dsum_{i=1}^n M_f^E E_i = \dsum_{i=1}^n M_{g_i}^{E_i}$.
\end{proof}

\begin{rem}
  Note that an idempotent $E$ as in theorem \ref{thm:regsplitE} always exists.
  Given $f \in \cR_d$, let $W = R_{d-1} (f)$, and choose $W' \subseteq \cR_1$
  such that $W \dsum W' = \cR_1$. Let $E \in \Mat_\K (r,r)$ be the idempotent
  determined by
  \begin{displaymath}
    \im E = \{ v \in \K^r \suchthat v^\T x \in W \} \quad \text{ and } \quad
    \ker E = \{ v \in \K^r \suchthat v^\T x \in W' \},
  \end{displaymath}
  cf. remark \ref{rem:Escs}. Then $E \p f = \p f$ and $\rank E = \dim_\K
  R_{d-1} (f)$ by lemma \ref{lem:Epf}. Moreover, $E \p f = \p f$ implies $E
  \in M_f$. Also note that this $E$ is not unique since we have the choice of
  $W' \in \cR_1$.
\end{rem}

\begin{rem}
  One goal of this paper is to find out what the algebra $M_f$ can tell us
  about $f$. Assume that $\ann_R (f)_1 = 0$. The idempotent $E$ in theorem
  \ref{thm:regsplitE} must then be the identity matrix $I$, and therefore
  $M_f^E = M_f$. Then the first part of theorem \ref{thm:regsplitE} reduces to
  theorem \ref{thm:regsplit}, and tells us that the idempotents in $M_f$
  determines the regular splittings of $f$, and how this happens.

  Assume $d \ge 3$. The last two statements of theorem \ref{thm:regsplitE}
  have no counter part in theorem \ref{thm:regsplit}. They say that if $A \in
  M_f$, then $A_i = A E_i \in M_{g_i}^{E_i}$ and $A = \sum_{i=1}^n A_i$. Thus
  any ``information'' about $f$ contained in $M_f$ is passed on as
  ``information'' about $g_i$ contained in $M_{g_i}^{E_i}$. For example, $M_f$
  contains a nilpotent matrix if and only if (at least) one of the
  $M_{g_i}^{E_i}$ contains a nilpotent matrix.

  In other words, in order to figure out what $M_f$ can tell us about $f$, it
  should be enough to find out what $M_{g_i}^{E_i}$ can tell us about $g_i$
  for all $i$. (Proposition \ref{prop:regularM} can be used for similar
  purposes.) Hence we may assume that $M_f$ does not contain any non-trivial
  idempotents. If $\K$ contains every eigenvalue of each $A \in M_f$, then
  this implies that $M_f = \langle I \rangle \dsum M_f^{\nil}$ by proposition
  \ref{prop:eigen}. And if $\K = \bar{\K}$, then it is always so, hence modulo
  theorem \ref{thm:regsplitE} it is enough to study all $f \in \cR_d$ such
  that $M_f = \langle I \rangle \dsum M_f^{\nil}$. It is this situation we
  study in chapter \ref{chapter:limits}.
\end{rem}

Theorem \ref{thm:regsplitE} is formulated using a non-unique idempotent $E$.
We will now give an intrinsic reformulation of that theorem when $d \ge 3$.
For that purpose, we define the following $\K$-algebra.

\begin{defn} \label{def:Gf}
  Assume $d \ge 3$ and $f \in \cR_d$. Define $G_f = \gamma_f (M_f)$, and let
  \begin{displaymath}
    \star : G_f \times G_f \to G_f
  \end{displaymath}
  be the map induced by multiplication in $M_f$.
\end{defn}

Of course, we could define $G_f$ also for smaller $d$, but then we would not
get an induced multiplication. The induced map is clearly the following. For
any $g,h \in G_f$, we may choose $A,B \in M_f$ such that $g = \gamma_f (A)$
and $h = \gamma_f (B)$, and define $g \star h = \gamma_f (AB)$. We can prove
that this is well defined, and that $\star$ is a bilinear, associative and
commutative multiplication on $G_f$, like we do in proposition
\ref{prop:star}. But here we choose a different approach.

The idempotent $E \in M_f$ in theorem \ref{thm:regsplitE} satisfies $E \p f =
\p f$ and $\rank E = \dim_\K R_{d-1} (f)$. Hence $M_f = M_f^E \dsum \ker
\gamma_f$ by lemma \ref{lem:MfE}. Therefore,
\begin{displaymath}
  G_f = \gamma_f (M_f) = \gamma_f (M_f^E) \iso M_f^E.
\end{displaymath}
The map $\star$ is clearly induced by the multiplication in $M_f^E$, proving
that $\star$ is well defined and giving $G_f$ the structure of a commutative
$\K$-algebra. Note that $\star$ is independent of $E$, by its definition
\ref{def:Gf}.

Note that $f$ is the identity element of $(G_f, \star)$ since $f = \gamma_f
(I)$. We have the following immediate consequence of theorem
\ref{thm:regsplitE}.

\begin{cor} \label{cor:regG}
  Let $d \ge 3$ and $f \in \cR_d$. Then $f = \sum_{i=1}^n g_i$ is a regular
  splitting of $f$ if and only if $\{ g_1, \dots, g_n \}$ is a complete set of
  orthogonal idempotents in $G_f$. In particular, there is a unique maximal
  regular splitting. If $f = \sum_{i=1}^n g_i$ is any regular splitting, then
  $G_{g_i} = G_f \star g_i$ for all $i$, and $G_f = \dsum_{i=1}^n G_{g_i}$.
\end{cor}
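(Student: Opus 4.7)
The plan is to derive this corollary from Theorem~\ref{thm:regsplitE} by transporting everything through the $\K$-algebra isomorphism $\gamma_f : M_f^E \to G_f$, for a suitable choice of idempotent $E$. First, I would pick $E \in M_f$ with $E\p f = \p f$ and $\rank E = \dim_\K R_{d-1}(f)$, which exists by the remark after Theorem~\ref{thm:regsplitE}. By Lemma~\ref{lem:MfE}, $M_f = M_f^E \dsum \ker \gamma_f$, so the restriction $\gamma_f|_{M_f^E} : M_f^E \to G_f$ is a $\K$-linear bijection. The definition of $\star$ is that it is induced by the multiplication in $M_f$ via $\gamma_f$, and this multiplication is compatible with the decomposition above since $\ker \gamma_f$ is an ideal in $M_f$ (or equivalently, since the restriction to $M_f^E$ is a subalgebra). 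Therefore $\gamma_f|_{M_f^E}$ is in fact a $\K$-algebra isomorphism carrying the identity $E$ of $M_f^E$ to the identity $f = \gamma_f(E)$ of $(G_f,\star)$.

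Under this algebra isomorphism, complete sets of orthogonal idempotents in $M_f^E$ (in the sense of Theorem~\ref{thm:regsplitE}, i.e.\ summing to $E$) correspond bijectively to complete sets of orthogonal idempotents in $G_f$ (summing to $f$). Combining this with the bijection $\Coid(M_f^E) \to \Reg(f)$ of Theorem~\ref{thm:regsplitE}, which is precisely $\{E_i\} \mapsto \{\gamma_f(E_i)\}$, gives the claimed bijection between $\Reg(f)$ and complete sets of orthogonal idempotents in $G_f$. The existence and uniqueness of a maximal regular splitting is then immediate from the corresponding statement for $M_f^E$ in Theorem~\ref{thm:regsplitE} (or from applying Proposition~\ref{prop:idempot}b directly to the Noetherian commutative ring $G_f$).

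For the last two assertions, fix a regular splitting $f = g_1 + \dots + g_n$, let $\{E_i\}$ be the associated complete set of orthogonal idempotents in $M_f^E$, and let me verify that
\begin{equation*}
  \gamma_f(A E_i) = \gamma_{g_i}(A E_i) \quad \text{for every } A \in M_f^E.
\end{equation*}
Indeed, $\p \gamma_f(A E_i) = A E_i \p f = A \p g_i$, while $A E_i \in M_f^E E_i = M_{g_i}^{E_i}$ by Theorem~\ref{thm:regsplitE}, so $\p \gamma_{g_i}(A E_i) = A E_i \p g_i = A E_i^2 \p f = A \p g_i$; uniqueness of the antiderivative (Lemma~\ref{lem:main2}) gives the equality. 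Applying this to Theorem~\ref{thm:regsplitE}'s identity $M_{g_i}^{E_i} = M_f^E E_i$ yields
\begin{equation*}
  G_{g_i} = \gamma_{g_i}(M_{g_i}^{E_i}) = \gamma_f(M_f^E E_i) = \gamma_f(M_f^E) \star \gamma_f(E_i) = G_f \star g_i,
\end{equation*}
where in the third equality I transport the product $M_f^E \cdot E_i$ through the algebra isomorphism $\gamma_f|_{M_f^E}$. Finally, applying $\gamma_f$ to the direct-sum decomposition $M_f^E = \dsum_{i=1}^n M_f^E E_i$ from Theorem~\ref{thm:regsplitE} gives $G_f = \dsum_{i=1}^n G_{g_i}$, since $\gamma_f$ is an isomorphism on $M_f^E$ and therefore preserves direct sums.

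The main obstacle is the compatibility $\gamma_f(A E_i) = \gamma_{g_i}(A E_i)$, which is what lets one identify the a priori distinct algebras $G_f \star g_i$ and $G_{g_i}$; everything else is a routine transport of the already-proven statements of Theorem~\ref{thm:regsplitE} through the algebra isomorphism $\gamma_f : M_f^E \to G_f$.
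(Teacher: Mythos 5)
Your proposal is correct and follows essentially the same route the paper takes: the discussion preceding the corollary establishes precisely the algebra isomorphism $\gamma_f : M_f^E \to G_f$ and the paper then declares the corollary an immediate consequence of Theorem~\ref{thm:regsplitE}. Your explicit verification that $\gamma_f(AE_i) = \gamma_{g_i}(AE_i)$ (both having $\p$-image $A\p g_i$) correctly fills in the one detail the paper leaves implicit, namely why $G_f \star g_i$ coincides with $G_{g_i}$ computed from $M_{g_i}^{E_i}$.
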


\begin{exmp}
  Let $r=d=3$ and $f = x_1 x_2^\pd 2 + x_2 x_3^\pd 2 + x_3^\pd 3$. Then
  \begin{displaymath}
    \p f =
    \begin{pmatrix}
      x_2^\pd 2 \\ x_1 x_2 + x_3^\pd 2 \\ x_2 x_3 + x_3^\pd 2
    \end{pmatrix}
    \quad \text{ and } \quad
    \p\p^\T f =
    \begin{pmatrix}
      0 & x_2 & 0 \\
      x_2 & x_1 & x_3 \\
      0 & x_3 & x_2 + x_3
    \end{pmatrix}\!.
  \end{displaymath}
  It follows that $\ann_R (f)_1 = 0$ and $\ann_R (f)_2 = \langle \p_1^2, \p_1
  \p_3, \p_1 \p_2 + \p_2 \p_3 - \p_3^2 \rangle$. Thus
  \begin{displaymath}
    I_2
    \begin{pmatrix}
      \p_1 & \p_2 & \p_3 \\
      0 & \p_3 & \p_1 + \p_3
    \end{pmatrix}
    \subseteq \ann_R f.
  \end{displaymath}
  It follows that
  \begin{displaymath}
    A =
    \begin{pmatrix}
      0 & 0 & 0 \\
      0 & 0 & 1 \\
      1 & 0 & 1
    \end{pmatrix}
    \in M_f.
  \end{displaymath}
  We note that $\det (\lambda I - A) = \lambda^2 (\lambda-1)$. Since $A$ has
  both 0 and 1 as eigenvalues, $A$ is neither invertible nor nilpotent. Hence
  there must exists a non-trivial idempotent in $M_f$! Indeed, we know that
  \begin{displaymath}
    A^2 =
    \begin{pmatrix}
      0 & 0 & 0 \\
      1 & 0 & 1 \\
      1 & 0 & 1
    \end{pmatrix}
    \in M_f,
  \end{displaymath}
  and we see that $A^3 = A^2$. Thus $E = A^2$ is such an idempotent.

  So far we have shown that $M_f \supseteq \K [A] = \langle I, A, A^2
  \rangle$. To prove equality, we show that $\ann_R f$ has exactly two
  generators of degree 3. Since $R/\ann_R f$ is Gorenstein of codimension 3,
  the structure theorem of Buchsbaum-Eisenbud \cite{BE} applies. Because we
  already know that $\ann_R f$ has three generators of degree 2 and at least
  two generators of degree 3, it follows easily that it cannot have more
  generators. Hence
  \begin{displaymath}
    \ann_R f = \bigl( \p_1^2, \p_1 \p_3, \p_1 \p_2 + \p_2 \p_3 - \p_3^2,
    \p_2^3, \p_2^2 \p_3 \bigr),
  \end{displaymath}
  which are the five Pfaffians of
  \begin{displaymath}
    \begin{pmatrix}
      0 & 0 & \p_1 & \p_2 & \p_3 \\
      0 & 0 & 0 & \p_3 & \p_1 + \p_3 \\
      -\p_1 & 0 & 0 & 0 & \p_2^2 \\
      -\p_2 & -\p_3 & 0 & 0 & 0 \\
      -\p_3 & -\p_1 -\p_3 & -\p_2^2 & 0 & 0
    \end{pmatrix}\!.
  \end{displaymath}
  Thus $M_f = \langle I, A, A^2 \rangle$, and $E=A^2$ is an idempotent of rank
  1. We note that
  \begin{displaymath}
    M_f \cdot E = \langle E \rangle \quad \text{ and } \quad M_f \cdot (I-E) =
    \langle I-E, A-A^2 \rangle.
  \end{displaymath}
  Since $A-A^2$ obviously is nilpotent, $M_f$ cannot contain another
  idempotent (in addition to $I$, $E$ and $I-E$). Let $g$ be the additive
  component of $f$ satisfying $\p g = E \p f$. Since
  \begin{displaymath}
    E \p f =
    \begin{pmatrix}
      0 & 0 & 0 \\
      1 & 0 & 1 \\
      1 & 0 & 1
    \end{pmatrix}
    \begin{pmatrix}
      x_2^\pd 2 \\ x_1 x_2 + x_3^\pd 2 \\ x_2 x_3 + x_3^\pd 2
    \end{pmatrix}
    =
    \begin{pmatrix}
      0 \\ (x_2+x_3)^\pd 2 \\ (x_2+x_3)^\pd 2
    \end{pmatrix},
  \end{displaymath}
  it follows that
  \begin{displaymath}
    g = (x_2+x_3)^\pd 3 \in \K [x_2+x_3]^{DP}.
  \end{displaymath}
  The other additive component is therefore
  \begin{displaymath}
    h = f-g = (x_1-x_3) x_2^\pd 2 - x_2^\pd 3 \in \K [x_1-x_3, x_2]^{DP}.
  \end{displaymath}
  This verifies that $f = g+h$ is a regular splitting of $f$, as promised by
  theorem \ref{thm:regsplit}. Furthermore, $M_g^E = M_f E$ and $M_h^{I-E} =
  M_f (I-E)$. Since $M_h^{I-E}$ contains a nilpotent matrix, we will in
  chapter 4 see that $h$ has a degenerate splitting.

  We also see that $G_f = \langle f,g, x_2^\pd 3 \rangle = \langle (x_1-x_3)
  x_2^\pd 2, x_2^\pd 3, (x_2+x_3)^\pd 3 \rangle$. And we note that $f \sim x_1
  x_2^\pd 2 + x_2^\pd 3 + x_3^\pd 3$, and $f \sim x_1 x_2^\pd 2 + x_3^\pd 3$
  as long as $\chr \K \ne 3$.
\end{exmp}

In remark \ref{rem:XM} we claimed that results concerning $M_f$ often
corresponds to results about $I(M)$. In this section we have seen how
idempotents in $M_f$ are related to regular splittings of $f$. We end this
section with a result showing how $I(M)$ and $X(M)$ ``splits'' if $M$ contains
a complete set of orthogonal idempotents. Recall that
\begin{displaymath}
  I(M) = \sum_{A \in M} I_2 (\p\: A\p) \quad \text{ and } \quad X(M) = \bigl\{
  f \in \cR \,\big\vert\, \ann_R f \supseteq I(M) \bigr\}.
\end{displaymath}

% We have already defined $I_R (M) = \sum_{A \in M} I_2 (\p\: A\p)$ and $X_\cR
% (M) = \{ f \in \cR \suchthat \ann_R f \supseteq I_R (M) \}$. We want to
% extend these to subrings. There is a unique inclusion $\dsum_{i=1}^n \Mat_\K
% (s_i, s_i) \hookrightarrow \Mat_\K (r,r)$ mapping $I \in \Mat_\K (s_i, s_i)$
% to $E_i$. In fact, $M$ lies in the image of this map, because $\{ E_i \}$ is
% a coid in $M$. Let $M'_i \subseteq \Mat_\K (s_i, s_i)$ be the preimage of
% $M_i$. Then both $I_{S_i} (M'_i)$ and $X_{\cS_i} (M'_i)$ are defined. And it
% follows that $I_R (M) = \bigl( \sum_{i<j} V_i V_j + \sum_i I_{S_i} (M'_i)
% \bigr) R$ and $X_\cR (M) = \dsum_{i=1}^n X_{\cS_i} (M'_i)$. Also $\bigl( R/
% I_R (M) \bigr)_d = \dsum_{i=1}^n \bigl( S_i / I_{S_i} (M'_i) \bigr)_d$ for
% all $d>0$, since $\bigl( R/ \sum_{i<j} V_i V_j R \bigr)_d = \dsum_{i=1}^n
% (S_i)_d$.

\begin{prop} \label{prop:regularM}
  Let $M \subseteq \Mat_\K (r,r)$ be a commutative subalgebra containing the
  identity matrix $I$. Let $\{ E_1, \dots, E_n \}$ be a complete set of
  orthogonal idempotents in $M$. For every $i$, let $M_i = M E_i$, $V_i = \{
  v^\T \p \suchthat v \in \im E_i^\T \} \subseteq R_1$, $S_i = \K [V_i]$ and
  $\cS_i = \K [ \{ v^\T x \suchthat v \in \im E_i \} ]^{DP} \iso S_i^*$.
  Define $I_{S_i} (M) = S_i \isect I(M)$ and $X_{\cS_i} (M) = \cS_i \isect
  X(M)$. Then
  \begin{enumerate}
    \setlength{\itemsep}{3pt}
    \setlength{\parskip}{0pt}
    \renewcommand{\theenumi}{\alph{enumi}}
    \renewcommand{\labelenumi}{\normalfont(\theenumi)}
  \item $I_R (M) = \bigl( \smash{\sum_{i<j}} R V_i V_j \bigr) \dsum \bigl(
    \dsum_{i=1}^n I_{S_i} (M_i) \bigr)$,
  \item $\bigl( R/ I_R (M) \smash{\bigr)}_d = \dsum_{i=1}^n \bigl( S_i /
    I_{S_i} (M_i) \smash{\bigr)}_d$ for all $d>0$, and
  \item $X_\cR (M)_d = \dsum_{i=1}^n X_{\cS_i} (M_i)_d$ for all $d>0$.
  \end{enumerate}
\end{prop}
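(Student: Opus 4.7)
The plan is to diagonalize the idempotents $E_1, \dots, E_n$ simultaneously via remark \ref{rem:diagEi}, exploit the resulting block-diagonal structure of elements of $M$ to split $I_R(M)$, and then derive (b) and (c) by quotienting and dualizing. After the base change, each $E_i$ becomes the characteristic matrix of an index set $\J_i \subseteq \{1,\dots,r\}$, the $\J_i$'s partition $\{1,\dots,r\}$, and $V_i = \langle \p_j : j \in \J_i\rangle$. Since $M$ is commutative and contains every $E_i$, each $A \in M$ commutes with every $E_i$, hence is block-diagonal $A = \sum_i A_i$ with $A_i = AE_i \in M_i$. A monomial in $R$ is \emph{pure} if its support lies in a single $\J_i$ and \emph{cross} otherwise; this yields the direct decomposition $R_{\ge 1} = \bigl( \dsum_i (S_i)_{\ge 1} \bigr) \oplus C$ with $C = \sum_{i<j}RV_iV_j$ the cross part, and the analogous decomposition $\cR_{\ge 1} = \bigl( \dsum_i (\cS_i)_{\ge 1} \bigr) \oplus C^\vee$ on the divided-power side.

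For (a), the key observation is that a $2\times 2$ minor of $(\p\:A\p)$ along rows $j,k$ is a pure block-$i$ element of $I_{S_i}(M_i)_2$ when $j,k \in \J_i$, and an element of $V_l V_m$ when $j \in \J_l, k \in \J_m$ with $l \ne m$. Conversely, taking $A = E_l$ for each $l$ produces the generators $\pm \p_j \p_k$ of $V_l V_m$ for $m \ne l$, so $C \subseteq I_R(M)$; and $M_i \subseteq M$ yields $I_{S_i}(M_i) \subseteq I_R(M)$. Multiplying by $R$ and noting that $R \cdot I_{S_i}(M_i)_2$ either stays inside $I_{S_i}(M_i)$ (when the prefactor is pure in block $i$) or produces a cross element absorbed by $C$, the reverse inclusion $I_R(M) \subseteq C + \sum_i I_{S_i}(M_i)$ drops out. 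Directness of the sum comes from the pure-versus-cross monomial dichotomy.

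Part (b) follows immediately: in degree $d>0$, $R_d = \bigl( \dsum_i (S_i)_d \bigr) \oplus C_d$ and $I_R(M)_d = \bigl( \dsum_i I_{S_i}(M_i)_d \bigr) \oplus C_d$ by (a), so the quotient splits term-by-term. For (c), lemma \ref{lem:XM}(c) identifies $X_\cR(M)_d$ with $I_R(M)_d^\perp \subseteq \cR_d$. The perfect pairing $R_d \times \cR_d \to \K$ respects the block-wise decomposition: $(S_i)_d$ pairs nondegenerately with $(\cS_i)_d$, $C_d$ pairs nondegenerately with $C^\vee_d$, and all other pairings vanish because the standard and divided-power monomial bases are mutually dual. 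Writing $f \in \cR_d$ as $\sum_i f_i + f'$ with $f_i \in (\cS_i)_d$ and $f' \in C^\vee_d$, annihilation by $C_d$ forces $f' = 0$, while annihilation by $I_{S_i}(M_i)_d$ reduces to $f_i$ being annihilated by $I_{S_i}(M_i)_d$ inside $\cS_i$, i.e.\ $f_i \in X_{\cS_i}(M_i)_d$ by the same lemma applied inside $S_i$ and $\cS_i$.

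The main technical hurdle is verifying cleanly that the pure-versus-cross decomposition of $R$ is compatible with ideal formation (so that $R \cdot I_{S_i}(M_i)_2$ does not escape the expected summand), and that the $E_l$-trick actually produces enough cross minors to fill $C$. Once these two points are settled, (b) and (c) follow by routine bookkeeping with the duality of lemma \ref{lem:XM}(c).
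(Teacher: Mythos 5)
Your proof is correct, but it takes a somewhat different route from the paper's. You first diagonalize the $E_i$ simultaneously (remark \ref{rem:diagEi}) and then sort the $2\times 2$ minors of $(\p\:A\p)$ according to which blocks $\J_l,\J_m$ the two row indices lie in, so that part (a) becomes bookkeeping with pure versus cross monomials; the paper stays coordinate-free, deriving the same three-way decomposition from identity \eqref{eq:AB} by writing $I_2(\p\:A\p)$, for $A\in M_i$, in terms of $I_2(\p\:E_i\p)$ and $I_2(E_i\p\:A\p)$ and then computing $I_2(\p\:E_i\p)=\sum_{j\ne i}RV_iV_j$ directly. The two are equivalent: yours is more transparent once the base change is made (and the base change is harmless, since $\phi_P$ carries $I(M)$ to $I(PMP^{-1})$ because $I_2$ of a two-column matrix is unchanged under left multiplication by an invertible matrix), while the paper's avoids choosing coordinates and records the intermediate identities \eqref{eq:IM5}--\eqref{eq:IM7} that it reuses later. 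The clearest divergence is in (c): you prove both inclusions at once by observing that the perfect pairing $R_d\times\cR_d\to\K$ is block-diagonal with respect to the pure/cross decompositions on both sides, whereas the paper proves only $\dsum_{i=1}^n X_{\cS_i}(M_i)_d\subseteq X(M)_d$ directly and obtains equality by a dimension count via (b) and lemma \ref{lem:XM}. Both of the ``hurdles'' you flag do go through: $R\cdot I_{S_i}(M_i)_2$ lands in $I_{S_i}(M_i)\dsum C$ because any prefactor involving a variable outside $\J_i$ produces a cross monomial, and the minors of $(\p\:E_l\p)$ with row indices in distinct blocks are exactly $\pm\p_j\p_k$, which span $C_2$.
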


% This cannot be made into an equivalence, since $I_R (M') = I_R (M)$ does not
% imply $M'=M$.

\begin{proof}
  Note that $R_1 = \dsum_{i=1}^n V_i$ by proposition \ref{prop:eigen}d. This
  implies
  \begin{equation*}
    R_d = \Bigl( \sum_{i<j} R_{d-2} V_i V_j \Bigr) \dsum \Bigl( \dsum_{i=1}^n
    V_i^d \Bigr)
  \end{equation*}
  for all $d \ge 1$. Since $V_i^d = (S_i)_d$, the degree $d$ part of $S_i$, we
  get
  \begin{equation*}
    \Bigl( R \Big/ \sum_{i<j} R V_i V_j \Bigr)_d = \dsum_{i=1}^n (S_i)_d
  \end{equation*}
  for all $d>0$. Thus (b) follows immediately from (a).

  % In particular, $R_2 = \dsum_{i \le j} V_i V_j$, and in general
  % $R_d = \dsum_{i_1 \le \dots \le i_d} V_{i_1} \dots V_{i_d}$.

  Since $M = \dsum_{i=1}^n M_i$ (proposition \ref{prop:eigen}c), it follows by
  definition that
  \begin{equation} \label{eq:IM1}
    I_R (M) = I(M) = \sum_{A \in M} I_2 (\p\: A\p) = \sum_{i=1}^n \sum_{A \in
    M_i} I_2 (\p\: A\p).
  \end{equation}
  Fix $i$, and let $A \in M_i$. Putting $(A,B) = (A,E_i)$ into equation
  \eqref{eq:AB} proves that
  \begin{equation*}
    I_2 (\p\: A\p) \subseteq I_2 (\p\: E_i\p) + I_2 (E_i\p\: A\p),
  \end{equation*}
  and putting $(A,B) = (E_i,A)$ gives
  \begin{equation*}
    I_2 (E_i\p\: A\p) \subseteq I_2 (\p\: A\p).
  \end{equation*}
  Since $E_i \in M_i$, this shows that
  \begin{equation} \label{eq:IM2}
    \sum_{A \in M_i} I_2 (\p\: A\p) = I_2 (\p\: E_i\p) + \sum_{A \in M_i} I_2
    (E_i\p\: A\p).
  \end{equation}

  Note that $(E_i\p)_k \in V_i$ and $((I-E_i)\p)_k \in \sum_{j \ne i} V_j$ for
  all $k$. Hence the minors of $(\p\: E_i\p)$ satisfy
  \begin{equation} \label{eq:IM3}
    \begin{vmatrix}
      \p_k & (E_i\p)_k \\
      \p_l & (E_i\p)_l
    \end{vmatrix}
    =
    \begin{vmatrix}
      ((I-E_i)\p)_k & (E_i\p)_k \\
      ((I-E_i)\p)_l & (E_i\p)_l
    \end{vmatrix}
    % = ((I-E_i)\p)_k (E_i\p)_l - ((I-E_i)\p)_l (E_i\p)_k
    \in \sum_{j \ne i} V_i V_j.
  \end{equation}
  For all $u,v \in \K^r$ and $j \ne i$ we have (cf. equation \eqref{eq:UV})
  \begin{equation*}
    \sum_{k,l=1}^r (E_j^\T u)_k v_l
    \begin{vmatrix}
      \p_k & (E_i \p)_k \\
      \p_l & (E_i \p)_l
    \end{vmatrix}
    =
    \begin{vmatrix}
      u^\T E_j \p & u^\T E_j E_i \p \\
      v^\T \p & v^\T E_i \p
    \end{vmatrix}
    = (u^\T E_j \p) \cdot (v^\T E_i \p)
  \end{equation*}
  because $E_j E_i = 0$. Since $\{ v^\T E_i \p \suchthat v \in \K^r \} = V_i$,
  this means that $I_2 (\p\: E_i\p)$ contains every product $V_i V_j$, $j \ne
  i$. Hence $I_2 (\p\: E_i\p) = \sum_{j \ne i} R V_i V_j$ for all $i$ by
  equation \eqref{eq:IM3}. Therefore,
  \begin{equation} \label{eq:IM4}
    \sum_{i=1}^n I_2 (\p\: E_i\p) = \sum_{i<j} R V_i V_j.
  \end{equation}

% Can this be confusing? In one we $i$ is fixed and we take the sum over all
% $j$ such that $j \ne i$, in the other no-one is fixed and we take the sum
% over all $i$ and $j$ such that $i < j$.

% $\sum_{i=1}^n I_2 (\p\: E_i\p) = \sum_{j \ne i} I_2 (E_j\p\: E_i\p)$.

  Combining equations \eqref{eq:IM1}, \eqref{eq:IM2} and \eqref{eq:IM4}, we
  have proven so far that
  \begin{equation*}
    I(M) = \sum_{i<j} R V_i V_j + \sum_{i=1}^n \sum_{A \in M_i} I_2 (E_i\p\:
    A\p).
  \end{equation*}
  If $A \in M_i$, then $A\p = AE_i\p$, and therefore $I_2 (E_i\p\: A\p)_2
  \subseteq V_i^2 \subseteq S_i$. Hence
  \begin{equation} \label{eq:IM5}
    I(M) = \biggl( \,\sum_{i<j} R V_i V_j \biggr) \dsum \biggl( \dsum_{i=1}^n
    \sum_{A \in M_i} I_2 (E_i\p\: A\p)_2 S_i \biggr),
  \end{equation}
  a direct sum of graded $\K$-vector spaces. What we have proven also shows
  that
  \begin{equation} \label{eq:IM6}
    I(M_i) = \sum_{A \in M_i} I_2 (\p\: A\p) = \biggl( \,\sum_{j \ne i} R V_i
    V_j \biggr) \dsum \biggl( \sum_{A \in M_i} I_2 (E_i\p\: A\p)_2 S_i \biggr)
  \end{equation}
  for all $i$. It follows that
  \begin{equation} \label{eq:IM7}
    I_{S_i} (M_i) = S_i \isect I (M_i) = S_i \isect I(M) = \sum_{A \in M_i}
    I_2 (E_i\p\: A\p)_2 S_i.
  \end{equation}
  With equation \eqref{eq:IM5} this proves (a).

  To prove (c), note for any $i$ and $f \in \cS_i$ that $\ann_R f = (\sum_{j
    \ne i} V_j) \dsum \ann_{S_i} f$. It follows from equations \eqref{eq:IM5},
  \eqref{eq:IM6} and \eqref{eq:IM7} that
  \begin{align*}
    X_{\cS_i} (M_i) & = \bigl\{ f \in \cS_i \,\big\vert\: \ann_R f \supseteq I
    (M_i) \bigr\} \\
    & = \bigl\{ f \in \cS_i \,\big\vert\: \ann_{S_i} f \supseteq I_{S_i} (M_i)
    \bigr\} = X_{\cS_i} (M) \subseteq X(M).
  \end{align*}
  Since $\cS_i \isect \cS_j = \K$ for $i \ne j$, it follows that
  $\dsum_{i=1}^n X_{\cS_i} (M_i)_d \subseteq X(M)_d$ for all $d>0$. To prove
  equality it is enough to show that their dimensions are equal. And this
  follows from (b), since $X_{\cS_i} (M_i)_d = \{ f \in (\cS_i)_d \suchthat Df
  = 0 \:\forall\: D \in I_{S_i} (M_i)_d \}$ (by lemma \ref{lem:XM}d) implies
  $\dim_\K X_{\cS_i} (M_i)_d = \dim_\K \bigl( S_i / I_{S_i} (M_i)
  \smash{\bigr)}_d$.
\end{proof}

% In fact, for any $A \in M_f^E$, we have $I_2 (\p\: A\p) = I_2 (E\p\: A\p) +
% I_2 ((I-E)\p\: A\p)$, and $I_2 ((I-E)\p\: A\p) \subseteq I_2 ((I-E)\p\: E\p)
% = I_2 (\p\: E\p)$. This is easily seen if we transform $E$ to standard form.
% To prove it in general, one may use equation \eqref{eq:AB}. Putting $(A,B)
% =(E,A)$ proves $I_2 (E\p\: A\p) \subseteq I_2 (\p\: A\p)$. Putting $(A,B) =
% (I-E,A)$ proves $I_2 ((I-E)\p\: A\p) \subseteq I_2 (\p\: A\p)$. Hence $I_2
% (\p\: A\p) = I_2 (E\p\: A\p) + I_2 ((I-E)\p\: A\p)$ since the other
% inclusion follows by linearity. Finally, putting $(A,B) = (A,I-E)$ proves
% $I_2 ((I-E)\p\: A\p) \subseteq I_2 (\p\: (I-E)\p) = I_2 (\p\: E\p)$.

\begin{rem}
  We can give a direct proof of the other inclusion in part (c). By
  definition, $f \in X(M)$ if and only if $M \subseteq M_f$. Let $f \in
  X(M)_d$. Since $\{E_i\} \subseteq M \subseteq M_f$, there exists $g_i \in
  \cS_i$ such that $f = \sum_{i=1}^n g_i$ is a regular splitting by theorem
  \ref{thm:regsplit} ($d=1$ is trivial). Let $D \in I_{S_i} (M_i)$. Then $D
  (g_j) = 0$ for all $j \ne i$ since $D \in (V_i)$, and $D(f) = 0$ since $D
  \in I(M)$. Hence $D (g_i) = 0$. This proves that $I_{S_i} (M_i) \subseteq
  \ann_{S_i} g_i$, i.e. $g_i \in X_{\cS_i} (M_i)_d$ for all $i$.
\end{rem}

\section{Minimal resolutions} \label{section:minimal}

Now that we know how to find all regular splittings of a form $f \in \cR_d$,
we turn to consequences for the graded Artinian Gorenstein quotient $R/ \ann_R
f$. In this section we obtain a minimal free resolution of $R/ \ann_R f$ when
$f$ splits regularly. This allows us to compute the (shifted) graded Betti
numbers of $R/ \ann_R f$.

Fix $n \ge 1$, and let $W_1, \dots, W_n \subseteq \cR_1$ satisfy $\cR_1 =
\dsum_{i=1}^n W_i$. For all $i$ define $\cS^i = \K [W_i]^{DP}$. Note that
$\cR_1 = \dsum_{i=1}^n W_i$ implies $\cR = \cS^1 \otimes_\K \dots \otimes_\K
\cS^n$. For each $i$, let $V_i = (\sum_{j \ne i} W_i)^\perp \subseteq R_1$
and $S^i = \K [V_i] \iso (\cS^i)^*$. Then $R_1 = \dsum_{i=1}^n V_i$, and
therefore $R = S^1 \otimes_\K \dots \otimes_\K S^n$.

\begin{rem}
  Let $s_i = \dim_\K W_i = \dim_\K V_i$, and note that $\sum_{i=1}^n s_i = r$.
  Let
  \begin{displaymath}
    \J_i = \biggl\{ j \in \Z \,\bigg\vert\: \sum_{k<i} s_k < j \le \sum_{k \le
      i} s_k \biggr\}.
  \end{displaymath}
  for all $i$. There is a base change (that is, a homogeneous change of
  variables) of $\cR$ such that $\cS^i = \K [ \{ x_j \suchthat j \in \J_i \}
  ]^{DP}$ for all $i$ (cf. remark \ref{rem:regsplit}). This implies for all
  $i$ that $S^i = \K [ \{ \p_k \suchthat j \in \J_i \} ]$. Note that the
%% CHANGED 2013-07-13: upper index from $r$ to $n$ two places
  subspaces $\{ W_i \}_{i=1}^n$, or equivalently $\{ V_i \}_{i=1}^n$,
  determine and is determined by a unique set of orthogonal idempotents $\{
  E_i \}_{i=1}^n \subseteq \Mat_\K (r,r)$, cf. remark \ref{rem:Escs}. Thus the
  ``rectifying'' base change above corresponds to a simultaneous
%% CHANGED 2013-07-13: upper index from $r$ to $n$
  diagonalization of $\{ E_i \}_{i=1}^n$ as in remark \ref{rem:diagEi}. We
  will not assume that this base change has been made when we state and prove
  our results, but some claims may be easier to understand with this in mind.
\end{rem}

%% CHANGED 2013-07-13: upper index from $r$ to $n$
Let $f = \sum_{i=1}^n g_i$ be a regular splitting with $g_i \in \cS^i_d$, $g_i
\ne 0$, $d>0$. The following result is fundamental to this section, comparing
the ideals $\ann_R (f)$, $\ann_R (g_i)$ and $\ann_{S^i} (g_i)$.

% Let $d>0$ and $f \in \cR_d$. Let $\cR = \cS^1 \otimes_\K \dots \otimes_\K
% \cS^n$ and $R = S^1 \otimes_\K \dots \otimes_\K S^n$ such that $D(g) = 0$
% for all $g \in \cS^j$ and $D \in S^i_e$ with $i \ne j$ and $e>0$. Assume
% that $f = \sum_{i=1}^n g_i$ is a regular splitting of $f$ with $g_i \in
% \cS^i_d$, $g_i \ne 0$.

\begin{lem} \label{lem:annfg}
  With the notation above, the following statements are true.
  \begin{enumerate}
    \setlength{\itemsep}{2pt}
    \setlength{\parskip}{0pt}
    \renewcommand{\theenumi}{\alph{enumi}}
    \renewcommand{\labelenumi}{\normalfont(\theenumi)}

  \item For every $i$ we have $\ann_{S^i} (g_i) = S^i \isect \ann_R (g_i)$
    and
    \begin{enumerate}
      \setlength{\itemsep}{1pt}
      \setlength{\parskip}{0pt}
      \renewcommand{\theenumii}{\roman{enumii}}
      \renewcommand{\labelenumi}{\normalfont(\theenumi)}
    \item $\ann_R (g_i) = \bigl( \sum_{j \ne i} S^j_1 \bigr) \dsum \ann_{S^i}
      (g_i)$ as graded $\K$-vector spaces,
    \item $\ann_R (g_i) = \bigl( \sum_{j \ne i} S^j_1 \bigr) + R \ann_{S^i}
      (g_i)$ as ideals in $R$, and
    \item $R/ \ann_R (g_i) \iso S^i/ \ann_{S^i} (g_i)$.
    \end{enumerate}
  \item There exist nonzero $D_i \in S^i_d$, $i = 1, \dots, n$, such that
    \begin{displaymath}
      \ann_R (f) = \isect_{i=1}^n \ann_R (g_i) + (D_2-D_1, \dots, D_n-D_1).
    \end{displaymath}

  \item We may express $\isect_{i=1}^n \ann_R (g_i)$ as a direct sum of graded
    $\K$-vector spaces;
    \begin{displaymath}
      \isect_{i=1}^n \ann_R (g_i) = \biggl( \sum_{i<j} R S^i_1 S^j_1 \biggr)
      \dsum \biggl( \dsum_{i=1}^n \ann_{S^i} (g_i) \biggr),
    \end{displaymath}

  \item or as a sum of ideals in $R$;
    \begin{displaymath}
      \isect_{i=1}^n \ann_R (g_i) = \sum_{i<j} R S^i_1 S^j_1 + \sum_{i=1}^n
      R \ann_{S^i} (g_i).
    \end{displaymath}

  \item The Hilbert function $H$ of $R/ \ann_R (f)$ satisfies
    \begin{displaymath}
      H \bigl( R/ \ann_R (f) \bigr) = \sum_{i=1}^n H \bigl( S^i/ \ann_{S^i}
      (g_i) \bigr) - (n-1) \bigl( \delta_0 + \delta_d \bigr),
    \end{displaymath}
    where $\delta_e$ is $1$ in degree $e$ and zero elsewhere.
  \end{enumerate}
\end{lem}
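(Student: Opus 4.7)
The plan is to address parts (a)--(e) in an order that builds on itself: (a), then (c), then (d), then (b), and finally (e).

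For (a), the key observation is that $g_i \in \cS^i$, so every $\p \in V_j$ with $j \ne i$ kills $g_i$, giving $\sum_{j\ne i} V_j \subseteq \ann_R(g_i)_1$. Since $R = S^1 \otimes_\K \cdots \otimes_\K S^n$ as graded rings, a monomial in $R$ either lies in some $S^k$ or involves variables from at least two $V_j$'s; in the latter case it automatically kills $g_i$, and in the former case it kills $g_i$ iff $k \ne i$ or the monomial lies in $\ann_{S^i}(g_i)$. This gives the decomposition in (i). Then (ii) and (iii) are routine: (ii) follows from (i) by observing both sides contain the same $\K$-vector space, and (iii) follows from (ii) because modding $R$ by $\sum_{j\ne i} S^j_1$ returns $S^i$.

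For (c), I would apply the same bookkeeping used for (a) simultaneously to all $i$. Writing any $D \in R_e$ ($e \ge 1$) uniquely as $D = D' + \sum_k D_k$ with $D' \in (\sum_{i<j} RS^i_1 S^j_1)_e$ and $D_k \in (S^k)_e$, the term $D'$ kills every $g_i$ automatically, while $\sum_k D_k$ kills $g_i$ iff $D_i \in \ann_{S^i}(g_i)$. Intersecting over $i$ gives the direct-sum decomposition. The degree $0$ case is trivial since all three spaces vanish there. Part (d) then follows immediately from (c), because $\ann_{S^i}(g_i) \subseteq R\ann_{S^i}(g_i)$ and conversely $R\ann_{S^i}(g_i) \subseteq \ann_{S^i}(g_i) + \sum_{j<l} RS^j_1 S^l_1$, since multiplying an element of $\ann_{S^i}(g_i)$ by a variable in some $V_j$, $j\ne i$, lands in the cross-product ideal.

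For (b), Lemma~\ref{lem:basic} handles all degrees $e < d$, where $\ann_R(f)_e = \isect_i \ann_R(g_i)_e$; in degrees $>d$ both sides equal $R_e$. The only interesting degree is $e=d$. Since each quotient $S^i/\ann_{S^i}(g_i)$ has one-dimensional socle in degree $d$, we may pick $D_i \in S^i_d$ with $D_i(g_i) = 1$; because $D_i \in S^i_d$ kills every $g_j$ with $j\ne i$, we get $D_i(f) = 1$, hence $D_i - D_1 \in \ann_R(f)$. A dimension count using the surjective evaluation map $R_d \to \K^n$, $D \mapsto (D(g_1), \dots, D(g_n))$ (surjective because the $D_i$ hit the standard basis) shows that $\ann_R(f)_d$ is the preimage of the hyperplane $\sum c_i = 0$, which is exactly $\isect_i \ann_R(g_i)_d$ plus the $(n-1)$-dimensional span of $D_2-D_1, \dots, D_n-D_1$.

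For (e), combine (c) with the decomposition $R_e = (\sum_{i<j} R S^i_1 S^j_1)_e \dsum \dsum_i (S^i)_e$, valid for $e \ge 1$. Subtracting gives $H(R/\isect_i \ann_R(g_i))_e = \sum_i H(S^i/\ann_{S^i}(g_i))_e$ for $0 < e < d$; in degrees $0$ and $d$ the LHS is $1$ while the RHS is $n$, contributing the $(n-1)(\delta_0+\delta_d)$ correction. Using (b) and the observation that the classes of $D_2-D_1,\dots,D_n-D_1$ inside $(R/\isect_i \ann_R(g_i))_d$ are linearly independent (by their image in $\K^n$), we reduce dimension by exactly $n-1$ in degree $d$, yielding the claimed formula. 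The main obstacle here is being careful with the degree $0$ discrepancy in the decomposition of $R$, since $(S^i)_0 = \K$ for all $i$ and these copies are not disjoint inside $R_0$; this is exactly what the $-(n-1)\delta_0$ term compensates for.
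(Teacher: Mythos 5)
Your proposal is correct and follows essentially the same route as the paper: the same decomposition $R_e = \bigl(\sum_{i<j} R_{e-2}S^i_1S^j_1\bigr) \dsum \bigl(\dsum_k S^k_e\bigr)$ drives (a), (c), (d) and (e), and (b) rests on Lemma~\ref{lem:basic} plus a codimension count in degree $d$ (your evaluation map $R_d\to\K^n$ is just an explicit form of the paper's remark that the $g_i$ are linearly independent, so $\isect_i\ann_R(g_i)_d$ has codimension $n$ while $\ann_R(f)_d$ has codimension $1$). No gaps.
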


\begin{proof}
  By definition, $\ann_{S^i} (g_i) = \{ D \in S^i \suchthat D (g_i) = 0 \}$,
  which clearly equals $S^i \isect \ann_R (g_i)$. By construction, $D (g_i) =
  0$ for all $D \in S^j_1$, $j \ne i$. Hence $\bigl( \sum_{j \ne i} S^j_1
  \bigr) \subseteq \ann_R (g_i)$. Since $R/ \bigl( \sum_{j \ne i} S^j_1 \bigr)
  = S^i$, we get
  \begin{displaymath}
    \ann_R (g_i) = \Bigl( \sum_{j \ne i} S^j_1 \Bigr) \dsum \ann_{S^i} (g_i)
  \end{displaymath}
  as graded $\K$-vector subspaces of $R$. The rest of (a) follows immediately.

  Consider the regular splitting $f = \sum_{i=1}^n g_i$. By lemma
  \ref{lem:basic} we have
  \begin{displaymath}
    \ann_R (f)_e = \isect_{i=1}^n \ann_R (g_i)_e \text{ for all } e<d.
  \end{displaymath}
  Thus the ideals $\ann_R (f)$ and $\isect_{i=1}^n \ann_R (g_i)$ are equal in
  every degree $e \ne d$. In degree $d$ the right-hand side has codimension
  $n$ (since the $g_i$ are linearly independent), hence $\ann_R(f)$ must have
  $n-1$ extra generators of degree $d$. If we choose $D_i \in S^i_d$ such that
  $D_1 (g_1) = \dots = D_n (g_n) \ne 0$, then clearly
  \begin{displaymath}
    \ann_R (f) = \isect_{i=1}^n \ann_R (g_i) + (D_2-D_1, \dots, D_n-D_1).
  \end{displaymath}

  By (a) we have $\sum_{i<j} R S^i_1 S^j_1 \subseteq \ann_R (g_k)$ for all
  $k$. Note that
  \begin{displaymath}
    R_e = \biggl( \sum_{i<j} R_{e-2} S^i_1 S^j_1 \biggr) \dsum \biggl(
    \dsum_{i=1}^n S^i_e \biggr) \text{ for all } e>0.
  \end{displaymath}
  Because $\bigl( \isect_{i=1}^n \ann_R (g_i) \bigr) \isect S^j = \ann_{S^j}
  (g_j)$, this implies both (c) and (d). Combining (b) and (c), it follows
  that $(R/ \ann_R f)_e = \dsum_{i=1}^n (S^i/ \ann_{S^i} g_i)_e$ for all $e
  \ne 0,d$, proving (e).
\end{proof}

% In particular, if $d \ge 2$, then $\ann_R (f)_1 = 0$ if and only if
% $\ann_{S^i} (g_i)_1 = 0$ for all $i$.

Most of the time in this section we will assume $n=2$. This makes it easier to
state and prove our results. Let $\cS = \cS^1$ and $\cT = \cS^2$. (Of course,
we may think of $\cT$ as $\cT = \cS^2 \otimes_\K \dots \otimes_\K \cS^n$,
reaching $n>2$ by induction.) Similarly, let $S = S^1$ and $T = S^2$, and $s =
s_1$ and $t = s_2 = r-s$. Hence $\cR = \cS \otimes_\K \cT$ and $R = S
\otimes_\K T$. We will often compare ideals of $R$, $S$ and $T$, and some
words are in order.

% When $S,T \subseteq R$ and we write $R = S \otimes_\K T$, we do not just
% mean $R \iso S \otimes_\K T$ abstractly. We mean that the natural map $S
% \otimes_\K T \to R$ given by $(s,t) \mapsto st$ is an isomorphism. Cf. $V =
% \dsum_i V_i$. I think this is equivalent to $R = S \cdot T$ and $S \isect T
% = \K$ since they are $\K$-algebras.

Given a homogeneous ideal $I \subseteq S$, the inclusion $S \subseteq R$ makes
$I$ into a graded $\K$-vector subspace of $R$. If $J \subseteq T$ is another
homogeneous ideal, then $IJ$ is the $\K$-vector subspace of $R$ spanned by all
products $ij$ with $i \in I$ and $j \in J$. Since $IJ$ automatically is closed
under multiplication from $R$, it is equal to the ideal in $R$ generated by
all products $ij$. In particular, $IT$ is simply the ideal in $R$ generated by
$I$. There are many ways to think of and write this ideal, including
\begin{displaymath}
  (I) = R \cdot I = I \otimes_S R = I \otimes_S (S \otimes_\K T) = I
  \otimes_\K T = IT.
\end{displaymath}
Similarly, $IT \cdot SJ = (I \otimes_S R) \otimes_R (R \otimes_T J) = I
\otimes_\K J = IJ = (IJ)$. We have used here a property of tensor products
often called \emph{base change}, cf. \cite[proposition A2.1]{Eis}. Note that
$IT \isect SJ = IT \cdot SJ = IJ$. It follows that
\begin{equation} \label{eq:IJ}
  I_1 J_1 \isect I_2 J_2 = (I_1 \isect I_2)(J_1 \isect J_2)
\end{equation}
for all homogeneous ideals $I_1, I_2 \subseteq S$ and $J_1, J_2 \subseteq T$.

% \begin{lem} %\label{lem:isect}
%   Given $\K$-vector subspaces $I_1, I_2 \subseteq S$ and $J_1, J_2 \subseteq
%   T$, we have
%   \begin{displaymath}
%     I_1 J_1 \isect I_2 J_2 = (I_1 \isect I_2)(J_1 \isect J_2).
%   \end{displaymath}
% \end{lem}
%
% % The proof basically exhibits explicit bases.
%
% \begin{proof}
%   We may assume that $S = I_1 + I_2$ and $T = J_1 + J_2$. Let $U_1 = I_1
%   \isect I_2$. Choose $U_2, U_3 \subseteq S$ such that $I_1 = U_1 \dsum U_2$
%   and $I_2 = U_1 \dsum U_3$. Then $S = \dsum_i U_i$. Similarly, let $V_1 =
%   J_1 \isect J_2$, and choose $V_2, V_3 \subseteq T$ such that $J_1 = V_1
%   \dsum V_2$ and $J_2 = V_1 \dsum V_3$. Thus $T = \dsum_i V_i$. Clearly,
%   $I_1 J_1 = U_1 V_1 \dsum U_1 V_2 \dsum U_2 V_1 \dsum U_2 V_2$. Similarly
%   for $I_2 J_2$, and it follows that $I_1 J_1 \isect I_2 J_2 = U_1 V_1$.
% \end{proof}

% Normally, when we tensor $0 \to I \to S \to S/I \to 0$ with an $S$-module
% $M$, we only get a right exact sequence $M \otimes_S I \to M \to M \otimes_S
% S/I \to 0$. The image of $M \otimes_S I$ in $M$ is clearly the submodule
% $IM$, hence $M \otimes_S S/I \iso M/IM$. But when we tensor with $T$ over
% $\K$, we even get $I \otimes_\K T = IT$, since $T$ is free over $\K$.

Fix $d \ge 1$, and let $g \in \cS_d$ and $h \in \cT_d$. We want to point out
what lemma \ref{lem:annfg} says in this simpler situation. Note that the ideal
$\ann_S (g)$ in $S$ generates the ideal $T \ann_S (g)$ in $R$. Let
\begin{displaymath}
  \m_S = (S_1) \subseteq S \quad \text{ and } \quad \m_T = (T_1) \subseteq T
\end{displaymath}
be the maximal homogeneous ideals in $S$ and $T$, respectively. Since $T =
\m_T \dsum \K$, we get $R = S \m_T \dsum S$. Lemma \ref{lem:annfg} tells us
that $\ann_R (g) = S \m_T \dsum \ann_S (g)$ and $\ann_R (g) = S \m_T + T
\ann_S (g)$. Furthermore,
\begin{equation} \label{eq:gh}
  \ann_R (g) \isect \ann_R (h) = \m_S \m_T + T \ann_S (g) + S \ann_T (h)
\end{equation}
as ideals in $R$, and there exist $D \in S_d$ and $E \in T_d$ such that
\begin{equation} \label{eq:fgh}
  \ann_R (f) = \ann_R (g) \isect \ann_R (h) + (D-E).
\end{equation}
We will use these equations to calculate the minimal resolution of $R/ \ann_R
(f)$. They involve products of ideals, and we start with the following lemma.

\begin{lem} \label{lem:IJ}
  Given homogeneous ideals $I \subseteq S$ and $J \subseteq T$, let $\F$ and
  $\G$ be their resolutions
  \begin{align*}
    \F & : 0 \to F_s \xrightarrow{\varphi_s} \dots \xrightarrow{\varphi_2} F_1
    \xrightarrow{\varphi_1} I \to 0, \\
    \G & : 0 \to G_t \xrightarrow{\psi_t} \dots \xrightarrow{\psi_2} G_1
    \xrightarrow{\psi_1} J \to 0,
  \end{align*}
  where the $F_i$'s are free $S$-modules and the $G_i$'s are free $T$-modules.
  Then the tensor complex
  \begin{displaymath}
    \F \otimes_\K \G : 0 \to H_{s+t-1} \xrightarrow{\eta_{s+t-1}} \dots
    \xrightarrow{\eta_2} H_1 \xrightarrow{\eta_1} IJ \to 0
  \end{displaymath}
  is exact, hence a free resolution of $IJ$ in $R = S \otimes_\K T$, and
  minimal if both $\F$ and $\G$ are minimal.
\end{lem}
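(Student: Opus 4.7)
The strategy is to recognize this as an instance of the Künneth principle for resolutions over a field. Since $\K$ is a field, every $\K$-module is flat, and so tensor products of complexes of $\K$-vector spaces preserve exactness. I will reindex the resolutions to put them in standard form, form the tensor double complex, and then identify the terms as free $R$-modules with augmentation $IJ$.

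First I would shift indices to pass from the lemma's conventions to the standard ones: let $P_a = F_{a+1}$ and $Q_b = G_{b+1}$ for $a,b \ge 0$, so that $P_\bullet \to I \to 0$ and $Q_\bullet \to J \to 0$ are the usual free resolutions. The tensor complex $P_\bullet \otimes_\K Q_\bullet$ has $n$-th term $\dsum_{a+b=n} P_a \otimes_\K Q_b = \dsum_{i+j=n+2,\, i,j \ge 1} F_i \otimes_\K G_j$, matching $H_{n+1}$ in the statement. Each $F_i \otimes_\K G_j$ is a free $R$-module, since $F_i$ is free over $S$ and $G_j$ is free over $T$, and a tensor product over $\K$ of a free $S$-module and a free $T$-module is free over $R = S \otimes_\K T$ (with the obvious rank and graded shifts).

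For exactness, I would view both augmented complexes as exact sequences of $\K$-vector spaces. Since everything in sight is $\K$-flat, tensoring the augmented complex $\tilde{\F}\colon \dots \to F_1 \to I \to 0$ term by term with $\tilde{\G}\colon \dots \to G_1 \to J \to 0$ produces a double complex whose rows and columns are all exact; the standard total-complex argument (or a two-step application of flatness) then yields an exact total complex with augmentation term $I \otimes_\K J$. To identify this augmentation with $IJ \subseteq R$ as $R$-modules, I invoke $\K$-flatness of $T$: the inclusion $J \hookrightarrow T$ induces an injection $I \otimes_\K J \hookrightarrow I \otimes_\K T \cong IT \subseteq R$, whose image is exactly $IJ$, so multiplication gives the required identification.

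Finally, for minimality, assume both $\F$ and $\G$ are minimal, so the entries of the matrices for $\varphi_i$ and $\psi_j$ lie in $\m_S$ and $\m_T$ respectively. The differential of the tensor complex restricted to $F_i \otimes_\K G_j$ is $\varphi_i \otimes 1 \pm 1 \otimes \psi_j$, whose entries therefore lie in $\m_S + \m_T = \m_R$; hence every internal differential has entries in $\m_R$, making the resolution minimal. The main obstacle, which is mostly a bookkeeping point, is handling the nonstandard indexing starting at $F_1$ rather than $F_0$ and keeping track of the isomorphism $I \otimes_\K J \cong IJ$; everything else is a direct invocation of flatness over the field $\K$.
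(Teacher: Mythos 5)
Your proof is correct and follows essentially the same route as the paper's, which simply observes that tensoring over the field $\K$ preserves exactness (since $I$ and $J$ are $\K$-flat) and that minimality is visible from the entries of the maps $\eta_i$ lying in $\m_R$. Your write-up just spells out the reindexing, the identification $I \otimes_\K J \cong IJ$, and the total-complex argument in more detail.
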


The definition of the tensor complex can be found in \cite[section 17.3]{Eis}.
Its construction gives $H_i = \dsum_{j=1}^{i} F_j \otimes_\K G_{i+1-j}$ for
all $i \ge 1$. Note that this is a free $R$-module. The maps $\eta_i : H_i \to
H_{i-1}$ for $i>1$ are given by
\begin{displaymath} % alternative: C=-25pt
  \xymatrix@R=20pt@C=-20pt{
    F_1 \otimes_\K G_i \ar[dr]_(.4){\id_{F_1} \otimes \psi_i} & \oplus & F_2
    \otimes_\K G_{i-1} \ar[dl]^(.4){\varphi_2 \otimes \id_{G_{i-1}}} &
    \qquad \oplus \quad & \quad \dots \quad & \quad \oplus \qquad & F_{i-1}
    \otimes_\K G_2 \ar[dr]_(.4){(-1)^i \id_{F_{i-1}} \otimes \psi_2} &
    \oplus & F_i \otimes_\K G_1 \ar[dl]^(.4){\varphi_i \otimes \id_{G_1}} \\
    & F_1 \otimes_\K G_{i-1} & \oplus && \dots && \oplus & F_{i-1}
    \otimes_\K G_1 &
  }
\end{displaymath}
that is, $\eta_i = \dsum_{j=1}^{i-1} \left( \varphi_{j+1} \otimes
  \id_{G_{i-j}} - (-1)^j \id_{F_j} \otimes\: \psi_{i-j+1} \right)$, and
$\eta_1 = \varphi_1 \otimes \psi_1$.

% For $i>1$ the map $\eta_i : H_i \to H_{i-1}$ is $\eta_i = \dsum_{j=1}^{i-1}
% \left( \varphi_{j+1,i-j} - (-1)^j \psi_{j,i-j+1} \right)$, a direct sum of
% maps into each direct summand $F_j \otimes_\K G_{i-j}$ of its codomain
% $H_{i-1}$, where $\varphi_{ij} = \varphi_i \otimes \id_{G_j}$ and $\psi_{ij}
% = \id_{F_i} \otimes\: \psi_j$.

\begin{proof}[Proof of lemma \ref{lem:IJ}:]
  The complex is exact since we get it by tensoring over $\K$, and $I$ and $J$
  are free over $\K$, hence flat. It is trivially minimal when $\F$ and $\G$
  are minimal by looking at the maps $\eta_i$.
\end{proof}

Note that $\F \otimes_S R = \F \otimes_\K T$ is a resolution of $I \otimes_S R
= IT$, the ideal in $R$ generated by $I$. Similarly, $R \otimes_T \G$ is a
resolution of $SJ$. Furthermore, $(\F \otimes_S R) \otimes_R (R \otimes_T \G)
= \F \otimes_\K \G$.

\begin{exmp} \label{ex:MN}
  Let
  \begin{align*}
    \M & : 0 \to M_s \to \dots \to M_1 \to \m_S \to 0, \\
    \N & : 0 \to N_t \to \dots \to N_1 \to \m_T \to 0
  \end{align*}
  be the Koszul resolutions of $\m_S \subseteq S$ and $\m_T \subseteq T$,
  respectively. We know that $M_k = \binom{s}{k} S(-k)$ and $N_k =
  \binom{t}{k} T(-k)$ for all $k$. If we apply lemma \ref{lem:IJ} to $I =
  \m_S$ and $J = \m_T$, we get a graded minimal free resolution
  \begin{displaymath}
    \MN = \M \otimes_\K \N : 0 \to MN_{s+t-1} \to \dots \to MN_1 \to \m_S \m_T
    \to 0
  \end{displaymath}
  of $\m_S \m_T \subseteq R = S \otimes_\K T$. Here $MN_k = \dsum_{i=1}^k M_i
  \otimes_\K N_{k+1-i}$ for all $k>0$. Hence $MN_k = \nu_k R(-k-1)$ where
  \begin{displaymath}
    \nu_k = \sum_{i=1}^k \binom{s}{i} \binom{t}{k+1-i} = \binom{s+t}{k+1} -
    \binom{s}{k+1} - \binom{t}{k+1}.
  \end{displaymath}
  This agrees with the Eagon-Northcott resolution of
  \begin{displaymath}
    I_2
    \begin{pmatrix}
      \p_1 & \dots & \p_s & 0 & \dots & 0 \\
      0 & \dots & 0 & \p_{s+1} & \dots & \p_{s+t}
    \end{pmatrix}.
  \end{displaymath}
\end{exmp}

We chose to write lemma \ref{lem:IJ} in terms of ideals $I \subseteq S$ and $J
\subseteq T$ because this is how we will use it most of the time. Of course,
the result is true more generally. Indeed, if $\F$ and $\G$ are resolutions of
an $S$-module $M$ and a $T$-module $N$, respectively, then the tensor complex
$\F \otimes_\K \G$ is a resolution of $M \otimes_\K N$, with the same proof.
We will use this is in the next lemma.

\begin{lem} \label{lem:betti}
  Let $I \subseteq S$ be a homogeneous ideal, and let $I' = S \m_T + IT
  \subseteq R$. Denote the shifted graded Betti numbers of $S/I$ and $R/I'$ by
  $\hat{\beta}^I_{ij}$ and $\hat{\beta}^{I'}_{ij}$, respectively. Then for all
  $j,k \ge 0$, we have
  \begin{displaymath}
    \hat{\beta}^{I'}_{kj} = \sum_{i=0}^k \binom{t}{k-i} \hat{\beta}^I_{ij}.
  \end{displaymath}
\end{lem}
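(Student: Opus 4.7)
The first step is to identify $R/I' = S/I$ as graded $R$-modules. Since $I' = S\m_T + IT$, quotienting $R$ by $S\m_T$ kills the variables of $T$ (yielding $R/S\m_T \iso S$), and then quotienting further by the image of $I$ gives $S/I$. Under this isomorphism, the variables $\p_{s+1},\dots,\p_{s+t}$ act on $S/I$ as zero. Consequently the shifted graded Betti numbers $\hat{\beta}^{I'}_{kj}$ may be computed from any minimal graded free $R$-resolution of $S/I$.

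The plan is to construct such a resolution by a tensor-product construction analogous to Lemma \ref{lem:IJ}. Let $\F: \dots \to F_1 \to F_0 = S \to S/I \to 0$ be a minimal graded free $S$-resolution of $S/I$, with $F_i = \dsum_j \beta^I_{ij} \, S(-j)$, and let $\G$ be the Koszul complex on the variables $\p_{s+1},\dots,\p_{s+t}$, which is a minimal graded free $T$-resolution of $\K = T/\m_T$ with $G_k = \binom{t}{k} \, T(-k)$. By the same argument as in the proof of Lemma \ref{lem:IJ}, the tensor complex $\F \otimes_\K \G$ is a graded complex of free $R$-modules; it is exact in positive degrees by flatness over the field $\K$ (Künneth), and its $0$-th homology equals $(S/I) \otimes_\K \K = S/I$. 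Minimality is immediate, since every differential in $\F \otimes_\K \G$ has entries in $\m_S + \m_T = \m_R$.

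It then remains to read off the ranks and twists. The $n$-th term of $\F \otimes_\K \G$ is
\[
\dsum_{i+k=n} F_i \otimes_\K G_k \;=\; \dsum_{i+k=n} \dsum_j \binom{t}{k} \beta^I_{ij} \, R(-j-k),
\]
so $\beta^{I'}_{n,m} = \sum_{i+k=n} \binom{t}{k} \, \beta^I_{i,m-k}$. Substituting $i = n-k$ and passing to the shifted convention via $\hat{\beta}_{ab} = \beta_{a,a+b}$ gives
\[
\hat{\beta}^{I'}_{nj} \;=\; \sum_{k=0}^{n} \binom{t}{k} \, \hat{\beta}^I_{n-k,\, j} \;=\; \sum_{i=0}^{n} \binom{t}{n-i} \, \hat{\beta}^I_{ij},
\]
which is precisely the stated formula.

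The only subtlety is that Lemma \ref{lem:IJ} is formulated for resolutions of ideals, whereas here I need the analogue for resolutions of arbitrary graded modules (namely $S/I$ and $\K$). I do not expect any real obstacle here: the proof of Lemma \ref{lem:IJ} relies only on flatness over the field $\K$ and the same inspection of the tensor differentials that gives minimality. So the main step is simply choosing the correct pair of complexes to tensor; everything else is bookkeeping.
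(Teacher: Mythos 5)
Your proof is correct and follows essentially the same route as the paper: the paper writes $R/I' \iso S/I \otimes_\K T/\m_T$ and resolves it by the tensor complex of the minimal $S$-resolution of $S/I$ with the Koszul resolution of $T/\m_T$, exactly as you do, and it even makes your final remark explicitly (that Lemma \ref{lem:IJ} holds verbatim for modules in place of ideals). The Betti-number bookkeeping is identical.
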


% Note that $R/ S \m_T \iso S$, hence $R/I' \iso S/I$.

% More generally, the shifted graded Betti numbers of $IT + SJ$ are
% \begin{displaymath}
%   \hat{\beta}_{kj} = \sum_{i=0}^k \sum_{l=0}^j \hat{\beta}^I_{i,j-l}
%   \hat{\beta}^J_{k-i,l}.
% \end{displaymath}

\begin{proof}
  The proof rests upon the following observation. If $I \subseteq S$ and $J
  \subseteq T$ are ideals, then $S/I \otimes_\K T/J \iso R/ (IT + SJ)$.
  Indeed,
  \begin{align*}
    S/I \otimes_\K T/J & = S/I \otimes_S (S \otimes_\K T/J) = S/I \otimes_\K
    R/SJ \\
    & = (S/I \otimes_S R) \otimes_R R/SJ = R/IT \otimes_R R/SJ = R/(IT + SJ).
  \end{align*}
  It follows that we may compute a resolution of $R/(IT+SJ)$ as the tensor
  complex of the resolutions of $S/I$ and $T/J$. We do this with $J = \m_T$.

  Let $\F$ and $\N$ be the graded minimal free resolutions of $S/I$ and
  $T/\m_T$, respectively, cf. example \ref{ex:MN}. That is,
  \begin{align*}
    \F & : 0 \to F_s \to \dots \to F_1 \to F_0 \to S/I \to 0, \\
    \N & : 0 \to N_t \to \dots \to N_1 \to N_0 \to T/\m_T \to 0,
  \end{align*}
  with $F_i = \dsum_{j \ge 0} \hat{\beta}^I_{ij} \, S(-i-j)$ and $N_i =
  \tbinom{t}{i} \, T(-i)$ for all $i \ge 0$.

  The tensor complex $\F \otimes_\K \N$ gives a graded minimal free resolution
  \begin{displaymath}
    \HH : 0 \to H_{s+t} \to \dots \to H_1 \to H_0
  \end{displaymath}
  of $R/(S \m_T + IT) = R/I'$, where for all $k \ge 0$ we have
  \begin{displaymath}
    H_k = \dsum_{i=0}^k F_i \otimes_\K N_{k-i} = \dsum_{i=0}^k \dsum_{j \ge 0}
    \binom{t}{k-i} \hat{\beta}^I_{ij} \, R(-k-j).
  \end{displaymath}
  The result follows by reading off the Betti numbers from this equation.
\end{proof}

% Alternatively, we could also use the exact sequence
% \begin{displaymath}
%   0 \to IJ \to IT \dsum SJ \to IT + SJ \to 0,
% \end{displaymath}
% and proceed as in proposition \ref{prop:IJ}, step 1.

Since $\ann_R (g) = S \m_T + T \ann_S (g)$, we may use this lemma to compare
the (shifted) graded Betti numbers of $R/\ann_R g$ and $S/\ann_S g$. In the
next two results we use the short exact sequence
\begin{displaymath}
  0 \to I \isect J \to I \dsum J \to I+J \to 0
\end{displaymath}
and the mapping cone construction (cf. \cite[appendix A3.12]{Eis}) several
times.

\begin{prop} \label{prop:IJ}
  Let $I \subseteq S$ and $J \subseteq T$ be homogeneous ideals, and let
  $\m_S$ and $\m_T$ be the maximal homogeneous ideals in $S$ and $T$,
  respectively. Assume that $I_1 = J_1 = 0$. Let $\F$ and $\G$ be graded
  minimal free resolutions
  \begin{align*}
    \F & : 0 \to F_s \xrightarrow{\varphi_s} \dots \xrightarrow{\varphi_2} F_1
    \xrightarrow{\varphi_1} I \to 0, \\
    \G & : 0 \to G_t \xrightarrow{\psi_t} \dots \xrightarrow{\psi_2} G_1
    \xrightarrow{\psi_1} J \to 0.
  \end{align*}
  Denote the shifted graded Betti numbers of $S/I$ and $T/J$ by
  $\hat{\beta}^I_{ij}$ and $\hat{\beta}^J_{ij}$. Then $\m_S \m_T + IT + SJ
  \subseteq R = S \otimes_\K T$ has a graded minimal free resolution
  \begin{displaymath}
    \HH : 0 \to H_r \to \dots \to H_1 \to \m_S \m_T + IT + SJ \to 0
  \end{displaymath}
  where $r = s+t$ and
  \begin{displaymath}
    H_k = \nu_k \, R(-k-1) \dsum \left( \dsum_{j \ge 0} \sum_{i=1}^k
      \left( \binom{t}{k-i} \hat{\beta}^I_{ij} + \binom{s}{k-i}
        \hat{\beta}^J_{ij} \right) R(-k-j) \right)
  \end{displaymath}
  for all $k>0$. Here $\nu_k = \binom{r}{k+1} - \binom{s}{k+1} -
  \binom{t}{k+1}$.
\end{prop}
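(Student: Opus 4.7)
The plan is to build a minimal free resolution of $K = \m_S\m_T + IT + SJ$ via iterated mapping cones, using lemma~\ref{lem:IJ} to supply the tensor resolutions and equation~\eqref{eq:IJ} to compute all ideal intersections that arise.

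First I would note that $R/(IT + SJ) \cong (S/I) \otimes_\K (T/J)$ as $R$-modules, so the tensor product $\F^+ \otimes_\K \G^+$ (where $\F^+, \G^+$ are the minimal resolutions of $S/I, T/J$ over $S, T$, augmented with $F_0 = S$, $G_0 = T$) is a minimal free resolution of $R/(IT + SJ)$. Truncating below homological degree zero gives a minimal resolution of the ideal $IT + SJ$; minimality is preserved because $I \subseteq \m_S$ and $J \subseteq \m_T$. Equivalently, this is the mapping cone of a chain map $\F \otimes_\K \G \to (\F \otimes_\K T) \oplus (S \otimes_\K \G)$ arising from the short exact sequence $0 \to IJ \to IT \oplus SJ \to IT + SJ \to 0$, where $IT \cap SJ = IJ$ by \eqref{eq:IJ}.

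Next I would use the $\K$-vector space decomposition $R_d = S_d \oplus T_d \oplus (\m_S\m_T)_d$ for $d \ge 1$ together with \eqref{eq:IJ} to verify that $\m_S\m_T \cap (IT + SJ) = I\m_T + \m_S J$. This intersection is itself resolved by the mapping cone of $\F \otimes_\K \G \to (\F \otimes_\K \N) \oplus (\M \otimes_\K \G)$ coming from $0 \to IJ \to I\m_T \oplus \m_S J \to I\m_T + \m_S J \to 0$ (again $I\m_T \cap \m_S J = IJ$ by \eqref{eq:IJ}). A final mapping cone applied to
\[
0 \to I\m_T + \m_S J \to \m_S\m_T \oplus (IT + SJ) \to K \to 0
\]
then assembles the resolution $\HH$. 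The $\M \otimes_\K \N$ block of $\m_S\m_T$ contributes the $\nu_k R(-k-1)$ summand (example~\ref{ex:MN}), and the hypothesis $I_1 = J_1 = 0$ ensures there are no degree-one overlaps between $\m_S \m_T$ and the ideals $IT, SJ$ that would disrupt minimality of the total cone.

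Throughout, the lifting chain maps are all induced by ideal inclusions whose images land in $\m_R$ times target free modules, so minimality propagates through each cone. The main obstacle is the bookkeeping: the resolution $\F \otimes_\K \G$ of $IJ$ appears in two separate cones, and one must track how the $F_a \otimes N_b$, $M_a \otimes G_b$, $F_a \otimes T$, and $S \otimes G_b$ contributions recombine after the two copies of $\F \otimes \G$ are accounted for. The surviving $\F$-side terms should yield exactly $\binom{t}{k-i} \hat\beta^I_{ij}$ at each $(k,j)$, by the same counting that underlies lemma~\ref{lem:betti} applied to the extension ideal $S\m_T + IT$, and symmetrically for the $\G$-side; together with the $\nu_k R(-k-1)$ from $\m_S\m_T$, these assemble into the stated formula for $H_k$.
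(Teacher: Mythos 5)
Your proposal is correct in outline but assembles the resolution in a genuinely different order from the paper. The paper never resolves $IJ$: it first forms the cone over $0 \to I\m_T \to IT \dsum \m_S\m_T \to \m_S\m_T + IT \to 0$ (using $IT \isect \m_S\m_T = I\m_T$), then the cone over $0 \to \m_S J \to (\m_S\m_T + IT) \dsum SJ \to \m_S\m_T + IT + SJ \to 0$ (using $(\m_S\m_T+IT) \isect SJ = \m_S J$, which follows from $\m_S\m_T + IT \subseteq \m_S T$). Each intersection is resolved by a single tensor complex ($\F\otimes_\K\N$, resp.\ $\M\otimes_\K\G$), every lifting is minimal (here is where $I_1 = J_1 = 0$ enters), and the stated $H_k$ drops out with no cancellation. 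Your order --- combine $IT$ and $SJ$ first, then add $\m_S\m_T$ --- buys the pleasant identification $R/(IT+SJ) \iso S/I\otimes_\K T/J$, and your intersection computation $\m_S\m_T \isect (IT+SJ) = I\m_T + \m_S J$ is correct; but it costs you an extra cone (to resolve $I\m_T + \m_S J$) and, more importantly, a non-minimal total complex: the copy of $(\F\otimes_\K\G)_{k-1}$ sitting inside your resolution of $IT+SJ$ in homological degree $k$ is hit isomorphically by the copy of $(\F\otimes_\K\G)_{k-1}$ sitting inside the shifted resolution of $I\m_T+\m_SJ$ in degree $k+1$, and these must be split off by Gaussian elimination before the formula for $H_k$ emerges. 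You flag this bookkeeping but do not carry it out; to close the argument you would need to exhibit that identity block explicitly in the lift of $I\m_T+\m_SJ \hookrightarrow IT+SJ$, cancel it, and check that the corrected differential (the usual $d - ba^{-1}c$ term) still has entries in $\m_R$, which it does because the off-diagonal blocks $b$ and $c$ are minimal. With that step supplied, the surviving summands are exactly $MN_k \dsum F'_k \dsum G'_k \dsum F''_{k-1} \dsum G''_{k-1}$, matching the statement. The paper's ordering is worth internalizing precisely because it avoids this cancellation altogether.
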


\begin{proof}
  Remember, by definition of the shifted graded Betti numbers, we have
  \begin{displaymath}
    F_i = \dsum_{j \ge 0} \hat{\beta}^I_{ij} \: S(-i-j) \quad \text{ and }
    \quad G_i = \dsum_{j \ge 0} \hat{\beta}^J_{ij} \: T(-i-j)
  \end{displaymath}
  for every $i$. We will construct the minimal resolution in two similar
  steps.

% \medskip
  \noindent\textit{Step 1.} Note that $IT \isect \m_S \m_T = (I \isect
  \m_S) (T \isect \m_T) = I \m_T$ by equation \eqref{eq:IJ}. This gives us a
  short exact sequence
  \begin{equation} \label{eq:seq1}
    0 \to I \m_T \to IT \dsum \m_S \m_T \to \m_S \m_T + IT \to 0.
  \end{equation}

  Let $\M$ and $\N$ be the Koszul resolutions of $\m_S \subseteq S$ and $\m_T
  \subseteq T$, respectively, as in example \ref{ex:MN}. By lemma \ref{lem:IJ}
  we have four minimal resolutions;
  \begin{align*}
    \F' = \F \otimes_\K T & : 0 \to F'_s \xrightarrow{\varphi_s} \dots
    \xrightarrow{\varphi_2} F'_1 \xrightarrow{\varphi_1} IT \to 0, \\
    \G' = S \otimes_\K \G & : 0 \to G'_t \xrightarrow{\psi_t} \dots
    \xrightarrow{\psi_2} G'_1 \xrightarrow{\psi_1} SJ \to 0, \\
    \F'' = \F \otimes_\K \N & : 0 \to F''_{s+t-1} \xrightarrow{\zeta_{s+t-1}}
    \dots \xrightarrow{\zeta_2} F''_1 \xrightarrow{\zeta_1} I \m_T \to 0, \\
    \G'' = \M \otimes_\K \G & : 0 \to G''_{s+t-1} \xrightarrow{\xi_{s+t-1}}
    \dots \xrightarrow{\xi_2} G''_1 \xrightarrow{\xi_1} \m_S J \to 0.
  \end{align*}
  The free modules in the first resolution are $F'_i = F_i \otimes_\K T =
  \dsum_{j \ge 0} \hat{\beta}^I_{ij} R(-i-j)$, and we identify the map
  $\varphi_i \otimes \id_T$ with $\varphi_i$ since they are given by the same
  matrix. Similarly, for the second resolution, we have $G'_i = S \otimes_\K
  G_i = \dsum_{j \ge 0} \hat{\beta}^J_{ij} R(-i-j)$. The modules in the third
  and fourth resolution satisfy
  \begin{align*}
    F''_{k-1} & = \dsum_{i=1}^{k-1} F_i \otimes_\K N_{k-i} \\
    & = \dsum_{i=1}^{k-1} \left( \Bigl( \dsum_{j \ge 0} \hat{\beta}^I_{ij} \,
    S(-i-j) \Bigr) \otimes_\K \binom{t}{k-i} \, T(-k+i) \right) \\
    & = \dsum_{j \ge 0} \biggl( \: \sum_{i=1}^{k-1} \binom{t}{k-i}
    \hat{\beta}^I_{ij} \biggr) \, R(-k-j),
  \end{align*}
  and similarly, $G''_{k-1} = \dsum_{j \ge 0} \left( \sum_{i=1}^{k-1}
    \tbinom{s}{k-i} \hat{\beta}^J_{ij} \right) R(-k-j)$.

  By tensoring the exact sequence $0 \to \m_T \to T \to T/\m_T \to 0$ with
  $I$, we get a short exact sequence
  \begin{displaymath}
    0 \to I \m_T \to I T \to I \otimes_\K T/\m_T \to 0.
  \end{displaymath}
  We need to lift the inclusion $I \m_T \subseteq IT$ to a map of complexes
  $\F'' \to \F'$. This is easily achieved by defining the map $F''_i \to F'_i
  = F_i \otimes_\K T$ to be $\id_{F_i} \otimes \psi_1$ on the summand $F_i
  \otimes_\K N_1$, and zero on all other direct summands of $F''_i$. The
  mapping cone construction now gives a resolution $\cdots \to F'_3 \dsum
  F''_2 \to F'_2 \dsum F''_1 \to F'_1$ of $I \otimes_\K T/\m_T$ that actually
  equals the tensor complex associated to $I \otimes_\K T/\m_T$ (similar to
  lemma \ref{lem:IJ}). It is obviously minimal by looking at the maps.

  Next we lift the inclusion $I \m_T \subseteq \m_S \m_T$ to a map of
  complexes $\F'' \to \MN$. By looking at the degrees of these maps, we see
  that they must be minimal when $I_1 = 0$, that is, when $I$ has no linear
  generators. Indeed, one such lift is
  \begin{displaymath}
    \bar{\pi}_i = \dsum_{j=1}^i \pi_j \otimes \id : \dsum_{j=1}^i F_j
    \otimes_\K N_{i+1-j} \to \dsum_{j=1}^i M_j \otimes_\K N_{i+1-j},
  \end{displaymath}
  where $\pi$ is a lift of $I \subseteq \m_S$ to a map of complexes $\F \to
  \M$.

  Thus we can lift the map $I \m_T \hookrightarrow IT \dsum \m_S \m_T$, $z
  \mapsto (z,-z)$, in the exact sequence \eqref{eq:seq1} to a map $(\id
  \otimes \psi_1) \dsum (-\bar{\pi})$ of complexes $\F'' \to \F' \dsum \MN$.
  The mapping cone construction now gives a minimal free resolution
  \begin{displaymath}
    \HH' : 0 \to H'_{s+t} \to \dots \to H'_1
  \end{displaymath}
  of $\m_S \m_T + IT$, where
  \begin{displaymath}
    H'_k = MN_k \dsum F'_k \dsum F''_{k-1} = \nu_k \, R(-k-1) \dsum
    \left( \dsum_{j \ge 0} \sum_{i=1}^k \binom{t}{k-i} \hat{\beta}^I_{ij} \:
      R(-k-j) \right)
  \end{displaymath}
  for all $k \ge 1$. This concludes the first step.

  \medskip\noindent\textit{Step 2.} We notice that $\m_S \m_T + IT \subseteq
  \m_S T$, and therefore
  \begin{displaymath}
    \m_S J \subseteq (\m_S \m_T + IT) \isect SJ \subseteq \m_S T \isect SJ =
    \m_S J.
  \end{displaymath}
  Hence $(\m_S \m_T + IT) \isect SJ = \m_S J$, and we have a short exact
  sequence
  \begin{equation} \label{eq:seq2}
    0 \to \m_S J \to (\m_S \m_T + IT) \dsum SJ \to \m_S \m_T + IT + SJ \to 0.
  \end{equation}

  We now proceed as in the first step, getting a lift of the inclusion $\m_S J
  \subseteq SJ$ to a map of complexes $\G'' \to \G'$. To lift the inclusion
  $\m_S J \subseteq \m_S \m_T + TI$ to a map of complexes $\G'' \to \HH'$, we
  take the lift of $\m_S J \subseteq \m_S \m_T$ to $\G'' \to \MN$, as in step
  one, and extend it by zero, since $H'_k = MN_k \dsum F'_k \dsum F''_{k-1}$
  for all $k \ge 1$. And then the mapping cone construction produces a free
  resolution
  \begin{displaymath}
    \HH : 0 \to H_r \to \dots \to H_1 \to \m_S \m_T + IT + SJ \to 0,
  \end{displaymath}
  which is minimal since all maps are minimal. Here $H_k = H'_k \dsum G'_k
  \dsum G''_{k-1}$ is for all $k>0$ equal to
  \begin{displaymath}
    H_k = \nu_k \, R(-k-1) \dsum \left( \dsum_{j \ge 0} \sum_{i=1}^k
      \left( \binom{t}{k-i} \hat{\beta}^I_{ij} + \binom{s}{k-i}
        \hat{\beta}^J_{ij} \right) R(-k-j) \right). \qedhere
  \end{displaymath}
\end{proof}

% As we see from the proof, the conditions $I_1 = J_1 = 0$ are only used to
% make the final resolution minimal. Without them, the resolution still
% exists, but will be non-minimal.

\begin{rem}
  Because $\ann_R (g) \isect \ann_R (h) = \m_S \m_T + T \ann_S (g) + S \ann_T
  (h)$, we will use proposition \ref{prop:IJ} with $I = \ann_S (g)$ and $J =
  \ann_T (h)$ when we calculate the resolution of $\ann_R (f) = \ann_R (g)
  \isect \ann_R (h) + (D-E)$. There is another way to find the resolution of
  $\ann_R (g) \isect \ann_R (h)$, using the sequence
  \begin{displaymath}
    0 \to \ann_R (g) \isect \ann_R (h) \to \ann_R (g) \dsum \ann_R (h) \to
    \m_R \to 0.
  \end{displaymath}

  This is a short exact sequence, and we know the minimal resolutions of the
  middle and right-hand side modules. Since the quotients are Artinian, these
  resolutions all have the ``right'' length. Hence we may dualize the
  sequence, use the mapping cone to construct a resolution of $\Ext_R^{r-1}
  \bigl( \ann_R (g) \isect \ann_R (h), R \bigr)$, and dualize back. Compared
  to the proof of proposition \ref{prop:IJ}, this is done in one step, but the
  resulting resolution is not minimal. Thus more work is needed to find the
  cancelations, and in the end the result is obviously the same.
\end{rem}

% The cancelations should be those coming from $S \m_T \dsum T \m_S \to \m_R$.
% At least, this seems to give us what we must get, since $\m_R = \m_S \m_T
% \dsum \m_S \dsum \m_T$.

We are now ready to find the minimal resolution of $R/ \ann_R f$. Note that we
here use the convention that $\binom{a}{b} = 0$ for all $b<0$ and all $b>a$.

\begin{thm} \label{thm:fgh}
  Let $g \in \cS_d$ and $h \in \cT_d$ for some $d \ge 2$. Let $f = g+h \in
  \cR_d$, and assume that $\ann_S (g)_1 = \ann_T (h)_1 = 0$. Let $\F$ and $\G$
  be graded minimal free resolutions of $\ann_S g \subseteq S$ and $\ann_T h
  \subseteq T$,
  \begin{align*}
    \F & : 0 \to F_s \xrightarrow{\varphi_s} \dots \xrightarrow{\varphi_2}
    F_1 \xrightarrow{\varphi_1} \ann_S g \to 0, \\
    \G & : 0 \to G_t \xrightarrow{\psi_t} \dots \xrightarrow{\psi_2} G_1
    \xrightarrow{\psi_1} \ann_T h \to 0.
  \end{align*}
  Denote the shifted graded Betti numbers of $S/\ann_S g$ and $T/\ann_T h$ by
  $\hat{\beta}^g_{ij}$ and $\hat{\beta}^h_{ij}$, respectively. That is,
  \begin{displaymath}
    F_i = \dsum_{j=0}^d \hat{\beta}^g_{ij} \: S(-i-j) \quad \text{ and } \quad
    G_i = \dsum_{j=0}^d \hat{\beta}^h_{ij} \: T(-i-j)
  \end{displaymath}
  for every $i$. Then $\ann_R f \subseteq R = S \otimes_\K T$ has a graded
  minimal free resolution
  \begin{displaymath}
    \HH : 0 \to H_r \to \dots \to H_1 \to \ann_R f \to 0
  \end{displaymath}
  with $H_r = R(-r-d)$ and
  \begin{align*}
    H_k & = \nu_k \, R(-k-1) \dsum \nu_{r-k} \, R(-d-k+1) \\
    & \qquad \dsum \left( \dsum_{j=1}^{d-1} \biggl( \sum_{i=1}^{s-1}
      \binom{r-s}{k-i} \hat{\beta}^g_{ij} + \sum_{i=1}^{t-1} \binom{r-t}{k-i}
      \hat{\beta}^h_{ij} \biggr) R(-k-j) \right)
  \end{align*}
  for all $0<k<r$. Here $r = s+t$ and $\nu_k = \binom{r}{k+1} - \binom{s}{k+1}
  - \binom{t}{k+1}$.
\end{thm}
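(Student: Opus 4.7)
The plan is to apply the mapping-cone machinery developed in proposition \ref{prop:IJ} one more time, grafting the extra generator $D-E$ of $\ann_R f$ onto the resolution of $J = \ann_R g \isect \ann_R h$.

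First, I would translate lemma \ref{lem:annfg}(b) into a workable short exact sequence. Choose $D \in S_d$ and $E \in T_d$ with $D(g) = E(h) = c \ne 0$. Because $D \in S$ while $h \in \cT$ one has $Dh = 0$, and symmetrically $Eg = 0$, so $A(D-E)(g) = A(c)$ and $A(D-E)(h) = -A(c)$ for any $A \in R$; both vanish iff $A \in \m_R$ (since $c$ is a nonzero constant). Hence the colon ideal is $(J : (D-E)) = \m_R$, yielding the short exact sequence
$$0 \longrightarrow \m_R(-d) \longrightarrow J \oplus R(-d) \longrightarrow \ann_R f \longrightarrow 0,$$
with left map $a \mapsto (a(D-E), -a)$ and right map $(j, a) \mapsto j + a(D-E)$.

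Next I would combine three resolutions into a mapping cone for this sequence: the minimal free resolution of $J$ from proposition \ref{prop:IJ} (with its $I, J$ playing the roles of $\ann_S g, \ann_T h$; I shall denote its $k$-th term by $H^J_k$), the Koszul resolution of $\m_R(-d)$ whose $k$-th term is $\binom{r}{k} R(-d-k)$, and the trivial self-resolution of $R(-d)$. Lifting the inclusion $\m_R(-d) \hookrightarrow J \oplus R(-d)$ to a chain map and taking the mapping cone gives a free resolution of $\ann_R f$ with $H_1 = H^J_1 \oplus R(-d)$, $H_k = H^J_k \oplus \binom{r}{k-1} R(-d-k+1)$ for $2 \le k \le r$, and a redundant top term $R(-d-r)$ at position $r+1$.

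The main obstacle is pinpointing the cancellations that bring this complex down to the stated minimal resolution of length $r$. The lift of $\m_R(-d) \to J$ sends $\p_i \mapsto \p_i(D-E)$; since $\p_i D$ is $\p_i$ times the socle representative of $\ann_S g$, its expression in the $J$-resolution uses the generator recorded by the top Betti $\hat{\beta}^g_{s,d} = 1$ with a unit coefficient, and analogously for $E$ via $\hat{\beta}^h_{t,d} = 1$. Propagating these unit entries through the cone, the Koszul piece $\binom{r}{k-1} R(-d-k+1)$ at position $k$ cancels summand-by-summand against the top-Betti contributions $\binom{t}{k-1-s} R(-d-k+1) \oplus \binom{s}{k-1-t} R(-d-k+1)$ sitting inside $H^J_{k-1}$; using the symmetries $\binom{t}{k-1-s} = \binom{t}{r-k+1}$ and $\binom{s}{k-1-t} = \binom{s}{r-k+1}$ together with $\binom{r}{k-1} = \binom{r}{r-k+1}$, the residue at position $k$ is exactly $\nu_{r-k} R(-d-k+1)$. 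The tail $R(-d-r)$ at position $r+1$ cancels against one copy of $R(-r-d)$ in $H^J_r$, leaving $H_r = R(-r-d)$. The surviving summands at position $k$ for $0 < k < r$ are $\nu_k R(-k-1)$ and the middle-range sums $\sum_{i=1}^{s-1} \binom{t}{k-i} \hat{\beta}^g_{ij}$ and $\sum_{i=1}^{t-1} \binom{s}{k-i} \hat{\beta}^h_{ij}$ with $1 \le j \le d-1$: the excluded $i=s,j=d$ and $i=t,j=d$ terms are precisely the top-Betti contributions absorbed above, while the $j=0$ contributions vanish by $\ann_S(g)_1 = \ann_T(h)_1 = 0$ combined with the Gorenstein self-duality of $\F$ and $\G$. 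Minimality of the residual complex, and consistency with Gorenstein self-duality of $R/\ann_R f$ (socle degree $d$, projective dimension $r$), then complete the argument.
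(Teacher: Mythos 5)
Your route is the paper's route: proposition \ref{prop:IJ} gives the minimal resolution of $J=\ann_R g \isect \ann_R h = \m_S\m_T + T\ann_S g + S\ann_T h$, the colon computation $(J:(D-E))=\m_R$ gives the short exact sequence $0 \to (D-E)\m_R \to J \dsum (D-E) \to \ann_R f \to 0$, and the mapping cone over the Koszul resolution of $(D-E)\m_R \iso \m_R(-d)$ produces a length-$(r+1)$ resolution whose only possible non-minimality sits, for degree reasons, in the blocks $\binom{r}{k}R(-d-k) \to \rho_k = \bigl[\binom{t}{k-s}+\binom{s}{k-t}\bigr]R(-d-k)$. Your binomial bookkeeping for the residue $\nu_{r-k}\,R(-d-k+1)$, for the surviving middle-range terms, and for the tail $R(-d-r)$ is exactly the paper's.

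The gap is the justification that \emph{all} of $\rho_k$ cancels. Your mechanism --- that the lift of $\p_i \mapsto \p_i(D-E)$ hits ``the generator recorded by the top Betti $\hat{\beta}^g_{s,d}=1$ with a unit coefficient'' and that these units ``propagate'' --- does not work as stated: $\hat{\beta}^g_{sd}$ indexes the last free module $F_s$ of the resolution, not a generator of $\ann_S g$, and at the first level of the chain map every entry of $rR(-d-1) \to H^J_1$ has degree $(d+1)-(1+j) \ge 1$ when $s,t \ge 2$, so there are no unit entries there to propagate. Equal twists in the blocks $\binom{r}{k}R(-d-k) \to \rho_k$ only tell you those blocks are constant matrices; to conclude that $\rho_k$ disappears entirely you need those constant blocks to have rank equal to $\dim_\K \rho_k$, which the mapping-cone construction alone does not provide. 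The paper obtains this not from the chain map but from the self-duality of the minimal free resolution of the Gorenstein quotient $R/\ann_R f$: since $H''_k$ contains no summand $R(-k)$, duality forbids any summand $R(-d-k)$ in the minimal $H_k$ for $0<k<r$, so all of $\rho_k$ must cancel, and the only summand it can cancel against is the Koszul piece $\binom{r}{k}R(-d-k)$ of $H''_{k+1}$. In other words, the ``consistency with Gorenstein self-duality'' you invoke at the very end is not a check; it is the step that forces the cancelation. With that substitution your outline becomes the paper's proof.
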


\begin{proof}
  Since $\ann_R g \isect \ann_R h = \m_S \m_T + T \ann_S g + S \ann_T h$ by
  equation \eqref{eq:gh} (or lemma \ref{lem:annfg}d), we may apply proposition
  \ref{prop:IJ}. This gives us a graded minimal free resolution
  \begin{displaymath}
    \HH' : 0 \to H'_r \to \dots \to H'_1 \to \ann_R g \isect \ann_R h \to 0
  \end{displaymath}
  with
  \begin{displaymath}
    H'_k = \nu_k \, R(-k-1) \dsum \left( \dsum_{j=0}^d \sum_{i=1}^k
      \left( \binom{t}{k-i} \hat{\beta}^g_{ij} + \binom{s}{k-i}
        \hat{\beta}^h_{ij} \right) R(-k-j) \right)\!.
  \end{displaymath}

  % Clearly, $D-E \notin \ann_R g \isect \ann_R h$ and $(\ann_R g \isect
  % \ann_R h)_e = R_e$ for all $e>d$, implying $(\ann_R g \isect \ann_R h)
  % \isect (D-E) = (D-E) \m_R$.

  By lemma \ref{lem:annfg}b, we may choose $D \in S_d$ and $E \in T_d$ such
  that
  \begin{displaymath}
    \ann_R f = (\ann_R g \isect \ann_R h) + (D-E).
  \end{displaymath}
% $D(g) = E(h) \ne 0$.
  Since $(\ann_R g \isect \ann_R h) \isect (D-E) = (D-E) \m_R$, we have a
  short exact sequence
  \begin{equation} \label{eq:seq3}
    0 \to (D-E) \m_R \to (\ann_R g \isect \ann_R h) \dsum (D-E) \to \ann_R f
    \to 0.
  \end{equation}
  Evidently, $(D-E) \m_R$ has a Koszul type resolution with \te{k} free module
  $M_k = \binom{r}{k} \, R(-d-k)$. Thus by the mapping cone construction we
  have a resolution
  \begin{displaymath}
    \HH'' : 0 \to H''_{r+1} \to \dots \to H''_1
  \end{displaymath}
  of $\ann_R f$, with
  \begin{equation} \label{eq:H}
    \begin{aligned}
      H''_k & = M_{k-1} \dsum H'_k
      = \binom{r}{k-1} \, R(-d-k+1) \dsum \nu_k \, R(-k-1) \\
      & \quad \dsum \left( \dsum_{j=0}^d \sum_{i=1}^k \left(
          \binom{t}{k-i} \hat{\beta}^g_{ij} +
          \binom{s}{k-i} \hat{\beta}^h_{ij} \right) R(-k-j) \right)\!.
    \end{aligned}
  \end{equation}

  Since $R/ \ann_R f$ is Gorenstein, its minimal resolution is self-dual. We
  now use this to find terms in $\HH''$ that must be canceled. When we dualize
  $\HH''$ (using $M^\vee = \Hom_R (M,R)$), we get a resolution whose \te{k}
  term is
  \begin{equation} \label{eq:H*}
    \begin{aligned}
      (H''_{r-k}&)^\vee \otimes_\K \K(-d-r) =
      \nu_{r-k} \, R(-d-k+1) \dsum \binom{r}{k+1} \, R(-k-1) \\
      & \dsum \left( \dsum_{j=0}^d \biggl( \sum_{i=k-t}^{s-1} \binom{t}{k-i}
        \hat{\beta}^g_{ij} + \sum_{i=k-s}^{t-1} \binom{s}{k-i}
        \hat{\beta}^h_{ij} \biggr) R(-k-j) \right)\!.
    \end{aligned}
  \end{equation}
  Here we have used $\hat{\beta}^g_{s-i,d-j} = \hat{\beta}^g_{ij}$ and
  $\hat{\beta}^h_{t-i,d-j} = \hat{\beta}^h_{ij}$, which follow from the
  symmetry of the resolutions $\F$ and $\G$.

  Since $\ann_S (g)_1 = 0$, we know that $\hat{\beta}^g_{sd} =
  \hat{\beta}^g_{00} = 1$, but otherwise the ``rim'' of the Betti diagram is
  zero, i.e. $\hat{\beta}^g_{ij} = 0$ for $i=0$, $j \ne 0$, for $j=0$, $i \ne
  0$, for $i=s$, $j \ne d$, and for $j = d$, $i \ne s$. Similar statements
  hold for $\hat{\beta}^h_{ij}$. Putting this into equations \eqref{eq:H} and
  \eqref{eq:H*}, we see that the first has no terms with twist $(-k)$, whereas
  the second has $[ \tbinom{t}{k} + \tbinom{s}{k} ] R(-k)$. Thus we see that
  at least a summand
  \begin{displaymath}
    \rho = \bigl[ \tbinom{t}{k-s} + \tbinom{s}{k-t} \bigr] R(-d-k)
  \end{displaymath}
  must be canceled from every $H''_k$. By looking at the expression for
  $H''_k$, we see that its summand with twist equal to $(-d-k)$, is exactly
  $\rho$.

  By the construction, the only part of the map $H''_{k+1} \to H''_k$ that can
  possibly be non-minimal, is the map from the direct summand $M_k =
  \tbinom{r}{k} R(-d-k)$ of $H''_{k+1}$ to the summand $\rho$ of $H''_k$. By
  the previous paragraph, all of $\rho$ must cancel. But $\rho$ is mapped into
  $H''_{k-1}$ by a map that we know is minimal, hence it must cancel against
  $M_k$. When we have done so for all $k$, every resulting map is minimal. So
  we are left with a graded free resolution that must be minimal. Since
  $\tbinom{r}{k} - \tbinom{t}{k-s} - \tbinom{s}{k-t} = \nu_{r-k-1}$, we see
  that this resolution is $\HH : 0 \to H_r \to \dots \to H_1 \to \ann_R f \to
  0$ with $H_r = R(-d-r)$ and
  \begin{align*}
    H_k & = \nu_k \, R(-k-1) \dsum \nu_{r-k} \, R(-d-k+1) \\
    & \qquad \dsum \left( \dsum_{j=1}^{d-1} \biggl( \sum_{i=1}^{s-1}
      \binom{r-s}{k-i} \hat{\beta}^g_{ij} + \sum_{i=1}^{t-1} \binom{r-t}{k-i}
      \hat{\beta}^h_{ij} \biggr) R(-k-j) \right)
  \end{align*}
  for all $0<k<r$.
\end{proof}

% Faktisk er formelen for $H_k$ gyldig for alle $k \ne -1,0,r,r+1$. For disse
% verdiene gir formelen $H_{-1} = -R$, $H_0 = H_r = 0$ og $H_{r+1} = R(-r-d)$,
% mens de riktige uttrykkene er $H_{-1} = H_{r+1} = 0$, $H_0 = R$ og $H_r =
% R(-r-d)$. Dette kan ikke være tilfeldig, men jeg vet ikke hva det betyr.

\begin{rem}
  If we compare theorem \ref{thm:fgh} in the case $(s,t) = (3,1)$ with the
  resolution obtained by Iarrobino and Srinivasan in \cite[theorem 3.9]{IS},
  we see that they agree. Our methods are, however, very different.
\end{rem}

% in \emph{Artinian Gorenstein algebras of embedding dimension four:
%   Components of $PGor(H)$ for $H=(1,4,7,..., 1)$}.

As a consequence we can compute the graded Betti numbers of $R/ \ann_R f$.

\begin{thm} \label{thm:betti}
  Let $d \ge 2$ and $f, g_1, \dots, g_n \in \cR_d$. Suppose $f = g_1 + \dots +
  g_n$ is a regular splitting of $f$. Let $s_i = \dim_\K R_{d-1} (g_i)$ for
  every $i$. Let $s = \sum_{i=1}^n s_i$, and define
  \begin{displaymath}
    \nu_{nk} = (n-1) \binom{r}{k+1} + \binom{r-s}{k+1} - \sum_{i=1}^n
    \binom{r-s_i}{k+1}.
  \end{displaymath}
  Denote by $\smash{\hat{\beta}^f_{kj}}$ and $\smash{\hat{\beta}^{g_i}_{kj}}$
  the shifted graded Betti numbers of $R/ \ann_R (f)$ and $R/ \ann_R (g_i)$,
  respectively. Then
  \begin{equation} \label{eq:betti}
    \hat{\beta}^f_{kj} = \sum_{i=1}^n \hat{\beta}^{g_i}_{kj} + \nu_{nk}
    \delta_{1j} + \nu_{n,r-k} \delta_{d-1,j}
  \end{equation}
  for all $0<j<d$ and all $k \in \Z$. Here the symbol $\delta_{ij}$ is defined
  by $\delta_{ii} = 1$ for all $i$, and $\delta_{ij} = 0$ for all $i \ne j$.
\end{thm}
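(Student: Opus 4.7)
The plan is to prove the formula by induction on $n$. The base case $n = 1$ is trivial, since then $f = g_1$ and $\nu_{1k} = \binom{r-s_1}{k+1} - \binom{r-s_1}{k+1} = 0$, so the claimed identity reduces to $\hat{\beta}^f_{kj} = \hat{\beta}^{g_1}_{kj}$. The two essential tools for the inductive step are \textbf{Theorem \ref{thm:fgh}} (which handles $n = 2$ in the ``native'' setup where $\cR = \cS \otimes_\K \cT$ and both $\ann_S(g)_1$ and $\ann_T(h)_1$ vanish) and \textbf{Lemma \ref{lem:betti}} (which, via binomial weights $\binom{t}{k-i}$, translates shifted Betti numbers of an ideal $I \subseteq S$ into those of its extension $S\m_T + IT \subseteq S \otimes_\K T$). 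By Lemma \ref{lem:annfg}, the latter is exactly the comparison between $\ann_S(g)$ and $\ann_R(g)$ when $g$ lives in a subring $\cS \subseteq \cR$ of codimension $t$.

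Before starting the induction I would reduce to the case $s = r$, i.e.\ $\ann_R(f)_1 = 0$. If $s < r$, Corollary \ref{cor:basic} places every $g_i$ inside $\cR' = \K[R_{d-1}(f)]^{DP}$, whose dual polynomial ring $R'$ has dimension $s$. Assuming the formula in $R'$ (where $\binom{s-s}{k+1} = 0$), apply Lemma \ref{lem:betti} with $t = r-s$ to both sides; the correction term transforms via the Vandermonde identity
\begin{displaymath}
  \sum_{l \geq 0} \binom{r-s}{k-l}\binom{a}{l+1} = \binom{r-s+a}{k+1} - \binom{r-s}{k+1},
\end{displaymath}
applied with $a = s$ and $a = s-s_i$. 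The stray $\binom{r-s}{k+1}$ terms reassemble exactly into the extra summand $\binom{r-s}{k+1}$ of $\nu_{nk}$, yielding $(n-1)\binom{r}{k+1} + \binom{r-s}{k+1} - \sum_i \binom{r-s_i}{k+1} = \nu_{nk}$.

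With $s = r$ in place, write $f = g_1 + h$, $h = g_2 + \dots + g_n$. Since $\sum_i s_i = r$ we have $\cR = \cS^1 \otimes_\K \cT$ with $\cT = \K[R_{d-1}(h)]^{DP}$ of dimension $r - s_1$, and both $\ann_{S^1}(g_1)_1$ and $\ann_T(h)_1$ vanish, so Theorem \ref{thm:fgh} applies to the two-term splitting $f = g_1 + h$ and expresses $\hat{\beta}^f_{kj}$ as $\nu^{\mathrm{fgh}}_k\delta_{1j} + \nu^{\mathrm{fgh}}_{r-k}\delta_{d-1,j}$ plus the weighted sums $\sum_i \binom{r-s_1}{k-i}\hat{\beta}^{g_1,\, S^1}_{ij} + \sum_i \binom{s_1}{k-i}\hat{\beta}^{h,\, T}_{ij}$, where $\nu^{\mathrm{fgh}}_k = \binom{r}{k+1} - \binom{s_1}{k+1} - \binom{r-s_1}{k+1}$. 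By Lemma \ref{lem:betti} these weighted sums are precisely $\hat{\beta}^{g_1,\, R}_{kj}$ and $\hat{\beta}^{h,\, R}_{kj}$. Now apply the inductive hypothesis to $h$ inside $T$ (where $\sum_{i \geq 2} s_i = r-s_1 = \dim T$, so the reduced formula holds) to rewrite $\hat{\beta}^{h,\, T}_{lj}$ as $\sum_{i \geq 2}\hat{\beta}^{g_i,\, T}_{lj} + \nu^T_l \delta_{1j} + \nu^T_{r-s_1-l}\delta_{d-1,j}$, and finish by applying Lemma \ref{lem:betti} once more with $t = s_1$ to convert each $\hat{\beta}^{g_i,\, T}$ ($i \geq 2$) into $\hat{\beta}^{g_i,\, R}_{kj}$.

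The only nontrivial step is verifying the combinatorial identity $\nu^{\mathrm{fgh}}_k + \sum_l \binom{s_1}{k-l}\nu^T_l = \nu_{nk}$ in the reduced case $s = r$, and its symmetric counterpart under $k \leftrightarrow r-k$ for the $\delta_{d-1,j}$ coefficient. The same Vandermonde identity used above, applied now with $a = r-s_1$ and $a = r-s_1-s_i$, evaluates $\sum_l \binom{s_1}{k-l}\nu^T_l$ to $(n-2)\binom{r}{k+1} + \binom{s_1}{k+1} - \sum_{i \geq 2}\binom{r-s_i}{k+1}$; adding $\nu^{\mathrm{fgh}}_k$ causes the $\pm\binom{s_1}{k+1}$ and $\binom{r-s_1}{k+1}$ terms to collapse, leaving $(n-1)\binom{r}{k+1} - \sum_{i=1}^n \binom{r-s_i}{k+1} = \nu_{nk}$. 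The main care required is bookkeeping: tracking which ambient ring each shifted Betti number lives in and invoking Lemma \ref{lem:betti} with the correct codimension at each step, plus reconciling the author's oscillation between Betti numbers of an ideal and of its quotient. I expect this to be the least interesting but most error-prone part of the argument.
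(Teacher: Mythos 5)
Your proposal is correct and follows essentially the same route as the paper: induction on $n$ with Theorem \ref{thm:fgh} supplying the two-term step, Lemma \ref{lem:betti} transferring shifted Betti numbers between the subrings and $R$, and Vandermonde identities absorbing the $\nu$-corrections. The only difference is organizational — the paper first proves the identity entirely inside $S=\K[V]$ with $\ann_S(f)_1=0$ (its equation \eqref{eq:betti2}) and converts to $R$ at the end, whereas you perform the reduction to $s=r$ up front and then induct directly; the binomial bookkeeping is identical.
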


\begin{proof}
  Since $\hat{\beta}^f_{kj} = v_{nk} = 0$ for all $k \ge r$ and all $k \le 0$,
  it is enough to prove \eqref{eq:betti} for $0<k<r$. Let $\cS = \K [R_{d-1}
  (f)]^{DP}$ and $\cS_i = \K [ R_{d-1} (g_i) ]^{DP}$. Recall that $f \in \cS$
  and $g_i \in \cS_i$. It follows from the definition of a regular splitting
  that $R_{d-1} (f) = \dsum_{i=1}^n R_{d-1} (g_i)$, and therefore $\cS = \cS_1
  \otimes_\K \dots \otimes_\K \cS_n \subseteq \cR$, cf. remark
  \ref{rem:basic}. In particular, $s = \sum_{i=1}^n s_i = \dim_\K R_{d-1} (f)
  \le r$.

  Choose $V \subseteq R_1$ such that $R_1 = R_{d-1} (f)^\perp \dsum V$, and
  let $S = \K [V]$. Then $S \iso \cS^*$, cf. remark \ref{rem:scs}. Denote the
  shifted graded Betti numbers of $S/ \ann_S (f)$ by
  $\smash{\hat{\beta}^{S/f}_{kj}}$. It follows from lemma \ref{lem:betti} that
  \begin{equation} \label{eq:S/f}
    \hat{\beta}^f_{kj} = \sum_{i=1}^{s-1} \binom{r-s}{k-i}
    \hat{\beta}^{S/f}_{ij} + \binom{r-s}{k} \delta_{0j} + \binom{r-s}{k-s}
    \delta_{dj}
  \end{equation}
  for all $j,k \ge 0$. Note that $\ann_S (f)_1 = 0$.

  For every $i$ let $V_i = \bigl( \sum_{j \ne i} R_{d-1} (g_j)
  \smash{\bigr)}^\perp \isect V \subseteq R_1$ and $S_i = \K [V_i]$. Then $V =
  \dsum_{i=1}^n V_i$, and therefore $S = S_1 \otimes_\K \dots \otimes_\K S_n
  \subseteq R$. Furthermore, $S_i \iso \cS_i^*$ for all $i$, and $\ann_S (f)_1
  = \dsum_{i=1}^n \ann_{S_i} (g_i)_1$ by lemma \ref{lem:annfg}. Thus $\ann_R
  (f)_1 = 0$ is equivalent to $\ann_{S_i} (g_i)_1 = 0$ for all $i$.

  Denote the shifted graded Betti numbers of $S_i/ \ann_{S_i} (g_i)$ by
  $\hat{\beta}^{S_i/g_i}_{kj}$. If we apply equation \eqref{eq:S/f} to $g_i$,
  we get
  \begin{equation} \label{eq:S/g}
    \hat{\beta}^{g_i}_{kj} = \sum_{l=1}^{s_i-1} \binom{r-s_i}{k-l}
    \hat{\beta}^{S_i/g_i}_{lj}
  \end{equation}
  for all $k \ge 0$ and all $0<j<d$. To prove the theorem we first show that
  \begin{equation} \label{eq:betti2}
    \hat{\beta}^{S/f}_{kj} = \sum_{i=1}^n \sum_{l=1}^{s_i-1}
    \binom{s-s_i}{k-l} \hat{\beta}^{S_i/g_i}_{lj} + \nu_{nk} \delta_{1j} +
    \nu_{n,s-k} \delta_{d-1,j}.
  \end{equation}
  for all $0<j<d$ and $0<k<r$.

  Note that $\nu_{1k} = 0$ for all $k$, since $n=1$ implies $s=s_1$. Thus
  equation \eqref{eq:betti2} is trivially fulfilled for $n=1$. We proceed by
  induction on $n$.

  Assume \eqref{eq:betti2} holds for $h = g_1 + \dots + g_{n-1}$. Let $T = S_1
  \otimes_\K \dots \otimes_\K S_{n-1}$, which is a polynomial ring in $t =
  \sum_{i=1}^{n-1} s_i$ variables. Since $f = h + g_n$ and $\ann_T (h)_1 =
  \ann_{S_n} (g_n)_1 = 0$, we may use theorem \ref{thm:fgh} to find the
  minimal resolution of $S/ \ann_S f$. We see that its graded Betti numbers
  are given by
  \begin{displaymath}
    \hat{\beta}^{S/f}_{kj} = \sum_{c=1}^{t-1} \binom{s-t}{k-c}
    \hat{\beta}^{T/h}_{cj} + \sum_{l=1}^{s_n-1} \binom{s-s_n}{k-l}
    \hat{\beta}^{S_n/g_n}_{lj} + \nu_{2k} \delta_{1j} + \nu_{2,s-k}
    \delta_{d-1,j}
  \end{displaymath}
  for all $0<k<s$ and $0<j<d$. Since by induction
  \begin{displaymath}
    \hat{\beta}^{T/h}_{cj} = \sum_{i=1}^{n-1} \sum_{l=1}^{s_i-1}
    \binom{t-s_i}{c-l} \hat{\beta}^{S_i/g_i}_{lj} + \nu_{n-1,c} \delta_{1j} +
    \nu_{n-1,t-c} \delta_{d-1,j},
  \end{displaymath}
  the proof of equation \eqref{eq:betti2} reduces to the following three
  binomial identities.
  \begin{enumerate}
    \setlength{\itemsep}{2pt}
    \setlength{\parskip}{0pt}
    \renewcommand{\labelenumi}{\normalfont(\theenumi)}
  \item $\displaystyle\sum_{c=1}^{t-1} \binom{s-t}{k-c} \binom{t-s_i}{c-l} =
    \binom{s-s_i}{k-l}$
  \item $\displaystyle\sum_{c=1}^{t-1} \binom{s-t}{k-c} \nu_{n-1,c} +
    \nu_{2k} = \nu_{nk}$
  \item $\displaystyle\sum_{c=1}^{t-1} \binom{s-t}{k-c} \nu_{n-1,t-c} +
    \nu_{2,s-k} = \nu_{n,s-k}$
  \end{enumerate}
  They all follow from the well known formula $\sum_{i \in \Z} \binom{a}{i}
  \binom{b}{k-i} = \binom{a+b}{k}$.

  The first follows immediately since we may extend the summation to $c \in
  \Z$ because $1 \le l < s_i$. In the second we note that
  \begin{displaymath}
    \nu_{n-1,c} = (n-2) \binom{t}{c+1} + \binom{0}{c+1} - \sum_{i=1}^{n-1}
    \binom{t-s_i}{c+1}.
  \end{displaymath}
  Note that $\nu_{n-1,c} = 0$ for all $c \ge t$ and all $c \le 0$, even $c=-1$
  since $\tbinom 00 = 1$. Hence we can extend the summation in equation (2) to
  all $c \in \Z$, implying
  \begin{displaymath}
    \sum_{c=1}^{t-1} \binom{s-t}{k-c} \nu_{n-1,c} = (n-2) \binom{s}{k+1} +
    \binom{s-t}{k+1} - \sum_{i=1}^{n-1} \binom{s-s_i}{k+1}.
  \end{displaymath}
  Since
  \begin{displaymath}
    \nu_{2k} = \binom{s}{k+1} + \binom{0}{k+1} - \binom{s-t}{k+1} -
    \binom{s-s_n}{k+1},
  \end{displaymath}
  equation (2) follows easily. Finally, the third equation equals the second
  by letting $(c,k) \mapsto (t-c,s-k)$, finishing the proof of equation
  \eqref{eq:betti2}.

  The theorem now follows by combining equations \eqref{eq:S/f},
  \eqref{eq:S/g} and \eqref{eq:betti2}. Also here the proof reduces to three
  binomial identities, and their proofs are similar to equation (1) above.
\end{proof}

\begin{rem}
  We may express $\hat{\beta}^f_{kj}$ in terms of
  $\hat{\beta}^{S_i/g_i}_{lj}$, the shifted graded Betti numbers of $S_i /
  \ann_{S_i} (g_i)$. From the proof of theorem \ref{thm:betti}, we see that
  \begin{equation*}
    \hat{\beta}^f_{kj} = \sum_{i=1}^n \sum_{l=1}^{s_i-1} \binom{r-s_i}{k-l}
    \hat{\beta}^{S_i/g_i}_{lj} + \nu_{nk} \delta_{1j} + \nu_{n,r-k}
    \delta_{d-1,j}.
  \end{equation*}
\end{rem}

\begin{rem}
  For any $f \in \cR_d$ we may arrange the shifted graded Betti numbers
  $\hat{\beta}_{ij}$ of $R/\ann_R f$ into the following $(d+1) \times (r+1)$
  box.
  \begin{displaymath}
    \begin{array}{|ccccc|} \hline
      \rule{0pt}{12pt}1 & \hat{\beta}_{r-1,d} &\dots& \hat{\beta}_{1d} & 0 \\
      0 & \hat{\beta}_{r-1,d-1} & \dots & \hat{\beta}_{1,d-1} & 0 \\
      \vdots & \vdots && \vdots & \vdots \\
      0 & \hat{\beta}_{r-1,1} & \dots & \hat{\beta}_{11} & 0 \\
      \rule[-5pt]{0pt}{5pt}0 & \hat{\beta}_{r-1,0} & \dots & \hat{\beta}_{10}
      & 1 \\ \hline
    \end{array}
  \end{displaymath}
  We call this the Betti diagram of $R/\ann_R f$. The Betti numbers are all
  zero outside this box, i.e. $\hat{\beta}_{ij} = 0$ for $i<0$, for $j<0$, for
  $i>r$, and for $j>d$. Thus the socle degree $d$ is equal to the
  Castelnuovo-Mumford regularity of $R/\ann_R f$. In addition,
  $\hat{\beta}_{ij}$ will always be zero for $i=0$, $j>0$ and for $i=r$,
  $j<d$, and $\hat{\beta}_{00} = \hat{\beta}_{rd} = 1$, as indicated.

  The values of $\hat{\beta}_{ij}$ when $j=0$ or $j=d$ are easily determined
  by equation \eqref{eq:S/f}. Since $\ann_S (f)_1 = 0$, it follows that
  \begin{displaymath}
    \hat{\beta}_{i0} = \binom{r-s}{i} \quad \text{ and } \quad
    \hat{\beta}_{id} = \binom{r-s}{i-s}
  \end{displaymath}
  for all $i$. In particular, if $\ann_R (f)_1 = 0$, then they are all zero
  (except $\hat{\beta}_{00} = \hat{\beta}_{rd} = 1$).

  The ``inner'' rectangle of the Betti diagram, that is, $\hat{\beta}_{ij}$
  with $0<i<r$ and $0<j<d$, is determined by theorem \ref{thm:betti}. We note
  that it is simply the sum of the ``inner'' rectangles of the Betti diagrams
  of $R/\ann_R (g_i)$, except an addition to the rows with $j=1$ and $j=d-1$.
\end{rem}

% Let $R_1 = \dsum_{i=1}^n V_i$ and $I = \sum_{i<j} (V_i V_j)$. $R/I$ should
% have a minimal free resolution of the form $0 \to F_{r-1} \to \dots \to F_1
% \to R$ with $F_k = \nu_{nk} R(-k-1)$ for all $k>0$.

\section{The parameter space}  \label{section:pgor}

% This section is inspired by \cite{IS}. We assume $\K = \bar{\K}$. I guess
% most of it would still be true as long as $\K$ is infinite.

The closed points of the quasi-affine scheme $\bGor (r,H)$ parameterize every
$f \in \cR_d$ such that the Hilbert function of $R/\ann_R f$ equals $H$. We
will in this section define some ``splitting subfamilies'' of $\bGor (r,H)$,
and discuss some of their properties. We assume here that $\K$ is an
algebraically closed field. We start by defining $\bGor (r,H)$, cf.
\cite[definition 1.10]{IK}.

Let
\begin{equation} \label{eq:A}
  A = \biggl\{ \alpha = (\alpha_1, \dots, \alpha_r) \in \Z^r \,\bigg\vert\:
  \alpha_i \ge 0 \text{ for all $i$ and } \sum_{i=1}^r \alpha_i = d \biggr\},
\end{equation}
and note that $|A| = \tbinom{r+d-1}{d} = \dim_\K \cR_d$. We consider $\A =
\K[\{z_\alpha \suchthat \alpha \in A\}]$, which is a polynomial ring in $|A|$
variables, to be the coordinate ring of $\bA (\cR_d)$. We think of
\begin{displaymath}
  F = \sum_{\alpha \in A} z_\alpha x^\pd \alpha \in \A \otimes_\K \cR_d
\end{displaymath}
as the generic element of $\cR_d$. The action of $R$ on $\cR$ extend by
$\A$-linearity to an action on $\A \otimes_\K \cR$. In particular, if $D \in
R_d$, then $D(F) = \sum_{\alpha \in A} z_\alpha D (x^\pd \alpha)$ is an
element of $\A_1$.

For any $0 \le e \le d$, fix bases $\D = \{ D_1, \dots, D_M \}$ and $\E = \{
E_1, \dots, E_N \}$ for $R_{d-e}$ and $R_e$, respectively. Let $D = [ D_1,
\dots, D_M ]^\T$ and $E = [ E_1, \dots, E_N ]^\T$, and define $\Cat^d_e =
DE^\T$. It is customary to require that $\D$ and $\E$ are the standard bases
$\{ \p^\alpha \}$ ordered lexicographically, and to call $\Cat^d_e$ the
``catalecticant'' matrix of this size. Note that the \te{(i,j)} entry of
$\Cat^d_e (F)$ is
\begin{displaymath}
  \bigl( \Cat^d_e (F) \bigr)_{ij} = D_i E_j (F) = \sum_{\alpha \in A} z_\alpha
  D_i E_j \bigl( x^\pd \alpha \bigr) \in \A_1.
\end{displaymath}

If $f \in \cR_d$, then $\Cat^d_e (f)$ is a matrix representation of the map
$R_e \to \cR_{d-e}$ given by $D \mapsto D(f)$. Hence
\begin{displaymath}
  \dim_\K (R/\ann f)_e = \rank \Cat^d_e (f) = \dim_\K (R/\ann f)_{d-e}
\end{displaymath}
by lemma \ref{lem:ann}. Therefore the $k \times k$ minors of $\Cat^d_e (F)$
cut out the subset
\begin{displaymath}
  \bigl\{ f \in \cR_d \,\big\vert\: \dim_\K (R/\ann f)_e < k \bigr\}
  \subseteq \bA (\cR_d).
\end{displaymath}

\begin{defn} \label{def:PGor}
  Let $H = (h_0, \dots, h_d)$ be a symmetric sequence of positive integers
  (i.e. $h_{d-i} = h_i$ for all $i$) such that $h_0 = 1$ and $h_1 \le r$. We
  define $\bGor_{\le} (r,H)$ to be the affine subscheme of $\bA (\cR_d)$
  defined by the ideal
  \begin{displaymath}
    I_H = \sum_{e=1}^{d-1} I_{h_e+1} \bigl( \Cat^d_e (F) \bigr).
  \end{displaymath}
  We let $\bGor (r,H)$ be the open subscheme of $\bGor_\le (r,H)$ where some
  $h_e \times h_e$ minor is nonzero for each $e$. We denote by $\Gor(r,H)$ the
  corresponding reduced scheme, which is then the quasi-affine algebraic set
  parameterizing all $f \in \cR_d$ such that $H(R/\ann f) = H$. Furthermore,
  let $\bPGor (r,H)$ and $\PGor (r,H)$ be the projectivizations of $\bGor
  (r,H)$ and $\Gor (r,H)$, respectively. By virtue of the Macaulay duality
  (cf. lemma \ref{lem:mac}), $\PGor(r,H)$ parameterizes the graded Artinian
  Gorenstein quotients $R/I$ with Hilbert function $H$.
\end{defn}

% (See definition 1.10 in \emph{Power Sums, Gorenstein Algebras, and
%   Determinantal Loci} by Iarrobino and Kanev, \cite{IK}.)

% As usual, $I_k (M)$ is the ideal generated by the $k \times k$ minors of the
% matrix $M$. To require one particular minor to be nonzero, is a localization
% of the ring. To require one of many is glueing these to a bigger open.

% $\bGor (r,H)$ can be covered by open neighbourhoods whose coordinate rings
% are localizations of $\A / I_H$.

We are now ready to define a set of $f \in \Gor (r,H)$ that split. This subset
will depend on the Hilbert function of every additive component of $f$. Recall
that if $f = \sum_{i=1}^n g_i$ is a regular splitting of $f$, then by lemma
\ref{lem:annfg} (a and e)
\begin{displaymath}
  H \bigl( R/ \ann_R f \bigr) = \sum_{i=1}^n H \bigl( R/ \ann_R g_i \bigr) -
  (n-1) \bigl( \delta_0 + \delta_d \bigr).
\end{displaymath}

\begin{defn} \label{def:Split}
  Let $r \ge 1$, $d \ge 2$ and $n \ge 1$. For each $i = 1, \dots, n$, suppose
  $H_i = (h_{i0}, \dots, h_{id})$ is a symmetric sequence of positive integers
  such that $h_{i0} = 1$ and $\sum_{i=1}^n h_{i1} \le r$. Let $\underline{H} =
  (H_1, \dots, H_n)$ and $H = \sum_{i=1}^n H_i - (n-1) (\delta_0 + \delta_d)$,
  i.e $H = (h_0, \dots, h_d)$ where $h_0 = h_d = 1$ and $h_j = \sum_{i=1}^n
  h_{ij}$ for all $0<j<d$. Define
  \begin{displaymath}
    \Split (r,\underline{H}) = \Split (r,d,n, \underline{H}) \subseteq \Gor
    (r,H)
  \end{displaymath}
  to be the subset parameterizing all $f \in \cR_d$ with the following
  property: There exist a regular splitting $f = \sum_{i=1}^n g_i$ such that
  $H ( R/ \ann_R g_i) = H_i$ for all $i$. Let $\PSplit (r,\underline{H})
  \subseteq \PGor (r,H)$ be the projectivization of $\Split
  (r,\underline{H})$.
\end{defn}

% Old definition: Let $r \ge 1$, $d \ge 2$ and $n \ge 1$. For each $i = 1,
% \dots, n$, let $H_i = (h_{i0}, \dots, h_{id})$ be symmetric, with $h_{i0} =
% 1$ and $\sum_{i=1}^n h_{i1} = r$. Let $\underline{H} = (H_1, \dots, H_n)$,
% and define $H = (h_0, \dots, h_d)$ by $h_j = \sum_{i=1}^n h_{ij}$ for all
% $0<j<d$ and $h_0 = h_d = 1$. Let $\J_i = \{ j \in \Z \suchthat \sum_{k<i}
% h_{k1} < j \le \sum_{k \le i} h_{k1} \}$ for all $i = 1, \dots, n$. Then
% $|\J_i| = h_{i1}$ and $\{ \J_i \}_{i=1}^n$ is a partition of $\{ 1, \dots, r
% \}$. Define $S^i = \K[ \{ \p_j \suchthat j \in \J_i \} ]$. Then $R = S^1
% \otimes_\K \dots \otimes_\K S^n$, and we identify $S^i$ with its image in
% $R$. Define $\PSplit (\underline{H}) = \PSplit (r,d,n, \underline{H})
% \subseteq \PGor(H)$ to be the subset parameterizing all $R/I \in \PGor(H)$
% that after a base change satisfy $S^i_1 S^j_1 \subseteq I_2$ for all $i \ne
% j$ and $H(S^i/I \isect S^i) = H_i$ for all $i$.

% Let $\underline{s} = (s_1, \dots, s_n)$ be an $n$-tuple of positive integers
% satisfying $\sum_{i=1}^n s_i = s \le r$, and assume $h_{i1} = s_i$. Or let
% $s_i = h_{i1}$ and $\underline{s} = (s_1, \dots, s_n)$, thus $\sum_{i=1}^n
% s_i = h_1$. I guess the full set of parameters would be $\PSplit (r,d,n,
% \underline{s}, \underline{H})$, but they are all encoded in $\underline{H}$.

Obviously, $\Split (r,\underline{H})$ reduces to $\Gor (r,H)$ if $n=1$.
$\Split (r,\underline{H})$ is always a constructible subset of $\Gor (r,H)$,
since it is the image of the morphism $\rho$, see lemma \ref{lem:Split}. Note
that every linear map $\K^s \to \K^r$, that is, every matrix $C \in \Mat_\K
(r,s)$, induces a homomorphism of $\K$-algebras $\K [x_1, \dots, x_s]^{DP} \to
\cR$, determined by $[x_1, \dots, x_s] \mapsto [x_1, \dots, x_r] C$, that we
denote $\phi_C$.

\begin{lem} \label{lem:Split}
  Let $\underline{H} = (H_1, \dots, H_n)$ be an $n$-tuple of symmetric
  $h$-vectors $H_i = (h_{i0}, \dots, h_{id})$ such that $h_{i0} = 1$ for all
  $i$, and $\sum_{i=1}^n h_{i1} \le r$. Let $s_i = h_{i1}$, $\underline{s} =
  (s_1, \dots, s_n)$ and $H = \sum_{i=1}^n H_i - (n-1) (\delta_0 + \delta_d)$,
  where $\delta_e$ is $1$ in degree $e$ and zero elsewhere. Define
% \begin{displaymath}
%   \Phi_{\underline{s}} = \biggl\{ (\phi_1, \dots, \phi_n) \,\bigg\vert\:
%   \phi_i : \K^{s_i} \hookrightarrow \cR_1 \text{ and } \im \phi_i \isect
%   \sum_{j \ne i} \im \phi_j = 0 \text{ for all } i \biggr\}.
% \end{displaymath}
  \begin{displaymath}
    \Phi_{\underline{s}} = \biggl\{ (\phi_{C_1}, \dots, \phi_{C_n})
    \,\bigg\vert\: C_i \in \Mat_\K (r,s_i) \text{ and } \dim_\K \sum_{i=1}^n
    \im C_i = \sum_{i=1}^n s_i \biggr\}.
  \end{displaymath}
  Then $\Split (r, \underline{H})$ is the image of the morphism
  \begin{align*}
    \rho : \Phi_{\underline{s}} \times \prod_{i=1}^n \Gor (s_i, H_i) & \to
    \Gor (r,H), \\
    \bigl( (\phi_{C_1}, \dots, \phi_{C_n}), (g_1, \dots, g_n) \bigr) & \mapsto
    \sum_{i=1}^n \phi_{C_i} (g_i).
  \end{align*}
  Furthermore, the fiber over any closed point has dimension $\sum_{i=1}^n
  s_i^2$.
\end{lem}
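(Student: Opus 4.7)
The plan is to establish the image description first and then compute the fiber dimension. I will argue both inclusions needed for $\im \rho = \Split(r,\underline H)$ and afterwards determine the fiber dimension over a fixed closed point.

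Given a point $\bigl((\phi_{C_i}),(g_i)\bigr)$ in the source of $\rho$, set $W_i = \{v^\T x : v \in \im C_i\} \subseteq \cR_1$. The rank condition defining $\Phi_{\underline s}$ is equivalent to each $C_i$ having full column rank $s_i$ together with the subspaces $W_1,\dots,W_n$ being linearly independent in $\cR_1$. Consequently $\phi_{C_i}$ restricts to a graded $\K$-algebra isomorphism from $\K[x_1,\dots,x_{s_i}]^{DP}$ onto $\K[W_i]^{DP}$. Writing $f_i = \phi_{C_i}(g_i) \in \K[W_i]^{DP}$ and $f = \sum_i f_i$, we have $R_{d-1}(f_i) \subseteq W_i$, so the linear independence of the $W_i$'s makes $f = \sum f_i$ a regular splitting in the sense of definition \ref{def:regsplit}. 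Transporting lemma \ref{lem:annfg}(a)(iii) through the isomorphism $\phi_{C_i}$ identifies the Hilbert functions of $R/\ann_R(f_i)$ and of $\K[\p_1,\dots,\p_{s_i}]/\ann(g_i)$, so $H(R/\ann_R f_i) = H_i$ and therefore $f \in \Split(r,\underline H)$.

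Conversely, given $f \in \Split(r,\underline H)$ with regular splitting $f = \sum f_i$ realizing the Hilbert functions $H_i$, set $W_i = R_{d-1}(f_i)$. By remark \ref{rem:native}, $f_i \in \K[W_i]^{DP}$, and the Hilbert function constraint gives $\dim_\K W_i = h_{i1} = s_i$. Since $f = \sum f_i$ is a regular splitting, the $W_i$'s are linearly independent subspaces of $\cR_1$. Choosing any basis of each $W_i$ produces a matrix $C_i \in \Mat_\K(r, s_i)$ with $(\phi_{C_1}, \dots, \phi_{C_n}) \in \Phi_{\underline s}$, and the isomorphism $\phi_{C_i}$ pulls $f_i$ back to a unique $g_i \in \Gor(s_i, H_i)$, yielding a preimage of $f$ under $\rho$.

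For the fiber dimension over a fixed closed point $f$, I appeal to theorem \ref{thm:regsplit}: when $d \ge 3$, $f$ has a unique maximal regular splitting and every regular splitting of $f$ arises by grouping its summands, so the multiset $\underline H$ of Hilbert functions selects only finitely many splittings. Once such a splitting $f = \sum f_i$ is fixed, the subspaces $W_i = R_{d-1}(f_i) \subseteq \cR_1$ are determined, and the remaining data $(C_i, g_i)$ in the fiber over $f$ is parametrized exactly by a choice of basis for each $W_i$, after which $g_i$ is forced by $\phi_{C_i}(g_i) = f_i$. Bases of an $s_i$-dimensional space form a $\GL_{s_i}$-torsor of dimension $s_i^2$, and summing over $i$ yields $\sum_{i=1}^n s_i^2$, as claimed. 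The main obstacle I foresee is the case $d = 2$, in which the representation of a quadratic form as a sum of squares is far from unique and the space of regular splittings of a fixed $f$ is itself positive-dimensional, so a literal count makes the fiber larger than $\sum s_i^2$; I would address this either by restricting the dimension statement to $d \ge 3$ or by interpreting the fiber over a closed point relative to a fixed splitting and making that choice explicit in the argument.
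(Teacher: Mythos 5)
Your proof is correct and follows essentially the same route as the paper: the paper dismisses the image description as immediate from the definition, and it organizes the fiber count as a finite union of orbits of $G_{\underline{H}} \times \prod_{i=1}^n \GL_{s_i}$ (a permutation group times the base-change groups), which is just a packaged form of your torsor count over the finitely many groupings of the maximal splitting. Your closing caveat about $d=2$ is well taken — the paper's own fiber argument also invokes the unique maximal regular splitting from theorem \ref{thm:regsplitE}, which requires $d \ge 3$, and the lemma is only applied downstream (theorem \ref{thm:gor}) with $d \ge 4$.
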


\begin{proof}
  The first part is clear from definition \ref{def:Split}. Note that the
  condition $\dim_\K \sum_{i=1}^n \im C_i = \sum_{i=1}^n s_i$ in the
  definition of $\Phi_{\underline{s}}$ is equivalent to $\rank C_i = s_i$ and
  $\im C_i \isect \sum_{j \ne i} \im C_j = 0$ for all $i$.

  To find the dimension of the fibers, we will start by describing a group
  that acts on $\Phi_{\underline{s}} \times \sprod_{i=1}^n \Gor (s_i, H_i)$ in
  such a way that the morphism $\rho$ is constant on the orbits of the group
  action.

  First, let the group $\sprod_{i=1}^n \GL_{s_i}$ act on $\Phi_{\underline{s}}
  \times \sprod_{i=1}^n \Gor (s_i, H_i)$ by
  \begin{displaymath}
    \bigl( P_i \smash{\bigr)}_{i=1}^n \times \Bigl( \bigl( \phi_{C_i}
    \smash{\bigr)}_{i=1}^n, \bigl( g_i \smash{\bigr)}_{i=1}^n \Bigr) \mapsto
    \Bigl( \bigl( \phi_{C_i P_i^{-1}} \smash{\bigr)}_{i=1}^n, \bigl(
    \phi_{P_i} (g_i) \smash{\bigr)}_{i=1}^n \Bigr).
  \end{displaymath}
  Obviously, $\phi_{C_i P_i^{-1}} = \phi_{C_i} \circ \phi_{P_i^{-1}}$, and
  therefore, $\bigl( \phi_{C_i P_i^{-1}} \bigr) (\phi_{P_i} g_i) = \phi_{C_i}
  (g_i)$.

% Recall that $\phi_P$ is defined on $\cR$ by $x \mapsto P^\T x$ and on $R$ by
% $\p \mapsto P^{-1} \p$. This implies that $\phi_P \circ \phi_Q = \phi_{PQ}$,
% and furthermore, $P (Q (\phi, g)) = (PQ) (\phi, g)$. Thus the above morphism
% is indeed a group action.

  Second, let $\SG_n$ denote the symmetric group on $n$ symbols. A permutation
  $\sigma \in \SG_n$ acts on the $n$-tuple $\underline{H} = (H_1, \dots, H_n)$
  by permuting its coordinates, i.e., $\sigma (\underline{H}) = \bigl(
  H_{\sigma^{-1} (1)}, \dots, H_{\sigma^{-1} (n)} \bigr)$. Let
  $G_{\underline{H}}$ be the subgroup of $\SG_n$ defined by
  \begin{displaymath}
    G_{\underline{H}} = \{ \sigma \in \SG_n \suchthat \sigma (\underline{H}) =
    \underline{H} \}.
  \end{displaymath}

  Note that $G_{\underline{H}}$ is a product of symmetric groups. Indeed, let
  $k$ be the number of distinct elements of $\{ H_1, \dots, H_n \}$. Call
  these elements $H'_1, \dots, H'_k$, and let $n_i \ge 1$ be the number of $j$
  such that $H_j = H'_i$. Then $\sum_{i=1}^k n_i = n$, and
  \begin{displaymath}
    G_{\underline{H}} \iso \SG_{n_1} \times \dots \times \SG_{n_k}.
  \end{displaymath}

  The group $G_{\underline{H}}$ acts on $\Phi_{\underline{s}} \times
  \sprod_{i=1}^n \Gor (s_i, H_i)$ by
  \begin{displaymath}
    \sigma \times \Bigl( \bigl( \phi_{C_i} \smash{\bigr)}_{i=1}^n, \bigl( g_i
    \smash{\bigr)}_{i=1}^n \Bigr) \mapsto \Bigl( \bigl( \phi_{C_{\sigma^{-1}
        (i)}} \smash{\bigr)}_{i=1}^n, \bigl( g_{\sigma^{-1} (i)}
    \smash{\bigr)}_{i=1}^n \Bigr).
  \end{displaymath}
  Indeed, since any $\sigma \in G_{\underline{H}}$ fixes $\underline{H}$, we
  have $H_{\sigma^{-1} (i)} = H_i$, and in particular $s_{\sigma^{-1} (i)} =
  s_i$ since $s_i = h_{i1}$. Thus $C_{\sigma^{-1} (i)} \in \Mat_\K (r,s_i)$
  and $g_{\sigma^{-1} (i)} \in \Gor (s_i, H_i)$. Clearly, $\sum_{i=1}^n
  \phi_{C_{\smash{\sigma^{-1} (i)}}} \bigl( g_{\sigma^{-1} (i)} \bigr) =
  \sum_{i=1}^n \phi_{C_i} (g_i)$. Thus the morphism $\rho$ is constant on the
  orbits of also this group action.

  Suppose $f \in \im \rho$. By theorem \ref{thm:regsplitE} $f$ has a unique
  maximal regular splitting $f = \sum_{i=1}^m f'_i$, and every other regular
  splitting is obtained by grouping some of the summands. Evidently, since $f
  \in \Split (r,d,n, \underline{H})$, there is at least one way to group the
  summands such that $f = \sum_{i=1}^n f_i$ is a regular splitting and $H
  \bigl( R/ \ann_R (f_i) \bigr) = H_i$ for all $i$, and there are only
  finitely many such ``groupings''. If $f = \sum_{i=1}^n f_i$ is any such
  expression, then clearly there exists $\bigl( (\phi_{C_i})_{i=1}^n,
  (g_i)_{i=1}^n \bigr) \in \Phi_{\underline{s}} \times \sprod_{i=1}^n \Gor
  (s_i, H_i)$ such that $f_i = \phi_{C_i} (g_i)$ for all $i$.

  Now, if $\bigl( (\phi_{C_i})_{i=1}^n, (g_i)_{i=1}^n \bigr) \in \rho^{-1}
  (f)$ is any element of the fiber over $f$, then the expression $f =
  \sum_{i=1}^n \phi_{C_i} (g_i)$ is one of those finitely many groupings.
  Assume $\bigl( (\phi_{C'_i})_{i=1}^n, (g'_i)_{i=1}^n \bigr)$ is another
  element of the fiber such that the expression \mbox{$f = \sum_{i=1}^n
    \phi_{C'_i} (g'_i)$} corresponds to the same grouping. Since
  \begin{displaymath}
    H \bigl( R/ \ann_R (\phi_{C_i} (g_i)) \bigr) = H_i = H \bigl( R/ \ann_R
    (\phi_{C'_i} (g'_i)) \bigr),
  \end{displaymath}
  there exists $\sigma \in G_{\underline{H}}$ such that $\phi_{C'_i} (g'_i) =
  \phi_{C_{\smash{\sigma^{-1} (i)}}} \bigl( g_{\sigma^{-1} (i)} \bigr)$ for
  all $i$. By composing with $\sigma$, we may assume $\phi_{C'_i} (g'_i) =
  \phi_{C_i} (g_i)$ for all $i$. Note that $\p (\phi_{C_i} (g_i)) = C_i
  \phi_{C_i} (\p g)$ and $R_{d-1} (\p g) = \K^{s_i}$. It follows that $R_{d-1}
  \p (\phi_{C_i} (g_i)) = \im C_i$, and therefore $\im C'_i = \im C_i$. Thus
  there exists $P_i \in \GL_{s_i}$ such that $C'_i = C_i P_i^{-1}$ for all
  $i$. Moreover, $\phi_{C_i} (g_i) = \phi_{C'_i} (g'_i) = \phi_{C_i}
  (\phi_{P_i^{-1}} g'_i)$ implies $g'_i = \phi_{P_i} (g_i)$ since $\phi_{C_i}$
  is injective. This proves that $\bigl( (\phi_{C'_i})_{i=1}^n, (g'_i)_{i=1}^n
  \bigr)$ and $\bigl( (\phi_{C_i})_{i=1}^n, (g_i)_{i=1}^n \bigr)$ are in the
  same orbit.

  We have shown that the fiber $\rho^{-1} (f)$ over $f$ is of a finite union
  of $( G_{\underline{H}} \times \sprod_{i=1}^n \GL_{s_i} )$-orbits; one orbit
  for each grouping $f = \sum_{i=1}^n f_i$ of the maximal splitting of $f$
  such that $H \bigl( R/ \ann_R (f_i) \bigr) = H_i$. By considering how the
  group acts on $\Phi_{\underline{s}}$, we see that different group elements
  give different elements in the orbit. It follows that the dimension of any
  fiber equals $\dim (\sprod_{i=1}^n \GL_{s_i}) = \sum_{i=1}^n s_i^2$.
\end{proof}

\begin{exmp}
  Let $n=2$. The fiber over $f = x_1^\pd d + x_2^\pd d \in \Split (r,d,2,
  \underline{H})$ is a single orbit. However, the fiber over $f = x_1^\pd d +
  x_2^\pd d + x_3^\pd d \in \Split (r,d,2, \underline{H})$ consists of three
  orbits, one for each of the expressions $f = x_i^\pd d + \sum_{j \ne i}
  x_j^\pd d$.
\end{exmp}

% \begin{defn} \label{def:Ws}
%   Define $\W_{\underline{s}}$ to be the open subscheme of $\sprod_{i=1}^n
%   \Grass (s_i,r)$ parameterizing all $n$-tuples $W = (W_1, \dots, W_n)$ of
%   subspaces $W_i \subseteq \cR_1$ such that $\dim_\K W_i = s_i$ and $W_i
%   \isect \sum_{j \ne i} W_j = 0$ for all $i$.
% \end{defn}

% Define $\W'_{\underline{H}} = \W_{\underline{s}} / G_{\underline{H}}$, if
% this orbit space exists. (It does at least as long as $\chr \K$ does not
% divide the order of $G_{\underline{H}}$.)

\begin{rem}
  We have seen that $\rho$ is constant on the orbits of the action of
  $G_{\underline{H}} \times \sprod_{i=1}^n \GL_{s_i}$. If the geometric
  quotient exists, we get an induced map
  \begin{displaymath}
    \biggl( \Phi_{\underline{s}} \times \prod_{i=1}^n \Gor (s_i, H_i) \biggr)
    \bigg/ \biggl( G_{\underline{H}} \times \prod_{i=1}^n \GL_{s_i} \biggr)
    \to \Gor (r,H).
  \end{displaymath}
  
  Let $U_i \subseteq \Gor (s_i, H_i)$ parameterize all $g \in \K [x_1, \dots,
  x_{s_i}]^{DP}$ that do not have any non-trivial regular splitting, and $U
  \subseteq \Split (r,n, \underline{H})$ be those $f \in \cR_d$ where $f =
  \sum_{i=1}^n g_i$ is a maximal splitting. The morphism above restricts to a
  map $\bigl( \Phi_{\underline{s}} \times \sprod_{i=1}^n U_i \bigr) \big/
  \bigl( G_{\underline{H}} \times \sprod_{i=1}^n \GL_{s_i} \bigr) \to U$. By
  the proof of lemma \ref{lem:Split}, this is a bijection.
\end{rem}

% Its inverse is essentially given by $f = \sum_{i=1}^n g_i \mapsto (R_{d-1}
% (g_i), g_i)_{i=1}^n$.

\begin{rem}
  We would like to identify $\W_{\underline{s}} = \Phi_{\underline{s}} /
  \sprod_{i=1}^n \GL_{s_i}$. Let $\Grass (s_i,r)$ be the Grassmannian that
  parameterizes $s_i$-dimensional $\K$-vector subspaces of $\cR_1 \iso \K^r$.
  We may think of $\Grass (s_i,r)$ as the set of equivalence classes of
  injective, linear maps $\K^{s_i} \hookrightarrow \cR_1$, two maps being
  equivalent if they have the same image. It follows that $\W_{\underline{s}}$
  is the open subscheme of $\sprod_{i=1}^n \Grass (s_i,r)$ parameterizing all
  $n$-tuples $W = (W_1, \dots, W_n)$ of subspaces $W_i \subseteq \cR_1$ such
  that $\dim_\K W_i = s_i$ and $W_i \isect \sum_{j \ne i} W_j = 0$ for all
  $i$.
\end{rem}

\begin{rem}
  For completeness, we want to describe the corresponding map of structure
  sheafs, $\rho^\# : \cO_{\Gor (r,H)} \to \rho_* \cO_{\Phi_{\underline{s}}
    \times \sprod_{i=1}^n \Gor (s_i, H_i)}$.

  \smallskip
  For each $i$, let $(c_{ijk})$ be the entries of $C_i \in \Mat_\K (r,s_i)$,
  i.e.
  \begin{displaymath}
    C_i =
    \begin{pmatrix}
      c_{i11} & \dots & c_{i1s_i} \\
      \vdots && \vdots \\
      c_{ir1} & \dots & c_{irs_i}
    \end{pmatrix}\!.
  \end{displaymath}
  Since $\sum_{i=1}^n \im C_i = \im [C_1, \dots, C_n]$, it follows that
  $\Phi_{\underline{s}}$ is isomorphic to the set of $r \times (\sum_i
  s_i)$-matrices of maximal rank. Let $Y$ be the coordinate ring of $\Mat_\K
  (r, \sum_i s_i)$. We choose to write $Y$ as
  \begin{displaymath}
    Y = \mathop{\rule[-2.3pt]{0pt}{9pt}}_{i=1}^n \mspace{-22mu} \otimes_\K \;
    \K \bigl[ \bigl\{ y_{ijk} \,\big\vert\: 1 \le j \le r \text{ and } 1 \le k
    \le s_i \bigr\} \bigr].
  \end{displaymath}

  Let $\cS^i = \K [x_1, \dots, x_{s_i}]^{DP}$ and $S^i = \K [\p_1, \dots,
  \p_{s_i}]$. By definition, $\Gor (s_i, H_i)$ parametrizes all $g_i \in
  \cS^i_d$ such that the Hilbert function of $S^i / \ann_{S^i} (g_i)$ is
  $H_i$. The coordinate ring of $\bA (\cS^i_d)$ is $\A_i = \K[\{z_{i\gamma}
  \suchthat \gamma \in A_i\}]$, where
  \begin{displaymath}
    A_i = \biggl\{ \gamma = (\gamma_1, \dots, \gamma_{s_i}) \in \Z^{s_i}
    \,\bigg\vert\: \gamma_k \ge 0 \text{ for all $k$ and } \sum_{k=1}^{s_i}
    \gamma_k = d \biggr\}.
  \end{displaymath}
  $\bGor_\le (s_i, H_i)$ is the affine subscheme of $\bA (\cS^i_d)$ whose
  coordinate ring is $\A_i / I_{H_i}$, cf. definition \ref{def:PGor}. Any $g_i
  \in \cS^i_d$ can be written as
  \begin{displaymath}
    g_i = \sum_{\gamma \in A_i} a_{i\gamma} \prod_{k=1}^{s_i}
    x_k^{(\gamma_k)}.
  \end{displaymath}
  It follows that
  \begin{displaymath}
    \sum_{i=1}^n \phi_{C_i} (g_i) = \sum_{i=1}^n \sum_{\gamma \in A_i}
    a_{i\gamma} \prod_{k=1}^{s_i} \Bigl( \sum_{j=1}^r c_{ijk} x_j \Bigr)^\pd
    {\gamma_k}.
  \end{displaymath}
  When we expand this, we see that for any $\alpha = (\alpha_1, \dots,
  \alpha_r) \in A$ (cf. equation \eqref{eq:A}) the coefficient in front of
  $x^\pd \alpha = \sprod_{j=1}^r x_j^\pd {\alpha_j}$ is
  \begin{equation*}
    \sum_{i=1}^n \sum_{\gamma \in A_i} a_{i\gamma} \cdot \mspace{-20mu}
    \sum_{\substack{\{\beta_{jk} \ge 0\}\\[1pt] \sum_{j=1}^r \beta_{jk} =
        \gamma_k\\[1pt] \sum_{k=1}^{s_i} \beta_{jk} = \alpha_j}} \prod_{j=1}^r
    \: \biggl[ \binom {\alpha_j} {\beta_{j1}, \dots, \beta_{js_i}}
    \prod_{k=1}^{s_i} c_{ijk}^{\beta_{jk}} \biggr].
  \end{equation*}
  The multinomial
  \begin{displaymath}
    \binom {\alpha_j} {\beta_{j1}, \dots, \beta_{js_i}} = \frac {\alpha_j!}
    {\beta_{j1}! \dotsm \beta_{js_i}!}
  \end{displaymath}
  appears as a result of how the multiplication in $\cR$ is defined.

  The coordinate ring of $\bA (\cR_d)$ is $\A = \K [\{z_\alpha \suchthat
  \alpha \in A\}]$. Let
  \begin{displaymath}
    \A \to Y \otimes_\K \A_1 \otimes_\K \dots \otimes_\K \A_n
  \end{displaymath}
  be the $\K$-algebra homomorphism induced by
  \begin{equation*}
    z_\alpha \mapsto \sum_{i=1}^n \sum_{\gamma \in A_i} z_{i\gamma} \cdot
    \mspace{-20mu} \sum_{\substack{\{\beta_{jk} \ge 0\}\\[1pt] \sum_{j=1}^r
        \beta_{jk} = \gamma_k\\[1pt] \sum_{k=1}^{s_i} \beta_{jk} = \alpha_j}}
    \prod_{j=1}^r \: \biggl[ \binom {\alpha_j} {\beta_{j1}, \dots,
      \beta_{js_i}} \prod_{k=1}^{s_i} y_{ijk}^{\beta_{jk}} \biggr]
  \end{equation*}
  for all $\alpha \in A$. This implies that $F = \sum_{\alpha \in A} z_\alpha
  x^\pd \alpha \in \A \otimes_\K \cR_d$ is mapped to $\sum_{i=1}^n \phi_i
  (F_i)$, where $F_i = \sum_{\gamma \in A_i} z_{i\gamma} x^\pd \gamma \in \A_i
  \otimes_\K \cS^i_d$ and
  \begin{displaymath}
    \phi_i :
    \begin{pmatrix}
      x_1 \\ \vdots \\ x_{s_i}
    \end{pmatrix}
    \mapsto
    \begin{pmatrix}
      y_{i11} & \dots & y_{ir1} \\
      \vdots && \vdots \\
      y_{i1s_i} & \dots & y_{irs_i}
    \end{pmatrix}
    \begin{pmatrix}
      x_1 \\ \vdots \\ x_r
    \end{pmatrix}\!.
  \end{displaymath}
  Hence $\Cat^d_e (F) \mapsto \sum_{i=1}^n \Cat^d_e (\phi_i (F_i)) =
  \sum_{i=1}^n P_i \Cat^d_e (F_i) P'_i$ for suitable matrices $P_i$ and $P'_i$
  with entries in $Y$. Since every $(h_{ie}+1) \times (h_{ie}+1)$-minor of
  $\Cat^d_e (F_i)$ is zero in $\A_i / I_{H_i}$, it follows that every $(h_e+1)
  \times (h_e+1)$-minor of $\Cat^d_e (F)$ maps to zero in $Y \otimes_\K \A_1 /
  I_{H_1} \otimes_\K \dots \otimes_\K \A_n / I_{H_n}$. This induces a map
  \begin{displaymath}
    \A / I_H \to Y \otimes_\K \A_1 / I_{H_1} \otimes_\K \dots \otimes_\K \A_n
    / I_{H_n}.
  \end{displaymath}

% Note that $\phi_i = \phi_{Y_i} : x \mapsto Y_i^\T x$. Hence $\Cat^d_e
% (F) \mapsto \sum_{i=1}^n DE^\T (\phi_{Y_i} (F_i)) = P_{Y_i,D} DE^\T (F_i)
% P_{Y_i,E}^\T$. If we consider $F_i$ as an element of $\A_i \otimes_\K
% \cR_d$, then $\Cat^d_e (F_i) = DE^\T (F_i)$. However, $F_i \in \A_i
% \otimes_\K \cS^i_d$, but still $DE^\T (F_i) = Q_i \Cat^d_e (F_i) Q'_i$. In
% fact, we may choose the bases $D$ and $E$ such that
% \begin{math}
%   DE^\T (F_i) = \bigl(
%     \begin{smallmatrix}
%       \Cat^d_e (F_i) & 0 \\ 0 & 0
%     \end{smallmatrix}
%   \bigr).
% \end{math}

  This ringhomomorphism is equivalent to a morphism of affine schemes;
  \begin{displaymath}
    \psi : \Mat_\K (r, {\textstyle\sum_i s_i}) \times \prod_{i=1}^n \bGor_\le
    (s_i, H_i) \to \bGor_\le (r,H).
  \end{displaymath}
  Let $f = \sum_{i=1}^n \phi_{C_i} (g_i) \in \im \psi \isect \Gor (r,H)$.
  Since $R_{d-1} \p (\phi_{C_i} (g_i)) = \im C_i$, it follows that
  \begin{displaymath}
    \im \Cat^d_{d-1} \biggl( \sum_{i=1}^n \phi_{C_i} (g_i) \biggr) = R_{d-1}
    \p \biggl( \sum_{i=1}^n \phi_{C_i} (g_i) \biggr) \subseteq \sum_{i=1}^n
    \im C_i.
  \end{displaymath}
  Hence $\rank \Cat^d_{d-1} (f) = h_1 = \sum_{i=1}^n s_i$ implies that
  $\dim_\K \sum_{i=1}^n \im C_i = \sum_{i=1}^n s_i$. Thus
  \begin{displaymath}
    \psi^{-1} \bigl( \Gor (r,H) \bigr) = \Phi_{\underline{s}} \times
    \prod_{i=1}^n \Gor (s_i, H_i).
  \end{displaymath}
  Since $\bGor (r,H)$ is an open subscheme of $\bGor_\le (r,H)$, it follows
  that $(\psi,\psi^\#)$ restricts to $(\rho,\rho^\#)$.
\end{rem}

% Because: $\psi^\# : \cO_{\bGor_\le (r,H)} \to \psi_* \cO_{\Mat_\K (r, \sum_i
%   s_i) \times \sprod_{i=1}^n \bGor_\le (s_i, H_i)}$ restricts to $\psi^\# :
% \cO_{\Gor (r,H)} \to \psi_* \cO_{\Phi_{\underline{s}} \times \sprod_{i=1}^n
%   \Gor (s_i, H_i)}$, that is, $\psi^\# = \rho^\#$.

The next lemma rewrites the definition of $\Split (r,\underline{H})$ so that
it gives conditions on the ideal $I = \ann_R f$ instead of conditions on $f$
directly.

\begin{lem} \label{lem:PSplit}
  $\PSplit (r,\underline{H})$ parameterizes all $R/I \in \PGor (r,H)$ that
  have the following properties: There exist subspaces $V_1, \dots, V_n
  \subseteq R_1$ with $\dim_\K V_i = h_{i1}$ such that $R_1 = I_1 \dsum \bigl(
  \dsum_{i=1}^n V_i \bigr)$ and $V_i V_j \subseteq I_2$ for all $i \ne j$.
  Furthermore, $S^i/I \isect S^i \in \PGor (h_{i1},H_i)$ for all $i$, where
  $S^i = \K [V_i] \subseteq R$.
\end{lem}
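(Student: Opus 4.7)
The plan is to prove the two implications separately, using Macaulay duality to translate between $f$ and $I = \ann_R f$ and using the subrings set up in remark \ref{rem:scs} (pairs $\cS^i = \K[W_i]^{DP}$ and $S^i = \K[V_i]$ that are mutually dual).

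For the forward implication, I assume $R/I \in \PSplit(r,\underline{H})$, so $I = \ann_R f$ for some $f = \sum_{i=1}^n g_i$ with $H(R/\ann_R g_i) = H_i$. By remark \ref{rem:basic}, the subspaces $(\cS^i)_1 = R_{d-1}(g_i) \subseteq \cR_1$ are linearly independent. I then choose $V_i \subseteq R_1$ dual to $(\cS^i)_1$ (cf.\ remark \ref{rem:scs}); the condition $R_1 = I_1 \oplus \bigoplus_i V_i$ is automatic since $I_1 = \ann_R(f)_1 = R_{d-1}(f)^\perp$ and $R_{d-1}(f) = \bigoplus R_{d-1}(g_i)$. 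The inclusion $V_i V_j \subseteq I_2$ for $i \ne j$ is lemma \ref{lem:annfg}(d) restricted to degree $2$. Finally, one checks $S^i \cap I = \ann_{S^i}(g_i)$ (for $e<d$ this is lemma \ref{lem:annfg}(a) combined with lemma \ref{lem:basic}; in degree $d$ it uses that $D(f) = D(g_i)$ for $D \in S^i_d$, since $S^i$ annihilates every $g_j$, $j \ne i$), so the Hilbert function equals $H_i$ by lemma \ref{lem:annfg}(a)(iii).

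For the reverse implication, I take $I$ satisfying the listed properties and set $W_i = (I_1 + \sum_{j \ne i} V_j)^\perp \cap \cR_1$ so that $S^i = \K[V_i]$ is dual to $\cS^i = \K[W_i]^{DP}$. By Macaulay duality (lemma \ref{lem:mac}), $I = \ann_R f$ for some $f \in \cR_d$. Since $I_1 = \ann_R(f)_1$, remark \ref{rem:native} gives $f \in \K[I_1^\perp \cap \cR_1]^{DP} = \cS^1 \otimes_\K \dots \otimes_\K \cS^n$, so I may expand $f = \sum_\alpha f_\alpha$ uniquely with $f_\alpha \in \cS^1_{\alpha_1} \otimes \dots \otimes \cS^n_{\alpha_n}$.

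The main step — and the principal obstacle — is showing that $f_\alpha = 0$ whenever $\alpha$ has two or more non-zero components. Iterating lemma \ref{lem:ann}(a) with the hypothesis $V_i V_j \subseteq I_2 = \ann_R(f)_2$ yields $R_{d-2} V_i V_j \subseteq \ann_R(f)_d$, so every mixed-index product $D_1 \cdots D_d$ with $D_k \in V_{i_k}$ (indices not all equal) annihilates $f$. Given any monomial $m \in \cS^1_{\alpha_1} \otimes \dots \otimes \cS^n_{\alpha_n}$ with mixed $\alpha$, I can choose its dual standard monomial $D$ in $S^1_{\alpha_1} \cdots S^n_{\alpha_n}$; then $D(f)$ equals the coefficient of $m$ in $f_\alpha$, which must therefore vanish. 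Setting $g_i = f_{(0,\dots,d,\dots,0)} \in \cS^i_d$ yields $f = \sum g_i$; the assumed Hilbert function condition $S^i/(S^i \cap I) \in \PGor(h_{i1},H_i)$ forces $g_i \ne 0$, and $R_{d-1}(g_i) \subseteq W_i$ combined with the independence of the $W_i$'s makes this a regular splitting. The Hilbert function equality $H(R/\ann_R g_i) = H_i$ then follows exactly as in the forward direction via $S^i \cap I = \ann_{S^i}(g_i)$.
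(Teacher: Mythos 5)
Your forward implication follows the paper's own argument step for step (choose $V$ with $R_1 = I_1 \dsum V$, take $V_i$ dual to $W_i = R_{d-1}(g_i)$, and read off $V_iV_j \subseteq I_2$ and $I \isect S^i = \ann_{S^i}(g_i)$ from lemma \ref{lem:annfg}); the only looseness is that ``choose $V_i$ dual to $(\cS^i)_1$'' must be done compatibly, i.e.\ $V_i \subseteq \bigl(\sum_{j\ne i}W_j\bigr)^\perp \isect V$ as in the paper, or the direct-sum condition $R_1 = I_1 \dsum \bigl(\dsum_i V_i\bigr)$ is not automatic. Your converse, however, is genuinely different: the paper disposes of it in one line by appealing to the shape of equation \eqref{eq:I} (implicitly: recover $g_i$ from the Gorenstein ideal $I \isect S^i$ by Macaulay duality, form $f' = \sum c_i g_i$, and match $\ann_R f'$ against $I$), whereas you start from $I = \ann_R f$, place $f$ in $\cS^1 \otimes_\K \dots \otimes_\K \cS^n$, and kill the mixed multigraded components of $f$ directly: since $V_iV_j \subseteq I_2$ and $\ann_R f$ is an ideal, every degree-$d$ monomial in the dual basis containing factors from two distinct $V_i$'s annihilates $f$, so the corresponding coefficient of $f$ vanishes. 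This is a complete and correct argument (the dual-basis bookkeeping is exactly the paper's discussion of dual bases under base change, and $I \isect S^i = \ann_{S^i}(g_i)$ follows because $S^i$ in positive degrees annihilates every $g_j$ with $j \ne i$). What your route buys is that it makes the converse self-contained and constructive, producing the splitting of $f$ directly from the hypotheses on $I$ rather than reassembling $f$ from the $n$ smaller Gorenstein quotients; what it costs is the explicit coefficient computation, which the paper's equation \eqref{eq:I} packages once and reuses in both directions.
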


\begin{proof}
  Pick $f \in \Split (r,\underline{H})$ such that $I = \ann_R f$. By
  definition \ref{def:Split} there exists a regular splitting $f =
  \sum_{i=1}^n g_i$ such that $H ( R/ \ann_R g_i) = H_i$ for all $i$, and $g_i
  \in \cS = \K [R_{d-1} (f)]^{DP}$ by corollary \ref{cor:basic}. Choose $V
  \subseteq R_1$ such that $R_1 = I_1 \dsum V$, and let $S = \K [V] \iso
  \cS^*$. By lemma~\ref{lem:annfg}(ai) we get $\ann_R f = (I_1) \dsum \ann_S
  f$. For all $i$ let $W_i = R_{d-1} (g_i) \subseteq \cR_1$ and define $V_i =
  ( \sum_{j \ne i} W_j )^\perp \isect S \subseteq V$.
  
  Note that $\dim_\K W_i = \dim_\K (R/ \ann_R g_i)_1 = h_{i1}$. Since $\cS_1 =
  \dsum_{i=1}^n W_i$, it follows that $S_1 = V = \dsum_{i=1}^n V_i$. Therefore
  $V_i \iso W_i^*$, and $\dim_\K V_i = h_{i1}$. Let $S^i = \K [V_i]$. By
  lemma~\ref{lem:annfg} (b and c) there exist nonzero $D_i \in S^i_d$ such
  that
  \begin{equation*}
    \ann_S f = \biggl( \sum_{i<j} S V_i V_j \biggr) \dsum \biggl(
    \dsum_{i=1}^n \ann_{S^i} (g_i) \biggr) + (D_2 - D_1, \dots, D_n - D_1).
  \end{equation*}
  It follows that $\ann_{S^i} (g_i) = \ann_S (f) \isect S^i = I \isect S^i$.
  Therefore,
  \begin{equation} \label{eq:I}
    I = (I_1) \dsum \biggl( \sum_{i<j} S V_i V_j \biggr) \dsum \biggl(
    \dsum_{i=1}^n (I \isect S^i) \biggr) + (D_2 - D_1, \dots, D_n - D_1).
  \end{equation}
  In particular, $V_i V_j \subseteq I_2$ for all $i \ne j$. This proves all
  the properties listed in lemma~\ref{lem:PSplit}. The opposite implication
  follows from equation~\eqref{eq:I}.
\end{proof}

\begin{rem}
  Note that the existence of the $D_i$'s in equation~\eqref{eq:I} implies that
  the map $I \mapsto (I_1, \{ V_i \}, \{ I \isect S^i \})$ is not 1-to-1. This
  is easily understood if we translate to polynomials. Since annihilator
  ideals determine polynomial only up to a nonzero scalar, it follows that the
  fiber over $\{ I \isect S^i = \ann_{S^i} (g_i) \}$ are all $I = \ann_R (f)$
  such that $f = \sum_{i=1}^n c_i g_i$ and $c_i \ne 0$ for all $i$.
\end{rem}

If $R/I \in \bPGor (r,H)$, we denote by $\cT_{R/I}$ the tangent space to
$\bGor (r,H)$ (the affine cone over $\bPGor (r,H)$) at a point corresponding
to $R/I$. Recall that $\PSplit (r, \underline{H})$ parametrizes all $R/\ann_R
f$ such that $f \in \cR_d$ and there exist a regular splitting $f =
\sum_{i=1}^n g_i$ such that $H ( R/ \ann_R g_i) = H_i$ for all $i$, cf.
definition \ref{def:Split}.

% For $I \in \PSplit (n, \underline{s}, \underline{H})$, let us denote by
% $\cT_I$ the tangent space to the affine cone over $\PGor(H)$ at a point
% corresponding to $R/I$. Similarly, for each $i$, let $\cT_{J_i}$ denote the
% tangent space to the affine cone over $\PGor (H(S^i/J_i))$ at a point
% corresponding to $S^i/J_i$.

\begin{thm} \label{thm:gor}
  Assume $\K = \bar{\K}$. Let $r \ge 1$, $d \ge 4$ and $n \ge 1$. Let
  $\underline{H} = (H_1, \dots, H_n)$ be an $n$-tuple of symmetric $h$-vectors
  $H_i = (h_{i0}, \dots, h_{id})$ such that $\sum_{i=1}^n h_{i1} \le r$ and
  $h_{i0} = 1$ for all $i$. Let $s_i = h_{i1} \ge 1$ and $H = \sum_{i=1}^n H_i
  - (n-1) (\delta_0 + \delta_d)$ where $\delta_e$ is $1$ in degree $e$ and
  zero elsewhere.
  \begin{enumerate}
    \setlength{\itemsep}{2pt}
    \setlength{\parskip}{0pt}
    \renewcommand{\theenumi}{\alph{enumi}}
    \renewcommand{\labelenumi}{\normalfont(\theenumi)}

  \item The dimension of $\PSplit (r, \underline{H}) \subseteq \PGor (r,H)
    \subseteq \bP (\cR_d)$ is
    \begin{displaymath}
      \dim \PSplit (r,\underline{H}) = n - 1 + \sum_{i=1}^n \dim \PGor
      (s_i,H_i) + \sum_{i=1}^n s_i (r-s_i).
    \end{displaymath}

  \item $\PSplit (r,\underline{H})$ is irreducible if $\PGor (s_i,H_i)$ is
    irreducible for all $i$.

  \end{enumerate}
    
  \noindent Let $R/I \in \PSplit (r,\underline{H})$. Choose $V_1, \dots V_n
  \subseteq R_1$ such that $\dim_\K V_i = s_i$ for all $i$, $R_1 = I_1 \dsum
  \bigl( \dsum_{i=1}^n V_i \bigr)$ and $V_i V_j \subseteq I_2$ for all $i \ne
  j$, cf. lemma \ref{lem:PSplit}. Let $S^i = \K [V_i]$ and $J_i = I \isect S^i
  \in \PGor (s_i,H_i)$. For each $i$, let $\beta^i_{\smash{1j}}$ be the
  minimal number of generators of degree $j$ of $J_i$ (as an ideal in $S^i$).

  \begin{enumerate}
    \setlength{\itemsep}{2pt}
    \setlength{\parskip}{0pt}
    \renewcommand{\theenumi}{\alph{enumi}}
    \renewcommand{\labelenumi}{\normalfont(\theenumi)}
    \setcounter{enumi}{2}
    
  \item The dimension of the tangent space to the affine cone over $\bPGor
    (r,H)$ at a point corresponding to $R/I$ is
    \begin{displaymath}
      \dim_\K \cT_{R/I} = \sum_{i=1}^n \dim_\K \cT_{S^i/J_i} + \sum_{i=1}^n
      s_i (r-s_i) + \sum_{i=1}^n \sum_{j \ne i} s_j \, \beta^i_{1,d-1}.
    \end{displaymath}

%   \begin{displaymath}
%     \dim_\K \cT_I = \sum_{i=1}^n \dim_\K \cT_{J_i} + \sum_{i=1}^n s_i
%     (r-s_i) + \sum_{i=1}^n (s-s_i) \beta^i_{1,d-1}.
%   \end{displaymath}
    
  \item Assume in addition for all $i$ that $S^i/J_i$ is a smooth point of
    $\bPGor (s_i,H_i)$ and $\beta^i_{1,d-1} = 0$. Then $R/I$ is a smooth point
    of $\bPGor (r,H)$. Moreover, $R/I$ is contained in a unique irreducible
    component of the closure $\overline{\PSplit (r,\underline{H})}$. This
    component is also an irreducible component of $\bPGor (r,H)$.

  \end{enumerate}
  In particular, if $\bPGor (s_i,H_i)$ is irreducible and generically smooth
  for all $i$, and $\beta_{1,d-1} (J_i) = 0$ for general $S^i/J_i \in \bPGor
  (s_i,H_i)$, then the closure $\overline{\PSplit (r,\underline{H})}$ is an
  irreducible component of $\bPGor (r,H)$, and $\bPGor (r,H)$ is smooth in
  some non-empty open subset of $\PSplit (r,\underline{H})$.
\end{thm}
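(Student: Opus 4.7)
The plan is to handle parts (a)--(d) in sequence, with the concluding ``in particular'' statement a direct consequence of (d) applied at a general point. The work for (a) and (b) is essentially bookkeeping from Lemma~\ref{lem:Split}: $\Phi_{\underline{s}}$ is a non-empty open subscheme of $\prod_{i=1}^n \Mat_\K(r,s_i)$ and so is irreducible of dimension $r \sum_i s_i$. Thus when each $\Gor(s_i,H_i)$ is irreducible so is the source of $\rho$, its dimension is $r \sum s_i + \sum_i \dim \Gor(s_i,H_i)$, and subtracting the fiber dimension $\sum s_i^2$ gives $\dim \Split(r,\underline{H}) = \sum_i s_i(r-s_i) + \sum_i \dim\Gor(s_i,H_i)$. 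Projectivising and applying $\dim\Gor(s_i,H_i) = \dim\PGor(s_i,H_i)+1$ yields (a); and (b) holds because the image of an irreducible scheme is irreducible.

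Part (c) is the technical core. The first step is the standard identification
\begin{displaymath}
  \cT_{R/I} \iso \bigl( I^2 \bigr)_d^\perp,
  \qquad\text{hence}\qquad
  \dim_\K \cT_{R/I} = \dim_\K (R/I^2)_d.
\end{displaymath}
To see this, recall that $\bGor(r,H)$ is cut out inside $\bA(\cR_d)$ by the $(h_e+1)\times(h_e+1)$ minors of the catalecticants $\Cat^d_e(F)$. At $f$ the catalecticant $\Cat^d_e(f)$ has rank exactly $h_e$ with kernel $I_e$ and image $I_{d-e}^\perp$, so the standard tangent space formula for a determinantal variety states that $g$ is tangent precisely when $\Cat^d_e(g)$ maps $I_e$ into $I_{d-e}^\perp$, i.e.\ when $ED(g)=0$ for all $D \in I_e$, $E \in I_{d-e}$ and all $e$. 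This is exactly $g \in (I^2)_d^\perp$. Using the short exact sequence $0 \to (I/I^2)_d \to (R/I^2)_d \to (R/I)_d \to 0$ and $\dim(R/I)_d = 1$ reduces the problem to computing $\dim_\K (I/I^2)_d$.

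The remaining work is to expand $(I/I^2)_d$ using the structural decomposition from Lemma~\ref{lem:PSplit}: $R_1 = I_1 \dsum \dsum_i V_i$ and
\begin{displaymath}
  I \;=\; (I_1) \,+\, \Bigl( \sum_{i<j} R V_i V_j \Bigr) \,+\, \Bigl( \sum_i R J_i \Bigr) \,+\, (D_2-D_1, \ldots, D_n-D_1).
\end{displaymath}
A careful bookkeeping in $(I/I^2)_d$ produces three groups of contributions, which assemble to the formula. First, the intrinsic deformations of each $J_i$ inside $S^i$ give $\sum_i \dim_\K \cT_{S^i/J_i}$ (via the analogous identification $\cT_{S^i/J_i} = (J_i^2)_d^\perp$ in $\cS^i$); the ``$+1$'' socle terms match up with the $(D_i-D_1)$ generators so no double-counting occurs. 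Second, the freedom to tilt each subspace $V_i$ relative to the fixed $I_1 \oplus \bigoplus_{j\ne i} V_j$ amounts to a first-order deformation of $V_i$ in $\Grass(s_i, R_1/I_1)$, contributing $\sum_i s_i(r-s_i)$ after identifying with the tangent space of $\Phi_{\underline{s}}/\prod\GL_{s_i}$. Third, each minimal generator of $J_i$ of degree $d-1$ can be perturbed by adding any element of $\sum_{j\ne i} V_j$ times a degree-$(d-2)$ element of $S^i$, producing exactly $(r-s_i)\beta^i_{1,d-1}$ independent new tangent directions; here the hypothesis $d \ge 4$ ensures that these generators live strictly above the quadratic relations $V_iV_j$ and can be tracked cleanly.

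Finally (d) is a dimension comparison. Under the smoothness assumption $\dim_\K \cT_{S^i/J_i} = \dim\PGor(s_i,H_i)+1$ and the vanishing $\beta^i_{1,d-1} = 0$, the formula of (c) reduces to
\begin{displaymath}
  \dim_\K \cT_{R/I} \;=\; n + \sum_i \dim\PGor(s_i,H_i) + \sum_i s_i(r-s_i) \;=\; \dim\PSplit(r,\underline{H}) + 1,
\end{displaymath}
by (a). Since $\dim \PSplit \le \dim_{R/I} \overline{\PSplit} \le \dim_{R/I} \bPGor(r,H) \le \dim_\K \cT_{R/I} - 1$, all inequalities are equalities. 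This gives smoothness at $R/I$, and because $R/I$ is a smooth point only one irreducible component of $\bPGor(r,H)$ passes through it; that component must coincide with the unique irreducible component of $\overline{\PSplit(r,\underline{H})}$ containing $R/I$. The concluding statement follows by applying (d) at a general point of $\PSplit(r,\underline{H})$. The main obstacle is the bookkeeping in part (c): verifying that the three contributions are genuinely independent (no hidden cancellations) and that the $(D_i-D_1)$ generators are absorbed correctly is where the computation becomes delicate, and is precisely where $d \ge 4$ is used.
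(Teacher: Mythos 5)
Your overall architecture matches the paper's: (a) and (b) come from the fibration $\rho$ of lemma \ref{lem:Split}, (c) from the identification $\dim_\K \cT_{R/I} = \dim_\K (R/I^2)_d$, and (d) from squeezing the local dimension of $\Gor(r,H)$ at $R/I$ between the dimension of the image of $\rho$ and $\dim_\K \cT_{R/I}$. The gap is in (c), which as written is not a proof: you assert that ``careful bookkeeping produces three groups of contributions'' and describe each heuristically, while explicitly deferring the verification that they are independent and exhaustive --- but that verification \emph{is} the content of (c). The paper carries it out by first reducing to $I_1 = 0$, writing $R = S \otimes_\K T$ with $T = S^n$, and using the exact decomposition $I_e = R_{e-2} S_1 T_1 \dsum J_{S,e} \dsum J_{T,e}$ for $e<d$ to compute $(I^2)_d$ term by term (here $J_{S,1} = J_{T,1} = 0$ and $\sum_e S_{d-e-1} J_{S,e} = S_1 J_{S,d-2}$ are what prevent hidden cross-terms), arriving at $\dim (R/I^2)_d = \dim (S/J_S^2)_d + \dim (T/J_T^2)_d + 2st + t\beta^{J_S}_{1,d-1} + s\beta^{J_T}_{1,d-1}$, and then inducting on $n$; the hypothesis $d \ge 4$ enters to guarantee $\beta^{J_S}_{1,d-1} = \sum_i \beta^i_{1,d-1}$. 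Note also that your third contribution, $(r-s_i)\beta^i_{1,d-1}$, disagrees with the theorem's $\sum_{j \ne i} s_j\, \beta^i_{1,d-1}$ whenever $I_1 \ne 0$: the correct coefficient is $\dim_\K \sum_{j\ne i} V_j = \sum_{j\ne i} s_j$, and the general case must be handled by splitting off $\K[I_1]$ separately, which feeds an extra $s(r-s)$ into the middle term, not into the $\beta$ term.

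Part (d) has a second, smaller flaw: you use $\dim_\K \cT_{S^i/J_i} = \dim \PGor(s_i,H_i)+1$ and $\dim \PSplit(r,\underline{H}) \le \dim_{R/I} \overline{\PSplit(r,\underline{H})}$. Both steps replace a local dimension by a global one and can fail when some $\Gor(s_i,H_i)$ has components of unequal dimensions: smoothness of $S^i/J_i$ only gives $\dim_\K \cT_{S^i/J_i} = \dim_{S^i/J_i} \Gor(s_i,H_i)$, and the component of $\overline{\PSplit}$ through $R/I$ need not be one of maximal dimension. The repair is the paper's: restrict $\rho$ to $\Phi_{\underline{s}} \times \prod_i X_i$, where $X_i$ is the unique component of $\Gor(s_i,H_i)$ through $S^i/J_i$, and bound $\dim \overline{\im \rho'}$ from below by the fiber-dimension theorem, giving $\dim \overline{\im \rho'} \ge \sum_i \dim_\K \cT_{S^i/J_i} + \sum_i s_i(r-s_i) = \dim_\K \cT_{R/I} \ge \dim_{R/I} \Gor(r,H) \ge \dim \overline{\im \rho'}$, whence all are equal.
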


This is a generalization of \cite[theorem 3.11]{IS}.

\begin{proof}
  (a) follows from lemma \ref{lem:Split}, since the lemma implies that
  \begin{displaymath}
    \dim \Split (r, \underline{H}) = \sum_{i=1}^n \dim \Gor (s_i, H_i) +
    \sum_{i=1}^n r s_i - \sum_{i=1}^n s_i^2.
  \end{displaymath}

  Alternatively, we can count dimensions using equation \eqref{eq:I}, just
  note that the $V_i$'s are determined only modulo $I_1$. Let $s = \dim_\K
  (R/I)_1 = \sum_{i=1}^n s_i$. Then we get $s(r-s)$ for the choice of $I_1
  \subseteq R_1$, $s_i (s-s_i)$ for the choice on $V_i$ (modulo $I_1$), $\dim
  \PGor (s_i,H_i)$ for the choice of $I \isect S^i \subseteq S^i$, and finally
  $n-1$ for the choice of $D_2-D_1, \dots, D_n-D_1 \in R_d$. Adding these
  together proves (a).
  
  (b) follows immediately from lemma \ref{lem:Split}.
  
  To prove (c), we use theorem 3.9 in \cite{IK} (see also remarks 3.10 and 4.3
  in the same book), which tells us that $\dim_\K \cT_{R/I} = \dim_\K
  (R/I^2)_d$. Note that $H (S^i / J_i) = H_i$ for all $i$ by definition of
  $\PSplit (r,\underline{H})$.
  
  Assume first that $I_1 = 0$. Note that this implies $R_1 = \dsum_{i=1}^n
  V_i$, and therefore $R = S^1 \otimes_\K \dots \otimes_\K S^n$ and $r =
  \sum_{i=1}^n s_i$. By equation \eqref{eq:I} we have
  \begin{displaymath}
    I_e = \biggl( \sum_{i<j} R_{e-2} S^i_1 S^j_1 \biggr) \dsum \biggl(
    \dsum_{i=1}^n J_{i,e} \biggr)
  \end{displaymath}
  as a direct sum of $\K$-vector subspaces of $R_e$ for all degrees $e<d$. In
%% CHANGED 2013-07-13: nulltallet i $J_{i,1} = 0$ manglet
  particular, $I_1 = 0$ is equivalent to $J_{i,1} = 0$ for all $i$.
  
% Moreover, $\beta^I_{1,d-1} = \sum_{i=1}^n \beta^i_{1.d-1}$ since $d \ge 4$.
  
% Note that $R_e = ( \sum_{i<j} R_{e-2} S^i_1 S^j_1 ) \dsum ( \dsum_{i=1}^n
% S^i_e )$ for all $e>0$.
  
  Let $S = S^1 \otimes_\K \dots \otimes_\K S^{n-1}$, $J_S = I \isect S$ and $s
  = \sum_{i=1}^{n-1} s_i$, and let $T = S^n$, $J_T = I \isect T$ and $t =
  s_n$. Then $I_e = R_{e-2} S_1 T_1 \dsum J_{S,e} \dsum J_{T,e}$ for all
  $e<d$. It follows for all $2 \le e \le d-2$ that
  \begin{align*}
    I_e \cdot I_{d-e} & = R_{d-4} S_2 T_2 \dsum J_{S,e} \cdot J_{S,d-e} \dsum
    J_{T,e} \cdot J_{T,d-e} \\
    & \quad \dsum T_1 ( S_{d-e-1} J_{S,e} + S_{e-1} J_{S,d-e} ) \dsum S_1 (
    T_{d-e-1} J_{T,e} + T_{e-1} J_{T,d-e} ).
  \end{align*}
  Since $I_1 = 0$ implies $J_{S,1} = J_{T,1} = 0$, and $\sum_{e=2}^{d-2}
  S_{d-e-1} J_{S,e} = S_1 J_{S,d-2}$, we get
  \begin{align*}
    (I^2)_d = \sum_{e=2}^{d-2} I_e \cdot I_{d-e} = R_{d-4} S_2 T_2 \dsum
    (J_S^2)_d \dsum (J_T^2)_d \dsum S_1 T_1 J_{S,d-2} \dsum S_1 T_1 J_{T,d-2}.
  \end{align*}
  Because $R_d = S_d \dsum T_1 S_{d-1} \dsum R_{d-4} S_2 T_2 \dsum S_1 T_{d-1}
  \dsum T_d$, it follows that
  \begin{displaymath}
    \bigl( R/I^2 \smash{\bigr)}_d = \bigl( S/J_S^2 \smash{\bigr)}_d \dsum
    \bigl( T/J_T^2 \smash{\bigr)}_d \dsum T_1 \bigl( S_{d-1} / S_1 J_{S,d-2}
    \bigr) \dsum S_1 \bigl( T_{d-1} / T_1 J_{T,d-2} \bigr).
  \end{displaymath}

  % This is correct even for $d \le 3$ since $J_{S,1} = J_{T,1} = 0$.

  % If $D_1, \dots, D_k$ are those of degree $d-1$ in a minimal set of
  % generators for $J_S$, then $k = \beta^{J_S}_{1,d-1}$ and $J_{S,d-1} = S_1
  % J_{S,d-2} \dsum \langle D_1, \dots, D_k \rangle$.
  
  To find the dimension of $(R/I^2)_d$, we need the dimension of $S_{d-1} /
  S_1 J_{S,d-2}$. We note that $S_{d-1} / S_1 J_{S,d-2} \iso S_{d-1} /
  J_{S,d-1} \dsum J_{S,d-1} / S_1 J_{S,d-2}$ as $\K$-vector spaces. And
  furthermore, $\dim_\K S_{d-1} / J_{S,d-1} = \dim_\K ( S/J_S )_{d-1} =
  \dim_\K ( S/J_S )_1 = s$ and $\dim_\K (J_{S,d-1} / S_1 J_{S,d-2}) =
  \beta^{J_S}_{1,d-1}$. Thus
  \begin{displaymath}
    \dim_\K T_1 \bigl( S_{d-1} / S_1 J_{S,d-2} \bigr) = t \bigl( s +
    \beta^{J_S}_{1,d-1} \bigr),
  \end{displaymath}
  and similarly $\dim_\K S_1 \bigl( T_{d-1} / T_1 J_{T,d-2} \bigr) = s \bigl(
  t + \beta^{J_T}_{1,d-1} \bigr)$. Therefore,
  \begin{displaymath}
    \dim_\K \bigl( R/I^2 \smash{\bigr)}_d = \dim_\K \bigl( S/J_S^2
    \smash{\bigr)}_d + \dim_\K \bigl( T/J_T^2 \smash{\bigr)}_d + 2st + t
    \beta^{J_S}_{1,d-1} + s \beta^{J_T}_{1,d-1}.
  \end{displaymath}
  Note that $\beta^{J_S}_{1,d-1} = \sum_{i=1}^{n-1} \beta^i_{1,d-1}$ since $d
  \ge 4$. Induction on $n$ now gives
  \begin{equation} \label{eq:I2} \tag{$*$}
    \dim_\K \bigl( R/I^2 \smash{\bigr)}_d = \sum_{i=1}^n \dim_\K \bigl(
    S^i/J_{S^i}^2 \smash{\bigr)}_d + \sum_{i=1}^n s_i (r-s_i) + \sum_{i=1}^n
    (r-s_i) \beta^i_{1,d-1}.
  \end{equation}
  
  Next we no longer assume $I_1 = 0$. Let $V = \dsum_{i=1}^n V_i$, $S = \K
  [V]$, $J = I \isect S$ and $s = \sum_{i=1}^n s_i \le r$. Let $T = \K [I_1]$
  so that $R = S \otimes_\K T$. Since $I_e = R_{e-1} T_1 \dsum J_e$ for all
  $e$, it follows that $(I^2)_d = R_{d-2} T_2 \dsum T_1 J_{d-1} \dsum
  (J^2)_d$. This implies that $\dim_\K (R/I^2)_d = \dim_\K (S/J^2)_d +
  s(r-s)$. Since $J_1 = 0$, we can find $\dim_\K (S/J^2)_d$ by using
  \eqref{eq:I2} (with $r$ replaced by $s$). Doing this proves (c).
  
  To prove (d), we use the morphism $\rho : \Phi_{\underline{s}} \times
  \sprod_{i=1}^n \Gor (s_i, H_i) \to \Gor (r,H)$ from lemma \ref{lem:Split}.
  For each $i$ let $X_i$ be the unique irreducible component of $\Gor (s_i,
  H_i)$ containing $S^i/J_i$. It is indeed unique since $S^i/J_i$ is a smooth
  point on $\PGor (s_i, H_i)$. Let $\rho' : \Phi_{\underline{s}} \times
  \sprod_{i=1}^n X_i \to \Gor (r,H)$ be the restriction of $\rho$, and let
  $\overline{\im \rho'}$ be the closure of $\im \rho'$ in $\Gor (r,H)$. Note
  that $\overline{\im \rho'}$ is irreducible. It is well known that the fiber
  $(\rho')^{-1} (R/I)$ must have dimension
  \begin{displaymath}
    \ge \dim \biggl( \Phi_{\underline{s}} \times \prod_{i=1}^n X_i \biggr) -
    \dim \overline{\im \rho'}.
  \end{displaymath}
  Furthermore, $\dim (\rho')^{-1} (R/I) \le \dim \rho^{-1} (R/I) =
  \sum_{i=1}^n s_i^2$ by lemma \ref{lem:Split}. Note that $\dim X_i = \dim_\K
  \cT_{S^i/J_i}$ since $S^i / J_i$ is a smooth point on $\PGor (s_i,H_i)$.
  Since $\beta^i_{1,d-1} = 0$, it follows from (c) that the dimension of $\Gor
  (r,H)$ at $R/I$ is
  \begin{align*}
    \dim_{R/I} \Gor (r,H) & \ge \dim \overline{\im \rho'} \\
    & \ge \dim \biggl( \Phi_{\underline{s}} \times
    \prod_{i=1}^n X_i \biggr) - \sum_{i=1}^n s_i^2 \\
    & = \sum_{i=1}^n \dim_\K \cT_{S^i/J_i} + \sum_{i=1}^n s_i (r-s_i) \\
    & = \dim_\K \cT_{R/I} \ge \dim_{R/I} \Gor (r,H)
  \end{align*}
  Hence $\dim_\K \cT_{R/I} = \dim_{R/I} \Gor (r,H) = \dim \overline{\im
    \rho'}$. Thus $R/I$ is a smooth point on $\PGor (r,H)$, and is therefore
  contained in a unique irreducible component $X$ of $\PGor (r,H)$. Since
  $\dim X = \dim_{R/I} \Gor (r,H) = \dim \overline{\im \rho'}$, it follows
  that only one component of $\overline{\Split (r,\underline{H})}$ contains
  $R/I$, namely $\overline{\im \rho'}$.
  
  The final statement follows easily.
\end{proof}

% If $\overline{\Split (r,\underline{H})}$ had another component containing
% $R/I$, then this closed, irreducible set must be contained in an irreducible
% component of $\Gor (r,H)$ containing $R/I$, of which there is only one.

\begin{rem}
  We assume in this remark that $d=3$.  We see from the proof of theorem
  \ref{thm:gor} that the dimension formula in (a) is valid also in this case.
  But the formula in (b) is no longer true in general. We need an additional
  correction term on the right-hand side. It is not difficult to show that
  this correction term is
  \begin{math}
    \sum_{i<j<k} s_i s_j s_k.
  \end{math}
  Note that if $d=3$ then $\beta^i_{1,d-1} = \tbinom{s_i}{2}$ for all $i$. It
  follows that the tangent space dimension when $d=3$ is
  \begin{displaymath}
    \dim_\K \cT_{R/I} = \sum_{i=1}^n \dim_\K \cT_{S^i/J_i} + \sum_{i=1}^n s_i
    (r-s_i) + \binom{s}{3} - \sum_{i=1}^n \binom{s_i}{3}.
  \end{displaymath}
  Thus $\dim_\K \cT_{R/I} > \dim \PSplit (r,\underline{H})$ when $n \ge 2$,
  except $n=2$ and $s_1 = s_2 = 1$.
\end{rem}

\begin{rem}
  Let $\hat{\beta}_{ij}$ be the shifted graded Betti numbers of $R/ \ann_R f$.
  The Hilbert function of $R/ \ann_R f$ for a general $f \in \cR_d$ is equal
  to
  \begin{displaymath}
    H_{d,r} (e) = \min (\dim_\K R_e, \dim_\K R_{d-e})
  \end{displaymath}
  by \cite[Proposition 3.12]{IK}. This is equivalent to $\ann_R (f)_e = 0$ for
  all $e \le d/2$, that is, $\hat{\beta}_{1j} = 0$ for all $j \le d/2 - 1$.
  It follows that $\hat{\beta}_{ij} = 0$ for all $i>0$ and $j \le d/2 - 1$.
  Recall that $\hat{\beta}_{ij} = \hat{\beta}_{r-i,d-j}$ since the minimal
  resolution of $R/ \ann_R f$ is symmetric, hence $\hat{\beta}_{ij} = 0$ for
  all $i<r$ and $j \ge d - (d/2 - 1) = d/2 + 1$. This shows that, if $d = 2m$,
  then $\hat{\beta}_{ij} = 0$ for all $j \ne m$, and if $d = 2m+1$, then
  $\hat{\beta}_{ij} = 0$ for all $j \ne m, m+1$, except $\hat{\beta}_{00} =
  \hat{\beta}_{rd} = 1$. Therefore, when $d \ge 6$, it follows that
  $\beta_{1,d-1} = \hat{\beta}_{1,d-2} = 0$ for a general $f \in \cR_d$.
  
  It is known that $\PGor (r,H)$ is smooth and irreducible for $r \le 3$.
  (For $r=3$ see \cite{Dies} and \cite{Kle}.) It is also known to be
  generically smooth in some cases with $r>3$, see \cite{IK}. Hence we can use
  theorem \ref{thm:gor} to produce irreducible, generically smooth components
  of $\PGor (r,H)$ for suitable $H$ when $d \ge 6$.
\end{rem}

% Thus when $r \le 3$, $\overline{\PSplit (\underline{H})}$ must be all of
% $\PGor (H)$ for the corresponding $H$ ! I guess I can show this directly.

%% \input{limits}
%% Chapter 4: Degenerated splittings
\chapter{Degenerate splittings}\label{chapter:limits}

In chapter \ref{chapter:regular} we proved that if $A \in M_f$ is idempotent,
then the polynomial $g$ satisfying $\p g = A \p f$ is an additive component of
$f$. In this chapter we will study what happens when $A$ is nilpotent. The
idea is to ``deform'' the situation so that $f,g \in \cR_d$ becomes $f_t, g_t
\in \cR_d [t_1, \dots, t_n]$ and $A$ becomes an idempotent $A_t \in \Mat_{\K
  [t_1, \dots, t_n]} (r,r)$, preserving the relation $\p g_t = A_t \p f_t$.

% Our investigations in this chapter were guided by the following question.

% \bigskip\noindent\hfill\parbox[c]{11cm}{%
%   Given $f \in \cR_d$, $d \ge 3$, is it possible to find $f_t \in \cR_d
%   [t_1, \dots, t_n]$ such that $f_0 = f$ and $f_t$ splits regularly $\dim_k
%   M_f - 1$ times over $\K (t_1, \dots, t_n)$?%
% }\hfill(Q)\bigskip

% The next two sections will deal with cases where we can give a positive
% answer and where we can produce counter examples, respectively.

% \begin{enumerate}
%   \setlength{\itemsep}{0pt}
%   \setlength{\parskip}{0pt}
% \item[(Q)] Given $f \in \cR_d$, $d \ge 3$, is it possible to find $f_t \in
%   \cR_d [t_1, \dots, t_n]$ such that $f_0 = f$ and $f_t$ splits regularly
%   $\dim_k M_f - 1$ times over $\K (t_1, \dots, t_n)$?
% \end{enumerate}

Our investigations in this chapter were guided by the following question.
\begin{que} \label{que:Q}
  Given $f \in \cR_d$, $d \ge 3$, is it possible to find $f_t \in \cR_d [t_1,
  \dots, t_n]$ such that $f_0 = f$ and $f_t$ splits regularly $\dim_k M_f - 1$
  times over $\K (t_1, \dots, t_n)$?
\end{que}

Sections \ref{section:positive} and \ref{section:negative} deal with cases
where we can give a positive answer to this question, and cases in which we
can produce counter examples, respectively. The motivation behind the question
is that $\dim_k M_f - 1$ is an upper bound for the number of times that $f_t$
can split when we require $f_0 = f$, see lemma \ref{lem:upbound} below. There
is also a flatness condition we would like $f_t$ to satisfy, but we will
ignore that in this paper, cf. remark \ref{rem:flatness}.

Note that $\dim_k M_f - 1 = r \beta_{11} + \beta_{1d}$ by lemma \ref{lem:Mf}.
Since $f_t$ can split at most $r-1$ times (that is, have at most $r$ additive
components), we see that question \ref{que:Q} automatically has a negative
answer if $\beta_{11} > 0$, i.e. if $\ann_R (f)_1 \ne 0$.

% Allowing $\beta_{11} > 0$ and changing the question to ``$f_t$ splits
% regularly $\beta_{1d}$ times over $\K (t_1, \dots, t_n)$'' is not any
% better, see example \ref{ex:b11} below.

Recall that by corollary \ref{cor:basic} the ``regular splitting properties''
of $f$ does not change if we add dummy variables since any regular splitting
must happen inside the subring $\K [R_{d-1} (f)]^{DP} \subseteq \cR$. It is
not so for degenerate splittings, as seen in example \ref{ex:b11} below. For
this reason most $f$ we consider in this chapter will satisfy $\ann_R (f)_1 =
0$. Note that this implies that $\dim_k M_f - 1 = \beta_{1d}$.

We will now prove that the number $\dim_\K M_f - 1$ in question \ref{que:Q} is
an upper bound. Recall that by theorem \ref{thm:regsplitE} the regular
splittings of $f_t$ inside $\cR_d \otimes_\K \K (t_1, \dots, t_n) = \cR_d
(t_1, \dots, t_n)$ are determined by the idempotents in
\begin{displaymath}
  M_{f_t} = \{ A \in \Mat_{\K (t_1, \dots, t_n)} (r,r) \suchthat I_2 (\p\:
  A\p) \subseteq \ann_{R (t_1, \dots, t_n)} f_t \}.
\end{displaymath}

\begin{lem} \label{lem:upbound}
  Let $f_t \in \cR_d [t_1, \dots, t_n]$. Then $\dim_{\K (t_1, \dots, t_n)}
  M_{f_t} \le \dim_\K M_{f_0}$. In particular, if $f_t$ splits regularly $m$
  times, then $m \le \dim_\K M_{f_0} - 1$.
\end{lem}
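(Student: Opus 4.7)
The plan is to reformulate $M_{f_t}$ as the kernel of a $\K$-linear map depending polynomially on $t = (t_1,\dots,t_n)$, and then exploit the standard upper-semicontinuity of kernel dimension under specialization.

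First, by lemma~\ref{lem:main2}, $A \in M_{f_t}$ if and only if $A\p\p^\T (f_t)$ is a symmetric matrix. Choosing coordinates, this symmetry condition becomes, for each pair $i<j$, a linear condition in the entries $a_{ij}$ of $A$ whose coefficients are elements of $\cR_{d-2}[t_1,\dots,t_n]$ (since $f_t \in \cR_d[t_1,\dots,t_n]$). Expanding each coefficient in the standard divided-power basis of $\cR_{d-2}$ and setting each resulting coordinate to zero yields a linear map
\begin{displaymath}
  \phi : \K[t_1,\dots,t_n]^{r^2} \to \K[t_1,\dots,t_n]^{N}
\end{displaymath}
with the property that, for any $\K$-algebra $L$ over $\K[t_1,\dots,t_n]$, we have $M_{f_L} = \ker(\phi \otimes_{\K[t]} L)$. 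In particular, taking $L = \K(t_1,\dots,t_n)$ recovers $M_{f_t}$, while taking $L = \K$ via the specialization $t \mapsto 0$ recovers $M_{f_0}$.

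Next I would apply the classical fact that, for a matrix $\Phi$ with entries in $\K[t_1,\dots,t_n]$, the rank drops (or stays the same) under any specialization, because the non-vanishing of a minor at the specialized point forces the same minor to be nonzero generically. Applied to the matrix representing $\phi$, this gives $\rank \phi_0 \le \rank \phi_{\K(t)}$, and hence
\begin{displaymath}
  \dim_{\K(t)} M_{f_t} = r^2 - \rank \phi_{\K(t)} \le r^2 - \rank \phi_0 = \dim_\K M_{f_0},
\end{displaymath}
which is the first claim.

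For the "in particular" part, suppose $f_t$ splits regularly $m$ times over $K = \K(t_1,\dots,t_n)$. Then by remark~\ref{rem:regsplit} (applied over $K$) there is a graded automorphism of $\cR \otimes_\K K$ carrying $f_t$ to a sum $h_1 + \dots + h_{m+1}$ of $m+1$ nonzero forms in disjoint sets of variables; the associated $m+1$ coordinate projections are nonzero orthogonal idempotents in $M_{h_1+\dots+h_{m+1}}$, and conjugating back gives a complete set of $m+1$ nonzero orthogonal idempotents in $M_{f_t}$. Such a set is $K$-linearly independent (if $\sum c_i E_i = 0$, multiply by $E_j$ to get $c_j E_j = 0$, forcing $c_j = 0$), so $\dim_K M_{f_t} \ge m+1$. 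Combined with the first statement, $m + 1 \le \dim_\K M_{f_0}$, as required. The main conceptual step is the reformulation via lemma~\ref{lem:main2}; the semicontinuity argument is then routine, so I do not expect any serious obstacle.
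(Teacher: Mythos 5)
Your proof is correct, but the first (and main) inequality is established by a genuinely different mechanism than the paper's. The paper works directly with a $\K(t_1,\dots,t_n)$-basis $A_1,\dots,A_m$ of $M_{f_t}$: after clearing denominators it runs an explicit reduction (repeatedly replacing some $A_j$ by $(c_j t)^{-1}\sum_i c_i A_i$ to lower its $t$-degree) until the constant terms $A_{10},\dots,A_{m0}$ are linearly independent, then reads off from $\p g_i = A_i\,\p f_t$ that each $A_{i0}$ lies in $M_{f_0}$; it handles several parameters by induction on $n$. You instead use lemma~\ref{lem:main2} to present $M_{f_t}$ as the kernel of a matrix $\phi$ over $\K[t_1,\dots,t_n]$ and invoke upper semicontinuity of kernel dimension under specialization ($\rank\phi_0\le\rank\phi_{\K(t)}$ because a minor nonvanishing at $t=0$ is generically nonzero). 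Your route is shorter and dispenses with the induction on $n$; what the paper's explicit argument buys is a concrete recipe for producing elements of $M_{f_0}$ as leading/constant coefficients of elements of $M_{f_t}$, a technique it reuses later (e.g.\ in the proof of theorem~\ref{thm:nilpotrank}, where the matrices $A_{i0}=\lc(A_i)$ and their idempotency/nilpotency are analyzed). Your treatment of the ``in particular'' clause — extracting $m+1$ nonzero orthogonal, hence linearly independent, idempotents from a regular splitting over $\K(t_1,\dots,t_n)$ — matches the paper's, just with the construction of the idempotents spelled out rather than cited from theorem~\ref{thm:regsplitE}.
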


% In a certain sense this says that all degenerate splitting properties of $f$
% are contained in $M_f$, as we have defined degenerate. All regular are
% obviously contained, thus $M_f$ contains everything we need.

\begin{proof}
  First assume that $n=1$. Then $f_t = \sum_{k \ge 0} t^k f_k$ for some $f_k
  \in \cR_d$. Let $A_1, \dots, A_m \in \Mat_{\K (t)} (r,r)$ form a basis for
  $M_{f_t}$ as a $\K (t)$-vector space. We may multiply by denominators and
  assume $A_i \in \Mat_{\K [t]} (r,r)$ for all $i$. Write $A_i =
  \sum_{k=0}^{a_i} t^k A_{ik}$ with $A_{ik} \in \Mat_\K (r,r)$. Assume that
  $A_{10}, \dots, A_{m0}$ are linearly dependent, say $\sum_{i=0}^m c_i A_{i0}
  = 0$ where $c_i \in \K$, not all zero. Choose $j$ such that $a_j = \max \{
  a_i \suchthat c_i \ne 0 \}$, and replace $A_j$ with $(c_j t)^{-1}
  \sum_{i=0}^m c_i A_i$. The new $A_i$'s still form a $\K (t)$-basis for
  $M_{f_t}$, and the degree of $A_j$ as a polynomial in $t$ has decreased.
  Continuing this process, we arrive at a basis $\{ A_i \}$ such that $A_{10},
  \dots, A_{m0}$ are linearly independent.

  For every $i$, since $A_i \in M_{f_t}$, there exists a polynomial $g_i \in
  \cR_d (t)$ such that $\p g_i = A_i \p f_t$. And because $A_i \in \Mat_{\K
    [t]} (r,r)$ it follows that $g_i \in \cR_d [t]$. Thus $g_i = \sum_{k \ge
    0} t^k g_{ik}$ for suitable $g_{ik} \in \cR_d$. It follows that
  \begin{displaymath}
    \sum_{k \ge 0} t^k \p g_{ik} = \p g_i = A_i \p f_t = \sum_{j,k \ge 0}
    t^{j+k} A_{ij} \p f_k.
  \end{displaymath}
  In particular, $\p g_{i0} = A_{i0} \p f_0$, implying $A_{i0} \in M_{f_0}$
  for all $i$. Since $\{ A_{i0} \}$ are linearly independent, it follows that
  $\dim_\K M_{f_0} \ge \dim_{\K (t)} M_{f_t}$.

  For general $n \ge 1$, let $\K' = \K (t_1, \dots, t_{n-1})$. There exist
  $f'_k \in \cR_d [t_1, \dots, t_{n-1}]$ such that $f_t = \sum_{k \ge 0} t_n^k
  f'_k$, and the above argument shows that $\dim_{\K'} M_{f'_0} \ge \dim_{\K'
    (t_n)} M_{f_t}$. Induction on $n$ proves that $\dim_\K M_{f_0} \ge
  \dim_{\K (t_1, \dots, t_n)} M_{f_t}$.

  If $f_t$ splits regularly $m$ times, then $M_{f_t}$ contains $m+1$
  orthogonal idem\-potents, hence $\dim_\K M_{f_0} \ge \dim_{\K (t_1, \dots,
    t_n)} M_{f_t} \ge m+1$.
\end{proof}

\begin{exmp} \label{ex:b11}
  Let $d \ge 4$ and $f = x_1^\pd {d-2} x_2^\pd 2 \in \cR = \K [x_1,
  x_2]^{DP}$. With $R = \K [\p_1, \p_2]$ we get $\ann_R f = (\p_2^3,
  \p_1^{d-1})$ and $M_f = \langle I \rangle$, hence $f$ cannot be a
  specialization of an $f_t \in \cR_d [t_1, \dots, t_n]$ that splits. But it
  is easy to find $f_t \in \K [t] [x_1, x_2, x_3]^{DP}$ such that $f_0 = f$
  and $f_t$ splits! Indeed, one such choice is
  \begin{displaymath}
    f_t = t^{-3} [ t (x_1 + t x_2 + t^3 x_3)^\pd{d} - (x_1 + t^2 x_2)^\pd{d} +
    (1-t) x_1^\pd{d} ] \equiv f \text{ mod } (t).
  \end{displaymath}
  Note that even this is in concordance with lemma \ref{lem:upbound}.
\end{exmp}

% Such an $f_t$ will not give a flat deformation, since $\ann_{S_t} (f_t)_1
% \ne \ann_S (f)_1$, $S = \K [\p_1, \p_2, \p_3]$.

\begin{rem} \label{rem:flatness}
  Let $f \in \cR_d$. When we look for $f_t \in \cR_d [t_1, \dots, t_n]$ such
  that $f_0 = f$, there are several properties we would like $f_t$ to have.
  Our main concern in this chapter is that we want $f_t$ to split regularly
  $\dim_\K M_f - 1$ times over $\K (t_1, \dots, t_n)$, giving a positive
  answer to question \ref{que:Q}. But in addition, we would like $R (t_1,
  \dots, t_n) (f_t) \iso R (t_1, \dots, t_n) / \ann_{R (t_1, \dots, t_n)}
  (f_t)$ and $R (f) \iso R/ \ann_R f$ to have equal Hilbert functions, for the
  following reason.

  % From a deformational point of view, there is another condition we want
  % $f_t$ to satisfy.

  Let $\K_t = \K [t_1, \dots, t_n]$, $\cR_t = \cR \otimes_\K \K_t$ and $R_t =
  R \otimes_\K \K_t$. An $f_t \in \cR_d \otimes_\K \K_t$ determines a family
  $\K_t \to \cR_t / R_t (f_t)$. Let $C_t = R_t (f_t) = R (f_t) \otimes_\K \K_t
  \subseteq \cR_t$. It is easy to show that $\cR / C_0 = \cR_t / C_t
  \otimes_{\K_t} \K_t / (t_1, \dots, t_n) = \cR / R (f_0)$, thus $R (f_0)$ is
  a specialization of the family. We would like this family to be flat, at
  least in an open neighbourhood of the origin. This simply means that the
  generic fiber $R (t_1, \dots, t_n) (f_t)$ has the same Hilbert function as
  $R (f_0)$. (The condition that $f_t$ should have a regular splitting of
  length $\dim_k M_f$ inside $\cR_d (t_1, \dots, t_n)$, is also a statement
  about the generic fiber.)

% Since the base $\K_t$ is reduced.
  
  Note that, although the family $\K_t \to R_t / J_t$ where $J_t = \ann_{R_t}
  (f_t)$ is maybe more natural to consider, it is also more problematic, since
  $f_t \mapsto R_t / J_t \mapsto R / J_0$ does not generally commute with med
  $f_t \mapsto f_0 \mapsto R / \ann_R (f_0)$. In general we only have an
  inclusion $J_0 \subseteq \ann_R (f_0)$. If $f \ne 0$, then $(J_0)_d = \ann_R
  (f_0)_d$, and since $\ann_R (f_0)$ is determined by its degree $d$ piece by
  lemma \ref{lem:ann}a, it follows that $\ann_R (f_0) = \sat_{\le d} J_0 =
  \dsum_{e=0}^d \{ D \in R_e \suchthat R_{d-e} \cdot D \subseteq J_0 \} +
  (R_{d+1})$.

  % Cf. limit ideals, f.ex. \cite[Lemma 5.12]{IK}.

  % If we use only one deformational parameter ($n=1$), then this second
  % family is generically flat, since $\ann_{R_t} (f_t) = R_t \isect \ann_{R
  %   (t_1, \dots, t_n)} (f_t)$.
  
  Of course we would like $R(f)$ to be a specialization of a flat, splitting
  family, but in this chapter we study question \ref{que:Q} without the
  additional flatness requirement. Note that we do not know of any example in
  which question \ref{que:Q} has a positive answer, but would have had a
  negative answer if we had required $H(R (t_1, \dots, t_n) (f_t)) = H( R(f)
  )$.
\end{rem}

\section{Positive results} \label{section:positive}

In this section we consider some cases where we are able to prove that
question \ref{que:Q} has a positive answer. We start with a result that
effectively ``deforms'' a relation $\p g = A \p f$ with $A$ nilpotent to a
relation $\p g_t = A_t \p f_t$ with $A_t$ idempotent. The proof is an explicit
construction of $f_t$ using the nilpotent matrix $A \in M_f$ as input data.
This will later allow us to answer question \ref{que:Q} positively when $r \le
4$.

Suppose $A$ is nilpotent, i.e. $A^k = 0$ for $k \gg 0$. The \emph{index} of
$A$ is defined by
\begin{displaymath}
  \indx (A) = \min \{ k \ge 1 \suchthat A^k = 0 \}.
\end{displaymath}

Let $A$ be a nilpotent matrix of index $n+1$, i.e., $A^{n+1} = 0$ and $A^n \ne
0$. Then $A^0 = I, A, A^2, \dots, A^n$ are linearly independent. To see why,
assume there is a non-zero relation $\sum_{k=0}^n c_k A^k = 0$, and let $i =
\min \{ k \suchthat c_k \ne 0 \} \le n$. Multiplying the relation by $A^{n-i}$
implies that $c_i A^n = 0$, which is a contradiction.

\begin{thm} \label{thm:onematrix}
  Let $d \ge 3$ and $f \in \cR_d$. Assume that $M_f$ contains a non-zero
  nilpotent matrix $A \in \Mat_\K (r,r)$, and let $n = \indx (A) - 1 \ge 1$.
  Then $f$ is a specialization of some $f_t \in \cR_d [t_1, \dots, t_n]$ that
  splits regularly $n$ times inside $\cR_d (t_1, \dots, t_n)$.
\end{thm}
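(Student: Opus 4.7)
The plan is to construct $f_t$ explicitly as a Newton divided difference of a one-parameter family $L(\mu)^{(d)}$ of divided $d$-th powers of linear forms, with the family built from a Jordan chain of $A$. Because the Newton divided difference of a polynomial in $\mu$ is automatically polynomial in the nodes, the resulting $f_t$ will lie in $\cR_d[t_1,\dots,t_n]$; by the Lagrange form it will also be a $\K(t)$-linear combination of $n+1$ divided $d$-th powers of linear forms, which gives the desired splitting of length $n+1$.

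Concretely, since $A^n\ne 0$, I would pick a cyclic vector $v\in\K^r$ with $A^n v\ne 0$ and set $v_k := A^{n-k} v$ for $0\le k\le n$; these are linearly independent and satisfy $A v_k = v_{k-1}$ (with $v_{-1}=0$). Put $l_k := v_k^{\T} x \in \cR_1$, and, with an auxiliary parameter $\mu$, form
\begin{displaymath}
L(\mu) \;=\; l_0 + \mu\, l_1 + \mu^2\, l_2 + \cdots + \mu^n\, l_n \;\in\; \cR_1\otimes_\K\K[\mu].
\end{displaymath}
Then $L(\mu)^{(d)} = \sum_{j\ge 0} \mu^j F_j$ is a polynomial of degree $\le nd$ in $\mu$ with coefficients $F_j\in\cR_d$. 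With $\mu_0 := 0$ and $\mu_i := t_i$ for $i=1,\dots,n$, define
\begin{displaymath}
f_t \;:=\; \sum_{i=0}^{n} \frac{L(\mu_i)^{(d)}}{\prod_{j\ne i}(\mu_i-\mu_j)} \;=\; [\mu_0,\mu_1,\dots,\mu_n]\, L(\mu)^{(d)}.
\end{displaymath}
Being the divided difference of a polynomial in $\mu$, $f_t$ is polynomial in $t_1,\dots,t_n$; and specialising $t\to 0$ (all nodes collapsing to $0$) yields the $n$-th Taylor coefficient $F_n$ of $L^{(d)}$.

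For the splitting property, Lagrange interpolation displays $f_t$ as a $\K(t)$-linear combination of the $n+1$ divided powers $L(\mu_i)^{(d)}$, each being the $d$-th divided power of a linear form $L(\mu_i)=\sum_k\mu_i^k l_k$. These $n+1$ forms are $\K(t)$-linearly independent because the Vandermonde matrix $(\mu_i^k)_{0\le i,k\le n}$ has nonzero determinant $\prod_{i<j}(\mu_j-\mu_i)$ in $\K(t)$, while $l_0,\dots,l_n$ are independent in $\cR_1$. Hence Remark~\ref{rem:regsplit} shows $f_t$ has a regular splitting of length $n+1$, i.e.\ splits $n$ times.

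The main obstacle is arranging $F_n=f$. Expanding gives the explicit formula
\begin{displaymath}
F_n \;=\; \sum_\alpha \prod_{k=0}^n l_k^{(\alpha_k)},
\end{displaymath}
summed over $\alpha\in\bN_0^{n+1}$ with $|\alpha|=d$ and $\sum k\alpha_k=n$; with the canonical cyclic choice of the $l_k$'s, $F_n$ lies in a specific subspace of $X(\K[A])_d$. Since $A\in M_f$ forces $f\in X(\K[A])_d$ by Lemma~\ref{lem:XM}, the remaining task is to realise $f$ as such an $F_n$. I would allow perturbations $l_k\leadsto l_k+\sum_{j<k}a_{kj} l_j$ of the $l_k$'s within the cyclic subspace; the induced map $(a_{kj})\mapsto F_n$ picks up correction terms that lie in $\im\gamma_f\subseteq X(\K[A])_d$, and a dimension count using Lemma~\ref{lem:Mf}(c,d) together with the description of $\im\gamma_f$ as the $\K$-span of $\{g_k=\gamma_f(A^k)\}$ allows one to solve for the perturbations so that $F_n=f$ exactly. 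Handling additional Jordan blocks of $A$ on the complement of the cyclic subspace is a straightforward supplement to this core argument.
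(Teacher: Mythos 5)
There is a genuine gap, and it is fatal to the whole ansatz rather than just to the step you flag as "the main obstacle." Your $f_t$ is by construction a $\K(t_1,\dots,t_n)$-linear combination of $n+1$ divided powers $L(\mu_i)^\pd d$ of linear forms, and this remains true after your proposed perturbations of the $l_k$'s. Consequently every catalecticant of $f_t$ has rank at most $n+1$ over $\K(t)$, and by semicontinuity (vanishing of the $(n+2)\times(n+2)$ minors of $\Cat^d_e(f_t)$ is a closed condition in $t$) the same bound holds for $f_0$. But a general $f$ satisfying the hypotheses violates this bound. Concretely, take $r=3$ and $f = x_1^\pd{d-1}x_3 + x_1^\pd{d-2}x_2^\pd 2$ as in example \ref{ex:A3=0}, and let $A$ be the matrix with $A\p = [\p_3,0,0]^\T$ (the square of the matrix in that example, hence in $M_f$); then $A$ is nilpotent of index $2$, so $n=1$, yet $\dim_\K(R/\ann_R f)_1 = 3 > 2 = n+1$. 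So $f$ cannot be the specialization of any combination of two $d$-th powers of linear forms, and no choice of cyclic vector, perturbation, or "supplement for the other Jordan blocks" can rescue the construction: the obstruction is the shape of $f_t$ itself, not the failure to arrange $F_n=f$. (Your dimension count also fails on its own terms: the achievable $F_n$'s form a family of dimension $O(n^2)$, while $X(\K[A])_d$ typically has dimension growing with $d$.)

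The paper's proof avoids this by never requiring the additive components to be powers of linear forms. It deforms $f$ itself, setting $f_t = f + \sum_{k>0}t^k g_{n+k}$ where the $g_j$ are the coefficients of $\phi_P(\gamma_f(A^n))$ for a suitable unipotent $P$, and produces an idempotent $E' = I - t^{-n}A_t^n$ in $M_{f_t}$ with $A_t = A + tE$. Theorem \ref{thm:regsplitE} then splits $f_t$ into just \emph{two} components: one, $t^{-n}\phi_P(\gamma_f(A^n))$, is indeed power-like, but the other, $f'$, carries all of the residual complexity of $f$ (in particular its large catalecticant ranks) and is handled by induction using the nilpotent $A_tE'$ it inherits. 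Any correct proof must similarly allow at least one component of the splitting to remain an essentially arbitrary form.
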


\begin{proof}
  Since $M_f$ is closed under multiplication by proposition \ref{prop:Mf}, it
  contains $\K [A] = \langle I, A, \dots, A^n \rangle$, the $\K$-algebra
  generated by $A$.

  Choose an idempotent $E \in \Mat_\K (r,r)$ such that $\ker E = \ker A^n$.
%% CHANGED 2013-07-13: $U = \ker A$ -> $U = \ker A^n$
  (I.e. let $U = \ker A^n$ and choose $W$ such that and $U \isect W = 0$ and $U
  + W = \K^r$. Then let $E$ represent the linear map that acts as the identity
  on $W$ and takes $U$ to 0.) This implies that $A^n E = A^n$ and that there
  exists a matrix $Q \in \Mat_\K (r,r)$ such that $E = Q A^n$. Note that $EA =
  0$. Define
  \begin{displaymath}
    A_t = A + tE.
  \end{displaymath}
  Then $A_t^n = A^n + t A^{n-1} E + \dots + t^n E$, and
  \begin{displaymath}
    A_t^{n+1} = A^{n+1} + t A^n E + \dots + t^{n+1} E = t A_t^n.
  \end{displaymath}
  It follows that $(A_t^n)^2 = t^n A_t^n$, hence $t^{-n} A_t^n$ is idempotent.
  Now define
  \begin{displaymath}
    P = I + \sum_{k=1}^n t^k A^{n-k} Q.
  \end{displaymath}

  $P$ is chosen so that $A_t^n = P A^n$. Since $\det P \equiv 1 \pmod t$, $P$
  is an invertible element of $\Mat_{\K(t)} (r,r)$. Let $\phi_P$ be the
  homomorphism defined by $x \mapsto P^\T x$ on $\cR$ and by $\p \mapsto
  P^{-1} \p$ on $R$, as usual. Recall that for all $g \in \cR$ and $D \in R$
  we have $\phi_P (Dg) = \phi_P (D) \phi_P (g)$. Also note that $(PA^n)^2 =
  t^n PA^n$ implies $A^n P A^n = t^n A^n$.

% $P = I + t A^{n-1} Q + t^2 A^{n-2} Q + \dots + t^n Q$ would also work.

  Since $A^n \in M_f$, there exists a polynomial $g \in \cR_d$ such that $\p g
  = A^n \p f$. Let $g_t = \phi_P (g) = \sum_{k \ge 0} t^k g_k \in \cR_d [t]$,
  and define
  \begin{displaymath}
    f_t = f + t^{-n} \biggl( g_t - \sum_{k=0}^n t^k g_k \biggr) = f +
    \sum_{k>0} t^k g_{n+k} \in \cR_d [t].
  \end{displaymath}

  We want to prove that $A_t \in M_{f_t}$. We start by calculating $\p g_t$.
  \begin{equation} \label{eq:pgt}
    \p g_t = \p \phi_P (g) = P \phi_P (\p g) = P \phi_P (A^n \p f) = A_t^n
    \phi_P (\p f)
  \end{equation}
  Multiplying \eqref{eq:pgt} by $A^n$, and using $A^n P A^n = t^n A^n$, gives
  $A^n \p g_t = t^n \phi_P (\p g)$. Since the entries of $\p g$ and $\phi_P
  (\p g)$ are in $\cR [t]$, this implies that $A^n \p g_i = 0$ for all $i <
  n$, and $A^n \p g_n = \p g = A^n \p f$. In particular, $E \p g_n = Q A^n \p
  g_n = E \p f$.

  When we multiply \eqref{eq:pgt} by $A_t$, the result is $A_t \p g_t = t \p
  g_t$. As polynomials in $t$ this equals $(A + tE) (\sum_{i \ge 0} t^i \p
  g_i) = t (\sum_{i \ge 0} t^i \p g_i)$, and implies that
  \begin{displaymath}
    A \p g_i + E \p g_{i-1} = \p g_{i-1} \text{ for all } i \ge 0.
  \end{displaymath}
  (Actually, this implies that $A \p g_i = \p g_{i-1}$ for all $0 \le i \le
  n$, since $E = Q A^n$ and we have already proven that $A^n \p g_{i-1} = 0$
  for $i \le n$.) Also, since $A \in M_f$, there exists $h \in \cR_d$ such
  that $\p h = A \p f$.

% $A^n \p g_i = 0$ for all $i < n$ can also be proven by induction on $A \p
% g_i + E \p g_{i-1} = \p g_{i-1}$ for all $i \ge 0$: $A^i \p g_{i-1} = 0$ for
% $i<n$ implies $A^n \p g_{i-1} = 0$, hence $A^{i+1} \p g_i = A^i \p g_{i-1} -
% Q A^n \p g_{i-1} = 0$.

  Putting all this together, we get
  \begin{align*}
    A_t \p f_t & = \bigl( A + tE \bigr) \biggl( \p f + \sum_{k>0} t^k \p
    g_{n+k} \biggr) \\
    & = A \p f + t E \p f + \sum_{k>0} t^k A \p g_{n+k} + \sum_{k>0} t^{k+1} E
    \p g_{n+k} \\
    & = \p h + \sum_{k>0} t^k \bigl( A \p g_{n+k} + E \p g_{n+k-1} \bigr) \\
    & = \p h + \sum_{k>0} t^k \p g_{n-1+k} = \p \bigl( h + t g_n + t^2 g_{n+1}
    + \dots \bigr).
  \end{align*}
  This proves that $A_t \in M_{f_t}$. And since $M_{f_t}$ is closed under
  multiplication, it follows that $\K [A_t] = \langle I, A_t, \dots, A_t^n
  \rangle \subseteq M_{f_t}$.

  Since $E' = I - t^{-n} A_t^n$ is idempotent, we may apply theorem
  \ref{thm:regsplitE}. It tells us that $f_t$ has a regular splitting with two
  additive components, $t^{-n} g_t$ and $f' = t^{-n} ( t^n f - g_0 - t g_1 -
  \dots - t^n g_n )$, and furthermore that
  \begin{displaymath}
    \K [A_t] \cdot E' = \langle E', A_t E', \dots, A_t^{n-1} E' \rangle
    \subseteq M_{f'}^{E'}.
  \end{displaymath}
  Hence we may repeat our procedure on $f'$. By induction on $n$, we arrive at
  some $f_{\underline{t}} \in \cR_d [t_1, \dots, t_n]$ such that $f_0 = f$ and
  $f_{\underline{t}}$ splits regularly $n$ times.
\end{proof}

\begin{rem}
  The choice of $E$ in the proof of theorem \ref{thm:onematrix} boils down to
  choosing $Q \in \Mat_\K (r,r)$ such that $A^n Q A^n = A^n$, and then letting
  $E = Q A^n$. This then implies $\ker E = \ker A^n$ and that $E$ is
  idempotent. We note that $Q$ is certainly not unique. If $A^n$ is in Jordan
%% CHANGED 2013-07-13: $Q = A^\T$ -> $Q = (A^n)^\T$
  normal form, then we may let $Q = (A^n)^\T$. This is what we will do in most
  explicit cases.
\end{rem}

\begin{cor} \label{cor:onematrix}
  Suppose $\K = \bar\K$ and $d \ge 3$. Let $f \in \cR_d$. Assume that $\ann_R
  (f)_1 = 0$, and let $\beta_{1j}$ be the minimal number of generators of
  $\ann_R f$ of degree $j$. Then $f$ has a regular or degenerate splitting if
  and only if $\beta_{1d} > 0$.
\end{cor}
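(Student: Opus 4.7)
The plan is to combine lemma \ref{lem:Mf}, theorems \ref{thm:regsplit} and \ref{thm:onematrix}, and the spectral decomposition of $M_f$ provided by proposition \ref{prop:eigen}. Since $\ann_R(f)_1 = 0$, lemma \ref{lem:Mf}(d) gives $\dim_\K M_f = 1 + \beta_{1d}$, so the hypothesis $\beta_{1d} > 0$ is equivalent to $\dim_\K M_f \ge 2$, i.e.\ $M_f \ne \langle I \rangle$. I will prove the two implications of the corollary from this reformulation.

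For the ``only if'' direction, if $f$ admits any regular or degenerate splitting of length $\ge 2$, then lemma \ref{lem:upbound} (together with the observation that a regular splitting of $f$ itself is a trivial case of a degenerate splitting, i.e.\ take $f_t = f$) forces $\dim_\K M_f \ge 2$, hence $\beta_{1d} \ge 1$.

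For the ``if'' direction, assume $\beta_{1d} > 0$, so $M_f \supsetneq \langle I \rangle$. Since $d \ge 3$, proposition \ref{prop:Mf} tells us $M_f$ is a commutative $\K$-algebra, so proposition \ref{prop:eigen} applies. Let $\{E_1, \dots, E_n\} \subseteq M_f$ be its unique maximal complete set of orthogonal idempotents. I split into two cases. If $n \ge 2$, then theorem \ref{thm:regsplit} (applied with $\ann_R(f)_1 = 0$) produces a regular splitting of $f$ of length $n \ge 2$, and we are done. If $n = 1$, then $E_1 = I$, and because $\K = \bar\K$ contains every eigenvalue of every $A \in M_f$, proposition \ref{prop:eigen}(f) gives $M_f = \langle I \rangle \oplus M_f^{\nil}$. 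From $\dim_\K M_f \ge 2$ we obtain a non-zero nilpotent $A \in M_f$, and theorem \ref{thm:onematrix} then produces an $f_t \in \cR_d[t_1, \dots, t_m]$ (with $m = \indx(A) - 1 \ge 1$) specializing to $f$ and splitting regularly $m \ge 1$ times over $\K(t_1, \dots, t_m)$, i.e.\ a degenerate splitting of $f$.

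The only subtle step is knowing that the case $n=1$ really forces $M_f$ to contain a nilpotent matrix; this is exactly where the hypothesis $\K = \bar\K$ enters, via proposition \ref{prop:eigen}(f). Everything else is bookkeeping between the dimension identity in lemma \ref{lem:Mf}(d) and the dichotomy ``non-trivial idempotent vs.\ non-zero nilpotent'' for commutative Artinian $\K$-algebras.
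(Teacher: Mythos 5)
Your proof is correct and follows essentially the same route as the paper: the dimension identity $\dim_\K M_f = 1 + \beta_{1d}$ from lemma \ref{lem:Mf}, the idempotent-versus-nilpotent dichotomy supplied by $\K = \bar\K$, theorems \ref{thm:regsplit}/\ref{thm:regsplitE} and \ref{thm:onematrix} for the two cases, and lemma \ref{lem:upbound} for the converse. Your explicit case split on the length $n$ of the maximal coid is just a slightly more spelled-out version of the paper's ``we may assume $A$ is either idempotent or nilpotent.''
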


% More precisely, $\beta_{1d} > 0$ if and only if $f$ has a non-trivial
% regular or degenerate splitting, i.e. of length $\ge 2$.

\begin{proof}
  Since $\beta_{11} = 0$, we have $\dim_\K M_f - 1 = \beta_{1d}$. Thus
  $\beta_{1d} > 0$ if and only if $M_f$ contains a matrix $A \notin \langle I
  \rangle$. Since $\K = \bar\K$, we may assume that $A$ is either idempotent
  or nilpotent. It follows from theorem \ref{thm:regsplitE} that $M_f$
  contains a non-trivial idempotent if and only if $f$ splits regularly. By
  theorem \ref{thm:onematrix}, if $A \in M_f$ is non-zero and nilpotent, then
  $f$ has a degenerate splitting. Finally, if $f$ has a degenerate splitting,
  then $\dim_\K M_f - 1 \ge 1$ by lemma \ref{lem:upbound}.
\end{proof}

% With a more general definition of degenerate splittings, this is true over
% any field.

Let $f \in \cR_d$ with $d \ge 3$. If $M_f$ is generated by one matrix, then
theorem \ref{thm:onematrix} answers question \ref{que:Q} affirmatively, that
is, we can find $f_t \in \cR_d [t_1, \dots, t_n]$ such that $f_0 = f$ and
$f_t$ splits regularly $\dim_\K M_f - 1$ times over $\K (t_1, \dots, t_n)$.
This is the best we can hope for by lemma \ref{lem:upbound}, and our next
theorem proves that this is always possible when $r \le 4$. But first we need
some facts about matrices.

\begin{lem} \label{lem:ranks}
  Given matrices $A, B \in \Mat_\K (r,r)$ the following are true.
  \begin{enumerate}
    \setlength{\itemsep}{2pt}
    \setlength{\parskip}{0pt}
    \renewcommand{\theenumi}{\alph{enumi}}
    \renewcommand{\labelenumi}{\normalfont(\theenumi)}
  \item $\rank A + \rank B - r \le \rank (AB) \le \min (\rank A, \rank B)$.
  \item If $AB = BA$, $A \ne 0$ and $B$ is nilpotent, then $\rank (AB) < \rank
    A$.
  \item If $AB = BA$, $\rank A = r-1$ and $A$ is nilpotent, then $A^{r-1} \ne
    0$ and
    \begin{displaymath}
      B \in \K [A] = \langle I, A, \dots, A^{r-1} \rangle.
    \end{displaymath}
  \end{enumerate}
\end{lem}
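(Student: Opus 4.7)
\medskip\noindent\textbf{Proof proposal.} For (a) I would use the standard Sylvester framework. The upper bound is immediate from $\im(AB)\subseteq\im A$ and $\ker B\subseteq \ker(AB)$, which give $\rank(AB)\le\rank A$ and $\rank(AB)\le\rank B$. For the lower bound I would observe that $B$ restricts to a linear map $\ker(AB)\to \ker A$ whose kernel is $\ker B \isect \ker(AB)\subseteq\ker B$; taking dimensions yields $\dim\ker(AB)\le\dim\ker A+\dim\ker B$, i.e.\ $\rank(AB)\ge\rank A+\rank B-r$.

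For (b) the key point is that $AB=BA$ implies $B(\im A)\subseteq\im A$, since $B(Aw)=A(Bw)\in\im A$. Hence $B$ restricts to an endomorphism of the subspace $\im A$, and this restriction is still nilpotent because a power of $B$ kills all of $\K^r$. Now if $A\neq0$ then $\im A\neq0$, and a nilpotent endomorphism of a nonzero finite-dimensional space has nonzero kernel, so $\rank(B|_{\im A})<\dim\im A=\rank A$. Finally $\rank(B|_{\im A})=\dim B(\im A)=\dim\im(BA)=\dim\im(AB)=\rank(AB)$, which closes the argument.

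For (c) I would proceed in two main stages. First, to show $A^{r-1}\neq0$, apply (b) with the pair $(A^k,A)$ (they commute, $A$ is nilpotent): whenever $A^k\neq0$, we obtain $\rank(A^{k+1})<\rank(A^k)$. Combined with the Sylvester bound from (a), which in our situation reads $\rank(A^{k+1})\ge \rank(A^k)+(r-1)-r=\rank(A^k)-1$, we conclude $\rank(A^{k+1})=\rank(A^k)-1$ exactly, so long as $A^k\neq0$. Starting from $\rank A=r-1$ and iterating, $\rank A^{r-1}=1>0$, hence $A^{r-1}\neq 0$ (and $A^r=0$). Second, to show $B\in\K[A]$, pick $v\in\K^r$ with $A^{r-1}v\neq0$. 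A standard minimality argument (take the smallest $i_0$ with $c_{i_0}\neq0$ in a relation $\sum c_iA^iv=0$ and hit it with $A^{r-1-i_0}$) shows that $\{v,Av,\dots,A^{r-1}v\}$ is a basis of $\K^r$. Expand $Bv=\sum_{i=0}^{r-1}c_iA^iv$ and set $B'=\sum_{i=0}^{r-1}c_iA^i\in\K[A]$; then $Bv=B'v$, and since both $B$ and $B'$ commute with $A$, for every $k$ we get $BA^kv=A^kBv=A^kB'v=B'A^kv$. Thus $B$ and $B'$ agree on a basis, so $B=B'\in\K[A]$.

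The main obstacle is the first stage of (c): getting $A^{r-1}\neq0$ purely from (a) and (b) without invoking Jordan form. The trick is to notice that (a) forces $\rank(A^{k+1})\ge\rank(A^k)-1$ while (b) forces a strict drop, pinning the rank drop to be exactly $1$ at each step so long as we have not yet reached $0$; this prevents the chain of ranks from collapsing prematurely. Once $A^{r-1}\neq0$ is in hand, the cyclic-vector argument is straightforward.
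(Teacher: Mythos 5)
Your proposal is correct and follows essentially the same route as the paper: the Sylvester bound via restricting $B$ to $\ker(AB)$, the invariance $B(\im A)\subseteq\im A$ for part (b), and the exact rank chain $\rank A^k=r-k$ followed by the cyclic-vector basis $\{v,Av,\dots,A^{r-1}v\}$ for part (c). The only cosmetic differences are that the paper proves (b) by contradiction (showing $\rank(AB)=\rank A$ forces $\im(AB^k)=\im A$ for all $k$, hence $A=0$) where you invoke that a nilpotent endomorphism of a nonzero space has nontrivial kernel, and that the paper gets $A^{r-1}\ne0$ by chaining the inequality from (a) downward from $A^r=0$ rather than combining (a) with (b) upward; both variants are sound.
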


\begin{proof}
  (a) The right inequality follows from the inclusions $\ker (AB) \supseteq
  \ker B$ and $\im (AB) \subseteq \im A$. To prove the left inequality, let
  $\beta$ be the restriction of the map $B : \K^r \to \K^r$ to $\ker (AB)$.
  Obviously, $\ker \beta = \{ v \in \ker (AB) \suchthat Bv = 0 \} = \ker B$,
  and $\im \beta \subseteq \ker A$. Hence
  \begin{displaymath}
    \dim_\K \ker (AB) = \dim_\K \ker \beta + \dim_\K \im \beta \le \dim_\K
    \ker B + \dim_\K \ker A,
  \end{displaymath}
  which is equivalent to $\rank (AB) \ge \rank A + \rank B - r$.

  (b) Assume that $\rank (AB) = \rank A$. We know that $\im (AB) \subseteq \im
  A$, hence equal ranks implies $\im (AB) = \im A$. It follows that $\im
  (AB^k) = \im A$ for all $k$ by induction on $k$. Indeed, since $AB = BA$, we
  have
  \begin{displaymath}
    \im AB^{k+1} = \im BAB^k = B (\im AB^k) = B (\im A) = \im BA = \im AB =
    \im A.
  \end{displaymath}
  But $B$ is nilpotent, implying $\im A = \im AB^r = \im 0 = 0$. Hence $A=0$.
  Therefore, when $A \ne 0$, it follows that $\rank AB < \rank A$.

% Below we use $A^r = 0$. This is well know, and follows from (a) and (b):
% First prove $\rank A^{k+1} \ge \rank A^k - 1$, hence $\rank A^k \ge r-k$.
% Thus $A^k \ne 0$ for all $k<r$, and by (b) $\rank A^{k+1} < \rank A^k$ for
% these $k$. Hence $\rank A^k = r-k$ for all $1 \le k \le r$.

  (c) Let $A^0 = I$. Part (a) implies for all $k \ge 0$ that
  \begin{displaymath}
    \rank A^{k+1} \ge \rank A^k + \rank A - r = \rank A^k - 1.
  \end{displaymath}
  Since $A$ is nilpotent, we know that $A^r = 0$. Therefore,
  \begin{displaymath}
    0 = \rank A^r \ge \rank A^{r-1} - 1 \ge \rank A^{r-2} - 2 \ge \dots \ge
    \rank A - (r-1) = 0.
  \end{displaymath}
  It follows that all inequalities must be equalities, that is, $\rank A^k =
  r-k$ for all $0 \le k \le r$. In particular, $A^{r-1} \ne 0$. Moreover, the
  quotient $\ker A^k / \ker A^{k-1}$ has dimension 1 for all $1 \le k \le r$.
  Consider the filtration
  \begin{displaymath}
    0 = \ker I \subsetneq \ker A \subsetneq \ker A^2 \subsetneq \dots
    \subsetneq \ker A^{r-1} \subsetneq \ker A^r = \K^r.
  \end{displaymath}
  Choose $v_1 \notin \ker A^{r-1}$, and let $v_k = A^{k-1} v_1$ for $k = 2,
  \dots, r$. Then $\{ v_1, \dots, v_r \}$ is a basis for $\K^r$. To prove
  this, note that $v_k \notin \ker A^{r-k}$ because $A^{r-1} v_1 \ne 0$, but
  $v_k \in \ker A^{r-k+1}$ since $A^r = 0$. Assume that $v_1, \dots, v_r$ are
  linearly dependent. Then there exist $c_1, \dots, c_r \in \K$, not all zero,
  such that $\sum_{i=1}^r c_i v_i = 0$. If we let $k = \min \{ i \suchthat c_i
  \ne 0 \}$, then $v_k = c_k^{-1} ( \sum_{i=k+1}^r c_i v_i )$. But $v_i \in
  \ker A^{r-k}$ for all $i>k$, implying $v_k \in \ker A^{r-k}$, a
  contradiction.

  There exist $c_1, \dots, c_r \in \K$ such that $B v_1 = \sum_{i=1}^r c_i v_i
  = \sum_{i=1}^r c_i A^{i-1} v_1$ since $\{ v_1, \dots, v_r \}$ is a basis for
  $\K^r$. Since $AB=BA$ it follows for all $k$ that
  \begin{align*}
    B v_k & = B A^{k-1} v_1 = A^{k-1} B v_1 \\
    & = A^{k-1} \sum_{i=1}^r c_i A^{i-1}
    v_1 = \sum_{i=1}^r c_i A^{i-1} A^{k-1} v_1 = \sum_{i=1}^r c_i A^{i-1} v_k.
  \end{align*}
  Since $\{ v_i \}$ is a basis, it follows that $B = \sum_{i=1}^r c_i
  A^{i-1}$, that is, $B \in \K [A]$.
\end{proof}

The following theorem gives a positive answer to question \ref{que:Q} when $r
\le 4$.

\begin{thm} \label{thm:rle4}
  Assume that $r \le 4$ and $\bar{\K} = \K$. Let $f \in \cR_d$, $d \ge 3$,
  satisfy $\ann_R (f)_1 = 0$. Then for some $n \ge 1$ there exists $f_t \in
  \cR_d [t_1, \dots, t_n]$ such that $f_0 = f$ and $f_t$ splits regularly
  $\dim_\K M_f - 1$ times over $\K (t_1, \dots, t_n)$.
\end{thm}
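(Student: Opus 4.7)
The strategy is to reduce to an irreducible case using idempotents, and then combine Theorem \ref{thm:onematrix} with a finite case analysis. Since $\ann_R(f)_1=0$ and $d\ge 3$, Theorem \ref{thm:regsplitE} applied with $E=I$ shows that $M_f$ is a commutative $\K$-algebra, and Proposition \ref{prop:idempot}(b) supplies a unique maximal coid $\{E_1,\dots,E_n\}$ in $M_f$. The associated maximal regular splitting $f=\sum_{i=1}^n g_i$ satisfies $M_f=\dsum_{i=1}^n M_{g_i}^{E_i}$ by the same theorem, and Proposition \ref{prop:MfE} identifies each summand with the matrix algebra $M'_{g'_i}$ of $g_i$ viewed, after a base change, as a polynomial in $s_i=\rank E_i$ variables (with $\sum s_i\le r\le 4$). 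The local factor $M_f E_i$ contains no non-trivial idempotent by Proposition \ref{prop:eigen}(b). Solving the theorem for each $g'_i$ separately and then concatenating over disjoint parameter sets (the $n-1$ splits already visible in $f=\sum g_i$ add in to give the total $\dim_\K M_f-1$) yields the required $f_t$. We may therefore assume $M_f$ has no non-trivial idempotents; since $\K=\bar\K$, Proposition \ref{prop:eigen}(f) then forces $M_f=\langle I\rangle \dsum N$ with $N:=M_f^{\nil}$ a commutative subalgebra of nilpotent matrices, and we seek a degenerate splitting of length $\dim_\K N$ for this $f$ in some number $s\le r\le 4$ of variables.

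Second, the generic step: if $N$ contains some $A$ of rank $s-1$, Lemma \ref{lem:ranks}(c) forces $\indx(A)=s$ and shows that every matrix commuting with $A$ lies in $\K[A]$; hence $M_f=\K[A]$, and Theorem \ref{thm:onematrix} produces an $f_t$ splitting regularly $s-1=\dim_\K M_f-1$ times. This settles $s\le 2$ (where the rank condition is automatic) and the bulk of $s\in\{3,4\}$.

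Third, the residual cases are $s\in\{3,4\}$ with every element of $N$ of rank $\le s-2$. Up to simultaneous similarity there are only finitely many commutative nilpotent subalgebras of $\Mat_\K(s,s)$ satisfying this bound, and the obstructions from Propositions \ref{prop:UV} and \ref{prop:M+} further restrict which of these may actually occur as $M_f^{\nil}$. The plan is to enumerate the survivors and for each one either invoke the generalized form of Theorem \ref{thm:onematrix} promised earlier in the paper (which constructs $f_t$ from a commuting family of nilpotents in $N$, introducing one parameter per basis element) or write $f_t$ by hand, mimicking the deformation $A\mapsto A+tE$ from the proof of Theorem \ref{thm:onematrix} carried out jointly over several commuting matrices. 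Lemma \ref{lem:upbound} guarantees that $\dim_\K M_f-1$ is optimal, so any successful construction saturates the bound.

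The main obstacle is precisely this last step: when $\dim_\K N$ strictly exceeds the maximal index of any element of $N$, no single matrix can generate $M_f$, so one must activate all of $N$ in a single coherent family whose specialization at $t=0$ recovers $f$ and whose generic fibre exhibits $\dim_\K N+1$ pairwise orthogonal idempotents in $M_{f_t}$. Verifying orthogonality and the exhaustion of $M_f$ configuration by configuration is the delicate bookkeeping; the smallness $s\le 4$ is exactly what keeps the classification finite and the ad hoc constructions manageable.
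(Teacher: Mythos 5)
Your overall architecture matches the paper's: reduce via Theorem \ref{thm:regsplitE} to the case where $M_f$ has no non-trivial idempotent (so $M_f=\langle I\rangle\dsum M_f^{\nil}$ since $\K=\bar\K$), dispose of the generic situation with Lemma \ref{lem:ranks}(c) plus Theorem \ref{thm:onematrix} when $M_f^{\nil}$ contains a matrix of rank $s-1$, and then handle the residual low-rank configurations for $s\in\{3,4\}$ by classification. But the residual step is where essentially all of the content of this theorem lives, and you have only announced a plan for it, not carried it out. The paper's proof spends most of its length there: for $r=3$ it shows two non-proportional rank-one nilpotents force $\ann_R(f)_1\ne 0$ via Proposition \ref{prop:UV}; for $r=4$ it runs four cases, eliminating "no rank-one element" and "no square-zero rank-two element" by short contradiction arguments, pinning down the surviving algebras $\langle I,A,B\rangle$ and $\langle I,A,B,C\rangle$ up to base change (with a separate branch in characteristic $2$), deducing the explicit normal forms of $f$ such as $f=x_4x_1^{\pd{d-1}}+x_3x_2x_1^{\pd{d-2}}+g$, and exhibiting an explicit two- or three-parameter $f_t$ for each. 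None of that enumeration, elimination, or construction appears in your proposal.

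A concrete reason the gap is not merely expository: your stated fallback of invoking "the generalized form of Theorem \ref{thm:onematrix}" (i.e.\ Theorem \ref{thm:morematrices}) does not apply to the configurations that actually survive. That theorem requires orthogonal idempotents $E_1,\dots,E_m$ with $E_iA_i=A_iE_i=A_i$; in the key $r=4$ case one has a square-zero $A$ of rank $2$ and a $B$ of rank $1$ with $\im B\subseteq\im A$, so no such orthogonal idempotents exist, and the paper instead writes down $f_t=\tfrac1{st}\bigl((x_1+sx_2+tx_3+stx_4)^{\pd d}-\cdots\bigr)+g$ by hand and verifies it splits twice (three times in the four-dimensional case). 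You would need to supply these constructions — including the characteristic-$2$ variant — and verify in each that the generic fibre really carries $\dim_\K M_f$ orthogonal idempotents, before the argument is complete.
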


%\begin{proof}
\noindent\textit{Proof:}
We may assume that $M_f$ does not contain any non-trivial idempotent, because
if it does, we apply theorem \ref{thm:regsplitE} first, and then the following
proof on each additive component. Since $\bar{\K} = \K$, it follows by
proposition \ref{prop:eigen} that $M_f = \langle I \rangle \dsum M_f^{\nil}$
where $M_f^{\nil} = \{ A \in M_f \suchthat A \text{ is nilpotent} \}$.

The conclusion follows from theorem \ref{thm:onematrix} if $M_f$ is generated
by a single matrix. And if $M_f^{\nil}$ contains a matrix $A$ of rank $r-1$,
then $M_f = \K [A]$ by lemma \ref{lem:ranks}. Therefore, we now assume that
$M_f$ is \emph{not} generated by a single matrix, and in particular, that all
matrices in $M_f^{\nil}$ have rank $\le r-2$.

If $r=1$, then $f = c x_1^\pd d$ and $M_f = \langle I \rangle$, thus there is
nothing to prove. If $r=2$, then $M_f$ must be generated by a single matrix,
and we are done.

If $r=3$, then $M_f^{\nil}$ may only contain matrices of rank 1. Since $M_f$
cannot be generated by a single matrix, $M_f^{\nil}$ must contain two matrices
$A \nparallel B$ of rank 1. We may write $A = u_1 v_1^\T$ and $B = u_2 v_2^\T$
for suitable vectors $u_i, v_j \in \K^r$. Note that $A^2 = B^2 = AB = BA = 0$
since their ranks are $< 1$ by lemma \ref{lem:ranks}b. Thus $u_i^\T v_j = 0$
for all $i,j = 1,2$. If $u_1 \nparallel u_2$, then this implies $v_1 \parallel
v_2$ since $r=3$. Similarly, $v_1 \nparallel v_2$ implies $u_1 \parallel u_2$.
However, both cases are impossible, since each imply $\ann_R (f)_1 \ne 0$ by
corollary \ref{prop:UV}. (These are essentially the two cases in example
\ref{ex:UV}.)

% Note that $\im A^\T = \langle v_1 \rangle$ and $\im A^\T = \langle u_1
% \rangle^\perp$. If $u_1 \nparallel u_2$ and $v_1 \parallel v_2$, then
% $\isect \im = \langle v_1 \rangle$ and $\sum \ker = ( \langle u_1 \rangle
% \isect \langle u_2 \rangle )^\perp = \K^3$. If $u_1 \parallel u_2$ and $v_1
% \nparallel v_2$, then $\sum \im = \langle v_1, v_2 \rangle$ and $\isect \ker
% = \langle u_1 \rangle^\perp$ has codimension 1.

Suppose $r=4$ and that $M_f^{\nil}$ only contains matrices of rank $\le 2$. We
will break down the proof of this case into four subcases.

\smallskip \textit{Case 1.} Assume $M_f^{\nil}$ contains two matrices $A
\nparallel B$ of rank 1, i.e. $A = u_1 v_1^\T$ and $B = u_2 v_2^\T$. Then $u_i
v_j^\T = 0$ for all $i,j = 1,2$ as above. Again, both $u_1 \parallel u_2$ and
$v_1 \parallel v_2$ lead to contradictions by corollary \ref{prop:UV}. Thus
we may up to a base change assume $u_1 = [1000]^\T$ and $u_2 = [0100]^\T$.
Hence $v_i = [00{*}{*}]^\T$, and after another change of basis, $v_1 =
[0010]^\T$ and $v_2 = [0001]^\T$. In other words,
\begin{displaymath}
  A = \left(
    \begin{smallmatrix}
      0 & 0 & 1 & 0 \\
      0 & 0 & 0 & 0 \\
      0 & 0 & 0 & 0 \\
      0 & 0 & 0 & 0
    \end{smallmatrix}
  \right) \qquad \text{and} \qquad
  B = \left(
    \begin{smallmatrix}
      0 & 0 & 0 & 0 \\
      0 & 0 & 0 & 1 \\
      0 & 0 & 0 & 0 \\
      0 & 0 & 0 & 0
    \end{smallmatrix}
  \right).
\end{displaymath}
Since $I_2 (\p\: A\p\: B\p) \subseteq \ann f$, this already implies that there
exist $c_1, c_2 \in \K$ and $g \in \K [x_1, x_2]^{DP}$ such that $f = c_1 x_3
x_1^\pd {d-1} + c_2 x_4 x_2^\pd {d-1} + g$. Note that $c_1, c_2 \ne 0$ since
$\ann (f)_1 = 0$, and we may assume $c_1 = c_2 = 1$.

Suppose that $M_f^{\nil}$ contains a matrix $C$ in addition to $A$ and $B$.
Then $CA = AC = CB = BC = 0$ because their ranks are $<1$. This implies that
\begin{displaymath}
  C = \bigl(
  \begin{smallmatrix}
    0 & \star \\
    0 & 0
  \end{smallmatrix}
  \bigr)
  \text{ as a $2 \times 2$ block matrix using $2 \times 2$ blocks,}
\end{displaymath}
and modulo $A$ and $B$ we may assume that
\begin{math}
  \star = \bigl(
  \begin{smallmatrix}
    0 & a \\
    b & 0
  \end{smallmatrix}
  \bigr).
\end{math}
It follows that
\begin{displaymath}
  I_2 (\p\: C\p) = (b \p_1 \p_3 - a \p_2 \p_4) \subseteq \ann_R f.
\end{displaymath}
Hence $0 = (b \p_1 \p_3 - a \p_2 \p_4) (f) = b x_1^\pd{d-2} - a x_2^\pd{d-2}$.
This implies $a=b=0$ since $d \ge 3$. Thus we have proven that $M_f = \langle
I,A,B \rangle$. Let
\begin{displaymath}
  f_t = \tfrac 1t \Bigl( (x_1 + t x_3)^\pd d - x_1^\pd d + (x_2 + t x_4)^\pd d
  - x_2^\pd d \Bigr) + g.
\end{displaymath}
Then $f_0 = f$, and $f_t \sim x_3^\pd d + x_4^\pd d - \bigl( x_1^\pd d +
x_2^\pd d - tg \bigr)$ obviously splits twice.

\smallskip \textit{Case 2.} Suppose $M_f$ does not contain any matrix of rank
1. If $A,B \in M_f^{\nil}$, then both have rank 2 and $A^2 = B^2 = AB = BA =
0$. We may assume that
\begin{math}
  A = \bigl(
  \begin{smallmatrix}
    0 & I \\
    0 & 0
  \end{smallmatrix}
  \bigr),
\end{math}
which implies that
\begin{math}
  B = \bigl(
  \begin{smallmatrix}
    0 & B' \\
    0 & 0
  \end{smallmatrix}
  \bigr).
\end{math}
But then $B - \lambda A$ has rank 1 when $\lambda$ is an eigenvalue for $B'$,
a contradiction. Therefore, for the rest of the proof we may assume that $M_f$
contains exactly one matrix of rank 1.

\smallskip \textit{Case 3.} Assume $M_f$ does not contain any $A$ of rank 2
satisfying $A^2=0$. Then $M_f$ must contain an $A$ such that $\rank A = 2$ and
$A^2 \ne 0$. Note that $\rank A^2 = 1$. Because $M_f \ne \K [A]$, there exists
$B \in M_f$, $B \notin \K [A]$. Then $\rank B = 2$ since $M_f$ cannot contain
several matrices of rank 1. Thus $B^2 \ne 0$, and therefore $B^2 = b A^2$, $b
\ne 0$. Also $\rank AB \le 1$, hence $AB = BA = a A^2$. Let $t$ be a root of
$t^2 + 2at + b$. Since $\rank (tA+B) \le 1$ implies $B \in \K [A]$, we get
$\rank (tA+B) = 2$. But $(tA+B)^2 = (t^2 + 2at + b) A^2 = 0$, contradicting
our assumption.

\smallskip \textit{Case 4.} Hence $M_f$ contains a matrix $A$ of rank 2
satisfying $A^2=0$ and a matrix $B$ of rank 1. We may assume that
\begin{math}
  A = \bigl(
  \begin{smallmatrix}
    0 & I \\ 0 & 0
  \end{smallmatrix}
  \bigr).
\end{math}
From $AB=BA$ it follows that
\begin{math}
  B = \bigl(
  \begin{smallmatrix}
    B_1 & B_2 \\ 0 & B_1
  \end{smallmatrix}
  \bigr),
\end{math}
and $B_1 = 0$ since $\rank B = 1$. Modulo a similarity transformation $B
\mapsto PBP^{-1}$ with
\begin{math}
  P = \bigl(
  \begin{smallmatrix}
    Q & 0 \\ 0 & Q
  \end{smallmatrix}
  \bigr)
\end{math}
we may assume that
\begin{displaymath}
  B_2 = \Bigl(
  \begin{smallmatrix}
    \lambda_1 & 0 \\ 0 & \lambda_2
  \end{smallmatrix}
  \Bigr) \qquad \text{or} \qquad
  B_2 = \Bigl(
  \begin{smallmatrix}
    \lambda & 1 \\ 0 & \lambda
  \end{smallmatrix}
  \Bigr),
\end{displaymath}
and modulo $A$ this becomes
\begin{math}
  B_2 \in \left\{ \bigl(
    \begin{smallmatrix}
      1 & 0 \\ 0 & 0
    \end{smallmatrix}
    \bigr), \bigl(
    \begin{smallmatrix}
      0 & 1 \\ 0 & 0
    \end{smallmatrix}
    \bigr) \right\}.
\end{math}
Since $B$ is the only matrix in $M_f$ of rank 1 (up to a scalar), the first
must be disregarded. (It reduces to case 1 above.) Hence
\begin{math}
  B_2 = \bigl(
  \begin{smallmatrix}
    0 & 1 \\ 0 & 0
  \end{smallmatrix}
  \bigr).
\end{math}
It follows that 
\begin{displaymath}
  f = x_4 x_1^\pd {d-1} + x_3 x_2 x_1^\pd {d-2} + g \quad \text{ where } \quad
  g \in \K [x_1, x_2]^{DP},
\end{displaymath}
up to a base change. Define $f_t \in \cR_d [t]$ by
\begin{displaymath}
  f_t = \tfrac{1}{st} \Bigl( (x_1 + s x_2 + t x_3 + st x_4)^\pd d - (x_1 + s
  x_2)^\pd d - (x_1 + t x_3)^\pd d + x_1^\pd d \Bigr) + g.
\end{displaymath}
%% CHANGED 2013-07-13: $f_t \iso $ -> $f_t$
Then $f_0 = f$, and $f_t$ splits twice. If $M_f = \langle I, A, B
\rangle$, then we are done.

% Above, it is not necessary to use the stronger similarity transformation
% with
% \begin{math}
%   P = \bigl(
%   \begin{smallmatrix}
%     Q & 0 \\ 0 & Q'
%   \end{smallmatrix}
%   \bigr).
% \end{math}
% It only implies that
% \begin{displaymath}
%   B_2 = \Bigl(
%   \begin{smallmatrix}
%     \lambda_1 & 0 \\ 0 & \lambda_2
%   \end{smallmatrix}
%   \Bigr) \qquad \text{or} \qquad
%   B_2 = \Bigl(
%   \begin{smallmatrix}
%     0 & 1 \\ 0 & 0
%   \end{smallmatrix}
%   \Bigr),
% \end{displaymath}
% with the same result modulo $A$.

Thus assume that $M_f^{\nil}$ contains a matrix $C \notin \langle A,B
\rangle$. Because $CA=AC$ and $CB=BC$, we have
\begin{displaymath}
  C = \left(
    \begin{smallmatrix}
      c_1 & c_2 & c_3 & c_4 \\
      0 & c_1 & c_5 & c_6 \\
      0 & 0 & c_1 & c_2 \\
      0 & 0 & 0 & c_1
    \end{smallmatrix}
  \right)\!.
\end{displaymath}
Clearly, $c_1=0$ since $C$ is nilpotent. If $c_2=0$, then $\rank (C - c_3 A -
c_4 B) \le 1$, thus $C \in \langle A,B \rangle$ since $B$ is the only matrix
in $M_f$ of rank 1. This contradiction allows us to assume that $c_2=1$. It
also implies that $M_f^{\nil}$ cannot contain yet another matrix, since we
then would have to get another one of rank 1. Therefore, $M_f = \langle
I,A,B,C \rangle$. Now, $\rank C < 3$ implies $c_5 = 0$, and modulo $B$ we may
assume $c_4 = 0$. If $\chr \K \ne 2$, we may also assume $c_3 = c_6 = 0$. This
follows from the similarity transformation $C \mapsto PCP^{-1}$ where
\begin{math}
  P = \bigl(
  \begin{smallmatrix}
    I & Q \\ 0 & I
  \end{smallmatrix}
  \bigr)
\end{math}
with
\begin{math}
  Q = \bigl(
  \begin{smallmatrix}
    0 & 0 \\ q & 0
  \end{smallmatrix}
  \bigr)
\end{math}
and $q = \frac12 (c_3 - c_6)$. % And modulo $A$.
It follows that
\begin{displaymath}
  f = x_4 x_1^\pd {d-1} + x_3 x_2 x_1^\pd {d-2} + c x_1^\pd d
\end{displaymath}
up to a base change. (We may even assume $c=0$ if $\chr \K \nmid d$.) Let
\begin{displaymath}
  f_t = \tfrac{1}{st} \Bigl( (x_1 + s x_2 + t x_3 + st x_4)^\pd d - (x_1 + s
  x_2)^\pd d - (x_1 + t x_3)^\pd d + x_1^\pd d \Bigr) + c x_1^\pd d.
\end{displaymath}
Then $f_0 = f$, and $f_t \sim x_1^\pd d + x_2^\pd d + x_3^\pd d + x_4^\pd d$
splits regularly three times.

If $\chr \K = 2$, then the case $(c_3, c_6) = (0,1)$ is not in the $\GL_\K
(4)$ orbit of $(c_3, c_6) = (0,0)$. A base change shows that this additional
case is isomorphic to $M_f = \langle I, A, B, A^2 \rangle$ where $A^2 = B^2$
and
\begin{displaymath}
  A = \left(
    \begin{smallmatrix}
      0 & 1 & 0 & 0 \\
      0 & 0 & 0 & 1 \\
      0 & 0 & 0 & 0 \\
      0 & 0 & 0 & 0
    \end{smallmatrix}
  \right) \qquad
  B = \left(
    \begin{smallmatrix}
      0 & 0 & 1 & 0 \\
      0 & 0 & 0 & 0 \\
      0 & 0 & 0 & 1 \\
      0 & 0 & 0 & 0
    \end{smallmatrix}
  \right).
\end{displaymath}
This implies that $f = x_4 x_1^\pd{d-1} + x_3^\pd 2 x_1^\pd{d-2} + x_2^\pd 2
x_1^\pd{d-2} + c x_1^\pd d$. Let
\begin{align*}
  f_t & = t^{-3} \Bigl( t (x_1 + t x_2 + t^2 x_4)^\pd d + t (x_1 + t x_3)^\pd
  d \\
  &\qquad - (x_1 + t^2 x_2 + t^2 x_3)^\pd d + (1-2t+ct^3) x_1^\pd d \Bigr).
\end{align*}
Again, $f_0 = f$, and $f_t \sim x_1^\pd d + x_2^\pd d + x_3^\pd d + x_4^\pd d$
splits regularly three times. Hence in each case we have found an $f_t \in
\cR_d [t_1, \dots, t_n]$ such that $f_0 = f$ and $f_t$ splits regularly
$\dim_\K M_f - 1$ times over $\K (t_1, \dots, t_n)$, and we are done. \qed
%\end{proof}

% The last $f_t$ is very asymmetric, but I have not found a symmetric
% splitting that works.

\begin{rem}
  Note that the last case of the proof says the following. Suppose $M_f$
  contains two matrices of rank 2 that are non-proportional. If $\chr \K \ne
  2$, then $M_f$ contains exactly two of rank 2 such that $A^2=0$. If $\chr \K
  = 2$, then there are two possibilities. Either every matrix in $M_f$ of rank
  2 satisfies $A^2=0$, or only one matrix is of this type, and the rest
  satisfy $A^2 \ne 0$.
\end{rem}

We will end this section with a generalization of theorem \ref{thm:onematrix}.

\begin{thm} \label{thm:morematrices}
  Suppose $d \ge 3$ and $f \in \cR_d$. Let $A_1, \dots, A_m \in \Mat_\K (r,r)$
  be nonzero and nilpotent, and assume there exist orthogonal idempotents
  $E_1, \dots, E_m$ such that $E_i A_i = A_i E_i = A_i$ for all $i$. Let $n_i
  = \indx A_i$ and $1 \le a_i < n_i$. Assume that $A_i^k \in M_f$ for all $k
  \ge a_i$ and all $i = 1, \dots, m$. Let $n = \sum_{i=1}^m (n_i - a_i)$. Then
  $f$ is a specialization of some $f_t \in \cR_d [t_1, \dots, t_n]$ that
  splits regularly $n$ times over $\K (t_1, \dots, t_n)$.
\end{thm}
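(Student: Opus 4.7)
The plan is to generalize theorem \ref{thm:onematrix} by iteration, using the orthogonality of $E_1,\dots,E_m$ to decouple the algebras attached to distinct indices. First observe that $E_iA_i=A_iE_i=A_i$ together with $E_iE_j=0$ for $i\ne j$ force $A_iA_j=0$ whenever $i\ne j$; hence the available matrices $\{A_i^k:k\ge a_i\}$ live in mutually annihilating blocks, and the constructions associated to different indices can be performed independently and then combined at the end.

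I would proceed by induction on $n=\sum_i(n_i-a_i)$. In the base case $n=m$, one has $a_i=n_i-1$ for every $i$, so each block contributes one matrix $A_i^{n_i-1}$, which is nilpotent of index $2$ because $(A_i^{n_i-1})^2=A_i^{2(n_i-1)}=0$. Applying theorem \ref{thm:onematrix} to each $A_i^{n_i-1}$ simultaneously in the $m$ orthogonal blocks, with $m=n$ independent parameters $t_1,\dots,t_m$, yields $n$ splittings. For the inductive step I pick some $i_0$ with $n_{i_0}-a_{i_0}\ge 2$ and invoke the construction of theorem \ref{thm:onematrix} on $B=A_{i_0}^{a_{i_0}}\in M_f$: this produces $f_t$ with $f_0=f$, a single splitting, and a residual polynomial $f'$ sitting in the block of a new idempotent $E'\in M_{f_t}$ over $\K(t)$. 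The inductive hypothesis, applied to $f'$ inside this block with the data obtained by replacing $a_{i_0}$ by $a_{i_0}+1$ and leaving $(a_j,n_j)$ unchanged for $j\ne i_0$, then furnishes the remaining $n-1$ splittings; together with the initial splitting this accounts for all $n$.

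For the induction to close I must verify that, after the deformation step, the higher powers $A_{i_0}^{a_{i_0}+1},\dots,A_{i_0}^{n_{i_0}-1}$ (appropriately conjugated by the change-of-variables $\phi_P$ used in theorem \ref{thm:onematrix}) remain in $M_{f'}^{E'}$ with nilpotent indices shifted so as to match the reduced data, while the matrices $A_j^k$ for $j\ne i_0$ and $k\ge a_j$ persist in $M_{f'}^{E'}$ with their idempotents $E_j$ unaltered. This bookkeeping is the main obstacle: one must trace the action of $B_t=B+tF$ (for an idempotent $F$ with $\ker F=\ker B^{\indx B-1}$) on the higher powers under conjugation by $P$, and check that every identity needed --- the one producing the $h$ with $\partial h=B\,\partial f$ and the one producing the $g$ with $\partial g=B^{\indx B-1}\partial f$ --- involves only powers $A_{i_0}^k$ with $k\ge a_{i_0}$, never the missing lower ones. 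The orthogonality $A_{i_0}A_j=0$ for $j\ne i_0$ is what makes the other-block matrices survive the conjugation unchanged. Once this verification is carried through, the induction produces $f_{\underline{t}}\in\cR_d[t_1,\dots,t_n]$ that specializes to $f$ at $\underline{t}=0$ and splits regularly $n$ times over $\K(t_1,\dots,t_n)$, as claimed.
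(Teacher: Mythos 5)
Your overall architecture --- treat one block at a time, use $A_iA_j=0$ (which indeed follows from $E_iE_j=0$ and $E_iA_i=A_iE_i=A_i$) to keep the other blocks intact, and induct on $n=\sum_i(n_i-a_i)$ --- is the same as the paper's, and your base case and the decoupling observation are fine. The inductive step, however, fails: you propose to run the construction of theorem \ref{thm:onematrix} on the \emph{lowest} available power $B=A_{i_0}^{a_{i_0}}$ and then continue with the data $a_{i_0}\mapsto a_{i_0}+1$. The trouble is that the complementary idempotent of the component you split off annihilates all the remaining powers. Concretely, take $m=1$ and $(a_1,n_1)=(2,4)$, so $A^4=0$, $A^3\ne 0$, $A^2,A^3\in M_f$, and two splittings are required. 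Here $B=A^2$ has index $2$, so the construction of theorem \ref{thm:onematrix} takes an idempotent $F$ with $\ker F=\ker B=\ker A^2$, sets $B_t=B+tF$, and the residual component $f'$ corresponds to $E'=I-t^{-1}B_t=I-t^{-1}A^2-F$. Since $\im(I-F)=\ker A^2\subseteq\ker A^k$ for $k\ge 2$, we get $A^kF=A^k$ and hence $A^kE'=-t^{-1}A^{k+2}=0$ for $k=2,3$. So none of the available powers survives into $M_{f'}^{E'}$, the induction cannot produce the second splitting, and no bookkeeping with the conjugation $\phi_P$ repairs this: the same computation gives $A^kE'=0$ whenever $k+a_{i_0}\ge n_{i_0}$, which covers every admissible $k$ as soon as $2a_{i_0}\ge n_{i_0}$.

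The paper avoids this by consuming the \emph{top} power first. It deforms $A_i$ itself (even though $A_i$ need not lie in $M_f$) via $A_{it}=A_i+tQA_i^{n_i-1}$ with $Q\in E_i\Mat_\K(r,r)E_i$ and $A_i^{n_i-1}QA_i^{n_i-1}=A_i^{n_i-1}$, proves $A_{it}^k\p f_t=A_i^k\p f+\sum_{j\ge 1}t^j\p g_{n_i-1-k+j}$, so that $A_{it}^k\in M_{f_t}$ precisely for $k\ge a_i$ (only gradients of genuine polynomials ever appear, because every power used is at least $a_i$), and splits off the component of the small-rank idempotent $t^{-(n_i-1)}A_{it}^{n_i-1}$. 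In the residual block $E'=I-t^{-(n_i-1)}A_{it}^{n_i-1}$ one has $(A_{it}E')^{n_i-1}=0$ while $(A_{it}E')^{n_i-2}\ne 0$, so the data becomes $(a_i,n_i-1)$ --- the nilpotency index drops, not the lower bound --- and the count $n_i-a_i$ comes out right. To repair your argument, replace your choice of $B$ and the reduction $a_{i_0}\mapsto a_{i_0}+1$ by this deformation and the reduction $n_{i_0}\mapsto n_{i_0}-1$.
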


\begin{proof}
  The proof uses the same ideas as the proof of theorem \ref{thm:onematrix},
  with some modifications. Fix one $i \in \{ 1, \dots, m \}$, and choose $Q
  \in E_i \Mat_\K (r,r) E_i$ such that $A_i^{n_i-1} Q A_i^{n_i-1} =
  A_i^{n_i-1}$. Define matrices $P = I + \sum_{k=1}^{n_i-1} t^k A_i^{n_i-1-k}
  Q$ and $A_{it} = A_i + t Q A_i^{n_i-1}$. It follows that $A_{it}^{n_i-1} = P
  A_i^{n_i-1}$ and $A_{it}^{n_i} = t A_{it}^{n_i-1}$. Because $A_i^{n_i-1} \in
  M_f$, there exists $g \in \cR_d$ such that $\p g = A_i^{n_i-1} \p f$. Define
  \begin{displaymath}
    g_t = \phi_P (g) = \sum_{k \ge 0} t^k g_k \quad \text{ and } \quad
    f_t = f + \sum_{k \ge 1} t^k g_{n_i-1+k}.
  \end{displaymath}

  For all $i \ne j$, it follows from $E_i E_j = 0$ that $A_i E_j = E_j A_i =
  A_i A_j = 0$. Thus $A_j A_{it} = 0$. Since $\p g_t = P \phi_P (\p g) =
  A_{it}^{n_i-1} \phi_P (\p f)$, it follows that $A_j \p g_t = 0$, and
  therefore, $A_j \p g_k = 0$ for all $k \ge 0$. Hence $A_j^k \in M_{f_t}$ for
  all $j \ne i$ and $k \ge a_j$.

% Let $M_i = E_i M = \langle E_i, A_i^{a_i}, \dots, A_i^{n_i-1} \rangle$, a
% $\K$-algebra with identity element $E_i$. If $m>1$, then $E_i \notin M_f$
% and $M \subsetneqq M_1 \dsum \dots \dsum M_m$. If $E_i$ had been an element
% of $M_f$, we would get a regular splitting of $f$. But we still get a
% degenerate splitting involving all $A_i^k$.

  We will now prove that $A_{it}^k \p f_t = A_i^k \p f + \sum_{j \ge 1} t^j \p
  g_{n_i-1-k+j}$ for all $k \ge 0$. Assume it is true for some $k \ge 0$. The
  arguments following equation \eqref{eq:pgt} in the proof of theorem
  \ref{thm:onematrix} apply here and show that $A_i^{n_i-1} \p g_{n_i-j} = 0$
  for all $j>1$, $A_i^{n_i-1} \p f = A_i^{n_i-1} \p g_{n_i-1}$ and $A_i \p
  g_{n_i-1+j} + Q A_i^{n_i-1} \p g_{n_i-2+j} = \p g_{n_i-2+j}$ for all $j$. It
  follows that
  \begin{align*}
    A_{it}^{k+1} \p f_t & = \Bigl( A_i + t Q A_i^{n_i-1} \Bigr) \biggl(
    A_i^k \p f + \sum_{j \ge 1} t^j \p g_{n_i-1-k+j} \biggr) \\
%   & = A_i^{k+1} \p f + t Q A_i^{n_i-1+k} \p f + \sum_{j \ge 1} t^j A_i \p
%   g_{n_i-1-k+j} + \sum_{j \ge 1} t^{j+1} Q A_i^{n_i-1} \p g_{n_i-1-k+j} \\
    & = A_i^{k+1} \p f + \sum_{j \ge 1} t^j \Bigl( A_i \p g_{n_i-1-k+j} + Q
    A_i^{n_i-1} \p g_{n_i-2-k+j} \Bigr) \\
    & = A_i^{k+1} \p f + \sum_{j \ge 1} t^j \p g_{n_i-2-k+j}.
  \end{align*}
  Since $A_i^k \in M_f$ for all $k \ge a_i$ it follows that $A_{it}^k \in
  M_{f_t}$ for all $k \ge a_i$. In particular, $E' = I - (t^{-1}
  A_{it})^{n_i-1} \in M_{f_t}$.

  % It is a bit hidden, but all three equations are needed.

  Since $E'$ is idempotent, we may apply theorem \ref{thm:regsplitE}. It tells
  us that $f_t$ has a regular splitting with the following two additive
  components, $t^{-n_i+1} g_t$ and
  \begin{displaymath}
    f' = t^{-n_i+1} \bigl( t^{n_i-1} f - g_0 - t g_1 - \dots - t^{n_i-1}
    g_{n_i-1} \bigr),
  \end{displaymath}
  and furthermore that $(A_{it} E')^k = A_{it}^k E' \in M_{f'}^{E'}$ for all
  $k \ge a_i$. Hence we may repeat our procedure on $f'$. By induction on
  $n_i$ and $i$, we arrive at some $f_{\underline{t}} \in \cR_d [t_1, \dots,
  t_n]$ such that $f_0 = f$ and $f_{\underline{t}}$ splits regularly $n$
  times.
\end{proof}

% Can we assume $(c_3, c_6) = (0,1)$? We may at least assume $(c_3, c_6) =
% (0,a)$. This implies that $f = x_4 x_1^\pd{d-1} + x_3 x_2 x_1^\pd{d-2} + a
% x_2^\pd 2 x_1^\pd{d-2} + c x_1^\pd d$. Since $\bar\K = \K$, we may assume
% $a=1$.

\begin{rem}
  We assume in theorem \ref{thm:morematrices} that $A_i^k \in M_f$ for all $k
  \ge a_i$. It is in fact enough to assume $A_i^{a_i} B_i, A_i^{a_i+1} C_i \in
  M_f$ for some invertible $B_i, C_i \in \K [A_i]$. Indeed, apply proposition
  \ref{prop:Mexpo} with $A = A_i B_i^{-1} C_i$, $B = I$ and $C = A_i^{a_i}
  B_i$. It follows that $A^k C = A_i^{a_i+k} B_i^{1-k} C_i^k \in M_f$ for all
  $k \ge 0$. In particular, with $k = n_i-a_i-1$, we get $A_i^{n_i-1} P \in
  M_f$ where $P \in \K [A_i]$ is invertible. This implies $A_i^{n_i-1} \in
  M_f$ since $A_i^{n_i} = 0$. Now letting $k = n_i-a_i-2$ implies $A_i^{n_i-2}
  \in M_f$. By descending induction on $k$ we get $A_i^k \in M_f$ for all $k
  \ge a_i$.
\end{rem}

\section{Counter examples} \label{section:negative}

In this section we will produce examples of $f \in \cR_d$ in which we cannot
find an $f_t \in \cR_d [t_1, \dots, t_n]$ such that $f_0 = f$ and $f_t$ splits
regularly $\dim_\K M_f - 1$ times over $\K (t_1, \dots, t_n)$. Thus question
\ref{que:Q} has a negative answer for these $f$. There exist many such
examples due to purely numerical reasons, and the following theorem enables us
to find some.

% The following proposition is also true for $d=2$. (Different proof.)

\begin{thm} \label{thm:nilpotrank}
  Let $d \ge 3$, $s \le r$ and $\cS = \K [x_1, \dots, x_s]^{DP} \subseteq
  \cR$. Suppose $h \in \cS_d$ does not split regularly. Let $f = h +
  x_{s+1}^\pd d + \dots + x_r^\pd d \in \cR_d$. Assume that there exists an
  $f_t \in \cR_d [t_1, \dots, t_n]$ such that $f_0 = f$ and $f_t$ splits
  regularly $m-1$ times over $\K (t_1, \dots, t_n)$. Suppose $m > r-s+1$. Then
  $M_h$ must contain a non-zero nilpotent matrix of rank $\le s/(m-r+s)$.
\end{thm}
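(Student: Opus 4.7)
My approach is to exploit the rigid structure of $M_f$ dictated by the hypothesis that $h$ does not split and then analyze the poles of the idempotents of $M_{f_t}$ as $t \to 0$. Since $h$ has no non-trivial regular splitting, theorem~\ref{thm:regsplit} (via theorem~\ref{thm:regsplitE}) tells me that $M_h$ has no non-trivial idempotent, so the unique maximal regular splitting of $f = h + x_{s+1}^{\pd d} + \dots + x_r^{\pd d}$ has exactly $r-s+1$ components. Consequently $M_f$ decomposes as $E_h M_f E_h \dsum \bigoplus_{j>s} \langle E_j \rangle$ with $E_h M_f E_h \cong M_h$ embedded as the upper-left $s\times s$ block; any nonzero nilpotent of $M_f$ lives entirely in that block. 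So it suffices to produce a nilpotent element in the $M_h$-block of rank at most $s/k$, where $k = m-r+s$.

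Next I reduce to the one-parameter case by restricting $f_t$ to a general line through the origin in $\Spec\K[t_1,\dots,t_n]$, so I may assume $n=1$. The $m$ orthogonal idempotents $\tilde E_1,\dots,\tilde E_m \in M_{f_t} \subseteq \Mat_{\K(t)}(r,r)$ from the regular splitting of $f_t$ can each be written $\tilde E_i = t^{-a_i}\bar E_i$ with $\bar E_i \in \Mat_{\K[t]}(r,r)$ and $\bar E_i(0)\ne 0$. The relation $\tilde E_i^2 = \tilde E_i$ becomes $\bar E_i^2 = t^{a_i}\bar E_i$, so specializing at $t=0$ gives either a non-zero idempotent $\bar E_i(0) \in M_f$ (when $a_i=0$) or a non-zero nilpotent of index $\le 2$ (when $a_i>0$). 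In the second case $\bar E_i(0)$ is forced into the $M_h$-block: $\bar E_i(0) = \bigl(\begin{smallmatrix}A_i & 0 \\ 0 & 0\end{smallmatrix}\bigr)$ with $A_i\in M_h$ nonzero and $A_i^2=0$. Put $I_0 = \{i: a_i=0\}$ and $I_+ = \{i: a_i>0\}$.

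Combining the orthogonality relations $\tilde E_i\tilde E_j=0$ with the specialization above, all pairwise products $\bar E_i(0)\bar E_j(0)$ vanish; so the $\tilde E_j(0)$ for $j\in I_0$ form a set of orthogonal idempotents in $M_f$, and proposition~\ref{prop:idempot}(b) says each is a non-empty subsum of the atomic coid $\{E_h,E_{s+1},\dots,E_r\}$. I claim none of them contains $E_h$: if some $\tilde E_j(0) = E_h + \sum_{k\in J_j}E_k$, then for any $i\in I_+$ the relation $\bar E_i(0)\tilde E_j(0)=0$ would force $\bar E_i(0)E_h = 0$ (the $E_k$ terms already annihilate $\bar E_i(0)$ on the right since $\bar E_i(0)$ has zero columns beyond the $s$-th), and together with $\bar E_i(0)$ being supported in the top block this would give $\bar E_i(0)=0$, a contradiction. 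Hence each $\tilde E_j(0)$ (for $j\in I_0$) is a disjoint non-empty subsum of $\{E_{s+1},\dots,E_r\}$, so $|I_0|\le r-s$ and $|I_+| \ge m-(r-s) = k$.

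The hard part is upgrading this to the rank bound $\rk A_i \le s/k$ for some $i$. The plan is: multiplying $\sum_i \tilde E_i = I$ by $t^\alpha$ (with $\alpha = \max a_i$) and specializing at $t=0$ forces a non-trivial linear relation among the top-pole $\bar E_i(0)$, and more generally, in the case $\alpha=1$ one gets $\sum_{i\in I_+}A_i = 0$ in $M_h$. Since $A_iA_j=0$ for all $i,j\in I_+$, every $\im A_i$ lies in $U := \bigcap_i \ker A_i$, and each $A_i$ factors as a linear map $\K^s/U \to U$. The $A_i$ therefore span an at-least-$(k-1)$-dimensional subspace $V \subseteq \Hom(\K^s/U, U)$, and I would use a determinantal/dimension-counting argument in this Hom-space (together with $\dim U + \dim \K^s/U = s$) to exhibit a non-zero element of $V$ of rank $\le s/k$, yielding the desired nilpotent. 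The main obstacle is the careful bookkeeping when $\alpha>1$ (several distinct pole orders), where one must combine the relation at each pole order and keep track of how the intermediate $\bar E_i(0)$ in lower $I_{a}$ can absorb or contribute rank; I expect this to be handled by a filtration on the $\tilde E_i$ according to $a_i$ and an inductive descent on $\alpha$.
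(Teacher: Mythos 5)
Your setup follows the paper's proof closely and is correct as far as it goes: the decomposition $M_f = M_h\text{-block} \dsum \langle E_{s+1}\rangle \dsum \dots \dsum \langle E_r\rangle$, the normalization $\tilde E_i = t^{-a_i}\bar E_i$ so that each $\bar E_i(0)$ is a nonzero element of $M_f$ that is either idempotent or square-zero, the fact that the square-zero ones land in the $M_h$-block, and the count $|I_+| \ge m-(r-s)$ all match the paper (which handles general $n$ directly via a monomial order and leading coefficients rather than restricting to a general line, but that difference is cosmetic).

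The genuine gap is in your final step, and it is not just bookkeeping. The relations you propose to exploit --- $\sum_{i\in I_+} A_i = 0$, $A_i^2 = 0$, $A_iA_j = 0$ --- do \emph{not} imply that some $A_i$ has rank $\le s/k$. Writing $U = \bigcap_i \ker A_i$ and viewing each $A_i$ as an element of $\Hom(\K^s/U, U)$, take $\dim U = \dim (\K^s/U) = s/2$ and $A_1 = A_2 = \Phi$, $A_3 = -2\Phi$ for an isomorphism $\Phi$: all three are square-zero with pairwise zero products and sum zero, yet each has rank $s/2 > s/3$. What rules this out in the actual situation, and what your plan omits, is the numerical identity $\sum_{i=1}^m \rank \tilde E_i = r$ (the $\tilde E_i$ are a \emph{complete} set of orthogonal idempotents, so $\K(t_1,\dots,t_n)^r = \dsum_i \im \tilde E_i$), together with $\rank \bar E_i(0) \le \rank \tilde E_i$. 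That is the paper's entire final step: the idempotent limits contribute total rank $k \le r-s$, so if every nilpotent limit had rank $> s/(m-r+s)$ one would get $r = \sum \rank \tilde E_i \ge \sum\rank\bar E_i(0) > k + (m-k)\tfrac{s}{m-r+s} \ge r$, a contradiction. With this identity in hand there is no need for the relation $\sum_{i\in I_+}A_i=0$ at all, no case distinction on the maximal pole order $\alpha$, and no inductive descent --- precisely the parts of your argument you flag as unresolved. As written, your proof cannot be completed along the route you describe without importing this rank identity, at which point the Hom-space dimension count becomes superfluous.
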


% Alternatively:
% Given $d \ge 3$ and $f \in \cR_d$ that satisfies $\ann_R (f)_1 = 0$. Let $f
% = h_1 + \dots + h_k$ be a maximal regular splitting of $f$. Then $\p h_i =
% E_i \p f$ for some orthogonal idempotents $E_i$. Let $s = \max \{ \rank E_i
% \}$. Assume that there exists an $f_t \in \cR_d [t_1, \dots, t_n]$ such that
% $f_0 = f$ and $f_t$ splits regularly $m-1 \ge k-1$ times over $\K (t_1,
% \dots, t_n)$. If $k<m$, that is, if $f$ does not itself split regularly
% $m-1$ times, then $M_f$ must contain a non-zero nilpotent matrix of rank
% $\le \mu = \max \{ (r - \sum_{j \in J} \rank E_j)/(m-|J|) \suchthat J
% \subsetneq \{ 1, \dots, k \} \}$. Note that $\mu \ge s/(m-k+1)$, which is
% max when $|J| = k-1$. Depending on $\{ \rank E_i \}$, $\mu$ might be larger.
% If $\rank E_i = 1$ for all but one, then $\mu = s/(m-k+1)$.

\begin{proof}
  Clearly, $m \le r$. Note that if $\ann_R (f)_1 \ne 0$, then $\ann_S (h)_1
  \ne 0$. In this case $M_h$ will contain nilpotent matrices of rank 1, and we
  are done. Therefore, we may assume $\ann_R (f)_1 = 0$. This implies $\ann_{R
    (t_1, \dots, t_n)} (f_t)_1 = 0$. It also implies that $f \ne 0$ since $s >
  r-m+1 \ge 1$.

  % The most direct and in a sense easiest proof of this is: Assume $0 \ne D
  % \in \ann_{R (t_1, \dots, t_n)} (f_t)_1 = 0$. May assume $D \in R [t_1,
  % \dots, t_n]$. Write $D = \sum_{k \ge 0} D_k t_n^k$ and $f_t = \sum_{k \ge
  % 0} f'_k t_n^k$. May assume $D_0 \ne 0$, otherwise divide $D$ by $t_n$.
  % Then $0 = D (f_t) = \sum ...$, hence $D_0 (f'_0) = 0$. Divide $D_0$ by
  % $t_{n-1}$ if possible, and proceed by induction.

  For each $k = 1, \dots, r-s$, define $E_k \in \Mat_\K (r,r)$ by
  \begin{displaymath}
    (E_k)_{ij} =
    \begin{cases}
      1 & \text{if $i=j=k+s$,} \\
      0 & \text{otherwise.}
    \end{cases}
  \end{displaymath}
  Clearly, $E_k$ is a diagonal idempotent of rank 1. Furthermore, $\p \bigl(
  x_{s+k}^\pd d \bigr) = E_k \p f$, thus $E_k \in M_f$. Let $E_0 = I -
  \sum_{k=1}^{r-s} E_k \in M_f$. It follows by theorem \ref{thm:regsplitE}
  that $M_f = M_0 \dsum M_1 \dsum \dots \dsum M_{r-s}$ where $M_k = M_f E_k =
  \langle E_k \rangle$ for $k = 1, \dots, r-s$, and $M_0 = M_f E_0 \iso M_h$.
  To be precise,
  \begin{math}
    M_0 = \left\{ \bigl(
      \begin{smallmatrix}
        A & 0 \\
        0 & 0
      \end{smallmatrix}
    \bigr) \suchthat A \in M_h \right\}.
  \end{math}

  Choose a multiplicative (monomial) order on $\K' = \K [t_1, \dots, t_n]$
  with $1$ as the smallest element. If $V$ is any $\K$-vector space and $v \in
  V' = V \otimes_\K \K [t_1, \dots, t_n]$, $v \ne 0$, denote by $\lc(v) \in V$
  the leading coefficient of $v$, which to us the coefficient of the
  \emph{smallest} non-zero term of $v$ in the ordering. Note that if $\varphi
  : U \times V \to W$ is a $\K$-bilinear map, it induces a $\K'$-bilinear map
  $\varphi' : U' \times V' \to W'$. Then $\lc \bigl( \varphi' (u,v) \bigr) =
  \varphi \bigl( \lc (u), \lc (v) \bigr)$ as long as $\varphi \bigl( \lc (u),
  \lc (v) \bigr) \ne 0$.

  % Note that $\K'$ is not a field. And $\varphi' (\sum_i a_i u_i, \sum_j b_j
  % v_j) = \sum_{i,j} a_i b_j \varphi (u_i, v_j)$ for all $a_i, b_j \in \K'$.
  
  There exist orthogonal idempotents $A_1, \dots, A_m \in M_{f_t}$ and
  non-zero polynomials $g_1, \dots, g_m \in \cR_d (t_1, \dots, t_n)$ such that
  $\sum_{i=1}^m A_i = I$ and $\p g_i = A_i \p f_t$. Let the common denominator
  of the entries of $A_i$ be $\lambda_i \in \K [t_1, \dots, t_n]$.  We may
  scale $\lambda_i$ such that $\lc (\lambda_i) = 1$. Replace $A_i$ by
  $\lambda_i A_i$. Then $A_i \in \Mat_{\K [t_1, \dots, t_n]} (r,r)$ and $A_i^2
  = \lambda_i A_i$. Moreover, replace $g_i$ by $\lambda_i g_i$ to preserve the
  relation $\p g_i = A_i \p f_t$. This implies that $g_i \in \cR_d [t_1,
  \dots, t_n]$.

  Let $A_{i0} = \lc (A_i) \ne 0$. Note that $\lc (f_t) = f$, and $A_{i0} \p f
  \ne 0$ because $\ann_R (f)_1 = 0$. It follows that
  \begin{displaymath}
    \p \lc (g_i) = \lc (\p g_i) = \lc (A_i \p f_t) = \lc (A_i) \p \lc (f_t) =
    A_{i0} \p f.
  \end{displaymath}
  Hence $A_{i0} \in M_f$. If $A_{i0}^2 \ne 0$, then $A_{i0}^2 = \lc (A_i^2) =
  \lc (\lambda_i A_i) = A_{i0}$. Thus $A_{i0}^2 = 0$ or $A_{i0}^2 = A_{i0}$
  for all $i$. Furthermore, $A_{i0} A_{j0} = 0$ for all $i \ne j$, because
  $A_i A_j = 0$. In addition, $\rank A_{i0} \le \rank A_i$. (If some minor of
  $A_i$ is zero, then the corresponding minor of $A_{i0}$ must also be zero.)

% Note that $\lc (v)$ is \emph{not} computed componentwise. For example, $\lc
% (t + t^3, t^2 + t^4) = (t,0)$.

  Since $h$ does not split regularly, $M_h$ does not contain any non-trivial
  idempotents. Hence $\{ E_i \}$ is the unique maximal coid in $M_f$, and any
  idempotent in $M_f$ is a sum of some of the $E_i$'s. Assume $A_{i0}$ is
  idempotent. We want to prove that $A_{i0} \in \langle E_1, \dots, E_{r-s}
  \rangle$. If it is not, then $A_{i0} E_0 = E_0$. For all $j \ne i$, we have
  $A_{j0} A_{i0} = 0$, and therefore $A_{j0} E_0 = 0$ and $A_{j0} \ne A_{i0}$.
  This implies $A_{j0} \in \dsum_{i=1}^{r-s} M_i = \langle E_1, \dots, E_{r-s}
  \rangle$, and it follows that $A_{j0}^2 \ne 0$. Hence $A_{j0}$ must be an
  idempotent! Therefore $\{ A_{j0} \}_{j=1}^m$ is a set of orthogonal
  idempotents, but $\{ E_j \}_{j=0}^{r-s}$ is maximal, hence $m \le r-s+1$, a
  contradiction.

  Let $J = \{ i \suchthat A_{i0}^2 = A_{i0} \}$ and $k = \sum_{i \in J} \rank
  A_{i0} \ge |J|$. By the last paragraph, $k \le r-s$. Clearly, the number of
  nilpotents among $\{ A_{i0} \}_{i=1}^m$ is
  \begin{displaymath}
    m - |J| \ge m-k \ge m-r+s \ge 2.
  \end{displaymath}
  Now suppose that $M_h$ does not contain any non-zero nilpotent matrix of
  rank $\le s/(m-r+s)$. Then $\rank A_{i0} > s/(m-r+s)$ for all $i \notin J$.
  It follows that
  \begin{align*}
    r & = \sum_{i=1}^m \rank A_i \ge \sum_{i=1}^m \rank A_{i0} > k + (m-k)
    \frac{s}{m-r+s} \\
    & = \frac{ms - (r-m)k}{m-r+s} \ge \frac{ms - (r-m)(r-s)}{m-r+s} = r,
  \end{align*}
  which is the contradiction we sought.
\end{proof}

% Is $\rank A_{i0} = \rank A_i$ when $A_{i0}^2 = A_{i0}$?

% Note that $\sum_{i=1}^m \lambda_i^{-1} A_i = I$ implies that $\sum_{i=1}^m
% (\sprod_{j \ne i} \lambda_j) A_i = (\sprod_{j=1}^m \lambda_j) I$. Taking
% ``$\lc$'', this implies that there exists $J \subseteq \{ 1, \dots, m \}$
% such that $\sum_{j \in J} A_{j0} = I$ or $0$. If $\sum_{j \in J} A_{j0} =
% I$, then $J = \{ 1, \dots, m \}$ and $A_{j0}^2 = A_{j0}$ for all $j$. If
% $\sum_{j \in J} A_{j0} = 0$, then $A_{j0}^2 = 0$ for all $j \in J$.

\begin{rem}
  It is not correct that if $M_{f_t}$ contains $m$ idempotents of rank $\le
  k$, then $M_{f_0}$ must contain $m$ idempotents or nilpotents of rank $\le
  k$. A simple example is $f = x_2 x_1^\pd {d-1}$, $r=2$. Then $M_f = \langle
  I,A \rangle$ where
  \begin{math}
    A_1 = \bigl(
    \begin{smallmatrix}
      0 & 1 \\
      0 & 0
    \end{smallmatrix}
    \bigr).
  \end{math}
  Let $f_t = t^{-1} [ (x_1 + tx_2)^\pd d - x_1^\pd d]$, so that $f_0 = f$.
  Then $M_{f_t} = \langle A_t, B_t \rangle$ where
  \begin{math}
    A_t = \left(
      \begin{smallmatrix}
        -t & 1 \\
        0 & 0
      \end{smallmatrix}
    \right)
  \end{math}
  and
  \begin{math}
    B_t = \left(
      \begin{smallmatrix}
        0 & 1 \\
        0 & t
      \end{smallmatrix}
    \right).
  \end{math}
  Thus both $A_0 = B_0 = A$. We see that $M_{f_t}$ can contain two idempotents
  of rank 1 even though $\dim_\K \{ A \in M_f \suchthat \rank A \le 1 \} = 1$.
\end{rem}

Now that we have theorem \ref{thm:nilpotrank} at our disposal, we are ready to
give the first example in which question \ref{que:Q} has a negative answer.

% in our arsenal

\begin{exmp} \label{ex:r5}
  Suppose $r=5$ and $a,b \ge 2$. Let
  \begin{displaymath}
    f = x_1^\pd {a-1} x_2^\pd {b+1} x_3 + x_1^\pd a x_2^\pd b x_4 + x_1^\pd
    {a+1} x_2^\pd {b-1} x_5.
  \end{displaymath}
  Then $f \in \cR_d$ where $d = a+b+1 \ge 5$. The annihilator ideal is
  \begin{align*}
    \ann_R (f) & = (\p_3, \p_4, \p_5)^2 + (\p_1 \p_4 - \p_2 \p_3, \p_1 \p_5 -
    \p_2 \p_4) \\
    & \phantom{==} + (\p_1^a \p_3, \p_2^b \p_5, \p_1^{a+2}, \p_2^{b+2}) +
    (\p_1^{a+1} \p_2^b, \p_1^a \p_2^{b+1}).
  \end{align*}
  
  It is easy to check that $\ann_R f$ contains the right-hand side. For the
  converse, assume that $D \in \ann_R (f)_e$. Modulo $(\p_3, \p_4, \p_5)^2$
  there exist $D_i \in \K [\p_1, \p_2]$ such that $D = \p_3 D_1 + \p_4 D_2 +
  \p_5 D_3 + D_4$, and modulo $(\p_1 \p_4 - \p_2 \p_3, \p_1 \p_5 - \p_2 \p_4)$
  we may assume that $D_2 = 0$ and $D_3 = c_1 \p_1 \p_2^{e-2} + c_2
  \p_2^{e-1}$. Computing $Df$, we see that $Df = 0$ is equivalent to $D_1
  (x_1^\pd {a-1} x_2^\pd {b+1}) + D_3 (x_1^\pd {a+1} x_2^\pd {b-1}) = D_4 (f)
  = 0$. This implies that $D_1 \in (\p_1^a, \p_2^{b+2})$, $D_3 \in (\p_2^b)$
  and $D_4 \in (\p_1^{a+2}, \p_2^{b+2}, \p_1^{a+1} \p_2^b, \p_1^a
  \p_2^{b+1})$, and proves that $D$ is contained in the right-hand side.
  
  Since $a,b \ge 2$, we see that $\ann_R f$ has two generators of degree $d$.
  Thus $\dim_\K M_f = 3$. Let $g_1 = x_1^\pd a x_2^\pd {b+1}$, $g_2 = x_1^\pd
  {a+1} x_2^\pd b$ and
  \begin{displaymath}
    A_1 = \left(
      \begin{smallmatrix}
        0 & 0 & 1 & 0 & 0 \\
        0 & 0 & 0 & 1 & 0 \\
        0 & 0 & 0 & 0 & 0 \\
        0 & 0 & 0 & 0 & 0 \\
        0 & 0 & 0 & 0 & 0
      \end{smallmatrix}
    \right)\!, \qquad
    A_2 = \left(
      \begin{smallmatrix}
        0 & 0 & 0 & 1 & 0 \\
        0 & 0 & 0 & 0 & 1 \\
        0 & 0 & 0 & 0 & 0 \\
        0 & 0 & 0 & 0 & 0 \\
        0 & 0 & 0 & 0 & 0
      \end{smallmatrix}
    \right)\!.
  \end{displaymath}
  A simple calculation shows that $\p g_1 = A_1 \p f$ and $\p g_2 = A_2 \p f$.
  This implies that $A_1, A_2 \in M_f$, and it follows that $M_f = \langle I,
  A_1, A_2 \rangle$. (Note that $g_1 = \p_1 h$ and $g_2 = \p_2 h$ where $h =
  x_1^\pd{a+1} x_2^\pd{b+1}$.)

% Suppose there is an $f_t \in \cR_d [t]$ such that $f_0 = f$ and $f_t$ splits
% twice over $\K (t)$. That is, $f_t = g_{1t} + g_{2t} + g_{3t}$ where $\p
% g_{it} = A_{it} \p f_t \ne 0$ and the $A_{it}$'s are orthogonal idempotents.
% ($\sum_i A_{it} = I$ follows since $\p f_t = \sum_i A_{it} \p f_t$ and
% $\ann_{R(t)} (f_t)_1 = 0$.) Then at least one $\{ A_{it} \}$ must have rank
% equal to 1, say $\rank A_{1t} = 1$. We may multiply $f, g_{1t}, g_{2t}$ and
% $g_{3t}$ by their common denominator and assume $f_t, g_{it} \in \cR_d [t]$
% and $A_{it}^2 = t^{n_i} A_{it}$. Next find $a_i, b_i \in \bN$ and $\lambda_i
% = \lambda_{i0} + t \lambda_{i1} + \dots + t^{b_i} \lambda_{ib_i} \in \K[t]$
% such that $\lambda_{i0} = 1$ and $\lambda_i A_{it} = A_{i0} + t A_{i1} +
% \dots + t^{a_i} A_{ia_i}$ where $A_{ij} \in \Mat_{\K[t]}$ and $A_{i0} \ne
% 0$. Then $\lambda_i \p g_{it} = A_{i0} \p f_0 + t [ A_{it} \p f_0 + A_{i0}
% \p f_1 ] + \dots$, therefore, $\p g_{i0} = A_{i0} \p f_0$. Thus $A_{i0} \in
% M_f$ and $g_{i0} \in G_f$. But $\rank A_{1t} = 1$, hence $\rank A_{10} \le
% 1$. Since $A_{10} \ne 0$, we must have $\rank A_{10} = 1$. But no $A \in
% M_f$ has rank equal to 1. Reductio ad absurdum.
  
  Since $M_f$ does not contain any non-zero nilpotent matrix of rank 1,
  theorem \ref{thm:nilpotrank} implies that there does not exist an $f_t \in
  \cR_d [t_1, \dots, t_n]$ such that $f_0 = f$ and $f_t$ splits regularly
  $\dim_k M_f - 1$ times over $\K (t_1, \dots, t_n)$. Moreover, by adding
  terms $x_i^\pd d$ with $i>5$, we have produced such examples for all $r \ge
  5$ and $d \ge 5$.
\end{exmp}

\begin{exmp} \label{ex:r7r9}
  Let us consider the following two polynomials.
  \begin{enumerate}
    \setlength{\itemsep}{2pt}
    \setlength{\parskip}{2pt}
    \renewcommand{\theenumi}{\alph{enumi}}
    \renewcommand{\labelenumi}{\normalfont(\theenumi)}
% \item $f = x_1^\pd {a-1} x_2^\pd {b+1} x_3 + x_1^\pd a x_2^\pd b x_4 +
%   x_1^\pd {a+1} x_2^\pd {b-1} x_5$, $a,b \ge 2$.
% \item $f = x_4 ( x_1 x_2 x_3 + x_2^\pd 3 ) + x_5 ( x_1^\pd 2 x_3 + x_1
%   x_2^\pd 2 ) + x_6 ( x_1^\pd 2 x_2 + x_3^\pd 3 ) + x_7 ( x_1^\pd 3 + x_2
%   x_3^\pd 2 )$.
  \item $f_1 = x_4 \bigl( x_2 x_3^\pd 2 \bigr) + x_5 \bigl( x_1 x_3^\pd 2 +
    x_2^\pd 2 x_3 \bigr)$

    $\qquad\phantom{0} + x_6 \bigl( x_1 x_2 x_3 + x_2^\pd 3 \bigr) + x_7
    \bigl( x_1^\pd 2 x_3 + x_1 x_2^\pd 2 \bigr) \in \cR_4$, $r=7$.
  \item $f_2 = x_5 \bigl( x_3 x_4 \bigr) + x_6 \bigl( x_2 x_4 + x_3^\pd 2
    \bigr)$

    $\qquad\phantom{0} + x_7 \bigl( x_1 x_4 + x_2 x_3 \bigr) + x_8 \bigl( x_1
    x_3 + x_2^\pd 2 \bigr) + x_9 \bigl( x_1 x_2 \bigr) \in \cR_3$, $r=9$.
  \end{enumerate}
  Tedious but simple computations show that the annihilators are:
  \begin{align*}
    \ann_R (f_1) = \; & (\p_4, \p_5, \p_6, \p_7)^2 + (\p_1 \p_4, \p_2 \p_4 -
    \p_1 \p_5, \p_3 \p_4 - \p_2 \p_5, \p_2 \p_5 - \p_1 \p_6, \\
    & \: \p_3 \p_5 - \p_2 \p_6, \p_2 \p_6 - \p_1 \p_7, \p_3 \p_6 - \p_2 \p_7)
    + (\p_1 \p_3 - \p_2^2) \\
    & + (\p_2 \p_3 \p_7, \p_3^2 \p_7) + (\p_1^3, \p_1^2 \p_2, \p_3^3) +
    (\p_2^4, \p_2^3 \p_3) \\
    \ann_R (f_2) = \; & (\p_5, \dots, \p_9)^2 + (\p_1 \p_5, \p_2 \p_5, \p_1
    \p_6, \p_3 \p_5 - \p_2 \p_6, \p_4 \p_5 - \p_3 \p_6, \\
    & \: \p_2 \p_6 - \p_1 \p_7, \p_3 \p_6 - \p_2 \p_7, \p_4 \p_6 - \p_3 \p_7,
    \p_2 \p_7 - \p_1 \p_8, \p_3 \p_7 - \p_2 \p_8, \\
    & \: \p_4 \p_7 - \p_3 \p_8, \p_2 \p_8 - \p_1 \p_9, \p_3 \p_8 - \p_2 \p_9,
    \p_4 \p_8, \p_3 \p_9, \p_4 \p_9) \\
    & + (\p_1^2, \p_2^2 - \p_1 \p_3, \p_2 \p_3 - \p_1 \p_4, \p_3^2 - \p_2
    \p_4, \p_4^2) + (\p_2^2 \p_3, \p_2 \p_3^2)
  \end{align*}
  In both cases, $\dim_\K M_{f_i} = 3$. It is easy to check that the two
  nilpotent matrices in $M_{f_1}$ are of rank 3, and of rank 4 in $M_{f_2}$.
  By theorem \ref{thm:nilpotrank}, there does not exist an $f_t \in \cR_d
  [t_1, \dots, t_n]$ such that $f_0 = f_i$ and $f_t$ splits regularly $\dim_k
  M_{f_i} - 1$ times over $\K (t_1, \dots, t_n)$. Again, we may add terms
  $x_i^\pd d$ to produce such examples for all $r \ge 7$ when $d=4$ and all $r
  \ge 9$ when $d=3$.
\end{exmp}

The next proposition allows us to construct $f$ such that $M_f$ does not
contain nilpotent matrices of small rank. The previous examples are special
cases of this proposition.

% Should I introduce the matrices before the proposition? F.ex.: In fact,
% under some conditions we will find $f$ such that $M_f$ consists of the
% following matrices.
% \begin{displaymath}
%   B'_k =
%   \begin{pmatrix}
%     0 & \dots & 0 & 1 & 0 & \hdotsfor{3} & 0 \\
%     \vdots &&& \ddots & \ddots & \ddots &&& \vdots \\
%     0 & \hdotsfor{3} & 0 & 1 & 0 & \dots & 0
%   \end{pmatrix}
% \end{displaymath}

% How about the following. Let $A$ be a set of statements. Define $\Delta (A)
% = 1$ if all statements in $A$ are true, and $\Delta (A) = 0$ otherwise. Then
% $(B_k)_{ij} = \Delta (j = s+k+i, i \le s)$.

\begin{prop} \label{prop:counterex}
  Suppose $d \ge 3$, $s \ge 2$, $q \ge 1$ and $r = 2s+q$. Let $\cS = \K [x_1,
  \dots, x_s]^{DP} \subseteq \cR = \K [x_1, \dots, x_r]^{DP}$. Let $g_1,
  \dots, g_{s+q} \in \cS_{d-1}$ satisfy $\p_{i+1} g_j = \p_i g_{j+1} =
  h_{i+j-2} \in \cS_{d-2}$ for all $1 \le i < s$ and $1 \le j < s+q$. Define
  $f = \sum_{i=1}^{s+q} x_{s+i} g_i \in \cR_d$. Assume that $h_i = 0$ for all
  $i < s-1$, and that $h_{s-1}, \dots, h_{s+q+1}$ are linearly independent.
  Then $M_f = \langle I, B_0, \dots, B_q \rangle$ where, for each $k = 0
  \dots, q$,
  \begin{displaymath}
    (B_k)_{ij} =
        \begin{cases}
          1, & \text{if $i \le s$ and $j = s+k+i$,} \\
          0, & \text{otherwise}.
        \end{cases}
  \end{displaymath}
\end{prop}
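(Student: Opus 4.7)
The plan is to verify the two inclusions $\langle I, B_0, \ldots, B_q \rangle \subseteq M_f$ (direct computation) and $M_f \subseteq \langle I, B_0, \ldots, B_q \rangle$ (a dimension count using Lemma~\ref{lem:Mf}).

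For each $k \in \{0, \ldots, q\}$, the vector $B_k \p$ has entries $(B_k\p)_i = \p_{s+k+i}$ when $i \le s$ and $0$ otherwise, so the $2 \times 2$ minors of $(\p \mid B_k\p)$ are either $\p_i\p_{s+k+j} - \p_j\p_{s+k+i}$ (for $i, j \le s$) or of the form $\pm\p_j\p_{s+k+i}$ (for $i \le s < j$). Applied to $f$, using $\p_a f = \sum_l x_{s+l}\p_a g_l$ for $a \le s$ and $\p_{s+a} f = g_a$, the first class reduces to $\p_i g_{k+j} - \p_j g_{k+i}$; iterating $\p_{a+1}g_b = \p_a g_{b+1}$ gives $\p_a g_b = h_{a+b-2}$, making both summands equal to $h_{k+i+j-2}$, so the difference vanishes. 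The second class vanishes because $g_{k+i} \in \cS$ is killed by $\p_j$ for $j > s$. Thus $B_k \in M_f$ by Lemma~\ref{lem:main2}, and $\{I, B_0, \ldots, B_q\}$ is linearly independent by inspecting entries $(1,1)$ and $(1, s+k+1)$.

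Next I check $\ann_R(f)_1 = 0$. If $L = \sum_i c_i \p_i$ kills $f$, splitting $Lf$ by its degree in $\{x_{s+1}, \ldots, x_r\}$ yields $\sum_{i=1}^s c_i h_{i+l-2} = 0$ for each $l \in [1, s+q]$ together with $\sum_l c_{s+l} g_l = 0$. Running $l = 1, 2, \ldots, s$ in the first family and exploiting the vanishing $h_i = 0$ for $i < s-1$ together with the linear independence of $h_{s-1}, \ldots, h_{s+q+1}$, a descending cascade forces $c_s = c_{s-1} = \cdots = c_1 = 0$; applying $\p_i$ to the second equation reduces it to a system of the same type in the $c_{s+l}$'s, which collapses analogously. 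Hence $\beta_{11} = 0$, and Lemma~\ref{lem:Mf} gives $\dim_\K M_f = 1 + \beta_{1d} = \dim_\K \im\gamma_f$; it remains to show $\dim_\K \im\gamma_f \le q+2$. Take $A \in M_f$ and write it in block form with blocks $P, Q, \Gamma, S$ of sizes $s \times s$, $s \times (s+q)$, $(s+q) \times s$, $(s+q) \times (s+q)$. Since $(\p_{s+1}, \ldots, \p_r)^2 \subseteq \ann_R(f)_2$, the polynomial $g = \gamma_f(A)$ is linear in $x_{s+1}, \ldots, x_r$, so $g = g_0 + \sum_l x_{s+l}\,g'_l$ with $g_0 \in \cS_d$ and $g'_l \in \cS_{d-1}$. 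Unfolding $\p g = A\p f$ and matching $x_{s+l}$-linear vs.\ pure-$\cS$ parts produces four systems: (i) $\sum_j \Gamma_{m,j}\, h_{j+l-2} = 0$ for all $m, l$; (ii) $g'_m = \sum_{m'} S_{m,m'}\,g_{m'}$; (iii) $\sum_j P_{i,j}\, h_{j+l-2} = \sum_{m'} S_{l,m'}\, h_{i+m'-2}$; and (iv) $\p_i g_0 = \sum_{m'} Q_{i,m'}\, g_{m'}$ subject to the integrability $\sum_{m'} Q_{i,m'}\, h_{j+m'-2} = \sum_{m'} Q_{j,m'}\, h_{i+m'-2}$.

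The cascade of the previous step, applied to (i), forces $\Gamma = 0$; applied to (iii) and cross-referenced over different pairs $(i, l)$, it collapses $S$ to a scalar multiple $\lambda \cdot I_{s+q}$ and then $P$ to $\lambda \cdot I_s$. Condition (iv) then pins $Q$ down to the $(q+1)$-dimensional subspace spanned by $Q^{(k)}_{i, m'} = \delta_{m', k+i}$ for $k = 0, \ldots, q$, which are precisely the top-right blocks of $B_0, \ldots, B_q$. Counting parameters, $\dim_\K M_f \le 1 + (q+1) = q+2$, matching the lower bound. The principal technical difficulty is the careful bookkeeping in the linear-independence cascade: at every step one must verify that the $h$-indices appearing fall inside the available range $[s-1, s+q+1]$. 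The hypothesis supplies exactly $q+3$ independent $h_i$'s -- precisely enough to collapse $(P, S)$ to a single scalar and reduce $Q$ to a $(q+1)$-parameter family. Corner cases of the relation $\p_a g_b = h_{a+b-2}$ at $(a, b) = (1, 1)$ and $(s, s+q)$ must be handled separately but do not affect the counting.
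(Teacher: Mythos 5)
Your overall strategy coincides with the paper's: verify $B_k \in M_f$ directly, establish $\ann_R(f)_1 = 0$, then write a general $A \in M_f$ in $2\times 2$ block form and use the hypotheses on the $h_i$ to force the blocks. Your easy direction is fine (the minor computation for $B_k$ and the cascade giving $\ann_R(f)_1=0$ both check out), your four systems (i)--(iv) are exactly the paper's symmetry conditions on $A\,\p\p^\T f$ in disguise, and the argument that $\Gamma = A_3 = 0$ is sound.

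The gap is in the hard direction, which you assert rather than prove. The claims that system (iii) ``collapses $S$ to $\lambda I_{s+q}$ and then $P$ to $\lambda I_s$'' and that the integrability condition in (iv) ``pins $Q$ down'' to $\langle B'_0,\dots,B'_q\rangle$ are precisely the content of the proposition, and they are not routine coefficient-matching. The hypothesis only makes the window $h_{s-1},\dots,h_{s+q+1}$ (just $q+3$ elements) linearly independent, while the equations in (iii) and (iv) involve $h$-indices running up to $r-2=2s+q-2$; the $h_i$ outside the window may satisfy arbitrary linear relations. Consequently one cannot read off coefficients equation by equation: the deductions must be ordered so that, at each step, every $h$ outside the independent window appears with a coefficient already known (so those terms cancel), and only then can the window be used on what remains. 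The paper organizes this as a nested double induction (its equations $(*_1)$ and $(*_2)$--$(*_5)$), tracking at each stage exactly which entries of $P$, $S$ and $Q$ have been determined and verifying that the surviving $h$-indices land in $[s-1,s+q+1]$. Your proposal acknowledges this as ``the principal technical difficulty'' but supplies no such ordering, so the crux of the proof is missing. (A minor further point: the detour through $\dim_\K M_f = 1+\beta_{1d} = \dim_\K\im\gamma_f$ is unnecessary once you bound $M_f$ directly, and you never actually compute $\beta_{1d}$, so the dimension count does no work as written.)
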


% Note: if I had defined $f = \sum_{i=1}^{s+q} x_{r+1-i} g_i$, then $X_2$
% would become Toeplitz instead of Hankel, and the $B'_k$ would be
% ``reversed''.

\begin{proof}
  For each $k$ we note that $B_k$ is block matrix of the form
  \begin{math}
    \bigl(
    \begin{smallmatrix}
      0 & \smash{B'_k} \\ 0 & 0
    \end{smallmatrix}
    \bigr),
  \end{math}
  where $B'_k \in \Mat_\K (s,s+q)$ is a ``displaced'' identity matrix. That
  is, $B'_k$ is a block matrix of the form $\bigl( O_1\: I\: O_2 \bigr)$,
  where $O_1$ is an $s \times k$ zero matrix, $I$ is an $s \times s$ identity
  matrix, and $O_2$ is an $s \times (q-k)$ zero matrix. In particular, $\rank
  B_k = s$.

% Thus $(B'_k)_{ij} = \delta_{j,k+i}$.

% For each $i = 0, \dots, q$, there exists an $f_i \in \cS_d$ such that $\p_j
% f_i = g_{i+j}$. Since $\p f_i = B'_i \p f$, we get $B_i \in M_f$.

  By computing $\p\p^\T f$, we see that it has a block decomposition,
  \begin{displaymath}
    \p \p^\T f =
    \begin{pmatrix}
      X_1 & X_2 \\ X_3 & 0
    \end{pmatrix},
  \end{displaymath}
  where $X_1 \in \Mat_\K (s,s)$ and $X_2 \in \Mat_\K (s,s+q)$. $X_2$ is a
  Hankel matrix in the sense that $(X_2)_{ij} = \p_i g_j = h_{i+j-1}$ for all
  $1 \le i \le s$ and $1 \le j \le s+q$, i.e
  \begin{displaymath}
    X_2 = X_3^\T =
    \begin{pmatrix}
      h_1 & \dots & h_{s+q} \\
      : && : \\
      h_s & \dots & h_{r-1}
    \end{pmatrix}.
  \end{displaymath}
  We note that the columns and rows of $X_2$ are linearly independent over
  $\K$. This implies that $\ann_R (f)_1 = 0$.

  By lemma \ref{lem:main2},
  \begin{math}
    A = \left(
      \begin{smallmatrix}
        A_1 & A_2 \\
        A_3 & A_4
      \end{smallmatrix}
    \right) \in M_f
  \end{math}
  if and only if
  \begin{displaymath}
    A \p\p^\T f =
    \begin{pmatrix}
      A_1 X_1 + A_2 X_3 & A_1 X_2 \\
      A_3 X_1 + A_4 X_3 & A_3 X_2
    \end{pmatrix}
  \end{displaymath}
  is symmetric. Since the entries of $X_1$ and $X_2 = X_3^\T$ are linearly
  independent, this is equivalent to both
  \begin{displaymath}
    \begin{pmatrix}
      A_1 X_1 & 0 \\
      A_3 X_1 & 0
    \end{pmatrix}
    \quad \text{ and } \quad
    \begin{pmatrix}
      A_2 X_3 & A_1 X_2 \\
      A_4 X_3 & A_3 X_2
    \end{pmatrix}
  \end{displaymath}
  being symmetric. In particular, it implies that $A_3 X_1 = 0$. Let $a^\T$ be
  a row in $A_3$, and define $\delta = \sum_{i=1}^s a_i \p_i$. Then $0 = a^\T
  X_1 = [ \delta \p_1 f, \dots, \delta \p_s f]$, i.e. $0 = \p_i \delta f =
  \sum_{j=1}^{s+q} x_{s+j} \p_i \delta g_j$ for all $i \le s$. This implies
  $\delta g_j = 0$ for all $j$, and therefore, $\delta f = 0$. Since $\ann_R
  (f)_1 = 0$, it follows that $A_3 = 0$.
  
  \smallskip Next we investigate $A_4 X_3 = (A_1 X_2)^\T$. We will use
  induction to prove that both $A_1$ and $A_4$ are identity matrices, up to a
  scalar. Let $a_{ij} = (A_4)_{ij}$ for all $1 \le i,j \le s+q$ and $b_{ij} =
  (A_1)_{ij}$ for all $1 \le i,j \le s$. Then
  \begin{displaymath}
    (A_4 X_3)_{ij} = \sum_{k=1}^{s+q} (A_4)_{ik} (X_3)_{kj} = \sum_{k=1}^{s+q}
    a_{ik} h_{j+k-1},
  \end{displaymath}
  and similarly, $(A_1 X_2)_{ji} = \sum_{k=1}^s b_{jk} h_{i+k-1}$. Thus $A_4
  X_3 = (A_1 X_2)^\T$ is equivalent to the following set of equations;
  \begin{equation} \label{eq:*1} \tag{$*_1$}
    \sum_{k=1}^{s+q} a_{ik} h_{j+k-1} = \sum_{k=1}^s b_{jk} h_{i+k-1} \text{
      for all } 1 \le i \le s+q \text{ and } 1 \le j \le s.
  \end{equation}
  
  Let $c = a_{11}$. Consider first the equation $\sum_{k=1}^{s+q} a_{1k} h_k =
  \sum_{k=1}^s b_{1k} h_k$, which we get from \eqref{eq:*1} by letting
  $i=j=1$. Since the non-zero $h_k$'s involved are linearly independent, it
  follows that $a_{1k} = 0$ for all $k>s$. Next put $i=1$ into \eqref{eq:*1}
  to get $\sum_{k=1}^{s+q} a_{1k} h_{j+k-1} = \sum_{k=1}^s b_{jk} h_k$. If
%% CHANGED 2013-07-13: a_{s-j+2} -> a_{1,s-j+2}
  $a_{1k} = 0$ for all $k \ge s-j+3$, then this equation implies $a_{1,s-j+2} =
  0$. By induction on $j$, $a_{1k} = 0$ for all $k>1$. Hence \eqref{eq:*1}
%% CHANGED 2013-07-13: j=1 -> i=1
  with $i=1$ reduces to $a_{11} h_j = \sum_{k=1}^s b_{jk} h_k = b_{j,s-1}
  h_{s-1} + b_{js} h_s$ for all $j$. This implies that $b_{jk} = c
  \delta_{jk}$ for $k=s-1$ and $k=s$. The symbol $\delta_{jk}$ is defined by
  $\delta_{jj} = 1$ for all $j$, and $\delta_{jk} = 0$ for all $j \ne k$.
  
  Now assume for some $2 \le i \le s+q$, that $b_{jk} = c \delta_{jk}$ for all
  $1 \le j \le s$ and $k>s-i$. Consider the right-hand side of \eqref{eq:*1}.
  If $k<s-i$, then $h_{i+k-1} = 0$. When $k>s-i$, all $b_{jk}$ are zero by the
  induction hypothesis, except $b_{jj} = c$. Thus $\sum_{k=1}^s b_{jk}
  h_{i+k-1}$ consists of at most two terms, $b_{j,s-i} h_{s-1}$ ($k=s-i$,
  requires $i<s$) and $c h_{i+j-1}$ ($k=j$, requires $s-i < j \le s$). Hence
  if $j=1$ and $i \ge s$, then \eqref{eq:*1} becomes $\sum_{k=1}^{s+q} a_{ik}
  h_k = c h_i$. Since $h_{s-1}, \dots, h_{s+q}$ are linearly independent, it
  follows that $a_{ik} = c \delta_{ik}$ for all $k \ge s$ and $b_{1,s-i} =
  a_{i,s-1}$.
  
  Assume for some $2 \le j \le s$ that we know $a_{ik} = c \delta_{ik}$ for
  all $k > s-j+1$. Then the left-hand side of \eqref{eq:*1} consist of at most
  three terms, corresponding to $k=s-j$, $k=s-j+1$ and $k=i>s-j+1$. Hence
  \eqref{eq:*1} reduces to
% $\sum_{k=1}^{s+q} a_{ik} h_{j+k-1} = a_{i,s-j} h_{s-1} + a_{i,s-j+1} h_s + c
% h_{i+j-1} = \sum_{k=1}^s b_{jk} h_{i+k-1} = b_{j,s-i} h_{s-1} + c h_{i+j-1}$
% \begin{displaymath}
%   a_{i,s-j} h_{s-1} + a_{i,s-j+1} h_s + c h_{i+j-1} = b_{j,s-i} h_{s-1} + c
%   h_{i+j-1},
% \end{displaymath}
  \begin{equation*}
    \begin{gathered}[t]
      a_{i,s-j} h_{s-1} \\ (j<s)
    \end{gathered}
    + a_{i,s-j+1} h_s +
    \begin{gathered}[t]
      c h_{i+j-1} \\ (i>s-j+1)
    \end{gathered} =
    \begin{gathered}[t]
      b_{j,s-i} h_{s-1} \\ (i<s)
    \end{gathered} +
    \begin{gathered}[t]
      c h_{i+j-1}. \\ (i>s-j)
    \end{gathered}
  \end{equation*}
  We have written under each term what it requires. The two terms $c
  h_{i+j-1}$ cancel each other, except when $i=s-j+1$. It follows that
  $a_{i,s-j+1} = c \delta_{i,s-j+1}$ and $b_{j,s-i} = a_{i,s-j}$. By induction
  on $j$, $a_{ik} = c \delta_{ik}$ for all $k \ge 1$, and $b_{j,s-i} =
  a_{i,s-j} = c \delta_{j,s-i}$ for all $j \ge 1$. By induction on $i$,
  $b_{jk} = c \delta_{jk}$ for all $1 \le j,k \le s$, and $a_{ik} = c
  \delta_{ik}$ for all $1 \le i,k \le s+q$. This means that $A_1 = cI$ and
  $A_4 = cI$.
  
  \smallskip Finally, to finish the proof, we need to show that $A_2 X_3$ is
  symmetric if and only if $A_2 \in \langle B'_0, \dots, B'_q \rangle$. Let
  $a_{ij} = (A_2)_{ij}$ for all $1 \le i \le s$ and $1 \le j \le s+q$, and let
  $a_{ij} = 0$ for $j \le 0$. $A_2 X_3$ is symmetric if and only if
  \begin{equation} \label{eq:*2} \tag{$*_2$}
    \sum_{k=1}^{s+q} a_{ik} h_{j+k-1} = \sum_{k=1}^{s+q} a_{jk} h_{i+k-1}
    \text{ for all } 1 \le j < i \le s.
  \end{equation}
  Assume for some $2 \le i \le s$ that $a_{1k} = 0$ for all $k > s+q+2-i$.
  Equation \eqref{eq:*2} with $j=1$ says that $\sum_{k=1}^{s+q} a_{ik} h_k =
  \sum_{k=1}^{s+q} a_{1k} h_{i+k-1}$. Since $h_k = 0$ for $k < s-1$ and
  $h_{s-1}, \dots, h_{s+q+1}$ are linearly independent, it follows that
  $a_{1,s+q+2-i} = 0$ and $a_{ik} = a_{1,k-i+1}$ for all $k = s-1, \dots,
  s+q$. By induction on $i$, $a_{1k} = 0$ for all $k \ge q+2$ and $a_{ik} =
  a_{1,k-i+1}$ for all $(i,k) \in \{ 2, \dots, s \} \times \{ s-1, \dots, s+q
  \}$.
  
  Assume for some $2 \le \alpha < s$ that
  \begin{equation} \label{eq:*3} \tag{$*_3$}
    a_{ij} = a_{1,j-i+1} \text{ for all pairs } \bigl\{ (i,j) \,\big\vert\: i
    < \alpha \text{ or } j > s - \alpha \bigr\}.
  \end{equation}
  This is true for $\alpha = 2$. For some $\alpha < \beta \le s$ assume in
  addition that
  \begin{equation} \label{eq:*4} \tag{$*_4$}
    \begin{gathered}
      a_{ij} = a_{1,j-i+1} \text{ for all pairs} \\
      \bigl\{ (i,j) \,\big\vert\: (i \le \beta-2 \text{ and } j = s - \alpha)
      \text{ or } (i = \alpha \text{ and } j \ge s - \beta + 2) \bigr\},
    \end{gathered}
  \end{equation}
  and also that
  \begin{equation} \label{eq:*5} \tag{$*_5$}
    a_{\beta-1, s-\alpha} = a_{\alpha, s-\beta+1}.
  \end{equation}
  These assumptions hold for $\beta = \alpha+1$. For all $k \ge s-\beta+2$ it
  follows in particular that $a_{\beta, k-\alpha+\beta} = a_{1, k-\alpha +1} =
  a_{\alpha k}$ by putting $(i,j) = (\beta, k-\alpha+\beta)$ in \eqref{eq:*3}
  and $(i,j) = (\alpha,k)$ in \eqref{eq:*4}. Therefore, any term on the
  left-hand side of $\sum_{k=1}^{s+q} a_{\alpha k} h_{\beta+k-1} =
  \sum_{k=1}^{s+q} a_{\beta k} h_{\alpha+k-1}$ with $ s-\beta+2 \le k \le s+q$
  cancel the corresponding term on the right-hand side. In addition, we
  already know that any term on the right-hand side with $k \ge q+2$ are zero.
  Hence the equation reduces to
  \begin{displaymath}
    a_{\alpha, s-\beta} h_{s-1} + a_{\alpha, s-\beta+1} h_s = a_{\beta,
      s-\alpha} h_{s-1} + a_{\beta, s-\alpha+1} h_s.
  \end{displaymath}
  
  This implies that $a_{\beta, s-\alpha} = a_{\alpha, s-\beta}$ and
  $a_{\alpha, s-\beta+1} = a_{\beta, s-\alpha+1}$. And because $a_{\beta-1,
    s-\alpha} = a_{\alpha, s-\beta+1}$ by \eqref{eq:*5} and $a_{\beta,
    s-\alpha+1} = a_{1, s-\alpha-\beta+2}$ by \eqref{eq:*3}, it follows that
  $a_{\beta-1, s-\alpha} = a_{1, s-\alpha-\beta+2}$. These equations are
  exactly what we need to proceed with induction on $\beta$. This induction
  ends after $\beta=s$, proving \eqref{eq:*4} and \eqref{eq:*5} with $\beta =
  s+1$. In order to continue with induction on $\alpha$, we need \eqref{eq:*3}
  with $\alpha \mapsto \alpha+1$. Now \eqref{eq:*4} with $\beta=s+1$ contains
  all these equations, except $a_{s, s-\alpha} = a_{1,1-\alpha}$. But $a_{s,
    s-\alpha} = a_{\alpha 0}$ by \eqref{eq:*5} with $\beta=s+1$, implying
  $a_{s, s-\alpha} = a_{\alpha 0} = 0 = a_{1,1-\alpha}$. Hence we may do
  induction on $\alpha$, finally proving \eqref{eq:*3} with $\alpha=s$. Since
  $\alpha_{1k} = 0$ for all $k \le 0$ and all $k \ge q+2$, this gives us
  exactly what we wanted, namely $A_2 = \sum_{k=0}^q a_{1,k+1} B'_k$.
  
  The converse statement, that $A_2 \in \langle B'_0, \dots, B'_q \rangle$
  implies that $A_2 X_3$ is symmetric, follows easily from equation
  \eqref{eq:*2}. This completes the proof.
\end{proof}

% How about: $h_2 = \dots = h_{s-1} = 0$ and $h_1, h_s, \dots, h_{s+q+1}$
% linearly independent?

% Note that $A_1 X_2 = X_2 A_4^\T$ implies that $A_1 \bar{\p} g_i = \bar{\p}
% ( \sum_{j=1}^s a_{ij} g_j )$ for all $i$, hence $A_1 X_1$ is symmetric.

% For $0 \le k \le q$, define the $s \times s$ matrix $Y_k$ by $(Y_k)_{ij} =
% \p_i g_{j+k}$, i.e. $Y_k$ consists of columns $k+1, \dots, k+s$ from $X_2$.
% The condition $\p_{i+1} g_j = \p_i g_{j+1}$ for all $1 \le i < s$ and $1 \le
% j < s+q$ is equivalent ($q>0$) to all $Y_k$ being symmetric! And this is
% equivalent to $\exists f_0, \dots, f_q \in \cS_d$ s.t. $Y_k = \p \p^\T f_k$.

\begin{rem} \label{rem:counterex}
  Proposition \ref{prop:counterex} involves polynomials $g_1, \dots, g_{s+q}
  \in \cS_{d-1}$ that satisfy $\p_{i+1} g_j = \p_i g_{j+1}$ for all $1 \le i <
  s$ and $1 \le j < s+q$. Using the $\{ g_i \}$ we defined $h_1, \dots,
  h_{r-1} \in \cS_{d-2}$ by $h_{i+j-1} = \p_i g_j$. This actually implies that
  $\p_{i+1} h_j = \p_i h_{j+1}$ for all $1 \le i < s$ and $1 \le j < r-1$.
  Indeed, if $i<s$ and $j<r-1$, then we may choose $k<s+q$ such that $h_j =
  \p_{j-k+1} g_k$. Hence
  \begin{displaymath}
    \p_{i+1} h_j = \p_{i+1} \p_{j-k+1} g_k = \p_i \p_{j-k+1} g_{k+1} = \p_i
    h_{j+1}.
  \end{displaymath}
  
  Assume conversely that we have polynomials $h_1, \dots, h_{r-1} \in
  \cS_{d-2}$ satisfying $\p_{i+1} h_j = \p_i h_{j+1}$ for all $1 \le i < s$
  and $1 \le j < r-1$. For some $k \in \{ 1, \dots, s+q \}$, consider $\{ h_k,
  \dots, h_{k+r-1} \}$. Since this set satisfies $\p_i h_{k-1+j} = \p_j
  h_{k-1+i}$ for all $1 \le i,j \le r$, it follows that there exists $g_k$
  such that $\p_i g_k = h_{k-1+i}$ for all $1 \le i \le r$. This defines $g_1,
  \dots, g_{s+q} \in \cS_{d-1}$, and $\p_{i+1} g_j = h_{i+j} = \p_i g_{j+1}$.
\end{rem}

\begin{rem} \label{rem:Hankel}
  Let $f_{ij} = (A_2 X_3)_{ij} = \sum_{k=1}^{s+q} a_{ik} h_{j+k-1}$ for $1 \le
  i,j \le s$. $A_2 X_3$ is symmetric if and only if it is a Hankel matrix,
  i.e. $f_{i+1,j} = f_{i,j+1}$ for all $1 \le i,j < s$. One implication is
  obvious. To prove the other, assume that $A_2 X_3$ is symmetric. Note that
  $\p_{i+1} h_j = \p_i h_{j+1}$ by remark \ref{rem:counterex}. Therefore,
  $\p_{k+1} f_{ij} = \p_k f_{i,j+1}$ for all $1 \le i \le s$ and all $1 \le
  j,k < s$. Assume for some $2 \le k \le 2s-2$ that $f_{i+1,j} = f_{i,j+1}$
  for all $1 \le i,j < s$ such that $i+j=k$. The following now follows for all
  $1 \le i < s$ and $1<j<s$ such that $i+j=k+1$.
  
  If $l<s$, then $\p_l f_{i+1,j} = \p_{l+1} f_{i+1,j-1} = \p_{l+1} f_{ij} =
  \p_l f_{i,j+1}$. Similarly, if $l>1$, then $\p_l f_{i+1,j} = \p_{l-1}
  f_{i+1,j+1} = \p_{l-1} f_{j+1,i+1} = \p_l f_{j+1,i} = \p_l f_{i,j+1}$. Here
  we also used that $A_2 X_3$ is symmetric. Together this shows that $\p_l
  f_{i+1,j} = \p_l f_{i,j+1}$ for all $l$, and therefore $f_{i+1,j} =
  f_{i,j+1}$. We have assumed $j>1$ here, thus we still need to prove that
  $f_{k+1,1} = f_{k,2}$ when $k<s$. But this follows by the symmetry of $A_2
  X_3$, which implies $f_{k+1,1} = f_{1,k+1}$. By induction on $k$, $A_2 X_3$
  is Hankel.
\end{rem}

% The $j=1$ case is an empty statement if $k \ge s$.

\begin{rem}
  The assumption in proposition \ref{prop:counterex} that $\p_{i+1} g_j = \p_i
  g_{j+1}$ for all $1 \le i < s$ and $1 \le j < s+q$ ensures that $B_k \in
  M_f$ for all $k = 0, \dots, q$. The extra restrictions on the $h_i$'s
  guarantee that $M_f = \langle I, B_0, \dots, B_q \rangle$. There are other
  restrictions we could impose on $\{ h_i \}$ to achieve the same ends, but at
  least $q+3$ of the $h_i$'s must be linearly independent. To prove this, let
  $\nu = \dim_\K \langle h_1, \dots, h_{r-1} \rangle$. Let us count the number
  of linearly independent equations that the symmetry of $A_2 X_3$ imposes on
  the entries of $A_2$. Let $f_{ij} = (A_2 X_3)_{ij}$. By remark
  \ref{rem:Hankel} we may use the equivalent statement that $A_2 X_3$ is a
  Hankel matrix.

  For every $i=1, \dots, s-1$, the equation $f_{i2} = f_{i+1,1}$ reduces to at
  most $\nu$ equations over $\K$. For every $j = 3, \dots, s$, the equation
  $f_{ij} = f_{i+1,j-1}$ gives at most one more equation, namely $\p_s^{d-2}
  f_{ij} = \p_s^{d-2} f_{i+1,j-1}$. All others are covered by $f_{i,j-1} =
  f_{i+1,j-2}$ since $\p_k f_{ij} = \p_{k+1} f_{i,j-1}$ for all $k<s$. Thus we
  get at most $(s-1)(\nu+s-2)$ linearly independent equations. In order to
  make $\dim_\K M_f = q+2$, we need to reduce the $s(s+q)$ entries of $A_2$ to
  $q+1$. We can only hope to achieve this if
  \begin{displaymath}
    (s-1) (\nu+s-2) \ge s(s+q) - (q+1) = (s-1) (s+q+1).
  \end{displaymath}
  Since $s \ge 2$, this is equivalent to $\nu \ge q+3$.
\end{rem}

When using proposition \ref{prop:counterex}, we need to construct the $g_i$'s
involved. By remark \ref{rem:counterex}, the condition on the $g_i$'s is
equivalent to the corresponding condition on the $h_i$'s. Since the $h_i$'s
have extra restrictions, it is easier to work directly with them. The next
lemma tells us how the $\{ h_i \}$ can and must be chosen.

\begin{lem} \label{lem:bigraded}
  Let $f \in \cR_d$. Define a homogeneous ideal $J \subseteq R$ by
  \begin{displaymath}
    J = I_2 \left(
      \begin{matrix}
        \p_1 & \dots & \p_{r-1} \\
        \p_2 & \dots & \p_r
      \end{matrix}
    \right) = \Bigl( \Bigl\{ \p_i \p_{j+1} - \p_{i+1} \p_j \,\Big\vert\: i,j =
    1, \dots, r-1 \Bigr\} \Bigr).
  \end{displaymath}
  Then the following statements are equivalent.
  \begin{enumerate}
    \setlength{\itemsep}{2pt}
    \setlength{\parskip}{0pt}
    \renewcommand{\theenumi}{\alph{enumi}}
    \renewcommand{\labelenumi}{\normalfont(\theenumi)}

  \item $J \subseteq \ann_R f$.

  \item There exists $g \in \cR_d$ such that $\p_i g = \p_{i+1} f$ for all $i
    = 1, \dots, r-1$.

    This $g$ is unique modulo $\langle x_r^\pd d \rangle$.
  \item There exists $h \in \cR_d$ such that $\p_i h = \p_{i-1} f$ for all $i
    = 2, \dots, r$.

    This $h$ is unique modulo $\langle x_1^\pd d \rangle$.

  \item $f$ is a linear combination of the terms in $(x_1 + t x_2 + \dots
    + t^{r-1} x_r)^\pd d$.

  \item $f$ is a linear combination of the terms in $(x_r + t x_{r-1} + \dots
    + t^{r-1} x_1)^\pd d$.

  \end{enumerate}
  Furthermore, if $n \ge 2$, then $f_1, \dots, f_n \in \cR_d$ satisfy $\p_i
  f_{j+1} = \p_{i+1} f_j$ for all $1 \le i < s$ and $1 \le j < n$ if and only
  if $f_1, \dots, f_n$ are $n$ consecutive terms in $c_t (x_r + t x_{r-1} +
  \dots + t^{r-1} x_1)^\pd d$ for some $c_t \in \K [t]$.
\end{lem}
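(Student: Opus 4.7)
The plan is to prove the two directions separately, using the equivalence (a)$\iff$(e) already established in the lemma. Throughout, let $L_t = x_r + t x_{r-1} + \dots + t^{r-1} x_1$, and write $L_t^\pd d = \sum_{k=0}^{d(r-1)} t^k E_k$ and $L_t^\pd{d-1} = \sum_m t^m G_m$. The divided-power multinomial formula expresses each $E_k$ as the sum of all $x^\pd\alpha$ with $\sum_i (r-i)\alpha_i = k$, so distinct $E_k$ involve disjoint sets of monomials and are therefore linearly independent in $\cR_d$ (likewise for the $G_m$ in $\cR_{d-1}$); by (e), the $E_k$ span the subspace $\{g \in \cR_d : J \subseteq \ann_R g\}$.

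The driving computation is
\begin{equation*}
  \p_i L_t^\pd d = (\p_i L_t) \cdot L_t^\pd{d-1} = t^{r-i} L_t^\pd{d-1},
\end{equation*}
since $\p_i L_t = t^{r-i}$. Matching coefficients of $t^k$ gives $\p_i E_k = G_{k-r+i}$ (with $G_m = 0$ outside $0 \le m \le (d-1)(r-1)$). Hence $\p_i E_{k+1} = G_{k-r+i+1} = \p_{i+1} E_k$ for every $k$, and by $\K$-linearity the same relation holds for the coefficients $F_k = \sum_l c_l E_{k-l}$ of $c_t L_t^\pd d$. This handles the ``if'' direction: any $n$ consecutive coefficients $F_{k_0}, \dots, F_{k_0+n-1}$ automatically satisfy $\p_i F_{j+1} = \p_{i+1} F_j$.

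For the converse, I first check that the hypothesis forces $J \subseteq \ann_R f_k$ for every $k$. The identity $\p_i f_{j+1} = \p_{i+1} f_j$ can be read in both directions: forward (raising the $f$-index when $k < n$) and backward (lowering it when $k \ge 2$). Since $n \ge 2$, for every $k$ at least one form is available, and a short chase rewrites $\p_i \p_{j+1} f_k$ as $\p_{i+1} \p_j f_k$---for instance, when $k \ge 2$, $\p_i \p_{j+1} f_k = \p_{j+1} \p_{i+1} f_{k-1} = \p_j \p_{i+1} f_k = \p_{i+1} \p_j f_k$. By (a)$\iff$(e) we then have a unique expansion $f_j = \sum_k a_{j,k} E_k$, and substituting into $\p_i f_{j+1} = \p_{i+1} f_j$ and using the independence of the $G_m$ collapses the hypotheses to the single family of scalar relations $a_{j+1,k} = a_{j,k-1}$ for $1 \le k \le d(r-1)$ and $1 \le j \le n-1$.

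It remains to exhibit $c_t = \sum_l c_l t^l \in \K[t]$ and an integer $k_0$ such that $a_{j,m} = c_{k_0+j-1-m}$ for all admissible $(j,m)$; this is precisely the statement that $f_1, \dots, f_n$ are $n$ consecutive coefficients of $c_t L_t^\pd d$. Fix $k_0$ and set $c_l = a_{1,\, k_0-l}$ for $l \le k_0$ and $c_l = a_{l-k_0+1,\, 0}$ for $l > k_0$ (the two prescriptions match at $l = k_0$, both giving $a_{1,0}$). Iterating $a_{j+1,k} = a_{j,k-1}$ shows that every $a_{j,m}$ reduces either to $a_{1,\, m-j+1}$ (if $m \ge j-1$) or to $a_{j-m,\, 0}$ (if $m < j-1$), each of which matches the appropriate $c_{k_0+j-1-m}$. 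The only technical point is index bookkeeping---ensuring each reduction chain stays in the range $k \ge 1$ where the relation applies---but this is routine once the two ranges of $m$ are separated.
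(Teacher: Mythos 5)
There is a genuine gap: your write-up only addresses the final ``furthermore'' clause and takes the equivalence (a)$\iff$(e) as ``already established'', but that equivalence is itself one of the assertions of the lemma, and its nontrivial direction is exactly the hard part. Your argument needs (a)$\Rightarrow$(d)/(e) --- i.e.\ that $J \subseteq \ann_R f_j$ forces $f_j \in \langle E_0, \dots, E_{d(r-1)}\rangle$ --- in order to write $f_j = \sum_k a_{j,k}E_k$, and in the paper this implication is \emph{deduced from} the ``furthermore'' statement (via (a)$\Rightarrow$(b)), not the other way around; so as written the logic is circular unless you supply an independent proof. To close the gap you must either reproduce the constructive step (a)$\Rightarrow$(b) (define $g_\alpha = \p^{\alpha - e_i + e_{i+1}}(f)$, check it is well defined because $J \subseteq \ann_R f$, and get (c) by reversing the variables, whence (d)/(e) follow by extending $f$ to a full sequence), or prove directly that any two exponents $\alpha, \alpha'$ with $|\alpha|=|\alpha'|=d$ and $\sum_i (r-i)\alpha_i = \sum_i(r-i)\alpha'_i$ are connected by the elementary moves $\alpha \mapsto \alpha - e_i - e_{j+1} + e_{i+1} + e_j$, so that the relations $(\p_i\p_{j+1}-\p_{i+1}\p_j)f = 0$ really do collapse the coefficients of $f$ onto the $E_k$. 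Neither is done, and the equivalences (a)--(e), including the uniqueness claims in (b) and (c), constitute most of the lemma.

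That said, granted (a)$\Rightarrow$(d), your treatment of the last statement is correct and genuinely different from the paper's. You verify $\p_i E_{k+1} = \p_{i+1}E_k$ from $\p_i L_t^\pd d = t^{r-i}L_t^\pd{d-1}$, reduce the hypothesis to the scalar recursion $a_{j+1,k} = a_{j,k-1}$ via independence of the $G_m$, and solve it explicitly with $k_0 = d(r-1)$; the index bookkeeping you wave at does check out (the two reduction chains $a_{j,m} \to a_{1,m-j+1}$ and $a_{j,m}\to a_{j-m,0}$ stay in range $k \ge 1$, and $c_l = 0$ for $l < 0$ is consistent because $k_0$ is large enough). The paper instead extends $f_1,\dots,f_n$ in both directions using (b) and (c) until the terms vanish, packages the result as $f_t = \sum t^k f_{k-a}$, computes $\ann_{R(t)}(f_t) \supseteq (\p_1 - t\p_2, \dots, \p_{r-1}-t\p_r, \p_r^{d+1})$, and concludes by the apolarity lemma; that route has the advantage of simultaneously yielding (a)$\Rightarrow$(e), which is precisely why your shortcut cannot borrow it.
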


% Note that (e) is equivalent to $f$ being a term in $c_t (x_r + t x_{r-1} +
% \dots + t^{r-1} x_1)^\pd d$ for some $c_t \in \K [t]$.

\begin{rem}
  For any $\alpha = (\alpha_1, \dots, \alpha_r) \in \bN_0^r$ define $\sigma
  (\alpha) = \sum_{i=1}^r (r-i) \alpha_i$. Let $|\alpha| = \sum_{i=1}^r
  \alpha_i$ and $m = \max \{ \sigma (\alpha) \suchthat \sum_{i=1}^r \alpha_i =
  d \} = (r-1)d$, and define
  \begin{displaymath}
    g_{dk} = \sum_{\substack{|\alpha| = d\\[1pt] \sigma (\alpha) = k}} x^\pd
    \alpha \in \cR_d
  \end{displaymath}
  for all $0 \le k \le m$. Clearly, $g_{d0}, \dots, g_{dm}$ are linearly
  independent, and
  \begin{displaymath}
    \bigl( x_r + t x_{r-1} + \dots + t^{r-1} x_1 \bigr)^\pd d = \sum_{k=0}^m
    t^k g_{dk}.
  \end{displaymath}
  Thus $\{ g_{dk} \}$ are the terms we speak of in lemma \ref{lem:bigraded}e.
  The lemma implies that $J_d^\perp = \{ f \in \cR_d \suchthat J \subseteq
  \ann_R f \} = \langle g_{d0}, \dots, g_{dm} \rangle$, hence $\dim_\K (R/J)_d
  = m+1$ for all $d \ge 0$.
\end{rem}

\begin{proof}[Proof of lemma \ref{lem:bigraded}:]
  The implications (b) $\implies$ (a), (c) $\implies$ (a) and (d) $\implies$
  (a) are all obvious. Furthermore, (d) $\iff$ (e), because the two expansions
  have the same terms, just in opposite order, since
  \begin{displaymath}
    \bigl( x_r + t x_{r-1} + \dots + t^{r-1} x_1 \bigr)^\pd d = t^{(r-1)d}
    \bigl( x_1 + \tfrac 1t x_2 + \dots + (\tfrac 1t)^{r-1} x_r \bigr)^\pd d.
  \end{displaymath}
  
  To prove (a) $\implies$ (b), assume that $J \subseteq \ann_R f$. For any $i
  = 1, \dots, r$ let $e_i \in \K^r$ be the \te{i} unit vector, i.e. $(e_i)_j =
  1$ if $j=i$, and $(e_i)_j = 0$ otherwise. In particular, $\alpha =
  (\alpha_1, \dots, \alpha_r) = \sum_{i=1}^r \alpha_i e_i$. For any $\alpha$
  such that $|\alpha|=d$, let
  \begin{displaymath}
    g_\alpha =
    \begin{cases}
      \p^{\alpha - e_i + e_{i+1}} (f), & \text{if $\alpha_i>0$ for some
        $i<r$,}\\
      0, & \text{if $\alpha_r = d$.}
    \end{cases}
  \end{displaymath}
  This is well defined since $J \subseteq \ann_R f$. Note that $g_\alpha$ is
  an element of $\K$. Define a polynomial $g \in \cR_d$ by $g =
  \sum_{|\alpha|=d} g_\alpha x^\pd \alpha$. It follows that $\p_i g = \p_{i+1}
  f$ for all $i<r$. Indeed, for all $|\alpha| = d-1$ we get $\p^\alpha \p_i g
  = g_{\alpha + e_i} = \p^{\alpha + e_{i+1}} f = \p^\alpha \p_{i+1} f$.
  Obviously, if both $g$ and $g'$ satisfy (b), then $\p_i g' = \p_{i+1} f =
  \p_i g$ for all $i<r$, hence $g'-g \in \langle x_r^\pd d \rangle$. This
  proves (a) $\implies$ (b). Moreover, we obtain a proof of (a) $\implies$ (c)
  by renaming the variables $(x_1, \dots, x_r) \mapsto (x_r, \dots, x_1)$.
  
  Note that (a) $\implies$ (e) follows from (a) $\implies$ (b) and the last
  statement. Thus we are done when we prove the last statement. One
  implication is obvious. To prove the other, let $n \ge 2$ and assume that
  $f_1, \dots, f_n \in \cR_d$ satisfy $\p_i f_{j+1} = \p_{i+1} f_j$ for all $1
  \le i < s$ and $1 \le j < n$. In particular, $J \subseteq \ann_R (f_i)$ for
  all $i$. From what we have already proven, we may for $k>n$ inductively
  choose $f_k \in \cR_d$ such that $\p_i f_{j+1} = \p_{i+1} f_j$ for all $i<r$
  and $\p_r^d (f_k) = 0$, and similarly for $k \le 0$, except then $\p_1^d
  (f_k) = 0$. For all $\alpha = (\alpha_1, \dots \alpha_r)$, $\alpha_i \ge 0$,
  let $\sigma (\alpha) = \sum_{i=1}^r (r-i) \alpha_i$. Since $\p_i (f_k) =
  \p_r (f_{k-(r-i)})$, it follows that $\p^\alpha (f_k) = \p_r^d (f_{k -
    \sigma (\alpha)})$ for all $k$. Obviously, $\max \{ \sigma (\alpha)
  \suchthat \sum_{i=1}^r \alpha_i = N \} = (r-1) \cdot N$. If $k > n +
  (r-1)N$, then for all $|\alpha| \ge N$ we have $\p^\alpha (f_k) = \p_r^d
  (f_{k - \sigma (\alpha)}) = 0$, hence $f_k = 0$. Similarly, $f_k = 0$ for
  all $k \ll 0$.
  
  Pick $a,b \ge 0$ such that $f_{-a}, f_b \ne 0$ and $f_{-a-1} = f_{b+1} = 0$.
  (In fact, $f_{-a} = c_1 x_r^\pd d$ and $f_b = c_2 x_1^\pd d$.) Define $f_t =
  \sum_{k=0}^{a+b} t^k f_{k-a} \in \cR_d [t]$. It follows for all $i<r$ that
  \begin{align*}
    (\p_i - t \p_{i+1}) (f_t) & = \p_i f_t - t \p_{i+1} f_t = \sum_{k=0}^{a+b}
    t^k \p_i f_{k-a} - t \sum_{k=0}^{a+b} t^k \p_{i+1} f_{k-a} \\
    & = \sum_{k \in \Z} t^k \p_i f_{k-a} - \sum_{k \in \Z} t^{k+1} \p_i
    f_{k-a+1} = 0.
  \end{align*}
  Thus $\ann_{R(t)} (f_t) \supseteq ( \p_1 - t \p_2, \dots, \p_{r-1} - t \p_r,
  \p_r^{d+1} )$. Note that
  \begin{displaymath}
    \ann_{R(t)} \bigl( (x_r + \dots + t^{r-1} x_1)^\pd d \bigr) = \bigl( \p_1
    - t \p_2, \dots, \p_{r-1} - t \p_r, \p_r^{d+1} \bigr).
  \end{displaymath}
  By lemma \ref{lem:apolar} there exists $c_t \in \K (t)$ such that $f_t = c_t
  (x_r + \dots + t^{r-1} x_1)^\pd d$. Since $f_t \in \cR_d [t]$, it follows
  that $c_t = \p_r^d f_t \in \K [t]$, finishing the proof.
\end{proof}

\begin{rem}
  By remark \ref{rem:counterex} and lemma \ref{lem:bigraded}, the polynomials
  $h_1, \dots, h_{r-1}$ in proposition \ref{prop:counterex} must be $r-1$
  consecutive terms in $c_t \bigl( \sum_{k=0}^{s-1} t^k x_{s-k}
  \smash{\bigr)}^\pd {d-2}$ for some $c_t \in \K [t]$. We also need $h_i = 0$
  for all $i<s-1$ and $h_{s-1}, \dots, h_{s+q+1}$ linearly independent. Since
  there are $(d-2)(s-1) + 1$ linearly independent terms in $\bigl(
  \sum_{k=0}^{s-1} t^k x_{s-k} \smash{\bigr)}^\pd {d-2}$, those conditions can
  be met if and only if
  \begin{displaymath}
    q+2 \le (d-2)(s-1).
  \end{displaymath}
  
  In particular, it is possible to construct such examples with $q=1$ as long
  as $(d-2)(s-1) \ge 3$, i.e. $s \ge 4$ when $d=3$, $s \ge 3$ when $d=4$, and
  $s \ge 2$ when $d \ge 5$. This is what we did in examples \ref{ex:r5} and
  \ref{ex:r7r9}. We may now also construct examples having $q>1$.
\end{rem}

\begin{rem}
  We started this chapter with the following question \ref{que:Q}. Given a
  polynomial $f \in \cR_d$, $d \ge 3$, is it possible to find $f_t \in \cR_d
  [t_1, \dots, t_n]$ such that $f_0 = f$ and $f_t$ splits regularly $\dim_k
  M_f - 1$ times over $\K (t_1, \dots, t_n)$? When $r \le 4$ we proved in
  theorem \ref{thm:rle4} that this is always possible. When $r \ge 5$ and $d
  \ge 5$, or $r \ge 7$ and $d=4$, or $r \ge 9$ and $d=3$, we have found
  examples that this is not always possible. This leaves only the six pairs
  \begin{displaymath}
    (r,d) \in \bigl\{ (5,3), (6,3), (7,3), (8,3), (5,4), (6,4) \bigr\}.
  \end{displaymath}
\end{rem}

We end this chapter with the following example. It is basically the first
degenerate splitting example we ever considered, and theorem
\ref{thm:onematrix} was formulated and proven with this example as a model.

\begin{exmp}
  Let $A \in \Mat_\K (r,r)$ be the fundamental Jordan block, i.e. 
  \begin{displaymath}
    A_{ij} =
    \begin{cases}
      1, & \text{if $j=i+1$,}\\
      0, & \text{otherwise.}
    \end{cases}
  \end{displaymath}
% $A_{ij} = \delta_{i+1,j}$.
  
  Let the ideal $J \subseteq R$ be defined as in lemma \ref{lem:bigraded}, and
  let
  \begin{displaymath}
    I = I_2 ( \p\: A\p ) = I_2 \left(
      \begin{smallmatrix}
        \p_1 & \p_2 & \dots & \p_{r-1} & \p_r \\
        \p_2 & \p_3 & \dots & \p_r & 0
      \end{smallmatrix}
    \right) = J + \p_r \cdot (\p_2, \dots \p_r).
  \end{displaymath}
  
  For all $d \ge 0$ and $k = 0, \dots, (r-1)d$, define $h_{dk} \in \cR_d$ by
  \begin{equation} \label{eq:hdk}
    \bigl( x_1 + t x_2 + \dots + t^{r-1} x_r \bigr)^\pd d =
    \sum_{k=0}^{(r-1)d} t^k h_{dk}.
  \end{equation}
  If we let $\tau
  (\alpha) = \sum_{i=1}^r (i-1) \alpha_i$, then this simply means that
  \begin{displaymath}
    h_{dk} = \sum_{\substack{|\alpha| = d\\[1pt] \tau (\alpha) = k}} x^\pd
    \alpha.
  \end{displaymath}
  
  Note that $\p_i h_{dk} = h_{d-1,k-i+1}$ for all $i = 1, \dots, r$. Let $f
  \in \cR_d$. It follows from lemma \ref{lem:bigraded} that $I \subseteq
  \ann_R f$ if and only if $f \in \langle h_{d0}, \dots, h_{d,r-1} \rangle$.
  This implies that
  \begin{displaymath}
    I_d^\perp = \bigl\{ f \in \cR_d \,\big\vert\: I \subseteq \ann_R f \bigr\}
    = \langle h_{d0}, \dots, h_{d,r-1} \rangle,
  \end{displaymath}
  and therefore $\dim_\K (R/I)_d = r$ for all $d>0$. Note that $\p_r (h_{dk})
  = 0$ for all $k<r-1$, thus $\ann_R (f)_1 \ne 0$ if $f \in \langle h_{d0},
  \dots, h_{d,r-2} \rangle$.
  
  Let $d \ge 3$ and $f = h_{d,r-1}$. Clearly $\ann_R (f)_1 = 0$, hence
  proposition \ref{prop:Mf} implies that $M_f$ is a commutative $\K$-algebra.
  Since $A \in M_f$, it follows by lemma \ref{lem:ranks}c that $M_f = \K [A]$.
  Let us prove that
  \begin{equation} \label{eq:max}
    \ann_R f = I + \p_1^{d-1} \cdot (\p_1, \dots, \p_{r-1}).
  \end{equation}
  Since $\p_i h_{dk} = h_{d-1,k-i+1}$, it follows that $\p_1^{d-2} \p_i f =
  h_{1,r-i} = x_{r+1-i}$ for all $i = 1, \dots, r$. These are linearly
  independent, and it follows that $\{ \p_1^k \p_i f \}_{i=1}^r$ are linearly
  independent for all $0 \le k \le d-2$. Hence for all $0<e<d$ we get $\dim_\K
  (R/ \ann_R f)_e \ge r = \dim_\K (R/I)_e$. Since $I \subseteq \ann_R f$, it
  follows that $\ann_R (f)_e = I_e$ for all $e<d$ and $H(R/ \ann_R f) =
  (1,r,r,\dots,r,1)$. In degree $d$ $\ann_R f$ needs $r-1$ extra generators.
  Since $\p_1^{d-1} \p_i f = 0$ for all $i<r$, equation \eqref{eq:max}
  follows. Note that $\ann_R f$ is generated in degree two and $d$ only.

  % Note that $\p_i h_{dk} = h_{d-1, k-i+1}$. Therefore, $\p_1^{e-1} \p_i
  % h_{d,r-1} = h_{d-e,r-i}$, and these are linearly independent for $i=1,
  % \dots, r$ and $e<d$. Thus $\ann_R (f)_e = I_e$ for all $e<d$. And
  % $\p_1^{d-1} \p_i h_{d,r-1} = 0$ for all $i<r$.
  
  Equation \eqref{eq:hdk} can be used to define a degenerate splitting of
  length $r$ of $f$. Indeed, substituting $k+1$ for $r$, the equation may be
  rewritten as
  \begin{displaymath}
    h_{dk} + \sum_{i>k} t^{i-k} h_{di} =
    t^{-k} \Bigl( \bigl( x_1 + t x_2 + \dots + t^k x_{k+1} \bigr)^\pd d -
    \sum_{i<k} t^i h_{di} \Bigr).
  \end{displaymath}
  Since $h_{di} \in \K [x_1, \dots, x_k]^{DP}$ for all $i<k$, we may proceed
  carefully by induction and prove that there exists a polynomial $h'_t \in \K
  [t_1, \dots, t_k] [x_1, \dots, x_{k+1}]^{DP}$ such that $h'_0 = h_{dk}$ and
  $h'_t$ splits $k$ times inside $\K (t_1, \dots, t_k) [x_1, \dots,
  x_{k+1}]^{DP}$. In particular, there exists $f_t \in \cR_d [t_1, \dots,
  t_{r-1}]$ such that $f_0 = f$ and $f_t$ splits $r-1$ times over $\K (t_1,
  \dots, t_{r-1})$, which is also what theorem \ref{thm:onematrix} guarantees.
  In fact, the degenerate splitting $f_t$ we get from equation \eqref{eq:hdk}
  is essentially the same as the one theorem \ref{thm:onematrix} gives us,
  since $A^k \p f = \p h_{d,r-k-1}$ for all $k$.

% Theorem \ref{thm:onematrix} mimics this example in a more general setting.

% F.ex. $h_{d2} + tX + sY = (st^2)^{-1} \bigl( s (x_1 + tx_2 + t^2x_3)^\pd d -
% (x_1 + stx_2)^\pd d + (1-s) x_1^\pd d \bigr)$.
  
  Note that $f_t \sim x_1^\pd d + \dots + x_r^\pd d$, thus this example is an
  extremal case. Other examples of $f \in \cR_d$ such that $M_f = \K [A]$ and
  $A$ is in Jordan normal form can be constructed from this one.
\end{exmp}

%% \input{general}
%% Generalizations (M^f and others)
\chapter{Generalizations}

A central object in this paper has been $M_f$, the matrix algebra that we have
associated to any $f \in \cR_d$. In this chapter we consider how to generalize
the construction of $M_f$ and some of the results in section \ref{section:Mf}.
In fact, we will define two different generalizations of $M_f$, and both give
is new algebras. Indeed, we show that both $\widehat{M}^f = \bigl(
\dsum_{e=0}^{d-3} M^f_e \bigr) \dsum \bigl( \dsum_{e \ge d-2} \Mat_{R_e} (r,r)
\bigr)$, where $M^f_e$ is defined below, and $M_{f,D} = \{ A \in \Mat_\K (N,N)
\suchthat I_2 (D\: AD) \subseteq \ann_R f \}$ are (non-commutative)
$\K$-algebras, see propositions \ref{prop:Mfe} and \ref{prop:MfD}.

We start by defining a $\K$-vector space $M^f_e$ that generalizes $M_f$ in the
sense that $M^f_0 = M_f$.

\begin{defn} \label{def:Mfe}
  Let $d \ge 0$ and $f \in \cR_d$. For all $e \ge 0$ define $M^f_e$ by
  \begin{displaymath}
    M^f_e = \{ A \in \Mat_{R_e} (r,r) \suchthat I_2 (\p\:A\p) \subseteq \ann_R
    f \}.
  \end{displaymath}
\end{defn}

Lemmas \ref{lem:main1} and \ref{lem:main2} were important tools in the study
of $M_f$. They provided a connection between $M_f$ and polynomials $g \in
\cR_d$ that we later used to find regular and degenerate splittings of $f$.
Lemma \ref{lem:main3} updates both lemmas, connecting $M^f_e$ to polynomials
$g \in \cR_{d-e}$ that are related to $f$.

\begin{lem} \label{lem:main3}
  Suppose $d \ge e \ge 0$ and $f \in \cR_d$.
  \begin{enumerate}
    \setlength{\itemsep}{2pt}
    \setlength{\parskip}{0pt}
    \renewcommand{\theenumi}{\alph{enumi}}
    \renewcommand{\labelenumi}{\normalfont(\theenumi)}

  \item Let $A \in \Mat_{R_e} (r,r)$. The following are equivalent.
    \begin{enumerate}
      \setlength{\itemsep}{2pt}
      \setlength{\parskip}{0pt}
      \renewcommand{\theenumii}{\roman{enumii}}
      \renewcommand{\labelenumii}{\normalfont(\theenumii)}
    \item $I_2 (\p\: A\p) \subseteq \ann_R f$.
    \item $A \p\p^\T f$ is a symmetric matrix.
    \item There exists $g \in \cR_{d-e}$ such that $\p g = A \p f$.
    \end{enumerate}
    Furthermore, this $g$ is unique if $e<d$.

  \item Let $g \in \cR_{d-e}$. The following are equivalent.
    \begin{enumerate}
      \setlength{\itemsep}{2pt}
      \setlength{\parskip}{0pt}
      \renewcommand{\theenumii}{\roman{enumii}}
      \renewcommand{\labelenumii}{\normalfont(\theenumii)}
    \item There exists $A \in \Mat_{R_e} (r,r)$ such that $\p g = A \p f$.
    \item $R_1 (g) \subseteq R_{e+1} (f)$.
    \item $\ann_R (f)_{d-e-1} \subseteq \ann_R (g)_{d-e-1}$.
    \end{enumerate}
  \end{enumerate}
\end{lem}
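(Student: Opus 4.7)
The plan is to replay the arguments of lemmas \ref{lem:main1} and \ref{lem:main2} in the present setting, keeping careful track of degree shifts: since $A \in \Mat_{R_e}(r,r)$, the vector $A\p f$ lives in $\cR_{d-e-1}^{\,r}$ and the matrix $A\p\p^\T f$ in $\cR_{d-e-2}^{\,r \times r}$, so a potential lift $g$ in (a)(iii) belongs to $\cR_{d-e}$, matching the statement.

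For part (a), the equivalence (i) $\iff$ (ii) goes through unchanged from lemma \ref{lem:main2}: the $(i,j)$-minor of $(\p\:A\p)$ is $\p_i(A\p)_j - \p_j(A\p)_i$, and applied to $f$ this equals $(A\p\p^\T f)_{ji} - (A\p\p^\T f)_{ij}$ by commutativity of $R$, so the $2\times 2$ minors all annihilate $f$ iff $A\p\p^\T f$ is symmetric. The equivalence (ii) $\iff$ (iii) reduces to the standard ``closed form lifts to a potential'' statement in the divided-power algebra: a vector $(h_1,\dots,h_r)$ of homogeneous forms in $\cR_{d-e-1}$ equals $\p g$ for some $g \in \cR_{d-e}$ iff $\p_j h_i = \p_i h_j$ for all $i,j$. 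The forward direction is trivial; for the converse I would define the coefficient of $x^\pd{\alpha}$ in $g$ (for each multi-index $\alpha$ of weight $d-e$) to be the coefficient of $x^\pd{\alpha-e_i}$ in $h_i$, for any $i$ with $\alpha_i>0$, and use the compatibility to see the choice is independent of $i$. Setting $h_i=(A\p f)_i$ then furnishes the desired $g$. Uniqueness when $e<d$ follows because a nonzero homogeneous form in $\cR_{d-e}$ of positive degree necessarily has a nonzero partial derivative.

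For part (b), the implication (i) $\implies$ (ii) is immediate since $\p_i g = \sum_j A_{ij}\p_j f$ lies in $R_{e+1}(f)$, so $R_1(g) = \langle \p_i g\rangle \subseteq R_{e+1}(f)$. For (ii) $\implies$ (i), each $\p_i g \in R_{e+1}(f)$ can be written as $D_i f$ with $D_i \in R_{e+1}$, and using $R_{e+1} = R_e \cdot R_1$ I expand $D_i = \sum_j A_{ij}\p_j$ with $A_{ij}\in R_e$ to produce the required $A$. The equivalence (ii) $\iff$ (iii) is pure apolarity duality: by lemma \ref{lem:ann}(b) we have $R_{e+1}(f)^\perp = \ann_R(f)_{d-e-1}$ and $R_1(g)^\perp = \ann_R(g)_{d-e-1}$, and the identity $V^{\perp\perp}=V$ flips the inclusion of subspaces into the reverse inclusion of orthogonals. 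The only obstacle worth flagging is not conceptual but bookkeeping: the degree shifts must be consistent throughout, and the uniqueness statement in (a) crucially uses $e<d$, since when $e=d$ the target $\cR_{d-e}=\K$ is all constants and $\p g = 0$ no longer forces $g = 0$.
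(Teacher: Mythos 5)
Your proof is correct and follows essentially the same route as the paper: part (a) is the degree-shifted replay of lemma \ref{lem:main2} (symmetry of $A\p\p^\T f$ as the integrability condition for lifting $A\p f$ to $\p g$), and part (b) combines the observation that $\p g = A\p f$ with $A \in \Mat_{R_e}(r,r)$ is exactly the containment $R_1(g) \subseteq R_{e+1}(f)$ with the duality $R_{e+1}(f)^\perp = \ann_R(f)_{d-e-1}$ from lemma \ref{lem:ann}. All the degree bookkeeping and the uniqueness argument for $e<d$ check out.
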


\begin{proof}
  The proof of the equivalences in (a) is an exact copy of the proof of lemma
  \ref{lem:main2}, and the uniqueness of $g$ is obvious. To prove (b), the
  existence of an $A$ such that $\p g = A \p f$ simply means that $R_1 (g)
  \subseteq R_{e+1} (f)$. By duality this is equivalent to $\ann_R (g)_{d-e-1}
  = R_1 (g)^\perp \supseteq R_{e+1} (f)^\perp = \ann_R (f)_{d-e-1}$.
\end{proof}

% There exists $g \in \cR_{d-e}$ such that $\p g = A \p f$ if and only if $A
% \p\p^\T f$ is symmetric. The symmetry is equivalent to $(A \p\p^\T - \p\p^\T
% A^\T)(f) = 0$, i.e. $I_2 (\p\: A\p) \subseteq \ann f$. (This is only a
% condition on $\ann (f)_{e+2}$.)

\begin{defn} \label{def:gamma_fe}
  If $d > e \ge 0$ and $f \in \cR_d$, let
  \begin{displaymath}
    \gamma^f_e : M^f_e \to \cR_{d-e}
  \end{displaymath}
  be the $\K$-linear map defined by sending a matrix $A \in M^f_e$ to the
  unique poly\-nomial $g \in \cR_{d-e}$ satisfying $\p g = A \p f$, cf. lemma
  \ref{lem:main3}a.
\end{defn}

$\gamma^f_e$ is indeed a map of $\K$-vector spaces since $\p g = A \p f$ is
$\K$-linear in both $A$ and $g$. In chapters \ref{chapter:regular} and
\ref{chapter:limits} we used elements in the image of $\gamma_f = \gamma^f_0$
to produce regular and degenerate splittings of $f$. Even though we do not
find such an explicit use of the polynomials in $\im \gamma^f_e$ when $e>0$,
we are still interested in its image. We start by calculating the kernel and
image of $\gamma^f_e$.

\begin{lem} \label{lem:gamma_fe}
  Suppose $d > e \ge 0$ and $f \in \cR_d$. Then
  \begin{align*}
    \im \gamma^f_e & = (\m_R \ann_R f)_{d-e}^\perp, \\
    \ker \gamma^f_e & = \{ A \in \Mat_{R_e} (r,r) \suchthat A \p f = 0 \}.
  \end{align*}
  Moreover, if we let $\beta_{1j}$ be the minimal number of generators of
  $\ann_R (f)$ of degree $j$, then
  \begin{align*}
    \dim_\K \im \gamma^f_e & = \dim_\K (R/\ann f)_{d-e} + \beta_{1,d-e}, \\
    \dim_\K \ker \gamma^f_e & = r e \cdot \tbinom{r-1+e}{e+1} + r \cdot
    \dim_\K \ann (f)_{e+1}.
  \end{align*}
\end{lem}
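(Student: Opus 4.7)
The plan is to handle the two set-theoretic identities first, then read off their dimensions. For the kernel, I would observe that since $d-e \ge 1$, a form in $\cR_{d-e}$ is determined by its first partial derivatives, so $\gamma^f_e(A) = 0$ iff $A\p f = 0$. Any such $A$ automatically satisfies $A \p\p^\T f = 0$, which is (trivially) symmetric, so $A \in M^f_e$ without any further condition; hence $\ker \gamma^f_e = \{ A \in \Mat_{R_e}(r,r) \suchthat A\p f = 0 \}$, as stated.

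For the image, I would invoke Lemma~\ref{lem:main3}(b): a form $g \in \cR_{d-e}$ lies in $\im \gamma^f_e$ iff $\ann_R(f)_{d-e-1} \subseteq \ann_R(g)_{d-e-1}$. Applying the saturation statement of Lemma~\ref{lem:ann}(a) in degree $d-e$, this is equivalent to $R_1 \cdot \ann_R(f)_{d-e-1} \subseteq \ann_R(g)_{d-e}$; and since $(\m_R \ann_R f)_{d-e} = R_1 \cdot \ann_R(f)_{d-e-1}$ (again by saturation, exactly as in the proof of Lemma~\ref{lem:main1}), this rewrites as $g \in (\m_R \ann_R f)_{d-e}^\perp$.

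The image dimension is then immediate from $\dim_\K \cR_{d-e} = \dim_\K R_{d-e}$ together with the short exact sequence $0 \to (\m_R \ann_R f)_{d-e} \to \ann_R(f)_{d-e} \to [\ann_R f / \m_R \ann_R f]_{d-e} \to 0$, whose right-hand term has dimension $\beta_{1,d-e}$ by definition. For the kernel dimension, I would work one row of $A$ at a time, writing the constraint $(A\p f)_i = 0$ as a single $\K$-linear map $\phi : R_e^r \to \cR_{d-1-e}$, $(D_1, \dots, D_r) \mapsto \sum_j D_j \p_j f$. The one genuine observation is that $\im \phi = R_{e+1}(f)$, which holds because $R_{e+1} = R_1 \cdot R_e = \sum_j \p_j R_e$, so every element of $R_{e+1}(f)$ is of the form $(\sum_j D_j \p_j)(f)$. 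Rank–nullity then gives $\dim \ker \phi = r\binom{r-1+e}{e} - \binom{r+e}{e+1} + \dim_\K \ann_R(f)_{e+1}$ per row, and multiplying by $r$ and invoking the binomial identity $r^2 \binom{r-1+e}{e} - r\binom{r+e}{e+1} = re \binom{r-1+e}{e+1}$ yields the stated formula. The only real obstacle is this final binomial bookkeeping; everything else is a direct appeal to Lemmas~\ref{lem:ann} and \ref{lem:main3}.
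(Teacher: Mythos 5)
Your proposal is correct and follows essentially the same route as the paper: the image identity via Lemma \ref{lem:main3}(b) and saturation, the image dimension via the codimension $\beta_{1,d-e}$ of $R_1\cdot\ann_R(f)_{d-e-1}$ in $\ann_R(f)_{d-e}$, and the kernel dimension via the surjection $R_e^r \to R_{e+1} \to R_{e+1}(f)$ row by row (your map $\phi$ is exactly the paper's $V_e$ computation). The binomial identity you isolate at the end is the same simplification the paper performs implicitly in passing from $r\binom{r-1+e}{e}-\dim_\K R_{e+1}(f)$ to the stated formula.
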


\begin{proof}
  By lemma \ref{lem:main3}b, $\im \gamma^f_e = \{ g \in \cR_{d-e} \suchthat
  \ann_R (f)_{d-e-1} \subseteq \ann_R (g)_{d-e-1} \}$. Since $\ann_R g$ is
  determined by its degree $d-e$ piece by lemma \ref{lem:ann}a, it follows
  that $\im \gamma^f_e = (R_1 \cdot \ann_R (f)_{d-e-1})^\perp = (\m_R \ann_R
  f)_{d-e}^\perp$. Evidently, $R_1 \ann_R f_{d-e-1}$ is a $\K$-vector subspace
  of $\ann_R (f)_{d-e}$ of codimension $\beta_{1,d-e}$. Hence
  \begin{displaymath}
    \dim_\K \im \gamma^f_e = \codim_\K (R_1 \cdot \ann_R (f)_{d-e-1}) =
    \dim_\K (R/\ann f)_{d-e} + \beta_{1,d-e}.
  \end{displaymath}

% In particular, $\dim_\K G^f_{d-2} = \dim_\K R_2 / (R_1 \cdot \ann_R (f)_1) =
% \binom {r-\beta_{11}+1} {2}$.

% To compute the dimension of $M^f_e$ we also need $\dim_\K \ker \gamma^f_e$.

  Since $\p \gamma^f_e(A) = A \p f$, we get $\ker \gamma^f_e = \{ A \in
  \Mat_{R_e} (r,r) \suchthat A \p f = 0 \}$. If we let $V_e = \{ D = [D_1
  \dots D_r]^\T \in R_e^{r} \suchthat \sum_i D_i \p_i \in \ann (f)_{e+1} \}$,
  we see that $\dim_\K \ker \gamma^f_e = r \cdot \dim_\K V_e$. We note that
  $V_e$ is the kernel of the map $R_e^r \to \cR_{d-e-1}$ given by $D \mapsto
  \sum_i D_i \p_i (f)$. This map is the composition $R_e^r \to R_{e+1} \to
  \cR_{d-e-1}$, and its image is $R_{e+1}(f)$ since $R_e^r \to R_{e+1}$ is
  surjective. It follows that
  \begin{displaymath}
    \dim_\K V_e = r \cdot \tbinom {r-1+e}{e} - \dim_\K R_{e+1} (f) = e \cdot
    \tbinom {r-1+e}{e+1} + \dim_\K \ann (f)_{e+1}.\qedhere
  \end{displaymath}
\end{proof}

The first significant property that $M_f$ possesses is that it is closed under
matrix multiplication when $d \ge 3$. Our definition of $M^f_e$ allows us to
transfer this to $M^f = \dsum_{e \ge 0} M^f_e$, with a similar restriction.
The following proposition should therefore come as no surprise.

\begin{prop} \label{prop:Mfe}
  Suppose $a+b \le d-3$. Matrix multiplication defines a map
  \begin{displaymath}
    M^f_a \times M^f_b \to M^f_{\smash{a+b}},
  \end{displaymath}
  and all commutators belong to $\ker \gamma^f_{\smash{a+b}}$. In particular,
  the augmentation
  \begin{displaymath}
    \widehat{M}^f = \left( \dsum_{e=0}^{d-3} M^f_e \right)
    \dsum \left( \dsum_{e \ge d-2} \Mat_{R_e} (r,r) \right)
  \end{displaymath}
  is a (non-commutative) graded $\K$-algebra with unity.
\end{prop}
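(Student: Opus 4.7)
The strategy is to mimic the proof of Proposition~\ref{prop:Mf}, substituting lemma~\ref{lem:main3} for lemmas~\ref{lem:main1} and~\ref{lem:main2}, and carefully tracking degrees via the saturation statement in lemma~\ref{lem:ann}a. The key observation driving both the closure and commutator claims is that for $A\in M^f_a$, $B\in M^f_b$, the equation $\p g = B\p f$ lets us transfer $A$ from ``acting on $f$'' to ``acting on $g$'', provided the generators of $I_2(\p\:A\p)$ live in a degree where $\ann_R(f)\subseteq\ann_R(g)$ still holds.

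First I would establish closure. Given $A\in M^f_a$ and $B\in M^f_b$ with $a+b\le d-3$, I aim to produce $h\in\cR_{d-a-b}$ with $\p h = AB\p f$, which by lemma~\ref{lem:main3}a places $AB$ in $M^f_{a+b}$. Set $g=\gamma^f_b(B)\in\cR_{d-b}$, so $\p g = B\p f$. The plan is to show $A\in M^g_a$; since $I_2(\p\:A\p)$ is generated in degree $a+1$, this reduces to checking $\ann_R(f)_{a+1}\subseteq\ann_R(g)_{a+1}$. Lemma~\ref{lem:main3}b gives $\ann_R(f)_{d-b-1}\subseteq\ann_R(g)_{d-b-1}$, and the saturation characterization of lemma~\ref{lem:ann}a propagates this inclusion down to every degree $\le d-b-1$; the hypothesis $a+b\le d-3$ ensures $a+1\le d-b-2\le d-b-1$. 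Then lemma~\ref{lem:main3}a applied to $A\in M^g_a$ supplies $h\in\cR_{d-a-b}$ with $\p h = A\p g = AB\p f$, as required.

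Next I would handle the commutator claim. Set $M=\p\p^\T f$. Step one together with the hypotheses gives that $AM$, $BM$, $ABM$, and $BAM$ are all symmetric matrices in $\Mat_{\cR}(r,r)$. Because the entries of $A$ and $B$ lie in the commutative ring $R$, the associativity identities $(AB)M=A(BM)$ and $A(MB^\T)=(AM)B^\T$ hold, where by convention the $(i,j)$-entry of $MB^\T$ means $\sum_k B_{jk}(M_{ik})$. Chaining these with the symmetry relations $AM=MA^\T$ and $BM=MB^\T$ yields $ABM = M(BA)^\T$; taking entrywise transpose and using symmetry of $M$ gives $(ABM)^\T = BAM$, and combined with symmetry of $ABM$ this forces $(AB-BA)M=0$. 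Expanding entries, this reads $\p_j((AB-BA)\p f)_i = 0$ for all $i,j$, so $R_1$ annihilates each component of $(AB-BA)\p f$. Since those components have degree $d-1-a-b\ge 2$, non-degeneracy of the pairing forces them to vanish, giving $(AB-BA)\p f=0$, i.e.\ $AB-BA\in\ker\gamma^f_{a+b}$.

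Finally, for the augmentation $\widehat M^f$: the identity matrix $I\in\Mat_\K(r,r)=M^f_0$ is a unit, and associativity and distributivity are inherited from the ambient graded ring $\dsum_{e\ge 0}\Mat_{R_e}(r,r)$. For closure of the product $\widehat M^f_a\cdot\widehat M^f_b\subseteq\widehat M^f_{a+b}$: when $a+b\le d-3$ (which forces $a,b\le d-3$) step one gives $M^f_a\cdot M^f_b\subseteq M^f_{a+b}$; when instead $a+b\ge d-2$ the product automatically lies in $\Mat_{R_{a+b}}(r,r)=\widehat M^f_{a+b}$, regardless of which side the factors come from. The main obstacle will be the bookkeeping in the commutator step: making the mixed left/right actions of $R$-matrices on the $\cR$-matrix $M$ precise and checking that the ``transpose'' manipulations valid for $M_f$ survive in the graded setting. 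Once those identities are confirmed, everything else is a direct transcription of the Proposition~\ref{prop:Mf} argument with the degree counts adjusted by the saturation bound $a+1\le d-b-1$.
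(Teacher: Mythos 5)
Your proof is correct and follows the same route as the paper, whose entire proof is the remark that the argument for Proposition~\ref{prop:Mf} generalizes immediately; you have carried out that generalization faithfully (closure by transferring $A$ into $M^g_a$ for $g=\gamma^f_b(B)$ via Lemma~\ref{lem:main3} and downward saturation, commutators via the symmetry of $A\p\p^\T f$, $B\p\p^\T f$, $AB\p\p^\T f$ and the mixed associativity of the $R$-action). One small slip: the $2\times 2$ minors of $(\p\:A\p)$ for $A\in M^f_a$ have degree $a+2$, not $a+1$, so the inclusion actually needed is $\ann_R(f)_{a+2}\subseteq\ann_R(g)_{a+2}$ --- but since $a+2\le d-b-1$ is precisely the hypothesis $a+b\le d-3$, your argument goes through unchanged, and this corrected degree count is exactly what makes the bound $d-3$ sharp (cf.\ the failure at $a+b=d-2$ noted after the proposition).
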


% Commutators: if $A \in M^f_a$ and $B \in M^f_b$, then $(AB-BA) \p f = 0$.

\begin{proof}
  The proof of proposition \ref{prop:Mf} generalizes immediately.
\end{proof}

% Let $A,B \in M^f$. Then $AB \in M^f \iff AB \p\p^\T f = BA \p\p^\T f$, and
% this is $\iff AB \p f = BA \p f$ when $a+b \ne d-1$.

Since $M^f_e = \Mat_{R_e} (r,r)$ for all $e \ge d-1$, we see that
$\widehat{M}^f$ differs from $M^f$ only in degree $d-2$. It is interesting
that the image of the multiplication map $M^f_a \times M^f_b \to
\Mat_{R_{a+b}} (r,r)$ is generally not contained in $M^f_{\smash{a+b}}$ if
$a+b = d-2$. An easy example is $r=2$ and $f = x_1^\pd 2 + x_2^\pd 2 \in
\cR_2$. Then $\p\p^\T f = I$, thus $M^f_0$ consists of all symmetric matrices.
But the product of two symmetric matrices is not symmetric, unless they
commute.

We now want to study $\im \gamma^f_e$ in more detail. To help us do that we
define the following graded $R$-modules.

\begin{defn} \label{def:FG}
  If $f \in \cR_d$, let $F^f = \dsum_e F^f_e$ and $G^f = \dsum_e G^f_e$ where
  \begin{align*}
    F^f_e & = \{ g \in \cR_{d-e} \suchthat \ann (f)_k \subseteq \ann (g)_k
    \:\forall\: k \le d-e \}, \\
    G^f_e & = \{ g \in \cR_{d-e} \suchthat \ann (f)_k \subseteq \ann (g)_k
    \:\forall\: k < d-e \}.
  \end{align*}
\end{defn}

In the following we will often drop the superscripts ($^f$). Obviously, $G_d =
\K$ and $G_e = F_e = 0$ for all $e>d$. Note that $G_e = \{ g \in \cR_{d-e}
\suchthat \ann_R (f)_{d-e-1} \subseteq \ann_R (g)_{d-e-1} \}$ for all $e$ by
lemma \ref{lem:ann}a. In particular, lemma \ref{lem:main3}b implies that
\begin{displaymath}
  G_e = \im \gamma^f_e \quad \text{ for all } \quad 0 \le e < d.
\end{displaymath}
The next lemma summarizes some nice properties of $F$ and $G$.

\begin{lem} \label{lem:Gf}
  Suppose $f \in \cR_d$. Then the following are true.
  \begin{enumerate}
    \setlength{\itemsep}{2pt}
    \setlength{\parskip}{0pt}
    \renewcommand{\theenumi}{\alph{enumi}}
    \renewcommand{\labelenumi}{\normalfont(\theenumi)}

  \item $G = \{ g \in \cR \suchthat \p_i g \in F \:\forall\: i \} \supseteq F
    = R(f)$,

  \item $\dim_\K (G/F)_e = \beta_{1,d-e}$ for all $e$, and

  \item $G \iso \Hom_\K (R / \m_R \ann_R f,\K)$.
  \end{enumerate}
  In particular, $G$ is a graded canonical module for $R / \m_R \ann_R f$, and
  we can get a free resolution of $G$ (as a graded $R$-module) by computing
  one for $R / \m_R \ann_R f$ and dualizing.
\end{lem}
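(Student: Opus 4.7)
For part (a), the plan is to first identify $F$ with the module of contractions $R(f)$. Given $g \in \cR_{d-e}$, the top-degree condition $\ann_R(f)_{d-e} \subseteq \ann_R(g)_{d-e}$ is, by lemma~\ref{lem:ann}(b) and $V^{\perp\perp}=V$, equivalent to $g \in R_e(f)$; and once $g = Ef$ for some $E \in R_e$, the smaller-degree inclusions $\ann_R(f)_k \subseteq \ann_R(g)_k$ follow automatically from the ideal property ($Dg = DEf = 0$ for $D \in \ann_R(f)_k$). Thus $F_e = R_e(f)$, and the inclusion $F \subseteq G$ is then immediate from the definitions. For the equality $G = \{g \in \cR \suchthat \p_i g \in F\ \forall\, i\}$, I would fix $g \in \cR_{d-e}$ and unfold both sides: by the saturation clause of lemma~\ref{lem:ann}(a), the membership $g \in G_e$ is equivalent to $R_1 \cdot \ann_R(f)_{d-e-1} \subseteq \ann_R(g)_{d-e}$, while the condition $\p_i g \in F_{e+1}$ translates (using $D \in \ann_R(\p_i g) \iff D\p_i \in \ann_R(g)$) to $\p_i \cdot \ann_R(f)_{d-e-1} \subseteq \ann_R(g)_{d-e}$; intersecting over all $i$ recovers the former.

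For (b), once one has the perp descriptions $F_e = \ann_R(f)_{d-e}^\perp$ and $G_e = (\m_R \ann_R f)_{d-e}^\perp$ inside $\cR_{d-e}$, dimension counting gives
\begin{displaymath}
  \dim_\K (G/F)_e = \dim_\K \ann_R(f)_{d-e} - \dim_\K (\m_R \ann_R f)_{d-e} = \beta_{1,d-e},
\end{displaymath}
the last equality being the definition of the minimal number of degree-$(d-e)$ generators of $\ann_R f$.

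For (c), the description $G_e = (\m_R \ann_R f)_{d-e}^\perp$ shows that the non-degenerate contraction pairing $R_{d-e} \times \cR_{d-e} \to \K$ restricts to a perfect pairing between $(R/\m_R \ann_R f)_{d-e}$ and $G_e$; summing over $e$ yields the claimed isomorphism (up to the usual grading shift in the graded $\K$-dual). Before invoking it, I would verify that $G$ is actually an $R$-submodule of $\cR$, which is an easy consequence of the characterization in (a) (if $g \in G_e$ and $E \in R_k$, one checks $\ann_R(f)_j \subseteq \ann_R(Eg)_j$ for $j<d-e-k$ using that $\ann_R f$ is an ideal), and then check that the contraction action on $G$ is the adjoint of multiplication on $R/\m_R \ann_R f$ via the routine identity $(ED)(g) = E(Dg)$.

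For the concluding ``in particular'' sentence, note first that $R/\m_R \ann_R f$ is Artinian, since it surjects onto the Artinian ring $R/\ann_R f$ with kernel $\ann_R f / \m_R \ann_R f$ of finite $\K$-dimension $\sum_j \beta_{1j}$ (using that $\ann_R f$ is finitely generated). Hence by standard Matlis/Macaulay duality, $\Hom_\K (R/\m_R \ann_R f, \K) \iso G$ is its graded canonical module; and a minimal free $R$-resolution of $G$ is produced from one of $R/\m_R \ann_R f$ by applying $\Hom_R(-,R)$ with a shift by $r$, since $R$ is Gorenstein of Krull dimension $r$. The main hurdle throughout is not conceptual depth but careful bookkeeping: one has to keep straight several perpendicularity, saturation and adjointness operations in (a) and (c), and take care with the grading shifts when invoking duality at the end.
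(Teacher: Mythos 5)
Your proposal is correct and follows essentially the same route as the paper: identify $F_e = R_e(f) = \ann_R(f)_{d-e}^\perp$ and $G_e = (\m_R\ann_R f)_{d-e}^\perp$ via saturation (lemma \ref{lem:ann}) and the characterizations in lemmas \ref{lem:main3} and \ref{lem:gamma_fe}, then read off (a) from the degree-shift of the annihilator conditions, (b) from a codimension count, and (c) from the induced perfect pairing with $R/\m_R\ann_R f$. Your extra care about the $R$-module structure of $G$ and the adjointness of the pairing only makes explicit what the paper leaves implicit.
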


\begin{proof}
  Recall that $R_e (f)^\perp = \ann_R (f)_{d-e}$ by lemma \ref{lem:ann}b.
  Dualizing this equation gives $R_e (f) = \{ g \in \cR_{d-e} \suchthat Dg = 0
  \:\forall\: D \in \ann_R (f)_{d-e} \}$, which equals $F_e$ by lemma
  \ref{lem:ann}a. Combining this with lemma \ref{lem:main3}b, we get $G_e = \{
  g \in \cR_{d-e} \suchthat R_1 (g) \subseteq R_{e+1} (f) = F_{e+1} \}$. This
  proves (a).

  (b) follows from lemma \ref{lem:gamma_fe} if $0 \le e < d$, and it is
  trivial otherwise.

% Note that $G_{-1} \ne 0$ if $f = l^\pd d$, but $G_e = 0$ for all $e<-1$.
  
  Before we prove (c), we want to say something about dualizing $F$. Note that
  $\cR_e = \Hom_\K (R_e, \K)$ since $\cR$ by definition is the graded dual of
  $R$. This implies $R_e = \Hom_\K (\cR_e, \K)$. Since $F_{d-e} \subseteq
  \cR_e$, the map $R_e \to \Hom_\K (F_{d-e}, \K)$ is clearly surjective, and
  its kernel is $\{ D \in R_e \suchthat D(g) = 0 \:\forall\: g \in F_{d-e} \}
  = F_{d-e}^\perp = \ann_R (f)_e$. Thus $\Hom_\K (F_{d-e}, \K) \iso (R /
  \ann_R f)_e$, and therefore $\Hom_\K (F,\K) \iso R / \ann_R f$. This
  explains why $F^* \iso F$, which is the Gorenstein property of $F$.

% $F^* \iso R / \ann_R f$ is degree-reversing, $R / \ann_R f \iso F$ is not.
  
  Turning to $G$, the map $R_e \to \Hom_\K (G_{d-e},\K)$ is surjective as
  above. Its kernel is $\{ D \in R_e \suchthat D(g) = 0 \: \forall \: g \in
  G_{d-e} \} = G_{d-e}^\perp$, and $G_{d-e}^\perp = (\m_R \ann_R f)_e$ by
  lemma \ref{lem:gamma_fe}. This shows that $\Hom_\K (G,\K) \iso R / \m_R
  \ann_R f$, proving (c). The last statements follow since $R / \m_R \ann_R f$
  is Artinian.
\end{proof}

% Therefore, $G$ is a graded canonical module for $R / \m_R \ann_R f$, and we
% can get a free resolution of $G$ (as a graded $R$-module) by computing one
% for $R / \m_R \ann_R f$ and dualizing.

Since $F = R (f)$, multiplication in $R$ induces a ring structure on $F$ given
by $D(f) \star E(f) = DE(f)$. For all $a,b$ such that $a+b \ne d$, we can
extend $\star$ to a bilinear map $F_a \times G_b \to G_{a+b}$ by $D(f) \star g
= D(g)$. This is well defined because $a \ne d-b$ implies $\ann_R (f)_a
\subseteq \ann_R (g)_a$. The equation $D(f) \star g = D(g)$ is not
well defined when $a = d-b$ and $g \in G_b \setminus F_b$, thus $G$ is not
quite an $F$-module.

In order to extend the multiplication to all of $G$, we need an even larger
restriction on the degrees, as seen in the following proposition. Note that
$M^f$ contains $R \cdot I = \{ D \cdot I \suchthat D \in R \}$, the subalgebra
consisting of all multiples of the identity matrix. Clearly, if $D \in R_e$,
then $\gamma^f_e (D \cdot I) = D(f)$. Thus $\gamma^f_e : M^f_e \to G_e$ maps
$R_e \cdot I$ onto $F_e$.

\begin{prop} \label{prop:star}
  $\gamma = \dsum_e \gamma_e$ induces a multiplication $\star : G_a \times G_b
  \to G_{a+b}$ for $a+b \le d-3$ that is associative, commutative and
  $\K$-bilinear. $f \in G_0$ acts as the identity. Furthermore, $D(f) \star h
  = D(h)$ for all $D \in R_a$ and $h \in G_b$.
\end{prop}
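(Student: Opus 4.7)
The plan is to define $g \star h$ by lifting to matrices, multiplying there, and pushing back down via $\gamma^f$. Concretely, given $g \in G_a$ and $h \in G_b$ with $a+b \le d-3$, choose any preimages $A \in M^f_a$ and $B \in M^f_b$ under $\gamma^f_a$ and $\gamma^f_b$ (these exist since $G_e = \im \gamma^f_e$ by lemma \ref{lem:main3}b). Since $a+b \le d-3$, proposition \ref{prop:Mfe} gives $AB \in M^f_{a+b}$, so it makes sense to set
\begin{displaymath}
  g \star h := \gamma^f_{a+b}(AB) \in G_{a+b}.
\end{displaymath}
The heart of the argument is to verify that this is independent of the choices of $A$ and $B$.

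By $\K$-bilinearity it is enough to show that $AB \in \ker \gamma^f_{a+b}$ whenever $A \in \ker \gamma^f_a$ or $B \in \ker \gamma^f_b$. By lemma \ref{lem:gamma_fe}, $\ker \gamma^f_e = \{A \mid A\p f = 0\}$. If $B \p f = 0$ then trivially $AB \p f = A(B\p f) = 0$. The interesting case is $A \p f = 0$: here I invoke the commutator part of proposition \ref{prop:Mfe}, which (using $a+b \le d-3$) gives $AB - BA \in \ker \gamma^f_{a+b}$, so
\begin{displaymath}
  AB \p f = BA \p f = B(A \p f) = 0.
\end{displaymath}
This is the one place where the bound $a+b \le d-3$ is really needed, and it is the main (and only real) obstacle; it is exactly what forces us to stay in $\widehat M^f$ rather than in the larger $M^f$.

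Once $\star$ is well-defined, the remaining properties are short. Bilinearity is inherited from the $\K$-linearity of $\gamma^f$ and of matrix multiplication. Commutativity follows immediately from $AB - BA \in \ker \gamma^f_{a+b}$. The identity property is clear: $I \in M^f_0$ lifts $f = \gamma^f_0(I)$, so $f \star h = \gamma^f_b(IB) = \gamma^f_b(B) = h$. For the formula $D(f) \star h = D(h)$ with $D \in R_a$, note that $D \cdot I \in R_a \cdot I \subseteq M^f_a$ lifts $D(f)$, so
\begin{displaymath}
  \p \bigl( D(f) \star h \bigr) = \p \gamma^f_{a+b}(DB) = DB \p f = D \p h = \p(Dh),
\end{displaymath}
and the uniqueness clause of lemma \ref{lem:main3}a (applicable since $a+b < d$) yields $D(f) \star h = D(h)$. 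Finally, associativity: assuming $a+b+c \le d-3$ (so that every pairwise product of the corresponding matrices lies in $\widehat M^f$ and the iterated $\star$-products are defined), it follows from the associativity of matrix multiplication, since $(g_1 \star g_2) \star g_3$ and $g_1 \star (g_2 \star g_3)$ are both represented by $A_1 A_2 A_3$ under $\gamma^f_{a+b+c}$. This completes the plan.
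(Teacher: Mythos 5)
Your proposal is correct and follows essentially the same route as the paper's proof: lift to $M^f_a$ and $M^f_b$, multiply, push down via $\gamma^f_{a+b}$, and use the commutator statement of proposition \ref{prop:Mfe} (valid precisely because $a+b \le d-3$) to establish independence of the chosen lifts. Your reduction of well-definedness to kernels via bilinearity is just a repackaging of the paper's chain of equalities $A'B'\p f = A'B\p f = BA'\p f = BA\p f = AB\p f$, and the verifications of commutativity, the identity, and $D(f)\star h = D(h)$ coincide with the paper's.
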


\begin{proof}
  Given $g \in G_a$ and $h \in G_b$, we can find $A \in M_a$ and $B \in M_b$
  such that $g = \gamma_a (A)$ and $h = \gamma_b (B)$ since $G_e = \im
  \gamma_e$. Since $a+b \le d-3$ it follows from proposition \ref{prop:Mfe}
  that $AB \in M_{a+b}$ and $BA \p f = AB \p f$. We define $g \star h$ to be
  \begin{displaymath}
    g \star h = \gamma_{a+b} (AB) \in G_{a+b}.
  \end{displaymath}
  
  First we prove that this is well defined. Assume that $\gamma_a (A') =
  \gamma_a (A)$ and $\gamma_b (B') = \gamma_b (B)$. Then $A' \p f = A \p f$
  and $B' \p f = B \p f$, and therefore
  \begin{align*}
    \p \bigl( \gamma_{a+b} (A'B') \bigr) & = A'B' \p f = A'B \p f \\
    & = BA' \p f = BA \p f = AB \p f = \p \bigl( \gamma_{a+b} (AB) \bigr).
  \end{align*}
  Hence $\gamma_{a+b} (A'B') = \gamma_{a+b} (AB)$.
  
  Now, $AB \p f = BA \p f$ is equivalent to $\gamma_{a+b} (AB) = \gamma_{a+b}
  (BA)$, which implies $g \star h = h \star g$. Associativity follows from
  associativity of matrix multiplication, and the bilinearity is obvious.
  Furthermore, from $f = \gamma_0 (I)$ it follows that $f \star g = g$ for all
  $g \in G_a$, $a \le d-3$. Finally, if $D \in R_a$, then $D(f) = \gamma_a (D
  \cdot I)$. Hence $D(f) \star h = \gamma_a (D \cdot I) \star \gamma_b (B) =
  \gamma_{a+b} (D \cdot B) = D(h)$.
\end{proof}

The last statement, $D(f) \star h = D(h)$, says that $\star$ restricts to the
``module'' action $F_a \times G_b \to G_{a+b}$, but with the stronger
requirement $a+b \le d-3$. Let us extend the multiplication $\star : G_a
\times G_b \to G_{a+b}$ by zero if $a+b \ge d-2$. We do this to get an
algebra, but note that $\star$ no longer restricts to $D(f) \star E(f) =
DE(f)$ on $F$ when $a+b \ge d-2$.

% That is, if $g \in G_a$, $h \in G_b$ and $a+b \ge d-2$, we define $g \star h
% = 0$.

\begin{cor} \label{cor:truncG}
  The truncation $\widetilde{G} = \dsum_{e=0}^{d-3} G_e$ is a commutative
  $\K$-algebra.
\end{cor}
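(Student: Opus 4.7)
The plan is to verify directly that the extended-by-zero multiplication from Proposition~\ref{prop:star} makes $\widetilde{G}$ into a commutative unital $\K$-algebra. The pieces to check are closure, bilinearity, commutativity, the unit, and associativity; only associativity requires any real argument.

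For closure, I would observe that if $g \in G_a$ and $h \in G_b$ with $0 \le a, b \le d-3$, then either $a+b \le d-3$, in which case $g \star h \in G_{a+b} \subseteq \widetilde{G}$ by Proposition~\ref{prop:star}, or $a+b \ge d-2$, in which case $g \star h := 0 \in \widetilde{G}$ by the extension convention adopted just before the corollary. Bilinearity is immediate: the zero extension is trivially bilinear, and in the regime $a+b \le d-3$ bilinearity comes from Proposition~\ref{prop:star}. Commutativity is inherited in the non-vanishing regime and is trivially symmetric in the zero regime. The element $f \in G_0$ serves as the identity, since for every $g \in G_b$ with $b \le d-3$ the sum $0+b \le d-3$ places us in the regime of Proposition~\ref{prop:star}, where $f$ is already known to act as the identity.

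For associativity, given $g_a \in G_a$, $g_b \in G_b$, $g_c \in G_c$ with $a,b,c \le d-3$, I would split into two cases. If $a+b+c \le d-3$, then all the relevant degree sums ($a+b$, $b+c$, $(a+b)+c$, $a+(b+c)$) are $\le d-3$, so both $(g_a \star g_b) \star g_c$ and $g_a \star (g_b \star g_c)$ are computed entirely inside the partial algebra of Proposition~\ref{prop:star} and agree by the associativity proven there. If instead $a+b+c \ge d-2$, I would argue that both sides vanish. For the left-hand side: if $a+b \ge d-2$ then $g_a \star g_b = 0$ and the outer product vanishes by bilinearity; otherwise $a+b \le d-3$ but $(a+b)+c = a+b+c \ge d-2$, so the outer product vanishes by the extension convention. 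The right-hand side is symmetric.

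I do not expect any real obstacle: the truncation is designed precisely so that the degree constraint $a+b \le d-3$ of Proposition~\ref{prop:star} becomes stable under multiplication once we zero out the forbidden region, and vanishing then propagates through bilinearity to give associativity automatically in the mixed case.
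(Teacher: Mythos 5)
Your proof is correct and follows the same route as the paper, which simply declares the corollary ``immediate from proposition \ref{prop:star}''; you have filled in the routine verification, correctly isolating the only point needing care, namely associativity when the total degree crosses the truncation boundary $d-3$, where both sides vanish.
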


\begin{proof}
  This is immediate from proposition \ref{prop:star}.
\end{proof}

% Once we extend $\star$ by zero, we get an algebra structure on all of $G$.
% But we need to trucate since we want $f$ to act as the identity.

\begin{rem}
  Proposition \ref{prop:star} implies in particular that $G_e$ is a module
  over $G_0$ for all $e \le d-3$. We first discovered this the following way.
  Let $N = \binom{r+e}{e+1}$, and fix a basis $\{ D_1, \dots, D_N \}$ be for
  $R_{e+1}$. Define $D = [D_1, \cdots, D_N]^\T$ and $M'_e = \{ A \in \Mat_\K
  (r,N) \suchthat I_2 (\p\: AD) \subseteq \ann f \}$. Just slightly modifying
  ideas in this chapter, it is easy to see that there is a surjective map
  $M'_e \to G_e$, and that matrix multiplication $M'_0 \times M'_e \to M'_e$
  induces the same module action $G_0 \times G_e \to G_e$ as above.
\end{rem}

There are other ways, in addition to $M^f$, to generalize the construction of
$M_f$. We feel the following is worth mentioning. Fix some $e \ge 1$, and let
$N = \dim_\K R_e = \binom{r-1+e}{e}$. Choose a basis $\D = \{ D_1, \dots, D_N
\}$ for $R_e$, and let $D = [D_1 \dots D_N]^\T$. For any $d \ge 0$ and $f \in
\cR_d$, we define
\begin{displaymath}
  M_{f,D} = \{ A \in \Mat_\K (N,N) \suchthat I_2 (D\: AD) \subseteq \ann_R f
  \}.
\end{displaymath}

$M_{f,D}$ is clearly a $\K$-vector space containing the identity matrix. We
note that $M_{f,\p} = M_f$, thus this is another generalization of $M_f$.
However, one of the basic lemmas we used to study $M_f$, lemma
\ref{lem:main2}, does not generalize to $M_{f,D}$ when $e \ge 2$. That is,
$I_2 (D\: AD) \subseteq \ann f$ does not imply that there exists $g \in \cR_d$
such that $Dg =A Df$. The converse implication is obviously still true. On the
other hand, lemma \ref{lem:main1} generalizes, i.e. $\ann (f)_{d-e} \subseteq
\ann (g)_{d-e}$ if and only if there exists $A \in \Mat_\K (N,N)$ such that
$Dg = A Df$. But the reason for including $M_{f,D}$ here, is that proposition
\ref{prop:Mf} generalizes.

\begin{prop} \label{prop:MfD}
  Suppose $e \ge 1$ and $d \ge 3e$. Let $f \in \cR_d$. Then $M_{f,D}$ is
  closed under matrix multiplication. If furthermore $\ann (f)_e = 0$, then
  $M_{f,D}$ is a commutative $\K$-algebra.
\end{prop}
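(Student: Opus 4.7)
The plan is to mimic the proof of Proposition~\ref{prop:Mf} by combining the $3 \times 3$ determinant trick used in Lemma~\ref{lem:IM} with the saturation statement of Lemma~\ref{lem:ann}(a); the hypothesis $d \ge 3e$ will emerge as exactly what makes the saturation step go through. The key intermediate claim is
\begin{displaymath}
I_2(AD\: BD) \subseteq \ann_R f \qquad \text{whenever } A, B \in M_{f,D},
\end{displaymath}
after which both conclusions of the proposition follow by direct computation.

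To establish this, for any triple $i, j, k$ form the $3 \times 3$ determinant
\begin{displaymath}
T_{ijk} = \begin{vmatrix} D_i & (AD)_i & (BD)_i \\ D_j & (AD)_j & (BD)_j \\ D_k & (AD)_k & (BD)_k \end{vmatrix} \in R_{3e}.
\end{displaymath}
Expanding along the third column writes $T_{ijk}$ as an $R_e$-linear combination of $2 \times 2$ minors of $(D\: AD)$, each of which is in $\ann_R(f)_{2e}$ because $A \in M_{f,D}$; hence $T_{ijk} \in \ann_R(f)_{3e}$. Expanding instead along the first row gives $T_{ijk} = D_i\, Q^{AB}_{jk} + (\text{correction})$ with $Q^{AB}_{jk} = (AD)_j(BD)_k - (AD)_k(BD)_j$ and the correction an $R_e$-combination of $2 \times 2$ minors of $(D\: BD)$ and $(D\: AD)$, hence also in $\ann_R(f)_{3e}$. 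Combining, $R_e \cdot Q^{AB}_{jk} \subseteq \ann_R(f)_{3e}$ for every $j, k$. Lemma~\ref{lem:ann}(a), applicable precisely because $2e \le 3e \le d$, then yields $Q^{AB}_{jk} \in \ann_R(f)_{2e}$.

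From here, unwinding $(ABD)_j = \sum_k A_{jk}(BD)_k$ and applying $B \in M_{f,D}$ to rewrite $D_i(BD)_k(f) = D_k(BD)_i(f)$ gives $D_i(ABD)_j(f) = (AD)_j(BD)_i(f)$, and symmetrically with $i, j$ swapped. Their difference is $Q^{AB}_{ji}(f) = 0$, so $I_2(D\: ABD) \subseteq \ann_R f$ and $AB \in M_{f,D}$. The same manipulation applied to $BAD$ (using $A \in M_{f,D}$) gives $D_i((AB - BA)D)_j(f) = Q^{AB}_{ji}(f) = 0$ for all $i, j$; a second invocation of Lemma~\ref{lem:ann}(a) (valid since $e \le 2e \le d$) lifts this to $((AB - BA)D)_j \in \ann_R(f)_e$, and the hypothesis $\ann_R(f)_e = 0$ together with the fact that $\{D_k\}$ is a basis of $R_e$ forces $AB = BA$. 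The main hurdle will be the bookkeeping in the two determinant expansions, in particular verifying that all correction terms really land in $R_e \cdot \ann_R(f)_{2e}$; everything else is driven by the two saturation steps, which explains why the bound $d \ge 3e$ is natural (and, by analogy with the sharp $d \ge 3$ of Proposition~\ref{prop:Mf}, likely sharp as well).
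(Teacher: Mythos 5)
Your proposal is correct and follows essentially the same route as the paper: the $3\times 3$ determinant built from the columns $D$, $AD$, $BD$, expanded two ways to show $R_e\cdot Q^{AB}_{jk}\subseteq\ann_R(f)_{3e}$, then saturation (lemma \ref{lem:ann}a, needing $d\ge 3e$) to get $I_2(AD\:BD)\subseteq\ann_R f$, followed by the symmetry manipulation for $AB\in M_{f,D}$ and for commutativity. The only differences are cosmetic — you expand along the first row rather than the third and carry out the second half entrywise where the paper writes it as matrix identities like $ABDD^{\T}(f)=DD^{\T}(f)(AB)^{\T}$ — and your explicit saturation step for $((AB-BA)D)_j\in\ann_R(f)_e$ correctly fills in what the paper leaves terse.
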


\begin{proof}
  Pick $A,B \in M_{f,D}$. Note that for all $i,j,k$ the $3 \times 3$ minor 
  \begin{displaymath}
    \begin{vmatrix}
      D_i & (AD)_i & (BD)_i \\
      D_j & (AD)_j & (BD)_j \\
      D_k & (AD)_k & (BD)_k
    \end{vmatrix}
  \end{displaymath}
  belongs to $\ann (f)_{3e}$ by expansion along the third column. Expanding
  along the third row proves that
  \begin{displaymath}
    D_k \cdot \left| 
      \begin{matrix}
        (AD)_i & (BD)_i \\
        (AD)_j & (BD)_j
      \end{matrix}
    \right| \in \ann (f)_{3e}
  \end{displaymath}
  for all $i,j$ and $k$. Since $d \ge 3e$ it follows that $I_2(AD\:BD)
  \subseteq \ann f$. Hence $(AD)(BD)^\T(f) = ADD^\T(f)B^\T$ is symmetric, and
  therefore
  \begin{displaymath}
    AB DD^\T (f) = A DD^\T (f) B^\T = B DD^\T (f) A^\T = DD^\T (f) B^\T A^\T =
    DD^\T (f)(AB)^\T.
  \end{displaymath}
  This means that $AB \in M_{f,D}$. Moreover,
  \begin{displaymath}
    AB DD^\T (f) = DD^\T (f) B^\T A^\T = B DD^\T (f) A^\T = BA DD^\T (f),
  \end{displaymath}
  which implies that $(AB-BA) Df = 0$. If $\ann (f)_e = 0$, then $AB = BA$.
\end{proof}

%% End of body of paper

\cleardoublepage
\nocite{*}%		Includes everything in phd.bib
\addcontentsline{toc}{chapter}{Bibliography}
\bibliography{phd}

\providecommand{\bysame}{\leavevmode\hbox to3em{\hrulefill}\thinspace}
\providecommand{\MR}{\relax\ifhmode\unskip\space\fi MR }
% \MRhref is called by the amsart/book/proc definition of \MR.
\providecommand{\MRhref}[2]{%
  \href{http://www.ams.org/mathscinet-getitem?mr=#1}{#2}
}
\providecommand{\href}[2]{#2}
\begin{thebibliography}{Mac16}

\bibitem[AH95]{AH}
J.~Alexander and A.~Hirschowitz, \emph{Polynomial interpolation in several
  variables}, Journal of Algebraic Geometry \textbf{1} (1995), 201--222.

\bibitem[BE77]{BE}
D.~Buchsbaum and D.~Eisenbud, \emph{Algebra structures for finite free
  resolutions, and some structure theorems for codimension three}, Amer. J.
  Math. \textbf{99} (1977), 447--485.

\bibitem[Die96]{Dies}
S.~J. Diesel, \emph{{Some irreducibility and dimension theorems for families of
  height 3 Gorenstein algebras}}, Pacific J. Math. \textbf{172} (1996), no.~2,
  365--397.

\bibitem[Eis95]{Eis}
D.~Eisenbud, \emph{{Commutative Algebra, with a View Toward Algebraic
  Geometry}}, Graduate Texts in Mathematics, vol. 150, Springer-Verlag, Berlin
  and New York, 1995.

\bibitem[Har77]{Hart}
R.~Hartshorne, \emph{{Algebraic Geometry}}, Graduate Texts in Mathematics,
  no.~52, Springer-Verlag, Berlin and New York, 1977.

\bibitem[Iar94]{Iarr}
A.~Iarrobino, \emph{{Associated graded algebra of a Gorenstein Artin algebra}},
  AMS Memoirs \textbf{107} (1994), no.~524, 117 p.

\bibitem[IK99]{IK}
A.~Iarrobino and V.~Kanev, \emph{{Power Sums, Gorenstein Algebras, and
  Determinantal Loci}}, Lecture Notes in Mathematics, vol. 1721,
  Springer-Verlag, 1999.

\bibitem[IS]{IS}
A.~Iarrobino and H.~Srinivasan, \emph{{Artinian Gorenstein algebras of
  embedding dimension four: Components of $PGor(H)$ for $H=(1,4,7,..., 1)$}},
  eprint: arXiv:math.AC/0412466.

\bibitem[Kle98]{Kle}
J.~O. Kleppe, \emph{{The smoothness and the dimension of PGor (H) and of other
  strata of the punctual Hilbert scheme}}, J. Algebra \textbf{200} (1998),
  606--628.

\bibitem[Mac16]{Mac}
F.~H.~S. Macaulay, \emph{{The Algebraic Theory of Modular Systems}}, Cambridge
  University Press, London, 1916.

\bibitem[Ter11]{Terr}
A.~Terracini, \emph{Sulle ${V}_k$ per cui la variet{\`a} degli
  ${S}_h(h+1)$-seganti ha dimensione minore dell'ordinario}, Rend. Circ. Mat.
  Palermo \textbf{31} (1911), 392--396.

\end{thebibliography}

\end{document}